\documentclass[a4paper,11pt,,reqno]{amsart}
\usepackage{caption,mathrsfs}
\DeclareCaptionLabelFormat{adja-page}{\hrulefill\\#1 #2 \emph{(previous page)}}
\usepackage[left=2cm,right=2cm,top=2.25cm,bottom=2.25cm]{geometry}
\usepackage[utf8]{inputenc}
\usepackage[T1]{fontenc}
\usepackage{amssymb}
\usepackage[english]{babel}
\usepackage{amsmath}
\usepackage{amsthm}
\usepackage{epsfig}
\usepackage{mathrsfs}
\usepackage{xcolor}
\usepackage{graphicx}
\usepackage{esint}
\usepackage{enumerate}
\usepackage{cases}
\usepackage[toc,page]{appendix}
\usepackage{hyperref}
\usepackage{xcolor}
\usepackage{verbatim,cleveref}

\newcommand{\dd}{{\, \mathrm d}}
\usepackage{todonotes}\reversemarginpar

\newtheorem{theorem}{Theorem}[section]
\newtheorem{lemma}[theorem]{Lemma}

\newtheorem{remark}[theorem]{Remark}
\newtheorem{proposition}[theorem]{Proposition}
\newtheorem{corollary}[theorem]{Corollary}

\newcommand{\R}{{\mathbb R}}
\newcommand{\N}{{\mathbb N}}
\newcommand{\T}{{\mathbb T}}

\def\E{\mathcal E}

\definecolor{darkgreen}{rgb}{.0,.4,.2}

\def\eps{\varepsilon}
\def\na{\nabla}
\def\pa{\partial}
\def\div{\mathrm{div}\, }

\newcommand{\be}[1]{\begin{equation}\label{#1}}
\newcommand{\ee}{\end{equation}}
\renewcommand{\(}{\left(}
\renewcommand{\)}{\right)}
\newcommand{\irdx}[1]{\int_{\T^d}{#1}\,dx}

\newcommand{\irdv}[1]{\int_{\R^d}{#1}\,\dd v}
\newcommand{\irdxv}[1]{\iint_{\T^d\times\R^d}{#1}\,\dd x \,\dd v}
\newcommand{\iroxv}[1]{\iint_{\Omega\times\R^d}{#1}\,\dd x \,\dd v}

\newcommand{\var}{\varepsilon}

\def\div{{\mathrm {div}}\,}

\newcommand{\g}{{\bf g}}

\usepackage{array,multirow,makecell,booktabs}
\setcellgapes{1pt}
\makegapedcells
\newcolumntype{R}[1]{>{\raggedleft\arraybackslash }b{#1}}
\newcolumntype{L}[1]{>{\raggedright\arraybackslash }b{#1}}
\newcolumntype{C}[1]{>{\centering\arraybackslash }b{#1}}

\usepackage{etoolbox}
\newcounter{taggedeq}
\pretocmd{\equation}{\stepcounter{taggedeq}}{}{}

\title[Nonlinear kinetic Fokker-Planck equations]{Nonlinear kinetic Fokker-Planck equations: \\ existence and diffusion limits}
\author{\'Emeric Bouin}
\address[E. Bouin]{CEREMADE - Université Paris-Dauphine, PSL Research University, UMR CNRS 7534, Place du Mar\'echal de Lattre de Tassigny, 75775 Paris Cedex 16, France.}
\email{bouin@ceremade.dauphine.fr}

\author{Jean Dolbeault}
\address[J. Dolbeault]{CEREMADE - Université Paris-Dauphine, PSL Research University, UMR CNRS 7534, Place du Mar\'echal de Lattre de Tassigny, 75775 Paris Cedex 16, France.}
\email{dolbeault@ceremade.dauphine.fr}

\author{Antoine Mellet}
\address[A. Mellet]{Department of Mathematics
University of Maryland
College Park, MD 20742}
\email{mellet@umd.edu}

\date{June 30, 2026}
\subjclass[2010]{60J60,35Q84,82C40,35B27,60K50,60G52,76P05}

\keywords{kinetic theory; relative entropy}

\begin{document}

\begin{abstract}
In this paper, we focus on a new type of non-linear kinetic Fokker-Planck equation where the non-linearity comes from a non-linear diffusion in the velocity variable. The existence of solutions in suitable Lebesgue spaces is proved, together with important entropy estimates on these solutions. We then study the diffusive limit of such equation.
\end{abstract}

\maketitle
\thispagestyle{empty}

\section{Introduction and main results}

\subsection{A nonlinear kinetic Fokker--Planck equation}

Kinetic equations provide a fundamental framework for describing the evolution of particle systems at an intermediate scale between microscopic stochastic dynamics and macroscopic continuum models. Among them, kinetic Fokker--Planck equations are used to describe various physical phenomena in statistical mechanics, plasma physics and molecular dynamics. They typically describe the evolution of a distribution function $f(t,x,v)$ in the phase space, where $x\in\mathbb{R}^d$ denotes the position variable and $v\in\mathbb{R}^d$ the velocity. A prototypical kinetic Fokker--Planck equation takes the form
\[
\partial_t f + v\cdot\nabla_x f
= \nabla_v\cdot\left(\nabla_v f + v f\right),
\]
which corresponds to the Kolmogorov forward equation associated with a Langevin process involving friction and stochastic forcing. Such models play an important role in the mathematical description of interacting particle systems subject to noise and relaxation mechanisms.

This paper is devoted to the analysis of the non-linear Vlasov-Fokker-Planck equation
\begin{equation}\label{eq:mainVFP}
\frac{\partial f}{\partial t}+\,v\cdot\nabla_x f=\Delta_v f^m+\nabla_v\cdot (v\,f), \qquad t > 0, \; x \in \Omega, \; v \in \R^d.
\end{equation}
When $m=1$, we recover the classical equation written above, so we are interested in the case $m \in (0,1) \cup (1,+\infty)$, though further restrictions on $m$ shall arise later on.

One of the central problems in kinetic theory is the derivation of macroscopic equations from kinetic models through suitable asymptotic limits. These limits provide a rigorous link between microscopic particle dynamics and continuum descriptions such as diffusion or hydrodynamic equations. Among the most classical regimes is the \emph{diffusion limit}, in which the kinetic equation is studied under a scaling where collisions or relaxation effects dominate the dynamics on fast time scales. In this regime, the distribution function rapidly relaxes toward a local equilibrium in velocity, while the macroscopic density evolves on a slower time scale according to a diffusion equation.

The mathematical analysis of diffusion limits for kinetic equations has a long history and has been extensively studied for various models, including the linear Boltzmann equation, BGK-type models, and kinetic Fokker--Planck equations. Early rigorous derivations of diffusion limits for linear transport equations can be traced back to the works of Larsen and Keller \cite{LarsenKeller1974} and to the Hilbert expansion method developed in the context of neutron transport theory. Later developments include the systematic study of diffusive limits for kinetic equations with relaxation operators, as presented for instance in works of Bardos, Golse, and Levermore \cite{BardosGolseLevermore1991,BardosGolseLevermore1991b} and in the book of Cercignani, Illner, and Pulvirenti \cite{CercignaniIllnerPulvirenti1994}.

In the case of the linear Boltzmann equation, diffusion limits have been analyzed using a variety of techniques including asymptotic expansions, compactness arguments, and entropy methods. Notable contributions include the works of Golse, Lions, Perthame, and Sentis \cite{GolseLionsPerthameSentis1988}, who developed a general framework for deriving macroscopic limits from kinetic models, and later extensions that addressed more general collision operators and boundary conditions. These methods have also been adapted to treat nonlinear kinetic equations and systems with more complex interaction mechanisms.

For kinetic Fokker--Planck equations, the derivation of diffusion limits presents both similarities and specific challenges compared to the Boltzmann setting. The collision operator acts as a diffusion operator in velocity space and generates a relaxation toward a Maxwellian equilibrium. However, the transport operator couples spatial and velocity variables, leading to a degenerate structure in which dissipation occurs only in part of the phase space variables. This feature makes the analysis closely related to the theory of hypoelliptic and hypocoercive operators.

Nonlinear diffusion equations arise in a wide range of physical, biological, and engineering contexts where transport processes depend on the local density of the evolving quantity. A prototypical example is the nonlinear diffusion equation
\[
\partial_t u = \Delta u^k,
\]
where $u=u(t,x)$ denotes a nonnegative density and $m>0$ is a parameter describing the nonlinear dependence of the diffusion flux on the density. This equation, commonly referred to as the \emph{porous medium or fast diffusion equation}, provides a fundamental model for density-dependent diffusion phenomena.

When $k>1$, the equation is known as the \emph{porous medium equation} (PME). In this regime the diffusion becomes degenerate as $u$ approaches $0$, leading to the formation of free boundaries and finite propagation speed. This behavior contrasts sharply with classical linear diffusion, where disturbances propagate instantaneously. The porous medium equation was originally introduced in the context of gas flow through porous media and filtration processes; see the early works of Muskat and Wyckoff as well as the systematic mathematical analysis developed later. Since then, the PME has become a paradigmatic nonlinear partial differential equation describing degenerate diffusion phenomena.

When $0<k<1$, the equation is called the \emph{fast diffusion equation} (FDE). In this regime the diffusion coefficient becomes singular as $u\to 0$, leading to very different qualitative properties. In particular, solutions may exhibit instantaneous propagation, strong smoothing effects, and, depending on the value of $m$, even finite-time extinction. The fast diffusion equation appears in various applications including plasma physics, thin film dynamics, population dynamics, and geometric flows.

The mathematical theory of nonlinear diffusion equations has been extensively developed over the past decades. Fundamental analytical results concerning existence, uniqueness, regularity, and asymptotic behavior of solutions can be found in the classical works of Aronson, Bénilan, Crandall, and Pierre. The systematic treatment of the porous medium equation was developed in particular by Vázquez and collaborators, culminating in the comprehensive monograph \cite{Vazquez2007}. Another important reference is the book by DiBenedetto \cite{DiBenedetto1993}, which provides a detailed analysis of degenerate parabolic equations and their regularity properties.

One of the most remarkable features of nonlinear diffusion equations is the presence of self-similar solutions governing the large-time behavior of solutions. In the porous medium regime, the long-time asymptotics of solutions are described by the celebrated Barenblatt–Pattle self-similar profiles. These solutions play a role analogous to the Gaussian kernel in linear diffusion and provide the fundamental attractors of the dynamics. The study of convergence toward these self-similar solutions has been carried out using a variety of techniques including entropy methods, scaling arguments, and functional inequalities.

In the fast diffusion regime, the asymptotic behavior is more subtle and strongly depends on the value of the exponent $m$. Several critical exponents appear that determine different qualitative behaviors of solutions. For instance, below certain thresholds solutions may vanish in finite time, while above them the dynamics resembles that of the porous medium equation but with faster spreading. The mathematical analysis of these phenomena has been developed in a number of influential works, including those of Bénilan, Brezis, and Crandall, and later extensions addressing extinction phenomena and asymptotic profiles.

In the context of kinetic equations, relative entropy methods have been extensively applied to prove rigorous diffusion or hydrodynamic limits. The idea is to measure the ``distance'' between the solution of a kinetic equation and a local equilibrium parameterized by macroscopic variables. By controlling the evolution of the relative entropy and its production rate, one can obtain uniform-in-$\varepsilon$ estimates that justify the convergence to macroscopic equations such as diffusion, Euler, or Navier–Stokes equations in suitable scaling limits \cite{BardosGolseLevermore1991b,GolseSaintRaymond2004}.

Relative entropy techniques are also widely used in the analysis of nonlinear diffusion equations. For instance, in the study of the porous medium and fast diffusion equations, the relative entropy with respect to self-similar Barenblatt profiles provides a natural functional to quantify convergence rates and asymptotic behavior \cite{Vazquez2007,Otto2001}. This approach allows one to derive functional inequalities and decay estimates that are often sharper than those obtained from purely energy-based methods.

Beyond deterministic PDEs, relative entropy has found significant applications in probability theory, interacting particle systems, and stochastic processes. It provides a natural framework to study propagation of chaos, large deviations, and the mean-field limit of many-body systems. In these contexts, the entropy functional quantifies the deviation of the empirical measure from its limiting law and allows one to control fluctuations in a probabilistically meaningful way \cite{MischlerMouhot2013}.

The present work aims to establish existence and uniqueness of solution for \eqref{eq:mainVFP} and rigorously establish macroscopic limits.

\subsection{The nonlinear Fokker-Planck operator}

The operator
\begin{align*}
\mathsf{L}[f](v) & : = \Delta_vf^m+\nabla_v(v\,f) = \text{div}_v \left( f \, \na_v \left( \frac{m}{m-1} f^{m-1} +\frac{|v|^2}{2}\right)\right)
\end{align*}
is associated to the family of local equilibria,
\[
v\mapsto \left(\mu-\frac{m-1}{2\,m}\,|v|^2\right)_+^\frac1{m-1}\,,
\]
parametrised by a constant $\mu\in \R$. Observe that for any $\mu\in \R$, the mass of an equilibrium is finite if and only if
$$m>\frac{d-2}{d}.$$
Under this condition, we can parametrise these equilibria by their mass: we define $G[\rho]$ by,
\begin{equation*}\label{G}
G[\rho](v) = \left(\mu[\rho]- \frac{m-1}{2\,m}\,|v|^2\right)_+^\frac1{m-1}, \qquad \irdv{G[\rho](v)} = \rho.
\end{equation*}
This defines the function $\mu:\R_+\to\R_+$ and a quick computation gives, for all $m>\frac{d-2}{d}$ (and $m\neq 1$):
\begin{equation}\label{eq:mu}
\mu[ \rho] = \mu_1\,\rho^{k-1}, \qquad
\mu_1:=
\left( 2\left(\frac{2\,\pi\,m}{1-m}\right)^\frac d2\frac{\Gamma\(\frac{1}{1-k} \)}{\Gamma\(\frac1{m-1}\)}
\right)^{\frac{1}{1-k}},
\end{equation}
where $m$ and $k$ are related by the relation
\begin{equation}\label{eq:km}
\frac{1}{k-1}= \frac{1}{m-1} + \frac d 2.
\end{equation}
The function $v\mapsto G[\rho](v)$ is the classical Barenblatt-Pattle equilibrium.
In particular, it is compactly supported when $m>1$ and heavy-tailed when $m<1$.

\medskip Next, we recall that the non-linear Fokker-Planck operator is associated with the entropy functional
$$
H[f] = \int_{\R^d}\left( \frac{f(v)^m}{m-1} + \frac{|v|^2 }{2}f(v) \right) \dd v.
$$
This functional is convex for all values of $m>0$ (with the usual convention that $\frac{f^m}{m-1}$ is replaced by $f \ln f$ when $m=1$), although it may take negative values when $m<1$ and the condition that $H[G[\rho]]$ is finite requires the additional restriction on $m$:
$$ m>\frac{d}{d+2}$$
in order to ensure the integrability of $G[\rho]^m$ and $|v|^2 G[\rho]$.
In fact, we have the following classical result, relying on the Jensen inequality,
\begin{proposition}\label{prop:entro}
For all $m>\frac{d}{d+2}$
and for any $\rho\in \R$, we have
$$
H[f]\geq H[G[\rho]] \, , \qquad \mbox{ for all } f(v) \geq 0 \mbox{ such that } \irdv{f(v)} =\rho
$$
with equality if and only if $f(v)=G[\rho](v)$.
\end{proposition}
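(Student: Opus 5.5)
The plan is the standard convexity (Bregman) argument: the free energy minus a multiple of the mass is minimized at the Barenblatt profile. Set $\Phi(s)=\frac{s^m}{m-1}$, which is strictly convex on $[0,\infty)$ for every $m>0$, $m\neq1$, with $\Phi'(s)=\frac{m}{m-1}s^{m-1}$. Write $G=G[\rho]$ and $\mu=\mu[\rho]$.

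The first step is to read off the Euler--Lagrange structure. On the set where $G>0$,
\[
\Phi'(G)=\frac{m}{m-1}\,G^{m-1}=\frac{m}{m-1}\,\mu-\frac{|v|^2}{2},
\]
so $\Phi'(G)+\frac{|v|^2}{2}$ equals the constant $\lambda:=\frac{m}{m-1}\mu$ there. I treat the two ranges of $m$ separately.

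\emph{Case $m<1$.} Here $G>0$ everywhere, and $\mu>0$ is forced by integrability. Convexity gives the pointwise inequality
\[
\Phi(f)\ \ge\ \Phi(G)+\Phi'(G)(f-G)\qquad\text{for all } f\ge0.
\]
Substituting the identity for $\Phi'(G)$ yields
\[
\Phi(f)+\tfrac{|v|^2}{2}f\ \ge\ \Phi(G)+\tfrac{|v|^2}{2}G+\lambda(f-G).
\]
Integrating in $v$ and using $\int f=\int G=\rho$ removes the $\lambda$ term, so $H[f]\ge H[G]$.

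\emph{Case $m>1$.} Outside the support of $G$ we have $\Phi(G)=0$ and $\Phi'(0)=0$. The inequality we need there is
\[
\Phi(f)+\tfrac{|v|^2}{2}f\ \ge\ \lambda f .
\]
This holds because $\Phi(f)\ge0$ and, outside the support, $\frac{|v|^2}{2}\ge\frac{m}{m-1}\mu=\lambda$. So the integrated inequality holds on all of $\R^d$, and the conclusion $H[f]\ge H[G]$ follows as before. This pointwise bound is the supporting-hyperplane form of Jensen's inequality, which is what the paper alludes to.

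\textbf{Integrability.} The main technical point is justifying the integration. We need $H[f]$ to be well-defined for an arbitrary $f$ and $H[G]$ to be finite. The latter is exactly where $m>\frac{d}{d+2}$ enters: $G^m$ and $|v|^2G$ both decay like $|v|^{-2m/(1-m)}$, which is integrable precisely when $m>\frac d{d+2}$.

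For general $f$ with $m<1$, $\int f^m$ might be $+\infty$; since $\frac1{m-1}<0$, that would only make $H[f]=-\infty$ formally. To handle this I will read $H[f]$ as $\int \bigl(\Phi(f)+\frac{|v|^2}{2}f\bigr)$, defined in $(-\infty,+\infty]$ whenever its negative part is integrable. The pointwise inequality shows the integrand dominates the integrable function
\[
\Phi(G)+\tfrac{|v|^2}{2}G+\lambda(f-G),
\]
so the negative part is controlled and the integral is meaningful. When the second moment of $f$ is finite, Hölder with a weight shows $\int f^m<\infty$, so $H[f]$ is genuinely finite. I would state this convention explicitly.

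\textbf{Equality.} If $H[f]=H[G]$, then the nonnegative integrand
\[
\Phi(f)-\Phi(G)-\Phi'(G)(f-G)+\Bigl(\tfrac{|v|^2}{2}-\lambda\Bigr)(f-G)
\]
vanishes almost everywhere. For $m<1$ the second group is identically zero, and strict convexity of $\Phi$ forces $f=G$ a.e. For $m>1$, outside the support the integrand reduces to
\[
\Phi(f)+\Bigl(\tfrac{|v|^2}{2}-\lambda\Bigr)f\ \ge\ \Phi(f),
\]
so it vanishes only if $f=0$ there; inside the support, strict convexity again gives $f=G$. Hence $f=G[\rho]$.
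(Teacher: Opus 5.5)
Your convexity argument is correct and follows the route the paper has in mind: the paper only cites Jensen/convexity here, and for $m>1$ your pointwise decomposition integrates to exactly the identity \eqref{eq:HH}, $H[f]-H[G[\rho]]=\E[f|G[\rho]]+\int_{|v|\geq R}\frac12(|v|^2-R^2)f\,\dd v$, which the paper writes out in the proof of Lemma~\ref{lem:support} (for $m<1$ only the $\E[f|G[\rho]]$ term is present). One slip in the equality case needs fixing: the nonnegative integrand is $[\Phi(f)-\Phi(G)-\Phi'(G)(f-G)]+[\Phi'(G)+\frac{|v|^2}{2}-\lambda](f-G)$, but your display drops $\Phi'(G)$ from the second bracket, so on the support it subtracts $\Phi'(G)(f-G)$ twice; only the corrected bracket vanishes identically where $G>0$, as your next sentence requires.
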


When $m>\frac{d}{d+2}$, we define:
\begin{equation}\label{eq:nu}
\nu[\rho] : =\frac 1 d \irdv{|v|^2\,G[\rho](v)} = \irdv{G[\rho](v)^m}= \nu_1 \rho^k
\end{equation}
with $k$ given by \eqref{eq:km} and where $\nu_1$ and $\mu_1$ (defined in \eqref{eq:mu}) satisfy
\begin{equation}\label{eq:nu1}
\frac{k}{1-k} \nu_1 = \frac{m}{1-m} \mu_1 .
\end{equation}
We then have
$$H[G[\rho]] = \frac{\nu_1}{k-1}\rho^k.$$

\medskip The relation between $k$ and $m$, \eqref{eq:km}, plays an important role in the paper. We note that when $m>1$, we find $k\in(1,\frac{d+2}{d})$ (in dimension $3$, it yields $k\in (1,\frac 5 3)$).

When $m<1$, there are several critical values that play a role in the analysis of the nonlinear Fokker-Planck equation.
We already mentioned the condition $m>\frac{d-2}{d}$ required for the integrability of $G[\rho]$ and the more restrictive $m>\frac{d}{d+2}$ to ensure finite entropy.
The condition $m\in\left(\frac{d}{d+2},1\right)$ is equivalent to $k\in (0,1)$.

The intermediate regime $m \in \left( \frac{d-2}{d} , \frac{d}{d+2}\right]$, for which $G[\rho]$ has no variance, will not be discussed in terms of diffusion limits: a fractional regime may appear, and is out of the scope of the present paper. Note that a moment of order $a$ of $G$ is finite when $m> \frac{d+a-2}{d+a}$.

\medskip Returning to the full kinetic equation \eqref{eq:mainVFP}, we define the entropy functional
\begin{equation*}
\mathcal E[f] = \iroxv{\left(\frac{f^m}{m-1} +\frac{|v|^2}{2} f \right)}.
\end{equation*}
A simple (formal) computation, assuming enough regularity to justify all integrations by parts, shows that solutions of \eqref{eq:mainVFP} satisfy the following:
\begin{equation} \label{eq:entropy_kinetic}
\frac{d}{dt} \mathscr E(f)(t) +
\iint_{\Omega \times\R^d} f \left\vert v + \frac{m}{m-1} \na_v f^{m-1} \right\vert^2 \, \dd x \, \dd v = 0.
\end{equation}
This entropy inequality plays a central role in the derivation of the diffusion limit.

\subsection{Existence of weak solutions}

Our first task is to show the existence and uniqueness of a global-in-time solution to \eqref{eq:mainVFP}. We state the results separately for the case $m>1$ and $m<1$ has some of the statements differ slightly. In both cases, we address both the case of a bounded domain with periodic boundary conditions $\Omega = \T^d$ and the whole set $\Omega=\R^d$. As part of our result, we show that these solutions satisfy the entropy inequality that will be needed to carry out the diffusion limit.

\begin{theorem}[Well-posedness when $m>1$]\label{thm:existence}
Fix $m>1$ and let $\Omega =\T^d$ or $\R^d$.
For any non-negative initial condition $f_{\text{in}}\in L^1\cap L^\infty (\Omega\times\R^d)$ such that
\begin{equation}\label{eq:xvfin} \iint_{\Omega\times \R^d} (|x|^2 + |v|^2) f_{\text{in}} (x,v)\, \dd x \, \dd v <\infty,
\end{equation}
there exists a unique non-negative weak solution $f(t,x,v)$ to
\begin{equation}\label{eq:FP}
\begin{cases}
\pa_t f + v\cdot\na_x f = \Delta_v f^m +\text{div}_v (vf) & \mbox{ in } (0,\infty)\times \Omega\times \R^d\\
f(0,x,v) = f_{\text{in}}(x,v) & \mbox{ in } \Omega\times \R^d.
\end{cases}
\end{equation}
For any $T>0$, we have $f\in L^\infty((0,T);L^1\cap L^\infty(\Omega\times \R^d))$, $\na_v f^{\frac{q}{2}} \in L^2((0,T)\times \Omega\times\R^d)$ for all $q>m$ and
$f$ satisfies the entropy inequality
\begin{equation}\label{eq:entropy}
\mathcal E[f(t)] + \int_0^t \iint_{\Omega\times \R^d} f \left| v + \frac{m}{m-1}\na_v f^{m-1}\right|^2 \, \dd x \, \dd v \leq \mathcal E[f_{\text{in}}].
\end{equation}

Furthermore, if there is $M_0$ such that $f_{\text{in}}(x,v) \leq \left( M_0-\frac{m-1}{m}\frac{|v|^2}{2}\right)_+^{\frac{1}{m-1}}$ in $\Omega\times\R^d$, then
\begin{equation}\label{eq:maxbar} f(t,x,v) \leq \left( M_0-\frac{m-1}{m}\frac{|v|^2}{2}\right)_+^{\frac{1}{m-1}} \qquad \mbox{ in } (0,\infty)\times \Omega\times\R^d.
\end{equation}
\end{theorem}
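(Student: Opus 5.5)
We construct the solution by regularization and compactness, obtain uniqueness from an $L^1$-contraction argument, and derive \eqref{eq:maxbar} from a comparison principle.

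\textbf{Approximate problems.} We replace $\Delta_v f^m$ by $\Delta_v \phi_\eps(f)$, where $\phi_\eps(s) = \eps s + s^m$. This makes the velocity diffusion uniformly nondegenerate, so the approximate equation is a quasilinear kinetic Fokker--Planck equation that is uniformly parabolic in $v$. For $\Omega=\R^d$ we also truncate in $x$ and $v$ (for instance, solve on large balls with cut-off coefficients), and we mollify $f_{\rm in}$. For such problems, existence of a smooth nonnegative solution $f_\eps$ follows from a standard fixed-point argument: freeze the diffusion coefficient $\phi_\eps'(\bar f)$, solve the linear hypoelliptic problem, and apply Schauder or Leray--Lions theory. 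Nonnegativity and the bound $\|f_\eps\|_\infty \le e^{dt}\|f_{\rm in}\|_\infty$ come from the maximum principle. Indeed, $\nabla_v\cdot(vf)=d f+v\cdot\nabla_v f$, so $\bar f=e^{dt}\|f_{\rm in}\|_\infty$ is a supersolution.

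\textbf{Uniform estimates.}
\begin{enumerate}
\item Mass is conserved, which gives the $L^1$ bound.
\item Multiplying by $q f^{q-1}$ shows that $\frac{d}{dt}\iint f^q + \frac{4mq(q-1)}{(q+m-1)^2}\iint |\nabla_v f^{\frac{q+m-1}{2}}|^2 = d(q-1)\iint f^q$. Since $q>1$ is arbitrary (and $q\ge 1$ is enough), this yields $\nabla_v f^{q/2}\in L^2$ for every exponent $q>m$.
\item Moments: $\frac{d}{dt}\iint |v|^2 f = 2d\iint f^m - 2\iint |v|^2 f$ and $\frac{d}{dt}\iint |x|^2 f = 2\iint x\cdot v\, f$. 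By Gronwall and the assumption \eqref{eq:xvfin}, the second moments stay bounded uniformly in $\eps$ on $(0,T)$.
\item The entropy identity \eqref{eq:entropy_kinetic} holds for $f_\eps$ with $\phi_\eps$ in place of the power law, and its dissipation term is nonnegative.
\end{enumerate}

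\textbf{Passage to the limit.} These bounds give weak compactness of $f_\eps$ in every $L^p$, $p<\infty$. Weak convergence is not enough to identify the limit of the nonlinear term $f_\eps^m$, so the key step is strong compactness. We obtain it from velocity averaging combined with the $L^2$ bound on $\nabla_v f_\eps^{q/2}$: regularity in $v$ together with the transport structure yields compactness in $(t,x,v)$, as in hypoelliptic averaging lemmas of Bouchut type. Equivalently, averaging gives compactness of moments, and the $v$-gradient bound upgrades this to strong $L^1_{\rm loc}$ convergence of $f_\eps$. The moment bounds provide tightness on $\R^d\times\R^d$, so the convergence holds in $L^1$ globally.

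With strong convergence we can pass to the limit in the weak formulation. The entropy inequality \eqref{eq:entropy} follows by lower semicontinuity, after writing the dissipation as
\[
\iint \Bigl|\sqrt f\, v + \tfrac{2m}{2m-1}\nabla_v f^{m-\frac12}\Bigr|^2 .
\]
This is a convex functional of $(\sqrt f, \nabla_v f^{m-1/2})$, and since $m-\frac12>\frac m2$, we have $\nabla_v f^{m-\frac12}\in L^2$ by the estimate above. I expect the main obstacle to be this compactness step together with making sense of the weak formulation in the degenerate region where $f=0$.

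\textbf{Uniqueness.} We use Kružkov's doubling of variables, or a Carrillo-type entropy-solution framework. For two solutions, we would test the difference of the equations with $\mathrm{sign}_\delta(f^m-g^m)$. The transport and drift terms are divergence-free up to the linear term $d(f-g)$, and the diffusion term has a good sign. This should give
\[
\|f(t)-g(t)\|_{L^1} \le e^{dt}\|f_{\rm in}-g_{\rm in}\|_{L^1}.
\]
Making this rigorous needs the regularity $\nabla_v f^m\in L^2$, which is available because $q=2m>m$, and it probably also requires a renormalization argument for the transport term.

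\textbf{Barenblatt bound.} Set $B(v)=\bigl(M_0-\frac{m-1}{m}\frac{|v|^2}{2}\bigr)_+^{\frac1{m-1}}$. It is a stationary solution: it is independent of $x$, and it satisfies $\mathsf L[B]=0$, since $\frac{m}{m-1}B^{m-1}+\frac{|v|^2}{2}$ is constant on its support. The same $L^1$-contraction argument applied to $(f-B)_+$ gives $\|(f-B)_+(t)\|_{L^1}\le e^{dt}\|(f_{\rm in}-B)_+\|_{L^1}=0$, which is \eqref{eq:maxbar}. Alternatively, one can use at the $\eps$-level the supersolution $B_\eps$ associated with $\phi_\eps$ and let $\eps\to0$.
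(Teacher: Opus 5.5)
Your existence argument is sound and follows the paper's scheme. The ingredients match: a uniformly elliptic regularisation $\Delta_v(f^m+\kappa f)$; $L^p$, moment and entropy bounds; Lions-type compactness from averaging plus $v$-regularity; lower semicontinuity of the dissipation; and \eqref{eq:maxbar} by comparison with the regularised stationary supersolution $G_{\kappa,\mu}=\psi_\kappa^{-1}(\mu-|v|^2/2)$, which is your second option.

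The only difference is which quantity you average.
\begin{itemize}
\item The paper applies the averaging lemma to $f_\kappa^m$. It derives the kinetic equation for $f_\kappa^m$, whose right-hand side is bounded in $L^2_{t,x}H^{-1}_v+L^1$, and uses the uniform $L^2_{t,x}H^1_v$ bound on $f_\kappa^m$ coming from $p=m+1$.
\item You apply it to $f_\eps$ itself, whose source $\mathrm{div}_v(\nabla_v f_\eps^m+\eps\nabla_v f_\eps+vf_\eps)$ is uniformly controlled. That works, but your ``upgrade'' step must be written out, because your $v$-regularity is on $f_\eps^{s}$ with $s=q/2$, not on $f_\eps$.
\end{itemize}
For $s\ge1$, the inequality $|a^{1/s}-b^{1/s}|\le|a-b|^{1/s}$ gives $\|f_\eps(\cdot,\cdot,\cdot+h)-f_\eps\|_{L^{2s}}\le|h|^{1/s}\|\nabla_v f_\eps^{s}\|_{L^2}^{1/s}$. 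This is a uniform modulus of continuity in $v$. Combined with compactness of $f_\eps\star_v\psi_\delta$, it gives compactness of $f_\eps$. For $m<2$ you can simply take $q=2$ (multiply by $f^{2-m}$) and get $\nabla_v f_\eps$ bounded in $L^2$ directly.

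\textbf{The genuine gap is uniqueness.} The theorem asserts uniqueness among weak solutions; in the paper the class is $f\in L^\infty(0,T;L^1\cap L^\infty)$ with $\nabla_v f^m\in L^2$. The $L^1$-contraction you invoke is not established in that class, and the formal computation does not close.
\begin{itemize}
\item With the test function $\mathrm{sign}_\delta(f^m-g^m)$, the diffusion term has a sign, but the first-order terms are no longer exact divergences. Since $\mathrm{sign}_\delta(f^m-g^m)$ is not a function of $f-g$, nothing turns the pairing of $(\partial_t+v\cdot\nabla_x)(f-g)-\mathrm{div}_v(v(f-g))$ with it into $\frac{d}{dt}\iint|f-g|$. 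A weak solution also has no regularity in $(t,x)$ (only $\nabla_v f^m\in L^2$) with which to compute this pairing directly.
\item With $\mathrm{sign}_\delta(f-g)$ instead, the transport terms are fine, but the diffusion term needs $\nabla_v f\in L^2$. That fails in this class for $m\ge3$: the Barenblatt profile is a stationary weak solution on $\T^d$ with $\nabla_v B^m=-vB\in L^2$ but $\nabla_v B\notin L^2_{\rm loc}$.
\end{itemize}
Kru\v{z}kov/Carrillo doubling of variables is designed for exactly these cross terms. However, it proves uniqueness of \emph{entropy} solutions: each solution must satisfy Kru\v{z}kov-type inequalities for $|f-k|$, which are not part of the weak formulation. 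Showing that every weak solution satisfies them is a renormalisation problem for the transport in $(t,x)$, coupled to a $v$-diffusion controlled only through $\nabla_v f^m$. That is the missing idea. As written, you obtain at best uniqueness in a smaller class, and your first proof of \eqref{eq:maxbar}, via $(f-B)_+$, has the same gap.

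The paper avoids the nonlinear chain rule by a duality argument. It tests the equation for $h=f_1-f_2$, which is linear in $h$ once $f_1^m-f_2^m$ is treated as a source, against the solution $\psi$ of the backward problem $-\partial_t\psi-v\cdot\nabla_x\psi+v\cdot\nabla_v\psi=e^{-\lambda t}(f_1^m-f_2^m)_k$. The nonlinearity then enters only through the sign $(f_1-f_2)(f_1^m-f_2^m)\ge0$ at the end.

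If you follow that route, be aware of one term. The energy identity for $\nabla_v\psi$ also contains the commutator contribution $\int_0^T\iint e^{\lambda t}\,\nabla_x\psi\cdot\nabla_v\psi$, coming from $[\nabla_v,v\cdot\nabla_x]=\nabla_x$. This term has no sign and does not appear in the paper's computation, so that step needs an additional, hypocoercive-type argument as well.
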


In the fast diffusion case $m<1$, the heavy tail of the equilibrium state suggests that some additional difficulties arise when trying to control large velocity (and large $|x|$ when $\Omega =\R^d$).

\begin{theorem}[Well-posedness when $m<1$]\label{thm:existencefast}
Fix $m \in \left( \frac{d-2}{d} ,1 \right)$ if $\Omega =\T^d$ or $m \in \left( \frac{d-1}{d} ,1 \right)$ if $\Omega =\R^d$.
Then for any non-negative initial condition $f_{\text{in}}\in L^1\cap L^\infty (\Omega\times\R^d)$ such that
$$ \iint_{\Omega\times \R^d} (|x|^a + |v|^b) f_{\text{in}} (x,v)\, \dd x \, \dd v <\infty$$
for $a$, $b\in(0,2]$, small enough,
there exists a unique nonnegative weak solution $f$ to
\begin{equation}
\begin{cases}
\pa_t f + v\cdot\na_x f = \Delta_v f^m +\text{div}_v (vf) & \mbox{ in } (0,\infty)\times \Omega\times \R^d\\
f(0,x,v) = f_{\text{in}}(x,v) & \mbox{ in } \Omega\times \R^d.
\end{cases}
\end{equation}
For all $T>0$, we have $f\in L^\infty((0,T);L^1\cap L^\infty(\Omega\times \R^d))$ and $\na_v f^{\frac{q}{2}} \in L^2((0,T)\times \Omega\times\R^d)$ for all $q>m$.

Furthermore, if $m \in \left( \frac{d}{d+2} ,1 \right)$ when $\Omega =\T^d$ or $m \in \left( \frac{d}{d+1} ,1 \right)$ when $\Omega =\R^d$, then
$f$ satisfies the entropy inequality
\begin{equation}
\mathcal E[f(t)] + \int_0^t \iint_{\Omega\times \R^d} f \left| v + \frac{m}{m-1}\na_v f^{m-1}\right|^2 \, \dd x \, \dd v \leq \mathcal E[f_{\text{in}}].
\end{equation}

Finally, if there is $M_0$ such that $f_{\text{in}}(x,v) \leq \left( M_0+\frac{1-m}{m}\frac{|v|^2}{2}\right)^{\frac{-1}{1-m}}$ in $\Omega\times\R^d$, then
\begin{equation}f(t,x,v) \leq \left( M_0+\frac{1-m}{m}\frac{|v|^2}{2}\right)^{\frac{-1}{1-m}} \qquad \mbox{ in } (0,T)\times \Omega\times\R^d.
\end{equation}
\end{theorem}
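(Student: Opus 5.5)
The proof follows the same scheme as Theorem~\ref{thm:existence}: regularize, derive a priori estimates that are uniform in the regularization, pass to the limit by compactness, and prove uniqueness by an $L^1$-contraction argument. The differences with the case $m>1$ lie in the weight exponents and in the control of heavy tails.

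\emph{Regularization.} For $\eps>0$ we replace $\Delta_v f^m$ by $\Delta_v \phi_\eps(f)$, where $\phi_\eps(s)=(s+\eps)^m-\eps^m+\eps s$. This nonlinearity is uniformly parabolic and Lipschitz on bounded sets. We also truncate the domain: a large ball in $v$, and in $x$ when $\Omega=\R^d$, with no-flux conditions. The regularized problem is a nondegenerate kinetic Fokker--Planck equation, and classical theory gives a smooth nonnegative solution $f_\eps$.

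\emph{A priori estimates.} We would derive the following bounds on $f_\eps$.
\begin{enumerate}
\item[(i)] Mass is conserved, and the $L^p$ norms satisfy $\frac{d}{dt}\|f\|_p^p = d(p-1)\|f\|_p^p - p(p-1)\iint \phi_\eps'(f) f^{p-2}|\na_v f|^2$. This gives $L^\infty((0,T);L^p)$ bounds for all $p$, hence an $L^\infty$ bound $e^{dt}\|f_{\text{in}}\|_\infty$ by letting $p\to\infty$. Choosing $p=q-m+1$ also gives $\na_v f^{q/2}\in L^2$ for $q>m$, since $f^{m-1}f^{p-2}|\na_v f|^2\sim|\na_v f^{q/2}|^2$.
\item[(ii)] Moment bounds: multiply by $\langle v\rangle^b$ and $\langle x\rangle^a$. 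The term $\iint f^m\Delta_v\langle v\rangle^b$ is controlled by interpolation, using $f^m\le \eps' f\langle v\rangle^{b}+C\langle v\rangle^{-bm/(1-m)}$. Integrability of the last term requires $bm/(1-m)>d$ on the velocity side. For $\Omega=\R^d$ the $x$-weight must also be integrable, which leads to the stronger restriction $m>\frac{d-1}{d}$; this is why $a,b$ are required to be small. The $x$-moment grows through $v\cdot\na_x\langle x\rangle^a$, which is bounded by the $v$-moment when $a\le b$.
\item[(iii)] Comparison: the functions $(M_0+\frac{1-m}{m}\frac{|v|^2}{2})^{-1/(1-m)}$ are exactly stationary, $x$-independent solutions, so a comparison principle for the regularized problem gives the final bound. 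Here one has to modify the barrier slightly to account for $\phi_\eps$.
\end{enumerate}

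\emph{Passage to the limit.} The bounds above, together with velocity averaging lemmas, give strong $L^1_{loc}$ compactness of $f_\eps$. Averaging lemmas are needed because the $\na_v$ bounds alone give no regularity in $x$. Strong convergence in $L^1$ then follows from the moments, which give tightness. This lets us identify the limit of $\phi_\eps(f_\eps)$ as $f^m$.

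\emph{Entropy inequality.} It follows from the $\eps$-entropy identity and lower semicontinuity of the dissipation, written as $4\iint|\frac{v}{2}\sqrt f+\frac{m}{m-1}\na_v(\ldots)|^2$, a convex functional of $(f,\na_v f^{m-1/2})$. The extra condition $m>\frac{d}{d+2}$ (resp.\ $\frac{d}{d+1}$) is needed so that $\iint f^m$ is finite and continuous along the sequence. Indeed $f^m\le \delta f(|x|^a+|v|^2)+C_\delta(\ldots)^{-m/(1-m)}$, and the remainder is integrable exactly under these constraints.

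\emph{Uniqueness.} We would use Kružkov-type doubling of variables, or the $L^1$-contraction $\frac{d}{dt}\|f-g\|_1\le 0$ obtained by testing the difference of the equations with $\mathrm{sign}(f-g)$. The transport term and the $\text{div}_v(v\,\cdot)$ term vanish after integration. For the diffusion term, testing with a smooth approximation of the sign function leaves an error $\iint |\na_v(f^m-g^m)|\,\delta^{-1}\mathbf 1_{|f-g|<\delta}$. The gradient regularity $\na_v f^{q/2}\in L^2$ is what is used to show this error vanishes.

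The \textbf{main obstacle} is the moment estimate (ii) in the fast-diffusion range and its use in the entropy inequality. The heavy tails make $\iint f^m$ and the boundary terms delicate, so all the thresholds on $m$ must be tracked precisely. We expect uniqueness to also need care, since $f\mapsto f^m$ is non-Lipschitz at $0$.
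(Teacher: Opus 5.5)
Your overall scheme matches the paper's: regularise, $L^p$ and $\na_v$ bounds, barrier, averaging lemma plus the $H^1_v$ bound, then lower semicontinuity of the dissipation. However, the two steps where the fast-diffusion case really differs do not go through as you describe them.

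\emph{Moments.} Your interpolation $f^m\le \eps' f\langle v\rangle^b+C\langle v\rangle^{-bm/(1-m)}$ discards the decay $\Delta_v\langle v\rangle^b\lesssim\langle v\rangle^{b-2}$. The resulting condition $bm/(1-m)>d$, with $b\le 2$, forces $m>\frac{d}{d+b}\ge\frac{d}{d+2}$. It also asks for $b$ large, not small. So it cannot cover $\frac{d-2}{d}<m\le\frac{d}{d+2}$ on $\T^d$.

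You must keep the weight:
$\iint\langle v\rangle^{b-2}f^m=\iint(\langle v\rangle^{b}f)^m\langle v\rangle^{b-2-bm}\le\eta\iint\langle v\rangle^b f+C_\eta\iint\langle v\rangle^{\frac{b-2-bm}{1-m}}$.
The last integral is finite iff $(d+b)(1-m)<2$, i.e.\ $m>\frac{d+b-2}{d+b}$. This can be achieved with $b$ small exactly when $m>\frac{d-2}{d}$. This is the paper's generalised Carlson--Levin inequality, and it is the real reason the moment exponent must be small.

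On $\R^d$, separate weights $\langle x\rangle^a$, $\langle v\rangle^b$ cannot reach $m$ close to $\frac{d-1}{d}$, for two reasons:
\begin{enumerate}
\item Since $\Delta_v\langle x\rangle^a=0$, the diffusion term only produces the weight $\langle v\rangle^{b-2}$, which has no decay in $x$.
\item The transport term $v\cdot\na_x\langle x\rangle^a$ produces $|v|\langle x\rangle^{a-1}f$, which $\langle v\rangle^b f$ does not control when $b<1$. So ``$a\le b$'' is not enough.
\end{enumerate}
The paper uses a single mixed weight $\varphi^a$, with $\varphi=1+|v+\beta x|^2+\beta(1-\beta)|x|^2$ and $\beta,a\in(0,1)$. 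Then $v\cdot\na_x\varphi-v\cdot\na_v\varphi=-2(1-\beta)|v|^2\le 0$ and $\Delta_v\varphi^a\le 2da\,\varphi^{a-1}$. Hence
$\frac{d}{dt}\iint\varphi^a f\lesssim\iint\varphi^{a-1}f^m\le(\iint\varphi^a f)^m(\iint\varphi^{\frac{a-1-am}{1-m}})^{1-m}$,
and the last integral over $\R^{2d}$ is finite iff $(d+a)(1-m)<1$. This is where $m>\frac{d-1}{d}$ comes from.

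\emph{Uniqueness.} Testing with $\mathrm{sign}_\delta(f-g)$ produces the diffusion term $-\delta^{-1}\iint_{\{|f-g|<\delta\}}\na_v(f^m-g^m)\cdot\na_v(f-g)$. This is a product of two gradients, not the expression you wrote, and it has no sign. Write $\na_v(f^m-g^m)=mf^{m-1}\na_v(f-g)+m(f^{m-1}-g^{m-1})\na_v g$. The first piece is dissipative. The second contributes, on $\{|f-g|<\delta\}$, a term of order $\min(f,g)^{m-2}|\na_v g|\,|\na_v(f-g)|$. That term is unbounded near $\{f=0\}\cup\{g=0\}$, and $\na_v f^{q/2}\in L^2$ for $q>m$ does not make it vanish.

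To get a signed diffusion term you have to test with $\mathrm{sign}_\delta(f^m-g^m)$. Then $\iint(\pa_t+v\cdot\na_x)(f-g)\,\mathrm{sign}_\delta(f^m-g^m)$ is no longer an exact derivative, and you need two further ingredients:
\begin{enumerate}
\item a Carrillo-type doubling of the $(t,x)$ variables, which is feasible here since the field $v$ does not depend on $(t,x)$, so $v$ need not be doubled;
\item a chain rule for $\pa_t+v\cdot\na_x$ on functions in $L^2_{t,x}H^1_v$ whose kinetic derivative lies in $L^2_{t,x}H^{-1}_v$.
\end{enumerate}
None of this is in your sketch.

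The paper does not use an $L^1$ argument. It tests the equation for $f_1-f_2$ with the solution $\psi$ of the backward problem $-\pa_t\psi-v\cdot\na_x\psi+v\cdot\na_v\psi=e^{-\lambda t}(f_1^m-f_2^m)$ (regularised). It then concludes from $\iint(f_1-f_2)(f_1^m-f_2^m)\le 0$ and monotonicity. If you follow that route, be aware that $\na_v(v\cdot\na_x\psi)\cdot\na_v\psi$ contains the commutator term $\na_x\psi\cdot\na_v\psi$. Its integral does not vanish and is not controlled by the energy identity for $|\na_v\psi|^2$, so that computation also needs additional work.
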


\newpage\subsection{Diffusion limits}

\subsubsection{Formal asymptotics}
We now turn to diffusion limits of \eqref{eq:mainVFP}. We consider the rescaled equation
\begin{equation}\label{Eqn}
\var^2\partial_t f_\var+ \var\,v\cdot\nabla_xf_\var= \Delta_vf_\var^m+\nabla_v\cdot (v\,f_\var), \qquad t > 0, \; x \in \Omega, \; v \in \R^d
\end{equation}
with initial condition
\begin{equation}\label{eq:initial_epsilon}
f_\var(0,x,v) = f_{in,\var}(x,v).
\end{equation}
The solutions of \eqref{Eqn} given by Theorem \ref{thm:existence} and \ref{thm:existencefast} satisfy in particular
\begin{equation}\label{entro}
\frac{d}{dt} \mathcal E[f_\var(t) ]
= - \frac{1}{\eps^2} \iroxv{f _\var\left| v + \frac{m}{m-1}\na_v f_\var^{m-1}\right|^2 }\leq 0.
\end{equation}

When $m=1$, it is a classical result that in the limit $\eps\to0$
the solution $f_\var(t,x,v) $ converges to a thermodynamical equilibrium function $G[\rho](v)$ whose density solves a linear diffusion equation.
We will prove that when $m\neq1$, the same limit leads to a nonlinear equation, of porous media type when $m>1$ and fast diffusion type when $m<1$.

In order to formally derive these limiting equations, we define the macroscopic quantities associated to \eqref{Eqn}:
\[
\rho_\var(t,x):=\irdv{f_\var(t,x,v)}, \qquad j_\var(t,x):=\frac1\var\irdv{v\,f_\var(t,x,v)},
\]
and
$$
P_\eps(t,x) := \irdv{v\otimes v \,f_\var(t,x,v)}.
$$
These functions solve, at least formally,
\begin{equation}\label{eq:macro}
\begin{cases}
\pa_t \rho_\eps + \text{div}_x j_\eps = 0, \\
\eps^2\pa_t j_\eps + \text{div}_x P_\eps = - j_\eps.
\end{cases}
\end{equation}
(the continuity equation is obtained by integrating \eqref{Eqn} with respect to $v$,
while the momentum equation follows from multiplying \eqref{Eqn} by $v$ and integrating).
The entropy inequality \eqref{entro} suggests that $f_\eps(t,x,v) $ converges to $G[\rho(t,x)](v)$ when $\eps\to0$ and therefore
$$
P_\eps(t,x) \to \irdv{v\otimes v\, G[\rho(t,x)](v)}
$$
The fact that $G[\rho(t,x)](v)$ only depends on $|v|^2$ implies that $\irdv{v_i v_j \, G[\rho(t,x)](v)}=0$ for all $i\neq j$ and so (recalling the definition \eqref{eq:nu})
$$
\irdv{v\otimes v\, G[\rho(t,x)](v)} = \frac 1 d \irdv{ |v|^2 \,G[\rho(t,x)](v)}\, {\mathbb I} = \nu(\rho) \, {\mathbb I} .
$$

Passing to the limit in \eqref{eq:macro} thus yields (formally of course)
$$ \frac{\partial\rho }{\partial t}+\div _x j =0 , \qquad j = -\na_x \nu(\rho).$$
Hence the limiting density $\rho(t,x)$ solves the nonlinear diffusion equation
\begin{equation*}\label{DL}
\frac{\partial\rho}{\partial t}= \Delta_x\nu (\rho),\qquad \nu(\rho) = \nu_1 \rho^k.
\end{equation*}
This is a nonlinear diffusion equation, of porous media type ($k\in(1,\frac{d+2}{d})$) when $m>1$ and fast diffusion type ($k\in(0,1)$) when $\frac{d}{d+2}<m<1$.

Differentiating the equation $\nu(\rho) = \irdv{ [\rho](v)^m}$ (see \eqref{eq:nu}), we derive the following relation that will be useful later on:
\begin{equation}\label{eq:numu}
\nu'(\rho) = \frac{m}{m-1} \mu'(\rho)\rho.
\end{equation}
This allows us to write the limiting equation in the form
\begin{equation}\label{DL1}
\frac{\partial\rho}{\partial t}=\text{div}_x \left(\rho \na \frac{m}{m-1}\mu(\rho)\right)
\end{equation}
with $p= \frac{m}{m-1}\mu(\rho)$ playing the classical role of the pressure in the porous media equation.

\medskip Establishing the convergence of $f_\eps(t,x,v)$ to an equilibrium $G[\rho(t,x)](v)$ with $\rho(t,x)$ solution of \eqref{DL1} is more challenging than in the linear case $m=1$.
We will develop here two approaches to such a result: The first approach relies on a classical compactness argument and the second one relies on a relative entropy method, which is not so classical in the context of diffusion limit.

The first method requires fewer assumptions on the initial data but involves stronger restrictions on $m$ (when $m<1$). It does not provide any rate of convergence.
The second approach requires the existence of smooth solutions of the limiting equation (which we can only reasonably expect to hold in bounded domain when $m>1$ and requires smooth initial conditions).
This approach provides some convergence result in the full range $m>\frac{d}{d+2}$ and gives a convergence rate.

\subsubsection{Diffusion limit via compactness method}

\begin{theorem}[Diffusion limit: compactness method]
Assume $m > 1$, $\Omega =\T^d$ or $\R^d$.
Let $f_{\text{in}}$ be a non-negative function satisfying
\begin{equation}\label{eq:initieps}
f_{\text{in}}\in L^1(\Omega\times \R^d), \quad \E[f_{\text{in}}] <\infty
\end{equation}
and there exists $M>0$ such that
\begin{equation}\label{eq:finB}
f_{\text{in}}(x,v) \leq G[M](v) \qquad \forall v\in \R^d, \; x\in \Omega.
\end{equation}
When $\Omega=\R^d$, we further assume
$$ \iint_{\R^d\times \R^d} \lfloor x \rceil ^\delta f_{\text{in}}(x,v)\dd v\dd x<\infty \qquad \mbox{ for some } \delta\in(0,1) .$$
Denote by $f_\eps$ the (unique) solution to \eqref{eq:mainVFP} given by Theorem \ref{thm:existence} and define $\rho_\eps= \int_{\R^d} f_\eps(\cdot,\cdot,v)\, \dd v$.
Then, for all $T>0$, $\rho_\eps(t,x)$ converges strongly in $L^1((0,T)\times\Omega)$ to $\rho(t,x)$ and $f_\eps(t,x,v)$ converges strongly in $L^1((0,T)\times\Omega\times\R^d)$ to the function $G[\rho(t,x)](v)$ where $\rho$ is the (unique) solution to
$$
\pa_t \rho - \nu_1\Delta \rho^k =0\quad \mbox{ in } [0,T]\times \Omega, \qquad \rho(0,\cdot) = \int_{\R^d} f_{\text{in}}(\cdot,v)\, \dd v,
$$
with $k$ and $\nu_1$ defined by \eqref{eq:km} and \eqref{eq:nu1}.
\end{theorem}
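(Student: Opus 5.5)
We follow the classical compactness route. Uniform bounds come from the entropy inequality \eqref{entro} and from the Barenblatt upper bound \eqref{eq:maxbar}. We pass to the limit in the macroscopic system \eqref{eq:macro}. Strong compactness of $\rho_\eps$ comes from an averaging lemma.

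\textbf{Step 1: uniform bounds.} Since $f_{\text{in}}\le G[M]$, Theorem~\ref{thm:existence} applied to the rescaled equation (the scaling does not affect the comparison with stationary Barenblatt profiles) gives $0\le f_\eps\le G[M](v)$. Hence $f_\eps$ is bounded in $L^\infty$, and it is supported in the fixed ball $|v|\le R_M$. In particular $\rho_\eps\le M$, all velocity moments are uniformly bounded, and $P_\eps$ is bounded in $L^\infty$.

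Integrating \eqref{entro} in time, and using $\E[f_\eps]\ge -C$ (which holds since $m>1$ and $f_\eps\ge0$), we get
\[
\int_0^T\iint f_\eps\left|v+\tfrac{m}{m-1}\na_v f_\eps^{m-1}\right|^2\le C\eps^2 .
\]
Write $j_\eps=\frac1\eps\int v f_\eps=-\frac1\eps\int f_\eps\,\frac{m}{m-1}\na_v f_\eps^{m-1}+\frac1\eps\int f_\eps\big(v+\frac{m}{m-1}\na_v f_\eps^{m-1}\big)$. The first term is $-\frac1\eps\int\na_v f_\eps^m\,dv=0$. By Cauchy--Schwarz, the second term is then bounded in $L^2$, uniformly in $\eps$.

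When $\Omega=\R^d$, the $\lfloor x\rceil^\delta$ moment is propagated using $|j_\eps|$ bounds. This gives tightness in $x$.

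\textbf{Step 2: the limit is an equilibrium.} The dissipation bound shows that $f_\eps$ is close to a local Barenblatt. To make this precise, I would use the entropy/relative-entropy (Csisz\'ar--Kullback type) inequality behind Proposition~\ref{prop:entro}, or a logarithmic-Sobolev-type inequality for the nonlinear operator: $H[f]-H[G[\rho]]\le \frac12\int f|v+\frac m{m-1}\na_v f^{m-1}|^2$. Integrating in $x,t$ gives that $\iint (H[f_\eps]-H[G[\rho_\eps]])$ is $O(\eps^2)$.

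From this I get $\|f_\eps-G[\rho_\eps]\|_{L^1}\to0$. Then $P_\eps-\nu(\rho_\eps)\mathbb I\to0$ in $L^1$; this uses the compact $v$-support to pass from $f_\eps-G[\rho_\eps]$ to the second moments.

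\textbf{Step 3: compactness of $\rho_\eps$ (main obstacle).} Weak limits are not enough, because $\nu(\rho)=\nu_1\rho^k$ is nonlinear. I would use the velocity averaging lemma. Rewrite \eqref{Eqn} as
\[
\eps^2\pa_t f_\eps+\eps v\cdot\na_x f_\eps=\div_v\big(f_\eps(v+\tfrac m{m-1}\na_vf_\eps^{m-1})\big).
\]
Then $\eps\pa_tf_\eps+v\cdot\na_xf_\eps=\div_v g_\eps$, where $g_\eps=\frac1\eps f_\eps\big(v+\dots\big)$ is bounded in $L^2$ by Step 1 and the $L^\infty$ bound. Averaging lemmas with derivatives in $v$ on the right-hand side, together with $f_\eps$ uniformly bounded in $L^2$ with compact $v$-support, give $H^s_{x}$-regularity of $\rho_\eps$, uniform in $\eps$.

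Time compactness comes from $\pa_t\rho_\eps=-\div j_\eps$ bounded in $L^2_tH^{-1}_x$. By Aubin--Lions type interpolation, $\rho_\eps\to\rho$ strongly in $L^1_{loc}$, and in $L^1$ using the tightness from Step 1 when $\Omega=\R^d$.

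\textbf{Step 4: passage to the limit and conclusion.} By Step 3 and continuity of $\rho\mapsto G[\rho]$ in $L^1$, $f_\eps\to G[\rho]$ strongly. Since $j_\eps\rightharpoonup j$ in $L^2$, the momentum equation of \eqref{eq:macro}, tested against smooth functions, gives $j=-\na_x\nu_1\rho^k$. Indeed $\eps^2\pa_tj_\eps\to0$ in distributions, and $P_\eps\to\nu(\rho)\mathbb I$ by Steps 2 and 3. The continuity equation then yields the weak form of $\pa_t\rho=\nu_1\Delta\rho^k$ with initial datum $\int f_{\text{in}}dv$.

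Moreover $\rho^k$ has $\na\rho^k\in L^2$, as the limit of the bounded $j$. This regularity, together with $\rho\in L^\infty$, puts $\rho$ in the class where weak solutions of the porous medium equation are unique. So the whole family converges, not just a subsequence.
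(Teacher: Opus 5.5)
Your proposal is correct and follows the paper's overall architecture: entropy and comparison bounds, velocity averaging together with the bound on $\pa_t\rho_\eps$ for strong compactness of $\rho_\eps$, passage to the limit in the moment system \eqref{eq:macro}, and the entropy--entropy production inequality of Proposition~\ref{prop:entropydissipation} to identify the local equilibrium. Several steps, however, are implemented differently.

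\begin{itemize}
\item \textbf{Compactness of $\rho_\eps$.} You use $f_\eps\le G[M]$ much more aggressively. The uniform compact velocity support and $\rho_\eps\le M$ put $j_\eps$ and $g_\eps$ in $L^2$, rather than only in $L^2((0,T);L^1)$. So a classical $L^2$ averaging lemma plus Aubin--Lions replaces the El Ghani--Masmoudi $L^1$ argument.
\item \textbf{Convergence of $P_\eps$.} You get $P_\eps-\nu(\rho_\eps)\mathbb I\to0$ at once from $\|f_\eps-G[\rho_\eps]\|_{L^1}\to0$ and $|v|\le R_M$. The paper instead splits $P_\eps=\int v\otimes(vf_\eps+\na_vf_\eps^m)\,\dd v+\mathbb I\int f_\eps^m\,\dd v$ and uses Corollary~\ref{cor:ineqm}.
\item \textbf{What each route buys.} The paper's $L^1$-based route never uses compact support, and it is the one recycled for $m<1$. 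Yours is shorter, but it is tied to the hypothesis $f_{\text{in}}\le G[M]$ with $m>1$, which is assumed here anyway.
\item \textbf{Uniqueness of the limit.} You also supply the uniqueness argument ($\na\rho^k\in L^2$, $\rho\in L^\infty$), which the paper only asserts.
\end{itemize}

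The one step you leave as a black box is ``from this I get $\|f_\eps-G[\rho_\eps]\|_{L^1}\to0$''. This is where the paper invests real effort (Lemma~\ref{lem:support} and Proposition~\ref{prop:f-g}), for two reasons:
\begin{itemize}
\item $H[f]-H[G[\rho]]$ is not a Bregman divergence when $f$ charges the complement of $\mathrm{supp}\,G[\rho]$;
\item the inequality must be integrated in $x$ against a varying mass $\rho_\eps(t,x)$.
\end{itemize}
Quoting an unspecified Csisz\'ar--Kullback inequality does not address either point.

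With your bounds, however, the step closes in a few lines. The identity \eqref{eq:HH} gives $H[f]-H[G[\rho]]\ge\E[f|G[\rho]]$. Since $0\le f,\,G[\rho]\le\|G[M]\|_{L^\infty}$ with supports in $B_{R_M}$, the Bregman integrand is bounded below by $c_M|f-G[\rho]|^2$ if $m\le2$, and by $c_m|f-G[\rho]|^m$ if $m>2$. Hence $\|f-G[\rho]\|_{L^1_v}\le C_M\,(H[f]-H[G[\rho]])^{\min\{1/2,1/m\}}$, uniformly in $\rho\le M$. H\"older's inequality in $(t,x)$, plus the $\lfloor x\rceil^\delta$ tails when $\Omega=\R^d$, then gives the convergence, even with a rate in $\eps$. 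You should write this out explicitly.
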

Note that the result holds if $f_{\text{in},\eps}$ depends on $\eps$ as long as it satisfies the bounds \eqref{eq:initieps} uniformly in $\eps$ and $\int_{\R^d} f_{\text{in},\eps}(\cdot,v)\, \dd v$ has a limit $\rho_{\text{in}}$ when $\eps\to0$.

\begin{theorem}[Diffusion limit: compactness method]
Assume $m \in (\frac d{d+2},1)$ when $\Omega =\T^d$ (and $m>\frac 1 2$ when $d=2$ and $m>\frac 1 3$ when $d=1$) and $m \in (\frac{d}{d+1},1)$ when $\Omega=\R^d$.
Let $f_{\text{in}}$ be a non-negative function satisfying
\begin{equation}
f_{\text{in}}\in L^1(\Omega\times \R^d), \quad \E[f_{\text{in}}] <\infty
\end{equation}
and
\begin{equation}
f_{\text{in}}(x,v) \leq G[M](v) \qquad \forall v\in \R^d, \; x\in \Omega.
\end{equation}
When $\Omega=\R^d$, we further assume
$$ \iint_{\R^d\times \R^d} |x|^2 f_{\text{in}}(x,v)\dd v\dd x<\infty .$$
Denote by $f_\eps$ the (unique) solution of \eqref{eq:mainVFP} on $[0,T]$ given by Theorem \ref{thm:existence} and define $\rho_\eps= \int_{\R^d} f_\eps(\cdot,\cdot,v)\, \dd v$.
Then for all $T>0$, $\rho_\eps$ converges strongly in $L^1((0,T)\times\Omega)$ to $\rho$ and $f_\eps$ converges strongly to the function $G[\rho]$ where $\rho$ is the (unique) solution to
$$
\pa_t \rho - \nu_1\Delta \rho^k =0\quad \mbox{ in } [0,T]\times \Omega, \qquad \rho(0,x) = \int_{\R^d} f_{\text{in}}(x,v)\, \dd v.
$$
\end{theorem}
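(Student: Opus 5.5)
The plan is the classical compactness argument, driven by the entropy dissipation \eqref{entro}, with the fast-diffusion tails as the main extra difficulty.

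\textbf{Uniform bounds.} Take $f_\eps$ from Theorem \ref{thm:existencefast}. Since $m>\frac{d}{d+2}$ (resp. $m>\frac{d}{d+1}$ on $\R^d$), the entropy inequality holds. The initial bound $f_{\text{in}}\le G[M]$ propagates to $f_\eps(t)\le \big(M_0+\frac{1-m}{m}\frac{|v|^2}{2}\big)^{-\frac1{1-m}}$, uniformly in $\eps$. This pointwise bound already gives several things.
\begin{itemize}
\item Since $m>\frac{d}{d+2}$, the envelope has a finite second moment, which is exactly the condition for finiteness of moments of order $2$ given earlier. The second velocity moment $\iint |v|^2 f_\eps$ is bounded, and because $\E[f_\eps]$ is bounded from above, the term $\iint f_\eps^m/(1-m)$ is controlled as well.
\item The densities satisfy $\rho_\eps\le \rho_{\max}:=\int (M_0+c|v|^2)^{-1/(1-m)}\dd v<\infty$, so $\rho_\eps$ is bounded in $L^\infty$.
\item On $\R^d$, I would propagate the $|x|^2$ moment using $\frac{d}{dt}\iint |x|^2 f_\eps = \frac{2}{\eps}\iint x\cdot v f_\eps$. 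Writing $\frac{v f_\eps}{\eps} = -\frac1\eps f_\eps\big(v+\frac{m}{m-1}\na_v f_\eps^{m-1}\big)+\frac1\eps\frac{m}{m-1}f_\eps\na_v f_\eps^{m-1}$, the second piece integrates to zero in $v$. The first is controlled by Cauchy--Schwarz against the dissipation $D_\eps:=\eps^{-2}\iint f_\eps|v+\frac{m}{m-1}\na_v f_\eps^{m-1}|^2$, whose time integral is bounded. This gives tightness in $x$.
\end{itemize}
The flux $j_\eps$ gets the same bound: $j_\eps=-\frac1\eps\int f_\eps(v+\frac m{m-1}\na_v f^{m-1}_\eps)\dd v$, so $\|j_\eps\|_{L^2_{t,x}}^2\le \rho_{\max}\int_0^T D_\eps\,\dd t\le C$.

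\textbf{Relaxation to equilibrium.} The entropy gives $\int_0^T\!\!\iint f_\eps|v+\frac{m}{m-1}\na_v f_\eps^{m-1}|^2\le C\eps^2$. I would combine this with a velocity-only entropy--entropy production inequality for the nonlinear Fokker--Planck operator, namely $H[f]-H[G[\rho]]\le \frac{1}{2}\int f|v+\frac m{m-1}\na f^{m-1}|^2$ for $m\ge \frac{d-1}{d}$. This is the Carrillo--Toscani / Del Pino--Dolbeault inequality (Gagliardo--Nirenberg type), and it is where the restrictions on $m$ in low dimension enter. It yields $\int_0^T\!\!\int_\Omega (H[f_\eps]-H[G[\rho_\eps]])\,\dd x\,\dd t\to0$. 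A Csisz\'ar--Kullback type inequality for the relative entropy of the Barenblatt, uniform for $\rho\le\rho_{\max}$, then gives $\|f_\eps-G[\rho_\eps]\|_{L^1}\to 0$. If the functional inequality is unavailable in some range, I would instead use weak compactness of $f_\eps$ together with lower semicontinuity of the dissipation to identify the weak limits as equilibria.

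\textbf{Compactness of $\rho_\eps$.} This is the main obstacle, since the density is only controlled through the nonlinear diffusion. From $\pa_t\rho_\eps=-\div j_\eps$ we get bounds on $\pa_t\rho_\eps$ in $L^2(0,T;H^{-1})$. For spatial compactness I would use the momentum equation
\[
j_\eps=-\div_x P_\eps-\eps^2\pa_t j_\eps .
\]
Split $P_\eps=\nu(\rho_\eps)\mathbb I+(P_\eps-P[G[\rho_\eps]])$. The remainder tends to zero in $L^1$, using the $L^1$ convergence above together with uniform integrability of $|v|^2 f_\eps$ coming from the pointwise envelope, which has moments strictly beyond $2$ when $m>\frac{d}{d+2}$. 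Hence $\na_x\nu(\rho_\eps)$ is bounded up to vanishing terms in $L^2+W^{-1,1}+\eps^2\pa_t(\cdot)$.

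To get strong convergence I would run an Aubin--Lions/averaging-type argument on $\nu(\rho_\eps)=\nu_1\rho_\eps^k$. A cleaner alternative is a compensated-compactness (div--curl) argument on the pairs $(\rho_\eps,j_\eps)$ and $(\nu(\rho_\eps),0,\dots)$, which yields $\overline{\rho\,\nu(\rho)}=\bar\rho\,\overline{\nu(\rho)}$. Strict monotonicity of $\nu$ then gives strong convergence of $\rho_\eps$ in $L^1$, using tightness on $\R^d$.

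\textbf{Conclusion.} Passing to the limit gives $j_\eps\rightharpoonup j=-\nu_1\na\rho^k$ and $\pa_t\rho=\nu_1\Delta\rho^k$ weakly, with initial datum $\rho_{\text{in}}$. Strong convergence of $f_\eps$ follows from $\|f_\eps-G[\rho_\eps]\|_{L^1}\to0$ and the continuity of $\rho\mapsto G[\rho]$ in $L^1$. Uniqueness of bounded $L^1$ weak solutions to the fast diffusion equation on $\T^d$ or $\R^d$ (Vázquez) then gives convergence of the whole family.
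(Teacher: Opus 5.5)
There is a genuine gap in your uniform bounds when $\Omega=\R^d$. You say that $\iint f_\eps^m$ is controlled ``because $\E[f_\eps]$ is bounded from above''. But $\E[f_\eps(t)]\le\E[f_{\text{in}}]$ reads $\frac12\iint|v|^2f_\eps-\frac1{1-m}\iint f_\eps^m\le\E[f_{\text{in}}]$, and this bounds $\iint f_\eps^m$ from \emph{below}, not above.

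On $\T^d$ this does no harm. The envelope $f_\eps\le G[M](v)$ directly gives $\iint f_\eps^m\le|\T^d|\,\nu[M]$ and $\iint|v|^2f_\eps\le d\,|\T^d|\,\nu[M]$. Hence $\E[f_\eps(t)]$ is bounded below and $\eps^{-2}\int_0^T D[f_\eps]\,\dd t\le C$.

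On $\R^d$ the pointwise envelope controls neither $\iint f_\eps^m$ nor $\iint|v|^2f_\eps$ after integration in $x$. So $\E[f_\eps(t)]$ is not known to be bounded below, and the dissipation bound does not follow from \eqref{entro}. Yet you use exactly that dissipation bound to propagate $\iint|x|^2f_\eps$, so the argument is circular.

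The missing ingredient is Proposition~\ref{prop:CLbd}. By H\"older in $\R^{2d}$,
\[
\iint f^m\,\dd x\,\dd v\le\Big(\iint(1+|x|^2+|v|^2)f\,\dd x\,\dd v\Big)^m\Big(\iint(1+|x|^2+|v|^2)^{-\frac m{1-m}}\,\dd x\,\dd v\Big)^{1-m},
\]
and the last integral is finite exactly when $m>\frac d{d+1}$. This is where that hypothesis is really needed; you only invoke it to quote the entropy inequality. One then closes a Gronwall estimate on $\iint\big(\frac{f_\eps^m}{m-1}+\frac12(|x|^2+|v|^2)f_\eps\big)$ by combining \eqref{entro} with the $|x|^2$-moment identity, as in Corollary~\ref{cor:CLbd}. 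This yields the moment bounds, the bound on $\iint f_\eps^m$ and the dissipation bound together. Everything downstream in your proof ($j_\eps\in L^2_{t,x}$, relaxation, compactness, tightness) rests on these bounds.

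Granting them, the rest of your proof is a valid route that differs from the paper's.
\begin{itemize}
\item \textbf{Paper's order.} The paper gets compactness of $\rho_\eps$ from the averaging lemma (Lemma~\ref{lem:ave}) applied to $\eps\pa_tf_\eps+v\cdot\na_xf_\eps=\div_vg_\eps$. This needs only $f_\eps\in L^2$ and $g_\eps\in L^2_tL^1_{x,v}$, and no relaxation. Relaxation (Corollary~\ref{cor:ineqm} and the Csisz\'ar--Kullback bound on $f^m-G[\rho]^m$) enters only afterwards, to identify $\int f_\eps^m\,\dd v\to\nu(\rho)$ and $f_\eps\to G[\rho]$.
\item \textbf{Your order.} You prove relaxation first. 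Then $P_\eps-\nu(\rho_\eps)\mathbb I\to0$ in $L^1$, and also in $L^2_{loc}$ since it is bounded in $L^\infty$ by the envelope. So $\na_x\nu(\rho_\eps)=-j_\eps-\eps^2\pa_tj_\eps-\div_x(P_\eps-\nu(\rho_\eps)\mathbb I)$ is compact in $H^{-1}_{loc}$, and div--curl on $(\rho_\eps,j_\eps)$, $(\nu(\rho_\eps),0)$ plus Minty gives strong convergence.
\item \textbf{Trade-off.} Your route avoids averaging lemmas altogether and uses the sharper bound $j_\eps\in L^2_{t,x}$. Tying compactness to the entropy--entropy production inequality costs nothing, since identifying the limit needs that inequality anyway.
\item \textbf{Final step.} Your direct $L^1$ Csisz\'ar--Kullback bound on $f-G[\rho]$, combined with $\|G[\rho_1]-G[\rho_2]\|_{L^1_v}=|\rho_1-\rho_2|$ (monotonicity of $G[\rho]$ in $\rho$), gives convergence of $f_\eps$ more directly than the paper's a.e.\ argument.
\end{itemize}
Finally, your fallback for $m<\frac{d-1}{d}$ (weak limits plus lower semicontinuity of the dissipation) is not an argument as it stands. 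The dissipation involves $\na_vf_\eps^m$, and you have no strong compactness of $f_\eps$ in $(t,x)$ to pass to the limit in it. Note, however, that the paper's proof has the same restriction $m\ge\frac{d-1}{d}$ at this step.
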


At this point, the dependence on the exponents deserves an explanation. While $m>\frac{d-2}d$ corresponds to the Herrero-Pierre exponent in~\cite{MR797051} for the existence in $\R^d$, we have the same limitation if $\Omega=\T^d$ but need $m>\frac{2d-2}{2d}=\frac{d-1}d$ if $\Omega=\R^d$. A similar limitation appears for the diffusion limit with $m>\frac d{d+2}$ on $\T^d\times\R^d$ and $m>\frac{2d}{2d+2}=\frac d{d+1}$ on $\R^d\times\R^d$, because second moments are also needed.

\subsubsection{Diffusion limit via relative entropy method.}
The convergence of $f_\var$ can also be proved using some carefully defined relative entropy.
A first attempt at such a proof would be to compare the microscopic distribution $f_\eps(t,x,v)$ to the thermodynamical equilibrium $G[\rho_0(t,x)](v)$ (where $\rho_0$ solves \eqref{DL}) by defining
$$
\mathcal E[f_\eps|\rho_0](t) = \frac{1}{m-1}\irdxv{ f_\eps^m - G[\rho_0]^m-m G[\rho_0]^{m-1} (f_\eps-G[\rho_0])}.
$$
This approach, however, does not seem to work. Instead, we compare $f_\eps$ to the local equilibrium with density $\rho_0$ and with macroscopic flux close to $ \irdv{ v f_\eps(\cdot,\cdot,v)}= \eps j^\eps \sim - \eps \na_x \nu(\rho_0)$.
For a given density $\rho>0$ and velocity $u\in\R^d$, we thus define
\begin{equation}\label{eq:Gw}
G[\rho,u]: v \mapsto \left(\mu[\rho]-\frac{m-1}{2\,m}\,|v-u|^2\right)_+^\frac1{m-1}.
\end{equation}
which satisfies
$$\irdv{G[\rho,u](v)} = \rho, \qquad \irdv{ v G[\rho,u](v)} = \rho u.$$

Recalling the formulation \eqref{DL1} of the asymptotic equation, we define
the macroscopic velocity
\begin{equation}\label{eq:w}
w_0 = -\frac{m}{m-1}\na_x \mu(\rho_0),
\end{equation}
and we will compare $f_\eps$ to $G[\rho_0,\eps w_0]$ (which has flux $\frac 1 \eps \irdv{ v G[\rho_0,\eps w_0](v)} = \rho_0 w_0$).

This leads us to the following relative entropy functional:
\begin{equation}\label{eq:RE}
\mathcal E[f_\eps|\rho_0,w] = \frac{1}{m-1}\irdxv{ f_\eps^m - G[\rho_0,\eps w_0]^m-m G[\rho_0,\eps w_0]^{m-1} (f_\eps-G[\rho_0,\eps w_0])}.
\end{equation}
The convexity of the function $s\mapsto \frac{s^{m}}{m-1}$ implies that $\mathcal E[f_\eps|\rho_0,w_0]\geq 0$ and $\mathcal E[f_\eps|\rho_0,w_0]=0$ if and only if $f(t,x,v)=G[\rho_0(t,x),\eps w_0(t,x)](v)$.

\begin{remark}
In addition, we can rewrite this relative entropy in a way that separate the approach of $f_\eps$ to the thermodynamical equilibrium $G[\rho_\eps,\eps w_0]$ from the convergence of $\rho_\eps $ to $\rho_0$. Indeed, we have
\begin{align}
\mathcal E[f_\eps|\rho_0,w_0]
& = \frac{1}{m-1}\irdxv{ f_\eps^m - G[\rho_\eps,\eps w_0]^m-m G[\rho_\eps,\eps w_0]^{m-1} (f_\eps-G[\rho_\eps,\eps w_0])}\nonumber \\
& \qquad + \frac{1}{k-1} \irdx{\nu[\rho_\eps] - \nu[\rho_0] - \nu'[ \rho_0] (\rho_\eps-\rho_0) }
\end{align}
where we recall that $\rho_\eps (t,x) = \irdv{f_\eps(t,x,v)}$.
\end{remark}
Next, using the definition of $G$, we observe that \eqref{eq:RE} satisfies
\begin{align*}
\mathcal E[f_\eps|\rho_0,w_0]
= \frac{1}{m-1} \irdxv{ f_\eps^m - m \(\mu[\rho_0]-\frac{m-1}{2\,m}\,|v-\eps w_0|^2\)_+ f_\eps } + \irdx{ \nu(\rho_0)}\\
\leq \frac{1}{m-1} \irdxv{ f_\eps^m - m \(\mu[\rho_0]-\frac{m-1}{2\,m}\,|v-\eps w_0|^2\) f_\eps } + \irdx{ \nu(\rho_0)}.
\end{align*}
(with equality when $m<1$ and $m>1$ if $f_\eps$ and $G[\rho_0,\eps w_0]$ have the same support).
The quantity in the right hand side turns out to be better suited for the computations that follows. As a consequence, we define
\begin{align}\label{eq:REH}
\mathscr H_\eps(t) = \frac{1}{m-1} \irdxv{ f_\eps^m - m \(\mu[\rho_0]-\frac{m-1}{2\,m}\,|v-\eps w_0|^2\) f_\eps } + \irdx{ \nu(\rho_0)}.
\end{align}
\begin{remark}
We can write
\begin{align*}
\mathscr H_\eps(t) =& \irdxv{ \left( \frac{f_\eps^m}{m-1} + \frac{1}{2}\,|v-\eps w_0|^2 f_\eps - \frac{m}{m-1} \mu[\rho_0] f_\eps \right)} + \irdx{ \nu(\rho_0)}\\
& = \irdxv{ \left( \frac{f_\eps^m}{m-1} + \frac{1}{2}\,|v-\eps w_0|^2 f_\eps\right)} \\
&-\irdxv{ \left( \frac{G[\rho_0,\eps w_0]^m}{m-1} + \frac{1}{2}\,|v-\eps w_0|^2 G[\rho_0,\eps w_0]\right)} \\
& \quad +\frac{\nu_1}{k-1} \irdx{\rho_\eps(x)^k}
- \irdx{ \frac{m}{m-1} \mu[\rho_0] \rho_\eps } + \nu_1 \irdx{ \rho_0^k}
\end{align*}
where we used the fact that
$$
\mathcal E[G[\rho]] = \frac{\nu_1}{k-1} \irdx{\rho(x)^k}.
$$
Using \eqref{eq:nu}, \eqref{eq:nu1} and \eqref{eq:km}, we deduce
\begin{align*}
\mathscr H_\eps(t) =& \irdxv{ \left( \frac{f_\eps^m}{m-1} + \frac{1}{2}\,|v-\eps w_0|^2 f_\eps \right)} \\
&-\irdxv{\left( \frac{G[\rho_0,\eps w_0]^m}{m-1} + \frac{1}{2}\,|v-\eps w_0|^2 G[\rho_0,\eps w_0]\right)}\\
& + \frac{\nu_1}{k-1} \irdx{\left(\rho_\eps^k-\rho_0^k-k\rho_0^{k-1}(\rho_\eps-\rho_0)\right)}
\end{align*}
\end{remark}

One of the main contribution of this paper is the following proposition, which we can use to prove the convergence of $ \mathcal E_\eps(t)$ to zero:

\begin{proposition}\label{prop:relativeentropym}
Given $f_\eps$ solution to \eqref{Eqn} and $\rho_0$ solution to \eqref{DL1}, the relative entropy defined by \eqref{eq:REH} satisfies:
\begin{align}
& \frac{d}{dt}\mathscr H_\eps(t) +\frac{1}{4 \eps^{2}} \irdxv{ \frac{1}{ f_\eps} |\na_v f_\eps^m +(v- \eps w_0) f_\eps |^2 } \nonumber \\
&\qquad \leq |k-1|\| \div w\|_{L^\infty} \mathscr H_\eps(t)\nonumber \\
&\qquad\quad + \eps^4 \|\pa_t w\|^2_{L^\infty} \irdxv{ f_\eps } + \eps^2 \|Dw\|_{L^\infty}^2 \irdxv{ | v| ^2 f_\eps} \nonumber\\
&\qquad \quad
+\eps^2 \left(\frac {k-1 } 2\right)^2 \|\div w\|^2 _{L^\infty} \irdxv{|v-\eps w_0|^2 f_\eps }\label{eq:relativeentropy}
\end{align}
where we recall that $w_0= -\na \frac{m}{m-1} \mu(\rho_0) = - \nu_1\frac{k}{k-1} \na \rho_0^{k-1} $.
\end{proposition}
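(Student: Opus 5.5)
We differentiate $\mathscr H_\eps$ directly, using the kinetic equation \eqref{Eqn} for $f_\eps$ and the limit equation \eqref{DL1} for $\rho_0$. It is convenient to write
\[
\mathscr H_\eps = \irdxv{\Big(\frac{f_\eps^m}{m-1}+\tfrac12|v-\eps w_0|^2 f_\eps\Big)} - \irdx{\frac{m}{m-1}\mu(\rho_0)\rho_\eps} + \irdx{\nu(\rho_0)}.
\]
We treat the three pieces separately.

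\textbf{Step 1: the kinetic piece.} Multiply \eqref{Eqn} by $\eps^{-2}\big(\frac{m}{m-1}f_\eps^{m-1}+\frac12|v-\eps w_0|^2\big)$ and integrate. The transport term $\eps^{-1}v\cdot\na_x f_\eps$ kills the $f^{m-1}$ part. Against $\frac12|v-\eps w_0|^2$ it produces, after integrating by parts in $x$, the term $-\irdxv{ (v-\eps w_0)\otimes v : D w_0\, f_\eps}$. The collision part, integrated by parts in $v$, gives
\[
-\eps^{-2}\irdxv{ \big(\na_v \tfrac{m}{m-1}f^{m-1}+v-\eps w_0\big)\cdot(\na_v f^m+vf)}.
\]
We write $\na_v f^m+vf = \big(\na_v f^m+(v-\eps w_0)f\big)+\eps w_0 f$. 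Then
\[
\na_v\tfrac{m}{m-1}f^{m-1}+v-\eps w_0 = \frac{1}{f}\big(\na_v f^m+(v-\eps w_0)f\big).
\]
This yields the full dissipation $-\eps^{-2}\irdxv{ f^{-1}|\na_v f^m+(v-\eps w_0)f|^2}$ plus a cross term $-\eps^{-1}\irdxv{ w_0\cdot(\na_v f^m+(v-\eps w_0)f)}$. Finally, the time derivative of $w_0$ contributes $-\eps\irdxv{ \pa_t w_0\cdot (v-\eps w_0)f}$.

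\textbf{Step 2: the macroscopic pieces.} Using $\pa_t\rho_\eps=-\div j_\eps$ with $\eps j_\eps=\irdv{ vf}$, we obtain
\[
\frac{d}{dt}\Big(-\irdx{\tfrac{m}{m-1}\mu(\rho_0)\rho_\eps}\Big)=-\irdx{\tfrac{m}{m-1}\mu'(\rho_0)\pa_t\rho_0\,\rho_\eps} - \eps^{-1}\irdxv{ w_0\cdot v f}.
\]
For the last piece, \eqref{eq:numu} gives $\frac{d}{dt}\irdx{\nu(\rho_0)}=\irdx{ \frac{m}{m-1}\mu'(\rho_0)\rho_0\pa_t\rho_0}$. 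The two $\mu'$ terms combine into $\irdx{ \frac{m}{m-1}\mu'(\rho_0)\pa_t\rho_0(\rho_0-\rho_\eps)}$. Since $\pa_t\rho_0=-\div(\rho_0w_0)$ and $\mu$ is a power, this term should become $(k-1)\div w_0$ times a quadratic-type remainder plus terms involving $\rho_0 w_0$.

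Recombining Step 1 with the $-\eps^{-1}\irdxv{ w_0\cdot vf}$ term, using $\irdv{\na_v f^m}=0$ and $\irdv{(v-\eps w_0)f}=\eps(j_\eps-\rho_\eps w_0)$, the singular $\eps^{-1}$ terms must cancel exactly. This cancellation is what justifies the choice of $G[\rho_0,\eps w_0]$.

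\textbf{Step 3: organizing the remainder.} The remainder should take the form of $\div w_0$ times the entropy density minus its equilibrium value, plus error terms of the form $\eps\irdxv{ A:(v-\eps w_0)\otimes \tfrac{1}{\sqrt f}(\na_v f^m+(v-\eps w_0)f)\sqrt f}$. Here $A$ involves $Dw_0$, $\pa_tw_0$ and $\frac{k-1}{2}\div w_0\,\mathbb I$. The term $\frac{k-1}2\div w_0$ comes from rewriting $\div w_0\,\irdv{f^m}$ via $\irdv{ v\cdot\na_v f^m}=-d\irdv{f^m}$ and the relation \eqref{eq:km}. The piece proportional to $\mathscr H_\eps$ is then bounded by $|k-1|\|\div w\|_{L^\infty}\mathscr H_\eps$; this uses $\mathscr H_\eps\ge\mathcal E\ge0$ and the pointwise structure of the integrand. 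Each error term is handled by Young's inequality, $\eps ab\le \frac{1}{4\eps^2}\cdot\frac{a^2}{3}+3\eps^4 b^2$ or similar. This absorbs three quarters of the dissipation, leaving $\frac1{4\eps^2}$ as stated, and produces exactly the terms $\eps^4\|\pa_tw\|^2\int f$, $\eps^2\|Dw\|^2\int|v|^2f$ and $\eps^2(\frac{k-1}2)^2\|\div w\|^2\int|v-\eps w_0|^2f$.

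\textbf{Main obstacle.} The hardest step is Step 3. We must identify the exact algebraic identity that turns the non-singular remainder into $(k-1)\div w_0$ times a nonnegative relative-entropy-like quantity plus terms containing the factor $\na_vf^m+(v-\eps w_0)f$. The natural first attempt is to mimic the proof for the isentropic Euler relative entropy. Concretely, we add and subtract $\nu(\rho_\eps)$ and exploit the pressure law $\nu=\nu_1\rho^k$, together with $\irdv{ v\otimes v f}=\irdv{ f^m}\,\mathbb I+$ (dissipative part).

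Throughout, we treat the integrations by parts formally, assuming the regularity of Theorems \ref{thm:existence} and \ref{thm:existencefast} is sufficient to justify them.
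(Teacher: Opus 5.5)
Your plan matches the paper's. You differentiate $\mathscr H_\eps$ using \eqref{Eqn} and $\pa_t\rho_0=-\div(\rho_0w_0)$, extract the dissipation with $X:=\na_vf_\eps^m+(v-\eps w_0)f_\eps$, and cancel the flux terms. You then write the remainder as $-(k-1)\div w_0$ times the local density of $\mathscr H_\eps$ plus terms linear in $X$, and close with three Young inequalities. As written, however, the argument does not close.

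First, Step 2 has a sign error:
$-\irdx{\frac{m}{m-1}\mu(\rho_0)\,\pa_t\rho_\eps}=\irdx{\frac{m}{m-1}\mu(\rho_0)\div j_\eps}=+\irdx{w_0\cdot j_\eps}=+\eps^{-1}\irdxv{w_0\cdot v f_\eps}$.
With your minus sign, the two flux contributions add up to $-2\irdx{w_0\cdot j_\eps}$ instead of cancelling. With the correct sign, your collision cross term $-\eps^{-1}\irdxv{w_0\cdot X}=-\irdx{w_0\cdot j_\eps}+\irdx{\rho_\eps|w_0|^2}$ does cancel it. It leaves $+\irdx{\rho_\eps|w_0|^2}$, however, which your outline never disposes of.

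Second, and more seriously, Step 3 is the actual content of the proposition, and you only conjecture it. The missing computation is as follows.

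(i) Use $\pa_t\rho_0=-\rho_0\div w_0-\na\rho_0\cdot w_0$, \eqref{eq:numu}, $\frac{m}{m-1}\na\mu(\rho_0)=-w_0$ and one integration by parts in $x$. The two $\mu'$ terms then equal $\irdx{[\nu(\rho_0)-\nu'(\rho_0)(\rho_0-\rho_\eps)]\div w_0}-\irdx{\rho_\eps|w_0|^2}$, and the last term kills the leftover above.

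(ii) Write $(v-\eps w_0)f_\eps=X-\na_vf_\eps^m$ and integrate by parts in $v$. Your transport term becomes $-\irdxv{X\otimes v:Dw_0}-\irdxv{f_\eps^m\,\div w_0}$, and the $\pa_tw_0$ term becomes $-\eps\irdxv{\pa_tw_0\cdot X}$.

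(iii) The bracket in (i) has no sign by itself; it is not a ``quadratic remainder''. It must be combined with the kinetic term $-\irdv{f_\eps^m}\,\div w_0$ from (ii). Set $\mathbb E:=\mu(\rho_0)-\frac{m-1}{2m}|v-\eps w_0|^2$ and let $h_\eps:=\nu(\rho_0)+\frac1{m-1}\irdv{(f_\eps^m-m\,\mathbb E f_\eps)}$ be the local density of $\mathscr H_\eps$. You need the following relations:
\begin{itemize}
\item $\nu-\rho\nu'=-(k-1)\nu$;
\item $\nu'(\rho_0)=(k-1)\frac{m}{m-1}\mu(\rho_0)$, from \eqref{eq:nu1};
\item $\frac{m}{m-1}\mu(\rho_0)\rho_\eps=\irdv{(\frac{m}{m-1}\mathbb E+\frac12|v-\eps w_0|^2)f_\eps}$;
\item $1-\frac{k-1}{m-1}=\frac{d(k-1)}{2}$;
\item $\irdv{(v-\eps w_0)\cdot\na_vf_\eps^m}=-d\irdv{f_\eps^m}$.
\end{itemize}
Together they give, pointwise in $(t,x)$,
\[
\nu(\rho_0)-\nu'(\rho_0)(\rho_0-\rho_\eps)-\irdv{f_\eps^m}=-(k-1)\,h_\eps+\frac{k-1}{2}\irdv{(v-\eps w_0)\cdot X},
\]
so that
\begin{multline*}
\frac{d}{dt}\mathscr H_\eps+\frac{1}{\eps^2}\irdxv{\frac{|X|^2}{f_\eps}}=-(k-1)\irdx{h_\eps\,\div w_0}\\
+\frac{k-1}{2}\irdxv{\div w_0\,(v-\eps w_0)\cdot X}-\irdxv{X\otimes v:Dw_0}-\eps\irdxv{\pa_tw_0\cdot X}.
\end{multline*}
Since $h_\eps(t,x)\ge0$, the first term is at most $|k-1|\|\div w_0\|_{L^\infty}\mathscr H_\eps$. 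Indeed, $h_\eps$ dominates $\frac1{m-1}\irdv{(f_\eps^m-G^m-mG^{m-1}(f_\eps-G))}$ with $G=G[\rho_0,\eps w_0]$, with equality if $m<1$. No detour through $\nu(\rho_\eps)$ is needed.

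Finally, check the constants in your Young step. To get exactly the stated constants, each of the three $X$-terms must absorb $\frac1{4\eps^2}\irdxv{|X|^2/f_\eps}$. That means using $ab\le\frac{a^2}{4\eps^2}+\eps^2b^2$, and $\eps ab\le\frac{a^2}{4\eps^2}+\eps^4b^2$ for the $\pa_tw_0$ term. Your splitting $\frac{a^2}{12\eps^2}+3\eps^4b^2$ absorbs only one quarter of the dissipation in total and triples the error constants.
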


Given an initial condition $\rho_{in,0}(x)$, we define $ w_{in,0} = -\frac{m}{m-1}\na_x \mu(\rho_{in,0})$ and set
\begin{equation}\label{eq:initeps}
f_{in,\eps}(x,v) = G[\rho_{in,0},\eps w_{in,0}].
\end{equation}
Then we have
\begin{align*}
\mathscr H_\eps(0) &= \frac{1}{m-1} \irdxv{ f_{in,\eps}^m - m \(\mu[\rho_{in,0}]-\frac{m-1}{2\,m}\,|v-\eps w_{in,0}|^2\) f_{in,\eps} } + \irdx{ \nu(\rho_{in,0})}\\
&=0.
\end{align*}

\begin{theorem}
Assume that $m>\frac{d}{d+2}$ and $\Omega=\T^d$. Let $\rho_{in,0}(x)$ be an initial condition such that the solution of
$$
\begin{cases}
\pa_t \rho - \nu_1 \Delta \rho^k = 0 & \mbox{ in } (0,T)\times\T^d \\
\rho(0,x) = \rho_{in,0}(x) & \mbox{ in } \T^d
\end{cases}
$$
satisfies
$$
\| D^2 \rho_0^{k-1}\|_{L^\infty((0,T)\times\T^d)} <\infty, \qquad \|\pa_t \na \rho_0^{k-1}\|_{L^\infty((0,T)\times\T^d)} <\infty
$$

Let $f_\eps(t,x,v)$ be the solution of \eqref{Eqn} with initial data given by \eqref{eq:initeps}. Then:
$$
\mathscr H_\eps(t)\to 0
$$
and $f_\eps(t,x,v)$ converges strongly in $L^1((0,T)\times\T^d\times\R^d)$ to $G[\rho_0]$.
\end{theorem}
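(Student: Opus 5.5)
The natural route is to apply Proposition~\ref{prop:relativeentropym} with $w=w_0=-\nu_1\frac{k}{k-1}\na\rho_0^{k-1}$ and close a Gronwall argument. The hypotheses on $\rho_0$ give
\[
\|\div w_0\|_{L^\infty},\ \|Dw_0\|_{L^\infty}\lesssim \|D^2\rho_0^{k-1}\|_{L^\infty}<\infty,
\qquad \|\pa_t w_0\|_{L^\infty}\lesssim \|\pa_t\na\rho_0^{k-1}\|_{L^\infty}<\infty .
\]
So every coefficient on the right-hand side of \eqref{eq:relativeentropy} is a finite constant $C_T$ on $(0,T)$. Mass is conserved, so $\irdxv{f_\eps}=\irdx{\rho_{in,0}}$ is bounded.

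The remaining task is a uniform-in-$\eps$ bound on the second moments $\irdxv{|v|^2 f_\eps}$ and $\irdxv{|v-\eps w_0|^2 f_\eps}$ over $(0,T)$. These are comparable up to $O(\eps^2)\|w_0\|_{L^\infty}^2\irdxv{f_\eps}$, and $\|w_0\|_{L^\infty}$ is finite on $\T^d$ because $\na\rho_0^{k-1}$ is Lipschitz and periodic.

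I see two ways to get the moment bound.
\begin{itemize}
\item \emph{From $\mathscr H_\eps$ itself.} Write $\mathscr H_\eps$ as in the Remark: it equals a nonnegative relative entropy-type term plus $\frac12\irdxv{|v-\eps w_0|^2 f_\eps}$ minus quantities controlled by $\irdxv{f_\eps^m}$ and $\irdx{\mu[\rho_0]\rho_\eps}$. This should give $\irdxv{|v-\eps w_0|^2 f_\eps}\le C(1+\mathscr H_\eps)$.
\item \emph{From the entropy inequality.} The inequality \eqref{entro} of Theorems~\ref{thm:existence}/\ref{thm:existencefast} gives $\mathcal E[f_\eps(t)]\le\mathcal E[f_{in,\eps}]$, and the latter is uniformly bounded since $f_{in,\eps}$ is an explicit Barenblatt profile.
\end{itemize}

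\emph{Case $m>1$.} The term $f^m/(m-1)$ is nonnegative, so $\mathcal E$ directly bounds $\iint |v|^2f_\eps$.

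\emph{Case $\frac{d}{d+2}<m<1$.} The term $-\iint f^m/(1-m)$ must be controlled by the second moment. Use the standard Hölder/Carlson-type interpolation
\[
\int f^m\,\dd v\le C\Big(\int f\,\dd v\Big)^{\alpha}\Big(\int|v|^2f\,\dd v\Big)^{\beta},
\]
valid exactly when $m>\frac{d}{d+2}$, with $\beta<1$. On the bounded domain $\T^d$, integrate in $x$ and apply Hölder in $x$ with mass conservation; since $\beta<1$, the $f^m$ term is absorbed and $\iint|v|^2 f_\eps\le C$ uniformly in $\eps$. This is the step I expect to be most delicate, and it is where $\Omega=\T^d$ and $m>\frac d{d+2}$ are used.

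With these bounds, \eqref{eq:relativeentropy} yields, after dropping the nonnegative dissipation,
\[
\frac{d}{dt}\mathscr H_\eps\le C_T\,\mathscr H_\eps+C_T\,\eps^2 .
\]
Since $\mathscr H_\eps(0)=0$ for the well-prepared data \eqref{eq:initeps}, Gronwall gives $\sup_{[0,T]}\mathscr H_\eps\le C_T\eps^2 e^{C_T T}\to0$.

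To pass to strong convergence, note that $\mathscr H_\eps\ge\mathcal E[f_\eps|\rho_0,w_0]\ge0$. By the decomposition in the first Remark, $\mathcal E[f_\eps|\rho_0,w_0]$ is the sum of two nonnegative Bregman divergences: one for $s\mapsto \frac{s^m}{m-1}$ between $f_\eps$ and $G[\rho_\eps,\eps w_0]$, and one for $\frac{\nu_1}{k-1}\rho^k$ between $\rho_\eps$ and $\rho_0$. Both therefore tend to zero uniformly on $[0,T]$.

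A Csiszár–Kullback/Pinsker-type inequality for these strictly convex power functionals converts this into $L^1$ convergence. One can use a local quadratic lower bound for the Bregman divergence on the set where the functions are bounded (they are: $f_\eps\le G[M]$ by the maximum principle of the existence theorems), together with tightness from the uniform second moments to handle large $|v|$. This gives $\|f_\eps-G[\rho_\eps,\eps w_0]\|_{L^1}\to0$ and $\|\rho_\eps-\rho_0\|_{L^1}\to0$.

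Finally, $G[\rho,u]$ depends continuously in $L^1_v$ on $(\rho,u)$, and $\eps w_0\to0$ uniformly. Hence $G[\rho_\eps,\eps w_0]-G[\rho_0]\to0$ in $L^1((0,T)\times\T^d\times\R^d)$, which concludes the proof.
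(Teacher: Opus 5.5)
Your route to $\mathscr H_\varepsilon\to0$ is the one the paper sets up. You apply Proposition~\ref{prop:relativeentropym} with $w=w_0$, take uniform second moments from the entropy inequality (with the H\"older/Carlson--Levin absorption on $\T^d$ when $m<1$, as in Corollary~\ref{cor:CLbd}), and run Gronwall from $\mathscr H_\varepsilon(0)=0$, which gives $\sup_{[0,T]}\mathscr H_\varepsilon\le C_T\varepsilon^2$.

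\textbf{The gap.} It is in the passage to $L^1$ convergence for $\frac{d}{d+2}<m<1$, where the bound $f_\varepsilon\le G[M]$ you invoke is false. The comparison principle of Theorem~\ref{thm:existencefast} only propagates bounds by centred profiles $(M_0+\frac{1-m}{2m}|v|^2)^{-\frac{1}{1-m}}$. But $f_{in,\varepsilon}=G[\rho_{in,0},\varepsilon w_{in,0}]$ is a shifted heavy-tailed profile. For $u\neq0$ one has $\mu[\rho]+\frac{1-m}{2m}|v-u|^2<M_0+\frac{1-m}{2m}|v|^2$ for $v$ large in the direction of $u$, whatever $M_0$. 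So no centred equilibrium lies above the initial datum. (For $m>1$ your claim is fine, since $G[\rho,u]$ has bounded support and $\rho_{in,0}$, $w_{in,0}$ are bounded.) The generic bound $e^{dt/\varepsilon^2}\|f_{in,\varepsilon}\|_{L^\infty}$ degenerates, and no uniform bound on $\rho_\varepsilon$ or $G[\rho_\varepsilon,\varepsilon w_0]$ is available either. Since $\phi(s)=\frac{s^m}{m-1}$ has $\phi''(s)=ms^{m-2}\to0$ as $s\to\infty$, your local quadratic lower bound on the Bregman divergence is not uniform, and the step fails as written.

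\textbf{A repair.} No upper bound on $f_\varepsilon$ is needed, because for $m<1$ the divergence grows linearly where $f_\varepsilon\gg g$, with $g:=G[\rho_\varepsilon,\varepsilon w_0]$. Write $\lambda=f_\varepsilon/g$. The kinetic integrand is $b=\frac{g^m}{1-m}\psi(\lambda)$ with $\psi(\lambda)=1+m(\lambda-1)-\lambda^m\ge c_m\min\{(\lambda-1)^2,|\lambda-1|\}$. Use $|f_\varepsilon-g|=g|\lambda-1|$, $g^{1-m}\le\mu[\rho_\varepsilon]^{-1}=\mu_1^{-1}\rho_\varepsilon^{1-k}$, and Cauchy--Schwarz on $\{|\lambda-1|\le1\}$ to get
\[
\int_{\R^d}|f_\varepsilon-g|\,dv\le C\rho_\varepsilon^{\frac{2-k}{2}}\Big(\int_{\R^d}b\,dv\Big)^{\frac12}+C\rho_\varepsilon^{1-k}\int_{\R^d}b\,dv .
\]
Integrating this over $\{\rho_\varepsilon\le K\}$ gives a quantity that tends to zero, since the double integral of $b$ is at most $\mathscr H_\varepsilon$. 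On the complement, bound by $\int_{\{\rho_\varepsilon>K\}}2\rho_\varepsilon\,dx\le4\|\rho_\varepsilon-\rho_0\|_{L^1}$ for $K\ge2\|\rho_0\|_{L^\infty}$. The same $\psi$-argument, with $k$ in place of $m$ and using that $\rho_0$ is bounded, applied to the density divergence gives $\|\rho_\varepsilon-\rho_0\|_{L^1}\to0$.

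\textbf{Two smaller points.}
\begin{enumerate}
\item For $m>1$ the identity of the first Remark is not exact, because $G^{m-1}$ involves a positive part. However, the same algebra applied to $\mathscr H_\varepsilon$ itself gives exactly the density divergence plus a kinetic term bounded below by the Bregman divergence of $f_\varepsilon$ with respect to $G[\rho_\varepsilon,\varepsilon w_0]$. That is all you use.
\item For $m>2$, $\phi''$ vanishes at $0$. On bounded velocity sets, use the lower bound $c_m|f-g|^m$ instead of a quadratic one.
\end{enumerate}
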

Note that the initial data \eqref{eq:initeps} could be replaced with the condition $\mathscr H_\eps(0)\to0$ as $\eps\to0$.

\section{Existence and uniqueness of weak solutions to \texorpdfstring{\eqref{Eqn}}{Eqn}}

We will provide below (\Cref{sec:mgtr1T}) a complete proof of the existence of weak solutions to \eqref{Eqn} when $m>1$ and $\Omega=\T^d$.
We will then describe the modifications required to handle the case $m<1$ with $\Omega =\T^d$ (\Cref{sec:m1}) and then with $\Omega =\R^d$ (\Cref{sec:oR}).

\subsection{Existence of weak solutions when \texorpdfstring{$m>1$}{m>1} and \texorpdfstring{$\Omega=\T^d$ ($m>1$)}{Omega=Td}} \label{sec:mgtr1T}

The outline of the proof is as follows:
\begin{enumerate}
\item The first step consists in proving the existence of a solution for a regularised (uniformly elliptic) equation (see \eqref{eq:regular}) via a fixed point argument.
\item Next, we will show that up to the truncation and regularisation of the initial data, the solution of this regularised equation is smooth and satisfies a number of important a priori estimates.
\item We then pass to the limit in the regularisation of the initial data by deriving other important estimates.
\item Finally, we will pass to the limit in the elliptic regularisation of the equation.
\end{enumerate}

Both the fixed point argument in the first step and the limits in the final step require some compactness arguments.
Such compactness will be obtained following a classical approach, first developed by
Lions in \cite{Lions1994BoltzmannLandau}, which relies on a combination of an averaging lemma and a $H^1_v$ regularity of the solution.
For the final step, we cannot expect to use this method to get compactness on $f$, due to the degeneracy of the diffusion, and we will instead prove some compactness for $f^m$.

\noindent{\bf \# Step 1:} \underline{Existence for a regularised equation}.

The first step is to regularise the equation into a non-linear but uniformly elliptic equation.
More precisely, we have the following result:

\begin{proposition}\label{prop:regular}
Assume that $\Phi: \R\to\R$ is a continuous function such that
$$0< \kappa \leq \Phi(s)\leq M, \qquad \mbox{ for all } s\in \R.$$
Given a non-negative function $f_{\text{in}}$ as in Theorem \ref{thm:existence}, there exists a nonnegative solution $f$ to,
\begin{equation}\label{eq:regular}
\begin{cases}
\pa_t f+ v\cdot\na_x f = \text{div}_v (\Phi(f)\na_v f + vf) \qquad & \mbox{ in } [0,T]\times \T^d\times \R^d,\\[8pt]
f(0,\cdot,\cdot)= f_{\text{in}} & \mbox{ in } \T^d\times \R^d,
\end{cases}
\end{equation}
such that
\begin{equation}\label{eq:LpLinfty} \| f(t)\|_{L^1(\T^d\times\R^d)} = \| f_{\text{in}}\|_{L^1(\T^d\times\R^d)} , \quad \| f(t)\|_{L^\infty(\T^d\times\R^d)} \leq e^{dT} \| f_{\text{in}}\|_{L^\infty(\T^d\times\R^d)},\quad \forall t\in [0,T],
\end{equation}
and
$$ \sup_{t\in [0,T]} \iint_{\T^d\times\R^d} |v|^2 |f(t,x,v)|^2\, \dd x \, \dd v\leq C(\kappa,M,f_{\text{in}},T).
$$
Furthermore, if $s\mapsto \Phi(s)$ is smooth and $f_{\text{in}} \in \mathcal{C}^{\infty}(\T^d\times \R^d)$, then $f$ is smooth.
\end{proposition}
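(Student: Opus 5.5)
We plan to obtain the solution of \eqref{eq:regular} as a fixed point of the map that freezes the nonlinearity. Given $g$ in $X:=L^2((0,T)\times\T^d\times\R^d)$ (or a weighted version of it), we let $f=\mathcal S(g)$ be the solution of the \emph{linear} kinetic Fokker--Planck equation
\begin{equation*}
\pa_t f+ v\cdot\na_x f = \text{div}_v (\Phi(g)\na_v f + vf), \qquad f(0)=f_{\text{in}}.
\end{equation*}
Since $\kappa\le\Phi(g)\le M$, this is a linear equation with bounded measurable diffusion coefficient that is uniformly elliptic in $v$. Existence of a weak solution in $L^\infty(0,T;L^2)\cap L^2(0,T;L^2_xH^1_v)$ follows by a standard Galerkin argument or from Lions' theorem (Lax--Milgram type for evolution problems). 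For instance, one can first add a small $\delta\Delta_x$ term, which makes the problem uniformly parabolic, and then let $\delta\to0$ using the estimates below, which are uniform in $\delta$. Nonnegativity comes from testing with $f_-$. Mass conservation follows by integrating in $(x,v)$, using a cutoff in $v$.

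\textbf{A priori bounds for the linear problem.} The $L^\infty$ bound in \eqref{eq:LpLinfty} comes from a comparison principle. The function $\bar f=e^{dt}\|f_{\text{in}}\|_\infty$ is a supersolution, because $\text{div}_v(v\bar f)=d\bar f$. Equivalently, we test with $(f-e^{dt}K)_+$. This bound does not depend on $g$.

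Multiplying the equation by $f$ gives
\[
\tfrac12\tfrac{d}{dt}\|f\|_2^2+\kappa\|\na_v f\|_2^2\le \tfrac d2\|f\|_2^2 ,
\]
since $\int f\,\text{div}_v(vf)=\frac d2\int f^2$. Multiplying by $|v|^2 f$ yields
\[
\tfrac{d}{dt}\int|v|^2f^2\le C(\kappa,M)\Big(\int (1+|v|^2) f^2\Big).
\]
Here the cross term $\int \Phi\, f\, v\cdot\na_v f$ is absorbed by Young's inequality into $\kappa\int|v|^2|\na_vf|^2$ plus $C\int f^2$, and the transport term vanishes by periodicity. Gronwall's lemma then gives the weighted bound with a constant $C(\kappa,M,f_{\text{in}},T)$ independent of $g$. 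We will need $\int|v|^2f_{\text{in}}^2<\infty$, which follows from $f_{\text{in}}\in L^\infty$ together with \eqref{eq:xvfin}.

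\textbf{Compactness and fixed point (main obstacle).} We apply Schauder's theorem on the convex set $\mathcal K$ of nonnegative $g\in X$ satisfying the bounds above. The difficulty is to show that $\mathcal S(\mathcal K)$ is relatively compact in $L^2$, since there is no regularity in $x$ or $t$. We plan to follow Lions' approach: $f$ is bounded in $L^2_{t,x}H^1_v$, and $\pa_t f+v\cdot\na_x f$ is bounded in $L^2_{t,x}H^{-1}_v$. The averaging lemma then shows that velocity averages $\int f\psi(v)\,\dd v$ are compact in $L^2_{t,x}$. Combining this with the $H^1_v$ regularity, we get strong compactness of $f$ itself on bounded velocity sets. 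Uniform integrability at large $|v|$ is provided by the $|v|^2f^2$ bound.

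For continuity of $\mathcal S$, we take $g_n\to g$ in $L^2$. Along a subsequence $g_n\to g$ a.e., so $\Phi(g_n)\to\Phi(g)$ a.e. and boundedly. Hence $\Phi(g_n)\na_v f_n\rightharpoonup\Phi(g)\na_v f$ weakly, because $\Phi(g_n)\psi\to\Phi(g)\psi$ strongly in $L^2$ for any test function $\psi$. Limits of $f_n$ therefore solve the linear equation with coefficient $\Phi(g)$. Uniqueness of that linear problem, which follows from the $L^2$ energy estimate, identifies the limit, so the whole sequence converges. Schauder's theorem then gives the fixed point, and the fixed point inherits \eqref{eq:LpLinfty} and the moment bound.

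\textbf{Smoothness.} If $\Phi$ and $f_{\text{in}}$ are smooth, we bootstrap. Differentiating the equation in $x$ shows that derivatives in $x$ satisfy similar energy estimates. Hörmander hypoellipticity of $\pa_t+v\cdot\na_x-\Phi\Delta_v$, or the commutator $[\pa_{v},v\cdot\na_x]=\na_x$, then transfers regularity to all variables, iteratively in $\Phi(f)$.
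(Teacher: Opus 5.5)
Your existence argument follows the paper's proof. You freeze the coefficient, get $g$-independent bounds for the linear problem (the $L^\infty$ bound by comparison with $e^{dt}\|f_{\text{in}}\|_\infty$, the $L^2_{t,x}H^1_v$ bound, and the $|v|^2f^2$ moment), obtain compactness in the spirit of Lions (averaging lemma on $v$-mollifications, the $H^1_v$ bound for the mollification error, the moment for the tails), and conclude with Schauder's theorem. Your continuity argument for the frozen-coefficient map is more explicit than the paper's.

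The gap is in the smoothness claim: your bootstrap has no base case. If $g=\partial_{x_i}f$, then
\[
\partial_t g+v\cdot\nabla_x g=\mathrm{div}_v\bigl(\Phi(f)\nabla_v g+\Phi'(f)\,g\,\nabla_v f+v\,g\bigr).
\]
Testing with $g$ leaves the term $\iint\Phi'(f)\,g\,\nabla_v f\cdot\nabla_v g$. It can be absorbed only if something like $\nabla_v f\in L^\infty$ is already known, so for this quasilinear equation the $x$-derivatives do \emph{not} satisfy closed energy estimates. Hörmander's theorem, and any commutator argument meant to reach full smoothness, likewise require smooth coefficients. At the start you only know that $\Phi(f)$ is measurable with $\kappa\le\Phi(f)\le M$.

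The missing ingredient is an a priori Hölder estimate for kinetic Fokker--Planck equations with a merely bounded measurable, uniformly elliptic diffusion coefficient. This is the kinetic De Giorgi--Nash--Moser theory (Golse--Imbert--Mouhot--Vasseur), applied to the linear equation with coefficient $a=\Phi(f)\in[\kappa,M]$. It gives $f\in C^\alpha$ and hence $\Phi(f)\in C^\alpha$. From there, kinetic Schauder estimates can be iterated, which is exactly the route the paper indicates ("De Giorgi estimates yield some $\mathcal C^\alpha$ regularity... iterate Schauder estimates"). Only after this step does a bootstrap on the regularity of $\Phi(f)$ make sense.
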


Concerning the last statement (regularity of $f$), we note that when $s\mapsto \Phi(s)$ is smooth and $f_{\text{in}} \in \mathcal{C}^{\infty}(\T^d\times \R^d)$, the regularity of $f$ follows from the standard regularity theory:
since the solution is in $L^\infty(\T^d\times \R^d)$, De Giorgi estimates yield some $\mathcal{C}^\alpha(\T^d\times \R^d)$ regularity.
We can then iterate Schauder estimates to get as much smoothness as desired. The main difficulty is thus to show the existence of $f$, and this will be done
by a fixed point argument.

\medskip Given $g\in L^2([0,T]\times \T^d\times \R^d)$, we denote by $\mathsf{T}[g]:= \tilde{g}$ the unique solution to
\begin{equation}\label{eq:fTg}
\begin{cases}
\pa_t \tilde{g} + v\cdot\na_x \tilde{g} = \text{div}_v (\Phi(g)\na_v \tilde{g} +v\tilde{g}) & \mbox{ in } [0,T]\times \T^d\times \R^d,\\
\tilde{g}(0,\cdot,\cdot) = f_{\text{in}} & \mbox{ in } \T^d\times \R^d.
\end{cases}
\end{equation}
The existence and uniqueness of $\tilde{g}$ is ensured by the following lemma:

\begin{lemma}\label{lem:fixptT}
Assume that $f_{\text{in}}\geq 0$ and $f_{\text{in}}\in L^1\cap L^\infty(\T^d \times \R^d)$. Then
equation \eqref{eq:fTg} has a unique weak solution $\tilde{g}\in L^2([0,T]\times\T^d; H^1(\R^d))$ which satisfies \eqref{eq:fTg} in the sense of distributions. Furthermore, $\tilde{g}$ is non-negative and satisfies:
\begin{equation}\label{eq:H1}\sup_{t\in [0,T]} \| \tilde{g}\|_{L^2( \T^d\times \R^d)} + \kappa \| \na_v \tilde{g}\|_{L^2([0,T]\times \T^d\times \R^d)} \leq C(T)
\| f_{\text{in}}\|_{L^2( \T^d\times \R^d)}
\end{equation}
(with $C(T)$ dependent on $T$ but independent of $g$) and
\begin{equation}\label{eq:L1Lin} \| \tilde{g}(t)\|_{L^1} = \| f_{\text{in}}\|_{L^1}, \qquad \| \tilde{g}(t)\|_{L^\infty} \leq \| f_{\text{in}}\|_{L^\infty} \exp(dt) \qquad \forall t\in [0,T].
\end{equation}
Finally, there exists $C=C(\kappa,M,T,f_{\text{in}})$ (independent of $g$) such that
\begin{equation}\label{eq:vv2}
\sup_{t\in [0,T]} \int_{ \T^d\times \R^d} |v|^2\tilde{g}(t,x,v)^2\, \dd x \, \dd v \leq \int_{ \T^d\times \R^d} |v|^2f_{\text{in}}(x,v)^2\, \dd x \, \dd v \exp({C(\kappa,M,d) T}).
\end{equation}
\end{lemma}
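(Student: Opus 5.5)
Since $g$ is fixed, the coefficient $a(t,x,v):=\Phi(g(t,x,v))$ is measurable with $\kappa\le a\le M$. Equation \eqref{eq:fTg} is therefore a \emph{linear} kinetic Fokker--Planck equation with bounded, uniformly elliptic (in $v$) diffusion and a linear friction drift. I plan to obtain existence by a Galerkin/Lions-type argument in the Hilbert space $L^2([0,T]\times\T^d;H^1(\R^d))$.

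\textbf{Step 1: existence.} Write the weak formulation against test functions $\varphi\in C^\infty_c([0,T)\times\T^d\times\R^d)$. Apply the J.-L. Lions version of the Lax--Milgram theorem, as used for kinetic Fokker--Planck equations by Degond and Carrillo. One first adds a zero-order term, via the change of unknown $\tilde g=e^{\lambda t}h$, to obtain coercivity in $L^2_{x,v}\cap L^2_x H^1_v$. The drift term $\int \div_v(v\tilde g)\tilde g = \frac d2\int \tilde g^2$ is harmless. The transport part $\pa_t+v\cdot\na_x$ is skew in $L^2$, which is exactly the setting of Lions' theorem.

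\textbf{Step 2: uniqueness and \eqref{eq:H1}.} Testing with $\tilde g$ is only formal, since $\tilde g$ need not be regular enough in $t,x$. I would justify it with the renormalization/regularization argument of DiPerna--Lions: mollify in $(x,v)$ and handle the commutator $[\rho_\delta*, v\cdot\na_x]$ and the diffusion commutator. The diffusion commutator needs care because $a$ is only $L^\infty$. To avoid this, I would instead mollify only in $(t,x)$ along characteristics, or use the time-reversed adjoint problem with smooth coefficients. This regularization step is the main technical obstacle. Once it is done, testing with $\tilde g$ gives
\[
\tfrac12\tfrac{d}{dt}\|\tilde g\|_2^2+\kappa\|\na_v\tilde g\|_2^2\le \tfrac d2\|\tilde g\|_2^2 ,
\]
which yields \eqref{eq:H1} by Gronwall with $C(T)$ independent of $g$. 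Linearity then gives uniqueness.

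\textbf{Step 3: nonnegativity, $L^1$ and $L^\infty$ bounds.} Test with $\tilde g_-$ (Stampacchia), using the same justification as in Step 2; this gives $\tilde g\ge0$. For the upper bound, set $\bar g=e^{dt}\|f_{\text{in}}\|_\infty$. It satisfies $\pa_t\bar g=d\bar g=\div_v(a\na_v\bar g+v\bar g)+\ldots$, so it is a supersolution; testing the equation for $(\tilde g-\bar g)_+$ gives \eqref{eq:L1Lin}. Mass conservation follows by testing with a cutoff $\chi(v/R)$ and letting $R\to\infty$. The error terms are controlled by $\int_{R<|v|<2R}(|\na_v\tilde g|/R+\tilde g)$. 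This requires some $L^1$ tail control, for instance from a weighted $L^1$ bound; alternatively, one can use nonnegativity together with the computation below.

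\textbf{Step 4: weighted estimate \eqref{eq:vv2}.} Multiply by $|v|^2\tilde g$, first with a truncated weight $\min(|v|^2,R^2)$ and then letting $R\to\infty$. The transport term vanishes. Integrating the diffusion term by parts gives $-\int a|v|^2|\na_v\tilde g|^2-2\int a\,\tilde g\,v\cdot\na_v\tilde g$. Young's inequality absorbs the cross term into half the dissipation, leaving $C(\kappa,M)\int\tilde g^2$. The friction term contributes $\int |v|^2\tilde g\,\div_v(v\tilde g)=\frac d2\int|v|^2\tilde g^2-\int|v|^2\tilde g^2\cdot\ldots$, which is bounded by $C(d)\int(1+|v|^2)\tilde g^2$. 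Gronwall, combined with \eqref{eq:H1}, gives \eqref{eq:vv2} with a constant independent of $g$.
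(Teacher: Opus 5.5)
Your proposal follows the paper's route. Existence comes from the Lions version of Lax--Milgram, which is the Degond approach the paper invokes. The bounds \eqref{eq:H1} and \eqref{eq:L1Lin} come from the standard energy, Stampacchia and supersolution arguments that the paper simply calls classical. Finally, \eqref{eq:vv2} comes from testing with the truncated weight $\min\{|v|^2,R^2\}\,\tilde g$, then Young's inequality and Gronwall.

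Two small remarks.

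\emph{Step 4.} You rightly keep the unweighted term $C(\kappa,M)\int\tilde g^2$ produced by Young and close the estimate with \eqref{eq:H1}. Your bound therefore carries $\int(1+|v|^2)f_{\text{in}}^2$ on the right rather than $\int|v|^2 f_{\text{in}}^2$. This is the correct form, and it is all the fixed-point argument needs; the paper's last line replaces $\int\tilde g^2$ by $\int|v|^2\tilde g^2$, which Young's inequality does not give.

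\emph{Step 3.} When $d\ge 3$, the $|v|^2$-weighted $L^2$ bound alone does not force the cutoff error $\int_{R<|v|<2R}\tilde g$ to vanish. For mass conservation you therefore really need the extra tail control you allude to. One way is to run your Step 4 computation with the weight $\langle v\rangle^{2s}$, $s>d/2$, for data compactly supported in $v$, and then pass to general $f_{\text{in}}$ by monotone approximation, using linearity and positivity.
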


\begin{proof}[{\bf Proof of Lemma \ref{lem:fixptT}}]

The existence and uniqueness of $\tilde{g}$ solution to \eqref{eq:fTg} can be proved using Degond's approach in \cite{Degond86}.

This solution satisfies $ \tilde{g}\in L^2([0,T]\times\T^d; H^1(\R^d))$, and $\pa_t \tilde{g}+v\cdot\na_x \tilde{g} \in L^2([0,T]\times\T^d; H^{-1}(\R^d))$. The bounds \eqref{eq:H1}, \eqref{eq:L1Lin} are classical and the estimate \eqref{eq:vv2} follows from the following computation:
\begin{align*}
\frac{d}{dt} \frac 1 2 \iint_{ \T^d\times \R^d} |v|^2\tilde{g}(t,x,v)^2\, \dd x \, \dd v
& = - \int_{ \T^d\times \R^d} \na_v( |v|^2 \tilde{g}) \cdot (\Phi(g)\na_v \tilde{g} + v\tilde{g} )\, \dd x \, \dd v\\
& \leq - \kappa \iint_{ \T^d\times \R^d} |v|^2 |\na_v \tilde{g}|^2\, \dd x \, \dd v
- 2 \iint_{ \T^d\times \R^d} \tilde{g} v \cdot (\Phi(g)\na_v \tilde{g} + v\tilde{g} )\, \dd x \, \dd v \\
& \quad - \iint_{ \T^d\times \R^d} |v|^2 \tilde{g}v\cdot \na_v \tilde{g} \, \dd x \, \dd v\\
& \leq - \kappa \iint_{ \T^d\times \R^d} |v|^2 |\na_v \tilde{g}|^2\, \dd x \, \dd v
+2 M \iint_{ \T^d\times \R^d} \tilde{g} |v||\na_v \tilde{g}|\, \dd x \, \dd v\\
& \quad- 2 \iint_{ \T^d\times \R^d} |v|^2 \tilde{g}^2 \, \dd x \, \dd v
+\frac{d+2}{2} \iint_{ \T^d\times \R^d} |v|^2 \tilde{g}^2\, \dd x \, \dd v\\
& \leq - \frac{\kappa}{2} \iint_{ \T^d\times \R^d} |v|^2 |\na_v \tilde{g}|^2\, \dd x \, \dd v
+C(\kappa,M,d) \iint_{ \T^d\times \R^d} |v|^2 \tilde{g}^2\, \dd x \, \dd v.
\end{align*}
A Gr\"onwall argument then yields \eqref{eq:vv2}.
This computation can be justified rigorously by approximating $|v|^2$ by $\beta_k(v) = \min\{|v|^2,k\}$ (so that $\beta_k \tilde{g}^2$ is integrable) to first get
\begin{align*}
&\frac{d}{dt} \frac 1 2 \iint_{ \T^d\times \R^d} \beta_k(v) \tilde{g}(t,x,v)^2\, \dd x \, \dd v\\
& \leq - \kappa \iint_{ \T^d\times \R^d} \beta_k(v) |\na_v \tilde{g}|^2\, \dd x \, \dd v
+M \iint_{ \T^d\times \R^d} \tilde{g} |\na_v\beta_k(v)||\na_v \tilde{g}|\, \dd x \, \dd v \\
& \quad - \iint_{ \T^d\times \R^d} v\cdot\na_v \beta_k(v) \tilde{g}^2 \, \dd x \, \dd v
+ \frac 1 2 \iint_{ \T^d\times \R^d}\div(v\beta_k(v)) \tilde{g}^2\, \dd x \, \dd v.
\end{align*}
Using the fact that $|\na_v\beta_k(v) |\leq2 \sqrt {\beta_k(v)}$ and $|v \na_v\beta_k(v) |\leq2 \beta_k(v)$, we deduce
\begin{align*}
\frac{d}{dt} \frac 1 2 \iint_{ \T^d\times \R^d} \beta_k(v) \tilde{g}(t,x,v)^2\, \dd x \, \dd v
\leq C(\kappa,M,d) \iint_{ \T^d\times \R^d} \beta_k(v) \tilde{g}^2\, \dd x \, \dd v.
\end{align*}
This yields \eqref{eq:vv2} with $\beta_k(v)$ instead of $|v|^2$ and the result follows by passing to the monotonous limit $k\to\infty$.
\end{proof}

\medskip We are now ready to prove Proposition \ref{prop:regular} via a fixed point argument.

\begin{proof}[{\bf Proof of Proposition \ref{prop:regular}}]

Let us show that the operator $\mathsf{T}: L^2([0,T]\times \T^d\times \R^d)\to L^2([0,T]\times \T^d\times \R^d)$ is continuous, compact and bounded:
$$ \| \mathsf{T}\|_{L^2([0,T]\times \T^d\times \R^d)\to L^2([0,T]\times \T^d\times \R^d)} \leq C,$$
(where $C$ depends on the initial condition $f_{\text{in}}$, but not on $g$).

The main step in the proof of this is the compactness of the operator $\mathsf{T}$ since all other claims come from the previous lemma. We present this argument in details here since a similar approach will be used to pass to the limit in the regularisation of the function $\Phi$.

Consider a bounded sequence $\left( g_k \right)_{k\in \N}$ in $L^2([0,T]\times \T^d\times \R^d)$.
We note that $\tilde{g}_k:=\mathsf{T}[g_k]$ is bounded in $L^\infty([0,T]; L^p(\T^d\times \R^d))$ for all $p\in [1,\infty]$ and
\begin{enumerate}
\item $\pa_t \tilde{g}_k + v\cdot \na_x \tilde{g}_k$ is bounded in $L^2([0,T]\times\T^d; H^{-1}(\R^d))$.
\item $\tilde{g}_k$ is bounded in $L^2([0,T]\times\T^d; H^1(\R^d))$.
\item $|v| \tilde{g}_k $ is bounded in $L^\infty([0,T]; L^2(\T^d\times\R^d))$.
\end{enumerate}

First, we can always fix a subsequence along which
$$
\tilde{g}_k \rightharpoonup \g, \qquad \mbox{ weakly in } L^1((0,T)\times \T^d\times \R^d).
$$

We shall now show that the first two bounds implies that $\tilde{g}_k$ is relatively compact in $L^1([0,T]\times \T^d \times B_R)$ for all $R>0$.
More precisely, Theorem 1 in \cite{golse2025velocity} implies that for all $\psi \in \mathcal D(\R^d)$, we have that the family of functions,
$$ \tilde{g}_k \star \psi : v \mapsto \int_{\R^d} \tilde{g}_k(\cdot,\cdot,w) \psi(v-w)\, \dd w \mbox{ is relatively compact in } L^1((0,T)\times \T^d\times B_R) .$$
We thus fix a mollifier $\psi$ and set $\psi_\delta = \frac{1}{\delta^{d}}\psi(\frac{v}{\delta})$. We have,
$$\left( \tilde{g}_k \star \psi_\delta \right)_{k\in \N} \mbox{ is relatively compact in } L^1((0,T)\times \T^d\times B_R) \mbox{ for all $\delta>0$}.$$
and
$$ \| \tilde{g}_k \star \psi_\delta - \tilde{g}_k \|_{L^2((0,T)\times \T^d\times \R^d)} \leq \delta \|\na _v \tilde{g}_k \| _{L^2((0,T)\times \T^d\times \R^d)} \int_{\R^d} |u|\psi(u)\, d u \leq C\delta.
$$
It follows that for all $R>0$, there exists a subsequence along which
$$ \tilde{g}_k \to \g \qquad \mbox{ strongly in } L^1((0,T)\times \T^d\times B_R).$$
Since $\tilde{g}_k$ is bounded in $L^\infty([0,T]\times \T^d \times \R^d)$, the convergence also holds in $L^2([0,T]\times \T^d \times B_R)$.

Finally, we use the third bound to say that since $|v| \tilde{g}_k $ is bounded in $L^2([0,T]; L^2(\T^d\times\R^d))$, one can also extract to ensure that $|v| \tilde{g}_k$ converges weakly to $|v| \g$ in $L^2([0,T]; L^2(\T^d\times\R^d))$. By lower semi-continuity, that $\Vert |v| \g \Vert_{L^2([0,T]; L^2(\T^d\times\R^d))}$ is finite and smaller than $\sup_{k} \Vert |v| \tilde{g}_k \Vert_{L^2([0,T]; L^2(\T^d\times\R^d))} := \mathfrak{C}_k$. Altogether, we write,
\begin{align*}
\| \tilde{g}_k- \g\|_{L^2([0,T]\times \T^d \times \R^d)}^2
& \leq \| \tilde{g}_k- \g\|_{L^2([0,T]\times \T^d \times B_R)}^2 + \frac{1}{2 R^2}\iiint_{[0,T]\times \T^d \times B_R^c} |v|^2 \left( \tilde{g}_k^2 + \g^2 \right)\, \dd x \, \dd v\,dt \\
& \leq \| \tilde{g}_k -\g\|_{L^2([0,T]\times \T^d \times B_R)}^2 + \frac{\mathfrak{C}_k}{R^2}.
\end{align*}
We deduce that $\limsup_{k\to\infty} \| \tilde{g}_k-\g\|_{L^2([0,T]\times \T^d \times \R^d)}^2 \leq \frac {CT}{R^2}$ and since this holds for all $R>0$, we have
$$\lim_{k\to\infty} \| \tilde{g}_k-\g\|_{L^2([0,T]\times \T^d \times \R^d)}^2 =0.$$
\end{proof}

\noindent{\bf \# Step 2:} \underline{Regularisation of the initial data.}

Let $\Phi (s) := \min\{M,m|s|^{m-1} +\kappa\}$.
Proposition \ref{prop:regular} gives the existence of a solution to \eqref{eq:regular}, which is non-negative and bounded in $L^\infty(\T^d \times \R^d)$.

Since $f_{\text{in}}$ does not depend on $M$ and neither does the $L^\infty(\T^d \times \R^d)$ bound \eqref{eq:LpLinfty}, we can take $M$ large enough so that this solution is smaller that $M$ almost everywhere.

It is thus a solution to
\begin{equation}\label{eq:kappa}
\begin{cases}
\pa_t f_\kappa + v\cdot\na_x f_\kappa = \Delta_v f_\kappa^m + \kappa \Delta_v f_\kappa+\text{div}_v (vf_\kappa), \qquad \mbox{ in } [0,T]\times \T^d\times \R^d\\[5pt]
f_\kappa(0,\cdot,\cdot)= f_{\text{in}}.
\end{cases}
\end{equation}
(Note importantly that the only non trivial dependency in the parameters is the one in $\kappa$).

In order to derive the a priori estimates necessary to pass to the limit $\kappa\to0$, we first approximate the initial data $f_{\text{in}}$ by a sequence of smooth functions $f_{\text{in}}^{(k)}$ such that $f_{\text{in}}^{(k)}$ converges strongly to $f_{\text{in}}$ in $L^p(\T^d\times\R^d)$ (for all $p \geq 1$) and $\left\Vert f_{\text{in}}^{(k)}\right\Vert _{L^\infty(\T^d\times\R^d)} \leq \left\Vert f_{\text{in}} \right\Vert_{\mathrm L^\infty(\T^d\times\R^d)}$, and such that,
\begin{equation}\label{eq:regbar}
f_{\text{in}}^{(k)}(x,v) \leq \left( k-\frac{m-1}{m}\frac{|v|^2}{2}\right)_+^{\frac{1}{m-1}}, \qquad \mbox{ for all } x\in \T^d, \; v\in \R^d
\end{equation}
(we can further assume compact support for $f_{\text{in}}^{(k)}$).

Note that local equilibria to \eqref{eq:kappa} are not proper Barenblatt-Pattle profiles anymore. They solve
\begin{equation*}
\text{div}_v \left( \nabla_v G_\kappa^m + \kappa \nabla_v G_\kappa + vG_\kappa\right) = 0.
\end{equation*}
Looking for positive solutions vanishing at infinity, leads to
$$\nabla_v \left( \frac{m}{m-1}G_\kappa^{m-1} + \kappa \ln(G_\kappa) \right) = -v, $$
which is
\begin{equation*}
\frac{m}{m-1}f_\kappa^{m-1} + \kappa \ln(f_\kappa) = - \frac{\vert v \vert^2}{2} + \mu.
\end{equation*}
The function $\psi_\kappa(s) = \frac {m}{m-1} s^{m-1} + \kappa \log s $ is monotone increasing from $(0,+\infty)$ to $\R$ and satisfies,
$$\lim_{s\to 0 }\psi_\kappa(s) = -\infty,\quad \lim_{s\to +\infty}\psi_\kappa(s) = +\infty .$$
Therefore, it has an inverse $\psi_\kappa^{-1}:\R\to (0,\infty)$ satisfying
$$\lim_{s\to -\infty }\psi_\kappa^{-1}(s) = 0,\quad \lim_{s\to +\infty}\psi_\kappa^{-1}(s) = +\infty .$$
For any $\mu$, we thus define the function,
$$G_{\kappa,\mu}(v) = \psi_\kappa^{-1} \left(\mu -\frac{v^2}{2}\right), $$
which is therefore a local equilibrium and a stationary solution to \eqref{eq:kappa}. Furthermore, since $\psi_\kappa(s) \geq \kappa \log(s)$ (recall that $m>1$), we have $\psi_\kappa^{-1}(x)\leq e^{\frac{x}{\kappa}}$ and therefore,
$$
G_\kappa(v)\leq e^{\frac{\mu}{\kappa}-\frac{v^2}{2\kappa}}.
$$
Next, we note that $\psi(s) \leq \frac {m}{m-1} s^{m-1} +\kappa \ln(k)$ for $s\leq k$ and so
$$ \psi^{-1}(x) \geq \max\left\{\min\left\{ k, \left(\frac{m-1}{m} (x-\kappa \ln(k))\right)^{\frac{1}{m-1}} \right\} , 0 \right\} .$$
We thus have
$$
G_{\kappa,\mu}(v) \geq \max\left\{
\min\left\{ k,\left(\frac{m-1}{m} (\mu-\kappa \ln(k)) -\frac{m-1}{m} \frac{|v|^2}{2}\right)^{\frac{1}{m-1}} \right\} , 0 \right\}=
\left(k-\frac{m-1}{m} \frac{|v|^2}{2}\right)_+^{\frac{1}{m-1}},
$$
when $\mu = \kappa \ln(k) + k \frac{m}{m-1}$. Note that then, using \eqref{eq:regbar}, $G_{\kappa,\mu} \geq f_{\text{in}}^{(k)}$.

Since \eqref{eq:kappa} satisfies a comparison principle (which is straightforward for classical solutions), we conclude that
\begin{equation}\label{eq:maxpsi}
f_\kappa^{(k)}(t,x,v) \leq G_{\kappa,\mu}(v) \leq k e^{\frac{k}{\kappa} \frac{m}{m-1}} e^{-\frac{v^2}{2\kappa}}.
\end{equation}

\noindent{\bf \# Step 3:} \underline{A priori estimates and passing to the limit in the regularisation parameter.}

Working with the approximation $f_{\kappa}^{(k)}$ from the previous step, we recall that $f_{\kappa}^{(k)}$ is a non-negative smooth solution of \eqref{eq:kappa} which satisfies the decay estimate \eqref{eq:maxpsi}.
For $p>1$, we multiply \eqref{eq:kappa} by $\left(f_{\kappa}^{(k)}\right)^{p-1}$ and integrate to find:
\begin{equation} \label{eq:p}
\begin{split}
&\frac{d}{dt} \iint_{\T^d\times \R^d} \frac{\left\vert f_{\kappa}^{(k)} \right\vert^p}{p}\, \dd x \, \dd v\\
&\qquad + \frac{4m(p-1)}{(p+m-1)^2} \iint_{\T^d\times \R^d} \left\vert \na_v (f_{\kappa}^{(k)})^{\frac{p+m-1}{2}}\right\vert ^2 \, \dd x \, \dd v
+\kappa \frac{4(p-1)}{p^2} \iint_{\T^d\times \R^d} \left\vert \na_v (f_{\kappa}^{(k)})^{\frac{p}{2}}\right\vert ^2 \, \dd x \, \dd v\\
& \qquad \qquad \leq \frac{(p-1)d}{p} \iint_{\T^d\times \R^d} \left\vert f_{\kappa}^{(k)}\right\vert ^p \, \dd x \, \dd v.
\end{split}
\end{equation}
We deduce the following estimate:
\begin{align}
&\sup_{t\in [0,T]}\iint_{\T^d\times \R^d} \frac{\left|f_{\kappa}^{(k)}(t)\right|^p}{p}\, \dd x \, \dd v \nonumber \\
& \qquad +
\frac{4m(p-1)}{(p+m-1)^2} \iiint_{(0,T)\times \T^d\times \R^d} \left|\na_v (f_{\kappa}^{(k)})^{\frac{p+m-1}{2}}\right|^2 \, \dd t \, \dd x \, \dd v \nonumber \\
&\qquad \qquad +\kappa \frac{4(p-1)}{p^2} \iiint_{(0,T)\times \T^d\times \R^d} \left|\na_v (f_{\kappa}^{(k)})^{\frac{p}{2}}\right|^2 \, \dd t \, \dd x \, \dd v\nonumber \\
&\qquad \qquad \qquad \leq e^{(p-1)d T} \iint_{\T^d\times \R^d} \frac{\left|f_{\text{in}}^{(k)}\right|^p}{p}\, \dd x \, \dd v \label{eq:pp}
\end{align}
Furthermore, multiplying \eqref{eq:kappa} by $\frac{v^2}{2}$ and integrating (recall that $f_{\kappa}^{(k)}$ satisfies the decay estimate \eqref{eq:maxpsi}),
we get:
\begin{align*}
\frac{d}{dt} \iint_{\T^d\times \R^d} \frac{|v|^2}{2} f_{\kappa}^{(k)} \, \dd x \, \dd v + \iint_{\T^d\times \R^d} |v|^2 f_{\kappa}^{(k)} \, \dd x \, \dd v\leq \iint_{\T^d\times \R^d} d \left( \left\vert f_{\kappa}^{(k)} \right\vert^m + \kappa f_{\kappa}^{(k)} \right) \, \dd x \, \dd v,
\end{align*}
and so, after integrating,
\begin{equation}\label{eq:velocity}
\sup_{t\in [0,T]} \iint_{\T^d\times \R^d} \frac{|v|^2}{2} f_{\kappa}^{(k)} (t)\, \dd x \, \dd v \leq C\left(\left\|f_{\text{in}}^{(k)}\right\|_{L^m}, \kappa \left\|f_{\text{in}}^{(k)}\right\|_{L^1}, d,m, \iint_{\T^d\times \R^d} |v|^2 f_{\text{in}}^{(k)}\, \dd x \, \dd v\right).
\end{equation}
Finally, we can similarly derive the approximated entropy inequality:
\begin{align}
\mathcal E[f_{\kappa}^{(k)}](t)&
+ \int_0^t \iint_{\T^d\times \R^d} f_{\kappa}^{(k)} \left|v+ \frac{m}{m-1}\na_v \left(f_{\kappa}^{(k)}\right)^{m-1} \right|^2 \, \dd s \, \dd x \, \dd v\nonumber \\
& \qquad \leq -\kappa \frac{4}{m} \int_0^t \iint_{\T^d\times \R^d} \left|\na_v (f_{\kappa}^{(k)})^{\frac m2}\right|^2 \, \dd s \, \dd x \, \dd v + \kappa d\int_0^t \iint_{\T^d\times \R^d} f \, \dd s \, \dd x \, \dd v+ \mathcal E[f_{\text{in}}^{(k)} ]\nonumber \\
& \qquad \leq \kappa d \int_0^t\iint_{\T^d\times \R^d} f + \mathcal E[f_{\text{in}}^{(k)} ].\label{eq:entropykappa}
\end{align}

We can now pass to the limit on this regularisation ($k\to\infty$) by proceeding as in the compactness argument from \# Step 1. We obtain a solution of \eqref{eq:kappa} in the following weak sense
\begin{multline*}\label{eq:kappaweak}
\iiint_{[0,T] \times \T^d \times \R^d} \left(\pa_t \varphi + v\cdot\na_x \varphi \right)\,f_\kappa \, \dd s \,\dd x \, \dd v-\iiint_{[0,T] \times \T^d \times \R^d} \na_v \varphi \cdot \na_v f_\kappa^{m} + (v \cdot \na_v \varphi) \, f_\kappa \, \dd s \,\dd x \, \dd v \\=- \iint_{\T^d \times \R^d} \varphi(0,x,v) \, f_{\text{in}}(x,v) \, \dd x \, \dd v,
\end{multline*}
for all $\varphi \in \mathcal{D}\left([0,T) \times \T^d \times \R^d \right)$, (in particular $\varphi(T,x,v)=0$ for $(x,v) \in \T^d \times \R^d$) which satisfies the bounds \eqref{eq:pp}, \eqref{eq:velocity} and \eqref{eq:entropykappa}.

\medskip\noindent {\bf \# Step 4}: \underline{Limit $\kappa\to0$: Compactness of $f_\kappa$.}

First, we can always fix a subsequence along which
\begin{equation}\label{eq:weakf}
f_\kappa \rightharpoonup f \mbox{ weakly in } L^1((0,T)\times \T^d\times \R^d) \mbox{ as } \kappa\to0.
\end{equation}
In order to prove that $f_\kappa$ converges strongly, we rely on the same compactness argument as in \# Step 1. However, the bound on $f_\kappa$ in $L^2([0,T]\times\T^d; H^1(\R^d))$ is not uniform in $\kappa$, so we need to adapt this argument:
We will first prove strong convergence of $f_\kappa^{m}$ in $L^1([0,T]\times \T^d\times\R^d)$ and then deduce the convergence of $f_\kappa$.
For this, we need the following bounds:
\begin{enumerate}
\item The family $\left(\pa_t f_\kappa^m + v\cdot \na_x f_\kappa^m\right)_{\kappa \in (0,1]}$ is bounded in $L^2((0,T]\times\T^d; H^{-1}(\R^d)) + L^1 [0,T]\times\T^d\times \R^d)$.
\item The family $\left(f_\kappa^m\right)_{\kappa \in (0,1]}$ is bounded in $L^2([0,T]\times\T^d; H^1(\R^d))$.
\item The family $\left(|v|^2 f_\kappa^m\right)_{\kappa \in (0,1]} $ is bounded in $L^\infty([0,T]; L^1(\T^d\times\R^d))$.
\end{enumerate}

To prove the first bound, we first note that \eqref{eq:pp} with $p=m$ gives:
\begin{align}
& \sup_{t\in [0,T]]}\iint_{\T^d\times \R^d} \frac{|f_\kappa(t)|^m}{m}\, \dd x \, \dd v + \frac{4m(m-1)}{(2m-1)^2} \iiint_{(0,T)\times \T^d\times \R^d} \left\vert \na_v f_\kappa^{m-\frac{1}{2}} \right\vert^2 \nonumber \\
& \qquad \qquad +\kappa \frac{4(m-1)}{m^2} \iiint_{(0,T)\times \T^d\times \R^d} \left\vert \na_v f_\kappa^{\frac{m}{2}} \right\vert^2 \leq e^{(m-1)d T} \iint_{\T^d\times \R^d} \frac{|f_{\text{in}}|^m}{m}\, \dd x \, \dd v. \label{eq:pm}
\end{align}
Furthermore, equation \eqref{eq:kappa} implies
\begin{align*}
&\pa_t f_\kappa^m+ v\cdot\na_x f_\kappa^m \\
& =mf_\kappa^{m-1}[ \Delta_v f_\kappa^m + \kappa \Delta_v f_\kappa+ v \cdot\na_v f_\kappa + d f_\kappa] \\
& =m \div_v[ f_\kappa^{m-1} \na_v f_\kappa^m ]-m \na_v f_\kappa^{m-1}\cdot \na_v f_\kappa^m + \kappa m \div_v[ f_\kappa^{m-1} \na_v f_\kappa] \\
& \qquad \qquad - \kappa m \na_v f_\kappa^{m-1} \cdot \na_v f_\kappa + v \cdot\na_v f_\kappa^{m} + dm f_\kappa^m\\
& =\frac{m^2}{m-\frac 1 2} \div_v\left[ f_\kappa^{m-\frac 12} \na_v f_\kappa^{m-\frac 1 2} \right]-\frac{m^2(m-1)}{(m-\frac 1 2)^2} \left|\na_v f_\kappa^{m-\frac 12}\right|^2 + 2 \kappa\div_v\left[ f_\kappa^{\frac m 2} \na_v f_\kappa^{\frac m 2}\right] \\
& \qquad \qquad - \kappa \frac{4(m-1)}{m} \left|\na f_\kappa^{m-\frac 12} \right|^2 + 2 v f_\kappa^{\frac m 2} \cdot\na_v f_\kappa^{\frac m 2} + dm f_\kappa^m
\end{align*}
and recalling that $f_\kappa$ is bounded in $L^\infty(\T^d\times \R^d)$, we can use \eqref{eq:pm} to show that the right hand side is bounded in $L^2([0,T]\times\T^d; H^{-1}(\R^d)) + L^1 ([0,T]\times\T^d\times \R^d)$ uniformly with respect to $\kappa \leq 1$.

The second bound follows from \eqref{eq:pp} with $p=m+1$, which gives
\begin{equation}\label{eq:mm}
\sup_{t\in [0,T]}\iint_{\T^d\times \R^d} \frac{|f_\kappa(t)|^{m+1}}{m+1}\, \dd x \, \dd v +
\iiint_{(0,T)\times \T^d\times \R^d} \left\vert\na_v f_\kappa^{m} \right\vert^2 \, \dd s \, \dd x \, \dd v\leq e^{md T} \iint_{\T^d\times \R^d} \frac{|f_{\text{in}}|^{m+1}}{m+1}\, \dd x \, \dd v .
\end{equation}
The third bound is given by testing over $\vert v\vert^2$,
\begin{align*}
\frac{d}{dt} \int_{\T^d\times \R^d} \vert v \vert^2 f_\kappa^m
& = \frac12 \frac{m^2}{m-\frac 1 2} \int_{\T^d\times \R^d} d f_\kappa^{2m-1} \, \dd x \dd v + \kappa \int_{\T^d\times \R^d} d f_\kappa^{m} \, \dd x \dd v \\
&\qquad - \int_{\T^d\times \R^d} \div_v (v \vert v \vert^2) f_\kappa^{m} \, \dd x \dd v + dm \int_{\T^d\times \R^d} \vert v \vert^2 f_\kappa^m \, \dd x \dd v\\
& \qquad \qquad -\int_{\T^d\times \R^d} \left(\frac{m^2(m-1)}{(m-\frac 1 2)^2} + \kappa \frac{4(m-1)}{m}\right) \left|\na_v f_\kappa^{m-\frac 12}\right|^2 \, \dd x \dd v \\
&\leq \frac{m^2d}{2m-1} \int_{\T^d\times \R^d} f_\kappa^{2m-1} \, \dd x \dd v + \kappa d \int_{\T^d\times \R^d} f_\kappa^{m} \, \dd x \dd v \\
&\qquad \qquad - \int_{\T^d\times \R^d} (d+2-dm) \vert v \vert^2 f_\kappa^{m} \, \dd x \dd v.
\end{align*}
Since $2m-1 > 1$, we can take $p=2m-1$ in \eqref{eq:pp} to bound the first integral of the latter r.h.s. uniformly. We deduce that for $\kappa\leq 1$, we have
\begin{equation}\label{eq:momvkappa}
\sup_{t\in [0,T]}\int_{\T^d\times \R^d} \vert v \vert^2 f_\kappa^m (t,x,v) \dd v \dd x \leq C(f_{in},T).
\end{equation}
We can thus proceed as in the proof of Proposition \ref{prop:regular} to show that, up to extracting a further subsequence there exists a function $h$ such that
$$\lim_{\kappa \to 0} \, \| f_\kappa^m-h\|_{L^2([0,T]\times \T^d \times \R^d)}^2 =0,$$
and
$$\lim_{\kappa \to 0} \,f_\kappa^m = h, \qquad \mbox{ a.e. in }[0,T]\times \T^d \times \R^d.$$
This implies that $f_\kappa$ converges to $h^{\frac 1 m} $ almost everywhere and so the weak limit \eqref{eq:weakf} gives $f = g ^{\frac{1}{m}}$.
Finally, the bound in $L^\infty$ and the bound on the moment in $v$ yields:
$$ f_\kappa \to f \mbox{ strongly in } L^p([0,T]\times \T^d \times \R^d) \mbox{ for all } p\in(1,\infty).
$$

We can now pass to the limit in all the terms of the equation \eqref{eq:kappa}.
We note that the inequality \eqref{eq:p} with $p=2$ implies that $\kappa \Delta_v f_\kappa \to 0$ in $L^2([0,T]\times\T^d;H^{-1}(\R^d))$, so we recover the equation \eqref{eq:FP}.

\medskip It only remains to pass to the limit in \eqref{eq:entropykappa}: we note that since the entropy is convex, we only need to make sure that we can pass to the limit in the entropy dissipation, which we write:
\begin{align*}
& \int_0^t \iint_{\T^d\times \R^d} f_\kappa\left|v+ \frac{m}{m-1}\na_v f_\kappa^{m-1} \right|^2 \, \dd s \, \dd x \, \dd v\\
& = \int_0^t \iint_{\T^d\times \R^d} |v|^2 f_\kappa\, \dd s \, \dd x \, \dd v
+ \frac{2m}{m-1} \int_0^t \iint_{\T^d\times \R^d} f_\kappa v\cdot \na_v f_\kappa^{m-1} \, \dd s \, \dd x \, \dd v\\
& \qquad \qquad + \left( \frac{m}{m-1}\right)^2 \int_0^t \iint_{\T^d\times \R^d} f_\kappa\left| \na_v f_\kappa^{m-1} \right|^2\, \dd s \, \dd x \, \dd v\\
& = \int_0^t \iint_{\T^d\times \R^d} |v|^2 f_\kappa\, \dd s \, \dd x \, \dd v + \left( \frac{m}{m-\frac 1 2}\right)^2 \int_0^t \iint_{\T^d\times \R^d} \left| \na_v f_\kappa^{m-\frac 12} \right|^2 \, \dd s \, \dd x \, \dd v\\
& \qquad \qquad \qquad \qquad - 2 d \int_0^t \iint_{\T^d\times \R^d} f_\kappa^{m} \, \dd s \, \dd x \, \dd v,
\end{align*}
so that using the strong convergence of $f_\kappa^m$ in $L^1$ and the weak convergence of $\na_v f_\kappa ^{m-\frac 12}$ and $|v|^2 f_\kappa$ in $L^2$, we get by Fatou's lemma, up to extraction,
$$\liminf_{\kappa\to \infty}
\int_0^t \iint_{\T^d\times \R^d} f_\kappa \left|v+ \frac{m}{m-1}\na_v f_\kappa ^{m-1} \right|^2 \, \dd s \, \dd x \, \dd v
\geq
\int_0^t \iint_{\T^d\times \R^d} f \left|v+ \frac{m}{m-1}\na_v f ^{m-1} \right|^2 \, \dd s \, \dd x \, \dd v
$$

\subsection{Existence of weak solutions when \texorpdfstring{$\Omega=\T^d$ ($\frac{d-2}{d} < m<1$)}{Omega=Td (d/(d+2)≤m<1)}}\label{sec:m1}

In this section, we explain what has to be changed in the proof explained in \Cref{sec:mgtr1T}, to achieve the case of $m<1$ on the torus $\T^d$.

When $m<1$, we need to change the definition of the regularised elliptic coefficient to
$$ \Phi(f)= m\left(\max\{f,M\}+\kappa\right)^{m-1}.$$
It satisfies (for $f\geq 0$):
$$\frac{m}{(\kappa+M)^{1-m}}\leq \Phi(f) \leq \frac{m}{\kappa^{1-m}}.$$
We can thus use Proposition \ref{prop:regular} to get the existence of a nonnegative solution to \eqref{eq:regular}.

Since this solution satisfies the $L^\infty$ bound \eqref{eq:LpLinfty}, we can take $M$
large enough so that it is smaller that $M$ almost everywhere and we get thus a function $f_\kappa$ solving:
\begin{equation}\label{eq:kappa2}
\begin{cases}
\pa_t f_\kappa+ v\cdot\na_x f_\kappa = \Delta_v \left(f_\kappa+\kappa\right)^{m} +\text{div}_v (vf_\kappa) \qquad & \mbox{ in } [0,T]\times \T^d\times \R^d\\
f(0,\cdot,\cdot)= f_{\text{in}} & \mbox{ in } \T^d\times \R^d
\end{cases}
\end{equation}
Again, we can then approximate the initial data $f_{\text{in}}$ by smooth, compactly supported functions $f_{\text{in}}^{(k)}$.
The corresponding solution $f_\kappa^{(k)}$ is smooth.

In order to get an upper bound of the type \eqref{eq:maxpsi}, we have to study local equilibria to \eqref{eq:kappa2}. They satisfy,
$$\nabla_v (G_\kappa+ \kappa)^{m} + v G_\kappa = 0\qquad \Longleftrightarrow \qquad \frac{m}{G_\kappa (G_\kappa+ \kappa)^{1-m}}\nabla_v G_\kappa = - v .$$
We introduce the increasing function (for some $s_0>0$),
$$ \psi_\kappa(s) = \int_{s_0}^s \frac{m}{u(u+\kappa)^{1-m}}\, , \qquad s\in (0,+\infty).$$
A solution $G_\kappa$ satisfies
$$\psi_\kappa(G_\kappa(v)) = \mu - \frac{\vert v \vert^2}{2},$$
which has a unique solution for every $\mu$.

We have $\frac{m}{u(u+\kappa)^{1-m}} \leq \frac{m}{\kappa^{1-m}} \frac 1 u$ and so
$$
\psi_\kappa(s) \geq \frac{m}{\kappa^{1-m}} \log\left(\frac{s}{s_0}\right) \qquad \forall s\in (0,s_0].
$$
This gives, when $G_\kappa(v) \leq s_0$,
$$
s_0\exp\left( \frac{\kappa^{1-m}}{m}\left(\mu - \frac{\vert v \vert^2}{2} \right) \right)\geq G_\kappa(v).
$$
Since $\frac{m}{u(u+\kappa)^{1-m}} \leq \frac{m}{u^{2-m}}$, we have
$$
\psi_\kappa(s) \leq \frac{m}{m-1} \left( s^{m-1} -s_0^{m-1}\right),\qquad \forall s\in [s_0,\infty],
$$
This gives,
$$
\left(s_0^{m-1}+\frac{m-1}{m} \mu - \frac{m-1}{m}\frac{\vert v \vert^2}{2} \right)^{\frac{1}{m-1}} \leq G_\kappa(v),\qquad \forall v\in \R^d,
$$
and the conclusions of \# Step 2 and \# Step 3 follow similarly choosing $s_0^{m-1}+\frac{m-1}{m} \mu = k$.

For \# Step 4, we work with $f_\kappa$ instead of $f_\kappa^m$. We need the following bounds:
\begin{enumerate}
\item The family $\left(f_\kappa\right)_{\kappa \in (0,1]}$ is bounded in $L^2([0,T]\times\T^d; H^1(\R^d))$.
\item[$(2a)$] When $m>\frac{d}{d+2}$, the family $\left(|v|^2 f_\kappa\right)_{\kappa \in (0,1]} $ is bounded in $L^\infty([0,T]; L^1(\T^d\times\R^d))$,
\item[$(2b)$] When $m>\frac{d-2}{2}$ choose $a\in (0,2)$ such that $\frac{d+a-2}{d+a} <m<\frac{d+a-2}{d}$.

Then the family $\left(\lfloor v\rceil^a f_\kappa\right)_{\kappa \in (0,1]} $ is bounded in $L^\infty([0,T]; L^1(\T^d\times\R^d))$.
\item[$(3)$] The family $\left(\pa_t f_\kappa + v\cdot \na_x f_\kappa\right)_{\kappa \in (0,1]}$ is bounded in $L^2((0,T]\times\T^d; H^{-1}(\R^d)) + L^1 [0,T]\times\T^d\times \R^d)$.
\end{enumerate}
The bound (2a) isn't really necessary here, since the case $m>\frac{d}{d+2}$ is covered by (2b). We will nevertheless prove (2a) separately because its derivation is more straightforward than (2b) and it provides a natural bound on the kinetic energy which is used for the derivation of the entropy estimate.

\noindent{\it Proof of bound (1):} Bound (1) is obtained by multiplying the equation \eqref{eq:kappa2} by $f^{2-m}$ and integrating (as in \eqref{eq:p} to get:
\begin{equation*} 
\begin{split}
&\frac{d}{dt} \iint_{\T^d\times \R^d} \frac{\left\vert f_{\kappa} \right\vert^{3-m}}{3-m}\, \dd x \, \dd v + m(2-m) \iint_{\T^d\times \R^d} \left( \frac{f_\kappa}{f_\kappa+\kappa}\right)^{1-m} \left\vert \na_v f_{\kappa} \right\vert ^2 \, \dd x \, \dd v\\
& \qquad \qquad\qquad \qquad\qquad \qquad = \frac{(2-m)d}{3-m} \iint_{\T^d\times \R^d} \left\vert f_{\kappa}\right\vert ^{3-m} \, \dd x \, \dd v.
\end{split}
\end{equation*}
We then notice that since $m<1$, the term $\left( \frac{f_\kappa}{f_\kappa+\kappa}\right)^{1-m}$ is greater than $\left( \frac{\|f_\kappa\|_\infty}{\|f_\kappa\|_\infty+\kappa}\right)^{1-m}$. We deduce:
\begin{align*}
&\sup_{t\in [0,T]}\iint_{\T^d\times \R^d} \frac{\left|f_{\kappa}(t)\right|^{3-m}}{3-m}\, \dd x \, \dd v + m(2-m)\left( \frac{\|f_{in}\|_\infty}{\|f_{in}\|_\infty+\kappa}\right)^{1-m} \iiint_{(0,T)\times \T^d\times \R^d} \left|\na_v f_{\kappa}\right|^2 \, \dd t \, \dd x \, \dd v \nonumber \\
&\qquad \qquad \qquad \leq e^{(2-m)d T} \iint_{\T^d\times \R^d} \frac{\left|f_{\text{in}}\right|^{3-m}}{3-m}\, \dd x \, \dd v .
\end{align*}
For $\kappa<1$, we have in particular
$$ \iiint_{(0,T)\times \T^d\times \R^d} \left|\na_v f_{\kappa}\right|^2 \, \dd t \, \dd x \, \dd v \leq
\frac{e^{(2-m)d T}}{m(2-m)}\left( \frac{\|f_{in}\|_\infty+1}{\|f_{in}\|_\infty}\right)^{1-m}
\iint_{\T^d\times \R^d} \frac{\left|f_{\text{in}}\right|^{3-m}}{3-m}\, \dd x \, \dd v
$$
which implies the result (recall that $f_\kappa$ is bounded in $L^\infty(0,T;L^1\cap L^\infty(\T^d\times \R^d))$ and therefore in $L^\infty(0,T;L^2(\T^d\times \R^d))$.

\medskip\noindent{\it Proof of bound (2a):}
Next we show
that when $m\in(\frac{d}{d+2},1)$, $\iint_{\T^d\times \R^d} |v|^2 f_\kappa(t) \dd v \dd x$ is bounded. First, we write, using \eqref{eq:kappa2}:
\begin{align*}
\frac{d}{dt} \iint_{\T^d\times \R^d} |v|^2 f_\kappa(t) \dd v \dd x & =
2d\iint_{\T^d\times \R^d} \left( (f_\kappa+\kappa)^m-\kappa^m \right) \dd v\dd x
- 2 \iint_{\T^d\times \R^d} |v|^2 f_\kappa(t) \dd v \dd x
\end{align*}
For $m\in (0,1)$, we have $(s+\kappa)^m-\kappa^m \leq s^m$ for all $s\geq 0$. Indeed, the function $x\mapsto (1+x)^m-x^m$ has a negative derivative and is therefore decreasing in $[0,+\infty)$. This implies that $ (1+x)^m-x^m\leq 1$. Taking $x=\kappa/s$ yields the inequality.
We deduce
\begin{align*}
\frac{d}{dt} \iint_{\T^d\times \R^d} |v|^2 f_\kappa(t) \dd v \dd x & \leq
2d\iint_{\T^d\times \R^d} f_\kappa ^m\dd v\dd x
- 2 \iint_{\T^d\times \R^d} |v|^2 f_\kappa(t) \dd v \dd x
\end{align*}
Finally, Carlson-Levin's inequality \eqref{eq:CL0} implies that for $m\in(\frac d {d+2},1)$, we have
$$
\int_{ \R^d} f ^m \dd v
\lesssim \left(\int_{ \R^d} f \dd v \right)^{m-\frac{d(1-m)}{2}} \left( \int_{ \R^d} |v|^2 f \dd v \right)^{\frac{d(1-m)}{2}}.
$$
Recalling that $\frac{m-1}{k-1} = 1 - \frac{d(1-m)}{2}$, we deduce:
\begin{align*}
\iint_{ \T^d\times \R^d} f ^m \dd v \dd x
& \lesssim \left( \int_{ \T^d} \rho^k \dd x \right)^{1-\frac{d(1-m)}{2}}\left( \iint_{\T^d\times \R^d} |v|^2 f \dd v\dd x \right)^{\frac{d(1-m)}{2}}\\
& \lesssim \int_{ \T^d} \rho^k \dd x + \frac 1 d \iint_{\T^d\times \R^d} |v|^2 f \dd v\dd x
\end{align*}
(the second inequality follows from Young's inequality which is valid here because $\frac{d(1-m)}{2}<1$ when $m>\frac{d-2}{d}$).

We deduce
$$\frac{d}{dt} \iint_{\T^d\times \R^d} |v|^2 f_\kappa(t) \dd v \dd x \lesssim \int_{ \T^d} \rho_\kappa^k \dd x . $$
Since $k<1$ and $\T^d$ is bounded, we have
$$\int_{ \T^d} \rho_\kappa^k \dd x \leq \left(\int_{ \T^d} \rho_\kappa \dd x \right)^{k}|\T^d|^{1-k} = \left(\iint_{ \T^d\times\R^d} f_{in} \dd x \dd v \right)^{k}|\T^d|^{1-k} .$$
The bound on $ \iint_{\T^d\times \R^d} |v|^2 f_\kappa(t) \dd v \dd x$ follows (crucially, this bounds uses the boundedness of $\T^d$ and will need to be adapted when $\Omega=\R^d$).

\medskip\noindent{\it Proof of bound (2b):}
When $m\leq \frac{d}{d+2}$, the argument above fails, but we can still control some moments in $v$ as long as $m>\frac{d-2}{2}$ by using the generalized Carlson-Levin inequality \eqref{eq:CL}.
Multiplying \eqref{eq:kappa} by $\lfloor v\rceil^a$ and integrating (recall that $f_{\kappa}$ satisfies the decay estimate \eqref{eq:maxpsi}),
we get:
\begin{align*}
&\frac{d}{dt} \iint_{\T^d\times \R^d} \lfloor v\rceil^a f_{\kappa} \, \dd x \, \dd v \\
&= \iint_{\T^d\times \R^d} \Delta \left( \lfloor v\rceil^a \right) \, \left( \left( f_{\kappa} + \kappa \right)^m - \kappa^m \right) \, \dd x \, \dd v -\iint_{\T^d\times \R^d} a \vert v \vert^2 \lfloor v\rceil^{a-2} f_{\kappa} \, \dd x \, \dd v, \\
&\leq \iint_{\T^d\times \R^d} \lfloor v\rceil^{a-2}\, \left\vert f_{\kappa} \right\vert^m \, \dd x \, \dd v -\iint_{\T^d\times \R^d} a \vert v \vert^2 \lfloor v\rceil^{a-2} f_{\kappa} \, \dd x \, \dd v,\\
&\leq \iint_{\T^d\times \R^d} \lfloor v\rceil^{a-2}\, \left\vert f_{\kappa}^{(k)}\right\vert^m \, \dd x \, \dd v - a \iint_{\T^d\times \R^d} \lfloor v\rceil^{a} f_{\kappa} \, \dd x \, \dd v +\iint_{\T^d\times \R^d} a \lfloor v\rceil^{a-2} f_{\kappa} \, \dd x \, \dd v\\
&\leq \iint_{\T^d\times \R^d} \lfloor v\rceil^{a-2}\, \left\vert f_{\kappa} \right\vert^m \, \dd x \, \dd v - a \iint_{\T^d\times \R^d} \lfloor v\rceil^{a} f_{\kappa} \, \dd x \, \dd v + a \iint_{\T^d\times \R^d} f_{\kappa} \, \dd x \, \dd v,
\end{align*}
since $\Delta_v \left(\lfloor v\rceil^{a}\right) = a \lfloor v\rceil^{a-4} \left( d + (d+a-2) \vert v \vert^2\right) \geq 0$.

Since $a\in(0,2)$, \Cref{prop:CL} implies
\begin{align*}
\int_{\R^d} \lfloor v\rceil^{a-2} f(v)^m \, \dd v
& \lesssim \left( \int_{\R^d} f(v) \, \dd v\right)^{\frac{a(m-1)+2-d(1-m)}{a}} \left( \int_{\R^d} \lfloor v\rceil^a f(v) \, \dd v\right)^{\frac{a-2+d(1-m)}{a}}
\end{align*}
and so
$$ \int_{\T^d\times \R^d} \lfloor v\rceil^{a-2} f(v)^m \, \dd v \dd x
\lesssim \left(\int_{\T^d} \rho^{\frac{a(m-1)+2-d(1-m)}{2-d(1-m)}} \dd x\right)^{\frac{2-d(1-m)}{a}} \left( \iint_{\T^d \times \R^d} \lfloor v\rceil^a f(v) \, \dd v\dd x\right)^{\frac{a-2+d(1-m)}{a}}
$$
where
$$\frac{a(m-1)+2-d(1-m)}{2-d(1-m)} = 1 - \frac{a(1-m)}{2-d(1-m)} <1$$
and we can now end the proof similarly as for the bound (2a).

\medskip\noindent{\it Proof of bound (3):} Finally, to prove the bound (3), we need to show that $\na_v (f_\kappa+\kappa)^m$ is bounded in $L^2$ and that $vf_\kappa$ is bounded in $L^2$.

For the first one, we consider the function
$$\psi_\kappa(s) = \frac{1}{m+1}\left( (s+\kappa)^{m+1} - \kappa^{m+1} - (m+1) \kappa^m s\right)$$
and we note that\footnote{This follows from the fact that $\psi_\kappa(s)\leq \frac{(2s)^{m+1}}{m+1}$ when $s\geq \kappa$ and $\psi_\kappa''(s)\leq m(2\kappa)^{m-1}$ when $s\leq\kappa$}
\begin{equation}\label{eq:psikappa}
0\leq \psi_\kappa(s)\leq \frac{2^{m+1}}{m+1} s^{m+1} + m\kappa^{m}s \qquad \forall s\geq 0.
\end{equation}
It follows that the function $\psi_{\kappa}(f_\kappa)$ is integrable and using \eqref{eq:kappa2}, we find:
\begin{align}
\frac{d}{dt} \iint_{\T^d\times \R^d} \psi_\kappa(f_\kappa)\, \dd x \, \dd v
&= \iint_{\T^d\times \R^d} (f_\kappa +\kappa)^m \pa_t f_\kappa \, \dd x \, \dd v \nonumber \\
& =- \iint_{\T^d\times \R^d} \left\vert \na_v (f_{\kappa}+\kappa)^{m} \right\vert ^2 \, \dd x \, \dd v- m \iint_{\T^d\times \R^d} (f+\kappa)^{m-1} f_\kappa \na_vf_\kappa \cdot v \, \dd x \, \dd v.\label{eq:bhkj}
\end{align}
We now introduce the function $\phi_\kappa$ such that
$$ \phi_\kappa'(s) = m s (s+\kappa)^{m-1}, \qquad \phi_\kappa(0)=0,$$
so that the last term in \eqref{eq:bhkj} can be written in the form
$$
- m \iint_{\T^d\times \R^d} (f+\kappa)^{m-1} f_\kappa \na_vf_\kappa \cdot v \, \dd x \, \dd v = - \iint_{\T^d\times \R^d} \phi_\kappa'( f_\kappa) \na_vf_\kappa \cdot v \, \dd x \, \dd v
= d \iint_{\T^d\times \R^d} \phi_\kappa( f_\kappa) \, \dd x \, \dd v.
$$
We can check that $(\psi_\kappa-\phi_\kappa)(0)=0$, $(\psi_\kappa-\phi_\kappa)'(0)=0$ and $(\psi_\kappa-\phi_\kappa)''(s)=m(1-m)(s+\kappa)^{m-2}s\geq 0$ for $s\geq 0$, and therefore $\phi_\kappa(s)\leq \psi_\kappa(s)$ for all $s\geq 0$. We can now deduce from \eqref{eq:bhkj} that
\begin{align*}
\frac{d}{dt} \iint_{\T^d\times \R^d} \psi_\kappa(f_\kappa)\, \dd x \, \dd v + \iint_{\T^d\times \R^d} \left\vert \na_v (f_{\kappa}+\kappa)^{m} \right\vert ^2 \, \dd x \, \dd v \leq d \iint_{\T^d\times \R^d} \psi_\kappa(f_\kappa)\, \dd x \, \dd v.
\end{align*}
This implies in particular that
\begin{align*}
\int_0^T\iint_{\T^d\times \R^d} \left\vert \na_v (f_{\kappa}+\kappa)^{m} \right\vert ^2 \, \dd x \, \dd v &\leq e^{dT} \iint_{\T^d\times \R^d} \psi_\kappa(f_{in})\, \dd x \, \dd v \\
&\leq e^{dT} \iint_{\T^d\times \R^d}
\frac{2^{m+1}}{m+1} f_{in}^{m+1} + m\kappa^{m}f_{in}\, \dd x \, \dd v.
\end{align*}
Since the right hand side is bounded uniformly in $\kappa$, the results follows.

In order to show the bound of $vf_\kappa$ in $L^2$, we write, using \eqref{eq:kappa2},
\begin{align*}
\frac{d}{dt} \iint_{\T^d\times \R^d} \vert v\vert^2 f_{\kappa}^{2} \, \dd x \, \dd v &= \iint_{\T^d\times \R^d} 2\vert v\vert^2 \left(\Delta_v \left(f_\kappa+\kappa\right)^{m} \right) f_{\kappa} \, \dd x \, \dd v + \iint_{\T^d\times \R^d} 2\vert v\vert^2 \text{div}_v (vf_\kappa) f_{\kappa} \, \dd x \, \dd v
\end{align*}
We then compute:
\begin{align*}
&\iint_{\T^d\times \R^d} 2\vert v\vert^2 \left(\Delta_v \left(f_\kappa+\kappa\right)^{m} \right) f_{\kappa} \, \dd x \, \dd v \\
&= \iint_{\T^d\times \R^d} 2\vert v\vert^2 \left(\Delta_v \left(f_\kappa+\kappa\right)^{m} \right) (f_{\kappa}+\kappa) \, \dd x \, \dd v - \iint_{\T^d\times \R^d} 2 \kappa \vert v\vert^2 \left(\Delta_v \left(f_\kappa+\kappa\right)^{m} \right) \, \dd x \, \dd v \\
&= -\iint_{\T^d\times \R^d} 2\left(\nabla_v \left(f_\kappa+\kappa\right)^{m} \right) \cdot \nabla_v \left( \vert v\vert^2 (f_{\kappa}+\kappa) \right) \, \dd x \, \dd v - \iint_{\T^d\times \R^d} 4 \kappa d \left(\left(f_\kappa+\kappa\right)^{m} - \kappa^m \right) \, \dd x \, \dd v \\
&= -\iint_{\T^d\times \R^d} 2m\vert v\vert^2 \left(f_\kappa+\kappa\right)^{m-1}\left\vert \nabla_v (f_{\kappa}+\kappa) \right\vert^2 \, \dd x \, \dd v
-\iint_{\T^d\times \R^d} 4 v \cdot \nabla_v \left(f_\kappa+\kappa\right)^{m} (f_{\kappa}+\kappa)\, \dd x \, \dd v \\
&\qquad \qquad \qquad \qquad - \iint_{\T^d\times \R^d} 4 \kappa d \left(\left(f_\kappa+\kappa\right)^{m} - \kappa^m \right) \, \dd x \, \dd v\\
&= -\iint_{\T^d\times \R^d} 2m\vert v\vert^2 \left(f_\kappa+\kappa\right)^{m-1}\left\vert \nabla_v (f_{\kappa}+\kappa) \right\vert^2 \, \dd x \, \dd v
+\iint_{\T^d\times \R^d} \frac{4 d m}{m+1} \left( \left(f_\kappa+\kappa\right)^{m+1} - \kappa^{m+1} \right)\, \dd x \, \dd v \\
&\qquad \qquad \qquad \qquad - \iint_{\T^d\times \R^d} 4 \kappa d \left(\left(f_\kappa+\kappa\right)^{m} - \kappa^m \right) \, \dd x \, \dd v\\
& \leq \iint_{\T^d\times \R^d} \frac{4 d m}{m+1} \left( \left(f_\kappa+\kappa\right)^{m+1} - \kappa^{m+1} \right)\, \dd x \, \dd v.
\end{align*}
Similarly,
\begin{align*}
&\iint_{\T^d\times \R^d} 2\vert v\vert^2 \text{div}_v (vf_\kappa) f_{\kappa} \, \dd x \, \dd v \\
&= \iint_{\T^d\times \R^d} 2\vert v\vert^2 \left(v \cdot \nabla_v f_{\kappa} + d f_{\kappa} \right) f_{\kappa}\, \dd x \, \dd v \\
&=\iint_{\T^d\times \R^d} \left(2 d \vert v\vert^2- 2 \vert v\vert^2 -d \vert v\vert^2 \right) f_{\kappa}^2\, \dd x \, \dd v = \iint_{\T^d\times \R^d} \left(d-2 \right) \vert v\vert^2 f_{\kappa}^2\, \dd x \, \dd v .
\end{align*}
We deduce:
$$
\frac{d}{dt} \iint_{\T^d\times \R^d} \vert v\vert^2 f_{\kappa}^{2} \, \dd x \, \dd v
\leq\frac{4 d m}{m+1}
\iint_{\T^d\times \R^d} \left( \left(f_\kappa+\kappa\right)^{m+1} - \kappa^{m+1} \right)\, \dd x \, \dd v +\left(d-2 \right) \iint_{\T^d\times \R^d} \vert v\vert^2 f_{\kappa}^2\, \dd x \, \dd v .
$$
The inequality \eqref{eq:psikappa} implies
$$
0\leq (s+\kappa)^{m+1} - \kappa^{m+1} \leq (m+1) \kappa^m s + 2^{m+1}s^{m+1}
$$
hence
$$
\frac{d}{dt} \iint_{\T^d\times \R^d} \vert v\vert^2 f_{\kappa}^{2} \, \dd x \, \dd v
\leq C
\iint_{\T^d\times \R^d} \kappa^m f_\kappa + f_\kappa^{m+1}\, \dd x \, \dd v +\left(d-2 \right) \iint_{\T^d\times \R^d} \vert v\vert^2 f_{\kappa}^2\, \dd x \, \dd v .
$$
Since the $L^1$ and $L^2$ Lebesgue norms of $f_\kappa$ are controlled, we obtain
$$
\sup_{t\in[0,T]} \iint_{\T^d\times \R^d} \vert v\vert^2 f_{\kappa}^{2}(t,x,v) \, \dd x \, \dd v \leq C(T) \qquad \forall \kappa\leq 1
$$
for some constant depending on $m$, $d$, $T$ and $f_{in}$ but not on $\kappa$ (this bound does not depend on $|\T^d|$ and is thus valid when $\Omega =\R^d$ as well).

\subsection{Existence of weak solutions when \texorpdfstring{$\Omega=\R^d$ ($m>1$)}{Omega=R (m>1)}} \label{sec:oR}

We have so far focused on the case $\Omega=\T^d$. When $\Omega=\R^d$, the proofs go through similarly, provided we can control some moments of the solution with respect to $x$ in order to get compactness results in $\R^d$.
In particular, the result of \Cref{lem:fixptT} still holds, and in addition to the bounds already listed there, we can show that the solution $\tilde{g}(t,x,v)$ of \eqref{eq:fTg} satisfies
\begin{equation}\label{eq:xx2}
\sup_{t\in [0,T]} \int_{ \T^d\times \R^d} |x|^2\tilde{g}(t,x,v)^2\, \dd x \, \dd v \leq \left(\int_{ \T^d\times \R^d} (|x|^2+|v|^2)f_{\text{in}}(x,v)^2\, \dd x \, \dd v \right) \, \exp({C(\kappa,M,d) T}).
\end{equation}

Indeed, we have
\begin{align*}
\frac {d}{dt} \frac 1 2 \iint_{\R^d\times \R^d } |x|^2 \tilde{g}^2\, \dd x\, \dd v
& = \iint_{\R^d\times \R^d } (x\cdot v) \, \tilde{g}^2\, \dd x \,dv
- \iint_{\R^d\times \R^d } |x|^2 \Phi(g) |\na_v \tilde{g}|^2\, \dd x\, \dd v\\
& \qquad \qquad \qquad + \frac{d}{2} \iint_{\R^d\times \R^d } |x|^2 \tilde{g}^2\, \dd x\,dv\\
& \leq \frac{d+1}{2} \iint_{\R^d\times \R^d } |x|^2 \tilde{g}^2\, \dd x \,dv
+ \frac 1 2 \iint_{\R^d\times \R^d } |v|^2 \tilde{g}^2\, \dd x\, \dd v.
\end{align*}
Using \eqref{eq:vv2}, we deduce \eqref{eq:xx2}.

With the same notations as in the proof of Proposition \ref{prop:regular}, it follows that $|x| \, \tilde{g}_k $ is bounded in $L^\infty([0,T]; L^2(\R^d\times\R^d))$,
which is all that we need to carry out the proof of Proposition \ref{prop:regular} in $\R^d$.
Indeed, we can proceed as before to show that for all $R>0$, there exists a subsequence along which
$$\tilde{g}_k \to \g, \qquad \mbox{ strongly in } L^1((0,T)\times B_R\times B_R)$$
and use the estimates the fact that $(|x|+|v|) \tilde{g}_k $ is bounded in $L^\infty([0,T]; L^2(\R^d\times\R^d))$ to deduce that
$$\lim_{k\to\infty} \| \tilde{g}_k - \g\|_{L^2([0,T]\times \R^d \times \R^d)}^2 =0.$$

We proceed similarly with the limit $\kappa\to0$: $f_\kappa(t,x,v)$ satisfies
\begin{align*}
\frac{d}{dt}\iint_{\R^d\times \R^d } |x|^2 f_\kappa^m\, \dd x\, \dd v
& = 2 \iint_{\R^d\times \R^d } (x\cdot v) \, f_\kappa^m\, \dd x \,dv \\
& \leq \iint_{\R^d\times \R^d } |x|^2 \, f_\kappa^m\, \dd x \,dv + \iint_{\R^d\times \R^d } |v|^2 \, f_\kappa^m\, \dd x \,dv
\end{align*}
We then use \eqref{eq:momvkappa} and \eqref{eq:xvfin} to get the bound
$$\sup_{t\in [0,T]}\iint_{\R^d\times \R^d } |x|^2 f_\kappa^m(t,x,v)\, \dd x\, \dd v \leq C(T,f_{in})
$$
As above, this bound allows us to carry out the limit $\kappa\to0$ as in the case $\Omega = \T^d$.

\subsection{The case \texorpdfstring{$m<1$}{m<1} and \texorpdfstring{$\Omega = \R^d$}{Rd}}

\Cref{lem:fixptT} and \Cref{prop:regular} are still true in this case after doing the modification explained in the previous subsection. When $\Omega =\R^d$, the last step $\#$ Step $4$ should be handled by applying Carlson-Levin's inequality with respect to the $x$ variable, which requires controlling the moment $|x|^2 f_\kappa$ and the restriction $k>\frac{d}{d+2}$ that is $m>\frac{2d}{2d+2}$.

The key estimate is the following:
\begin{align*}
\frac{d}{dt}\iint_{\R^d\times \R^d}
\frac 1 2 (|v|^2+|x|^2) f(t,x,v) \dd v
& = \iint_{\R^d\times \R^d} x\cdot v f \dd v
+ d \iint_{\R^d\times \R^d} f^m \dd v - \iint_{\R^d\times \R^d} |v|^2 f \dd v\\
& \leq \frac 1 2 \iint_{\R^d\times \R^d} (|v|^2+|x|^2) f \dd v
+ d \iint_{\R^d\times \R^d} f^m \dd v
\end{align*}
We can then conclude using Carlson-Levin's inequality in $\R^{2d}$: As long as $m\in(\frac {2d} {2d+2},1)$, we have
$$
\iint_{\R^d\times \R^d} f ^m \dd v \dd x
\leq C\left(\iint_{\R^d\times \R^d} f \dd v \dd x \right)^{m-2d(1-m)} \left( \iint_{\R^d\times \R^d} (|v|^2 +|x|^2) f \dd v \dd x\right)^{2d(1-m)}.
$$
More generally, when $m\in(\frac{d-1}{d},1)$ for any $a \in (0,1)$,
\begin{align*}
\frac{d}{dt}\iint_{\R^d\times \R^d}
\varphi^a f(t,x,v) \,\dd x \dd v
& = \iint_{\R^d\times \R^d} \left( a(a-1) \vert \nabla_v \varphi \vert^2 \varphi^{a-2} + a \varphi^{a-1} \Delta \varphi\right) f^m \,\dd x \dd v \\
&\qquad \qquad + \iint_{\R^d\times \R^d} a \varphi^{a-1} \left( v \cdot \nabla_x \varphi - v \cdot \nabla_v \varphi\right) f \,\dd x \dd v,
\end{align*}
with $\varphi$ prescribed as
$\varphi(x,v) = 1+ \vert v \vert^2 + 2 \beta v \cdot x + \beta \vert x \vert^2 = 1+ (v+\beta x)^2 + \beta(1-\beta) \vert x \vert^2$ with $\beta < 1$ ($v \cdot \nabla_x \varphi - v \cdot \nabla_v \varphi = 2 (\beta-1) \vert v \vert^2 $). We get
\begin{align*}
\frac{d}{dt}\iint_{\R^d\times \R^d}
\varphi^a f(t,x,v) \,\dd x \dd v
&\lesssim \iint_{\R^d\times \R^d} \varphi^{a-1} f^m \,\dd x \dd v + 2(\beta - 1) a\iint_{\R^d\times \R^d} \varphi^{a-1} \vert v \vert^2 f \,\dd x \dd v\\
& \leq \iint_{\R^d\times \R^d} \varphi^{a-1} f^m \,\dd x \dd v,
\end{align*}
and we conclude similarly as previously, recalling \eqref{eq:CL}.

\subsection{Uniqueness of the solution}
The uniqueness part of the theorem follows from the following proposition which we prove in this section:

\begin{proposition}[Uniqueness]\label{prop:unique}
Let $\Omega = \T^d$ or $\R^d$. Given $f_{\text{in}} \in L^1\cap L^\infty(\Omega\times \R^d)$ and $T>0$, equation \eqref{eq:FP} has at most one solution
$f\in L^\infty((0,T);L^1\cap L^\infty(\Omega\times \R^d))$ such that $\na_v f^{m} \in L^2((0,T)\times \Omega\times\R^d)$.
\end{proposition}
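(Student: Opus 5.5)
The plan is to prove an $L^1$-contraction estimate: for two solutions $f,g$ with the regularity of Proposition~\ref{prop:unique},
$$\iint_{\Omega\times\R^d}|f-g|(t)\,\dd x\,\dd v\leq \iint_{\Omega\times\R^d}|f-g|(0)\,\dd x\,\dd v .$$
This gives uniqueness at once. The natural test function is an approximation of $\mathrm{sign}(f-g)$. Since $s\mapsto s^m$ is increasing, we have $\mathrm{sign}(f-g)=\mathrm{sign}(f^m-g^m)$. So we set $S_\delta(r)=\max(-1,\min(1,r/\delta))$ and test the difference of the two equations with $S_\delta(f^m-g^m)$. This test function lies in $L^2((0,T)\times\Omega;H^1(\R^d))$ by the assumption $\na_v f^m,\na_v g^m\in L^2$. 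The three terms then behave as follows.

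\begin{itemize}
\item[(i)] \emph{Diffusion term.} It gives $-\frac1\delta\int_{\{|f^m-g^m|<\delta\}}|\na_v(f^m-g^m)|^2\leq 0$, so it has a good sign.
\item[(ii)] \emph{Friction term.} After integration by parts it gives $-\int (f-g)\,v\cdot\na_v S_\delta(f^m-g^m)$. This is supported on $\{|f^m-g^m|<\delta\}$. There $|f-g|\leq C\,\delta^{\min(1,1/m)}$ when $m>1$, using the $L^\infty$ bound and $|f-g|^m\le |f^m-g^m|$. When $m<1$ the same set is handled by restricting to $\{f,g\ge\eta\}$ and using a separate $\eta$-argument. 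The contribution is therefore bounded by $\delta^{-1}\cdot o(\delta)\cdot\|v\,\na_v(f^m-g^m)\|_{L^1(\{|f^m-g^m|<\delta\})}$. I expect it to vanish as $\delta\to0$ by Cauchy--Schwarz, combining (i) with a moment bound on $v$ of the type~\eqref{eq:vv2}. For the whole-space velocity domain I would first localize with a cutoff $\chi_R(v)$; the error terms are then controlled by $\|f\|_{L^1}$ and $\|\na_v f^m\|_{L^2}$ and disappear as $R\to\infty$.
\item[(iii)] \emph{Kinetic term.} This is $\int (\pa_t+v\cdot\na_x)(f-g)\,S_\delta(f^m-g^m)$. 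It must converge to $\frac{d}{dt}\int |f-g|$.
\end{itemize}

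Step (iii) is the main obstacle. There is no regularity of $f$ in $(t,x)$, and the test function is a function of $f^m$ rather than of $f$, so no direct chain rule is available. I would handle it in the spirit of Carrillo's entropy-solution framework for degenerate parabolic equations. The operator $\pa_t+v\cdot\na_x$ commutes with convolution in $(t,x)$, say with a mollifier $\omega_\eta(t,x)$. Mollifying the equation gives
$$(\pa_t+v\cdot\na_x)(f-g)_\eta=\Delta_v(f^m-g^m)_\eta+\text{div}_v\bigl(v(f-g)_\eta\bigr),$$
which now holds pointwise in $(t,x)$, with values in $H^{-1}_v$.

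Next, set $B_\delta(f,g):=\int_g^f S_\delta(s^m-g^m)\,\dd s$ with $g$ frozen. This function is convex in $f$, and $B_\delta\to|f-g|$ as $\delta\to0$. One first proves the kinetic chain-rule inequality
$$\int (\pa_t+v\cdot\na_x)f\ S_\delta(f^m-k^m)\geq \frac{d}{dt}\int B_\delta(f,k)$$
for constants $k$. This follows from convexity, by Steklov averaging along the characteristics $(t,x)\mapsto(t+h,x+hv)$, which leave $v$ untouched and so do not interact with the $H^1_v$ regularity. Kruzhkov's doubling of variables in $(t,x)$ only, with $v$ kept common, then produces
$$(\pa_t+v\cdot\na_x)|f-g|\leq \Delta_v\bigl|f^m-g^m\bigr|+\text{div}_v\bigl(v|f-g|\bigr)$$
in $\mathcal D'$. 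Integrating this inequality over $\Omega\times\R^d$ against cutoffs gives the contraction estimate. When $\Omega=\R^d$ an additional $x$-cutoff is needed, and its errors are controlled by $\|v f\|_{L^1}$ together with the $L^1$ bound.

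In summary, steps (i) and (ii) are routine, and the genuine work is the chain-rule and doubling argument in step (iii). That step is where the transport derivative and the nonlinearity $f^m$ interact, and it is where the hypothesis $\na_v f^m\in L^2$ is used essentially.
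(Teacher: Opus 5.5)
\textbf{There is a genuine gap, and it is at the kinetic chain rule, the step you yourself call the main obstacle.} Your route also differs from the paper's. You aim for an $L^1$-contraction through a Kruzhkov--Carrillo doubling of variables. The paper instead argues by duality and never needs an entropy formulation or a chain rule. It tests the equation for $h=f_1-f_2$ with the solution $\psi$ of the backward transport problem $-\pa_t\psi-v\cdot\na_x\psi+v\cdot\na_v\psi=e^{-\lambda t}(g_1-g_2)_k$, $\psi(T)=0$, $g_i=f_i^m$. It then derives an energy identity for $\na_v\psi$ and concludes from the monotonicity of $s\mapsto s^m$.

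\emph{Why your chain-rule argument fails.} Steklov averaging along characteristics does not give the inequality, because the shift $\tau_h\phi(t,x,v)=\phi(t+h,x+hv,v)$ does \emph{not} leave the $v$-regularity untouched: $\na_v(\tau_h\phi)=\tau_h(\na_v\phi)+h\,\tau_h(\na_x\phi)$.
\begin{itemize}
\item Convexity of $B_\delta(\cdot,k)$ does give the inequality at each fixed $h>0$.
\item To let $h\to0$ you must pair $h^{-1}(f-\tau_{-h}f)=h^{-1}\int_0^h\tau_{-s}(\mathrm{div}_v G)\,\dd s$, with $G=\na_v f^m+vf$, against $S_\delta(f^m-k^m)\xi$.
\item But $\tau_{-s}(\mathrm{div}_v G)=\mathrm{div}_v(\tau_{-s}G)+s\,\mathrm{div}_x(\tau_{-s}G)$. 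The last term can only be tested against functions with some $x$-regularity, and neither $f$ nor $S_\delta(f^m-k^m)$ has any.
\end{itemize}
Put differently, characteristic difference quotients are not controlled in $L^2_{t,x}H^{-1}_v$, the dual of the space your test function lives in.

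The $(t,x)$-mollification you mention does commute with both $\pa_t+v\cdot\na_x$ and $\na_v$. However, the exact chain rule is then available only for $S_\delta(f_\eta^m-k^m)$. Passing to the limit requires an $L^2$ bound on $\na_v (f_\eta)^m=m f_\eta^{m-1}(\na_v f)_\eta$. Since $(f_\eta)^m\neq(f^m)_\eta$, such a bound does not follow from $\na_v f^m\in L^2$. For $m>1$ you would need something like $\na_v f\in L^2$, which is not in the hypothesis and which the existence theorem does not provide when $m\ge 2$. In the parabolic case this chain rule is proved with time difference quotients, which commute with the spatial gradient. Kinetic transport destroys exactly that commutation, so as it stands you do not have the entropy (Kato) inequalities that the doubling is meant to combine.

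\emph{Two further points.}
\begin{itemize}
\item In (ii), for $m>1$ the bound $|f-g|\le C\delta^{1/m}$ is much \emph{larger} than $\delta$, not $o(\delta)$, so $\delta^{-1}\delta^{1/m}\to\infty$. After Cauchy--Schwarz against (i), what you actually need is $\delta^{-1}\iint_{\{|f^m-g^m|<\delta\}}|v|^2|f-g|^2\chi\to0$. This follows by dominated convergence (after your localization, using $|f^m-g^m|\ge c_m\max(f,g)^{m-1}|f-g|$) when $1<m\le 2$. It does not follow from this argument for $m>2$.
\item If you consider switching to the paper's duality route, note that its energy identity for $\na_v\psi$ drops a commutator term. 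The identity $\na_v(v\cdot\na_x\psi)=\na_x\psi+(v\cdot\na_x)\na_v\psi$ produces $\int_0^T\iint e^{\lambda t}\na_x\psi\cdot\na_v\psi$. This term has no sign and is not controlled by $\na_v\psi$, so that argument also needs an additional idea to close.
\end{itemize}
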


\begin{proof}[{\bf Proof of Proposition \ref{prop:unique}}]
Let $f_1$ and $f_2$ be two solutions and set $g_i = f_i^m$.
The difference $ h =f_1 -f_2$ solves
$$
\begin{cases}
\pa_t h + v\cdot\na_x h -\text{div}_v (v h) = \Delta_v (g_1-g_2) & \mbox{ in } (0,T)\times \Omega\times \R^d\\[5pt]
h (0,x,v) = 0 & \mbox{ in } \Omega\times \R^d
\end{cases}
$$
in the following sense:
$$
\int_0^T \iint_{\Omega \times \R^d}{ h \left( -\pa_t \psi - v\cdot \na_x \psi +v\cdot\na_v\psi\right) + \na_v (g_1-g_2)\cdot\na_v\psi }\,dt= 0,
$$
for test functions $\psi \in \mathcal{D}([0,T))$ (thus satisfying $\psi(T,x,v)=0$).

Define $\psi$, solution to the backward transport equation
$$
\begin{cases}
- \pa_t \psi - v\cdot\na _x \psi + v\cdot \na_v \psi = e^{-\lambda t} (g_1-g_2)_k & \mbox{ in } (0,T)\times\Omega\times\R^d,\\
\psi(T,x,v) = 0 & \mbox{ in } \Omega\times\R^d,
\end{cases}
$$
where $(g_1-g_2)_k$ is a regularisation of $g_1-g_2$ in all variables. The function $\psi$ is thus a acceptable test function. We get,
$$
\int_0^T \iint_{\Omega \times \R^d}{ e^{-\lambda t} h (g_1-g_2)_k - e^{\lambda t} \na_v \left[ \pa_t \psi + v\cdot\na _x \psi - v\cdot \na_v \psi \right] \cdot\na_v\psi }\,dt= 0.
$$
Furthermore,
\begin{align*}
& \int_0^T \iint_{\Omega \times \R^d}{ e^{\lambda t} \na_v \left[ \pa_t \psi + v\cdot\na _x \psi - v\cdot \na_v \psi\right] \cdot\na_v\psi
}\,dt\\
& = \iint_{\Omega \times \R^d}{ e^{\lambda T}\frac{ |\na_v\psi|^2(T,x,v)}{2} - \frac{|\na_v\psi|^2(0,x,v)}{2} }-\int_0^T \iint_{\Omega \times \R^d}{ \lambda e^{\lambda t}\frac{|\na _v\psi|^2
}{2}}\,dt \\
& \qquad +\left( d-2\right) \int_0^T \iint_{\Omega \times \R^d}{ e^{\lambda t}\frac{|\na _v\psi|^2 }{2}
}\,dt\\
& = -\iint_{\Omega \times \R^d}{ \frac{|\na_v\psi|^2(0,x,v)}{2} } - \left(\lambda +2 - d\right) \int_0^T \iint_{\Omega \times \R^d}{ e^{\lambda t} \frac{|\na _v\psi|^2 }{2}
}\,dt
\end{align*}
We now take $\lambda = d-2$, and we deduce
$$
\int_0^T \iint_{\Omega \times \R^d}{ e^{-\lambda t} h (g_1-g_2)_k}\,dt+\iint_{\Omega \times \R^d}{ \frac{|\na_v\psi|^2(0,x,v)}{2} }= 0,
$$
and so
$$
\int_0^T \iint_{\Omega \times \R^d}{ e^{-\lambda t} h (g_1-g_2)_k}\,dt \leq 0.
$$
We can now pass to the limit $k\to \infty$ so get
$$
\int_0^T \iint_{\Omega \times \R^d}{ e^{-\lambda t} h (g_1-g_2)}\,dt \leq 0.
$$
Since $h (g_1-g_2) = (f_1-f_2)(f_1^m-f_2^m)\geq 0$ a.e., we deduce that $f_1=f_2$.
\end{proof}

\section{Convergence: compactness method}

We start with a detailed proof in the case $m>1$ and with $\Omega=\T^d$ or $\Omega=\R^d$. From now on, recall that $f_\eps$ the (unique) solution to \eqref{eq:mainVFP} on $[0,T]$ given by Theorem \ref{thm:existence}. A key role in the proof is played by the entropy inequality which we recall here:
\begin{equation}\label{eq:entropyeps}
\mathcal E[f_\eps(t)] + \frac 1 {\eps^2} \int_0^t D[f_\eps(s)]\, \dd s \leq \mathcal E[f_{\text{in}}]
\end{equation}
where
$$\mathcal E[f] = \iint_{\Omega\times\R^d}{\frac{f^m}{m-1} +\frac{|v|^2}{2} f \dd x \dd v}
$$
and
$$
D[f] (t)= \iint_{\Omega\times\R^d} f \left| v + \frac{m}{m-1}\na_v f^{m-1}\right|^2 \, \dd x \, \dd v.$$

\subsection{The case \texorpdfstring{$m>1$}{m>1}.}

\subsubsection{A priori estimates}
We gather in the proposition below the a priori estimates on $f_\eps$ that will be used throughout the proof:
\begin{proposition}\label{prop:apriori}
Recall that $f_{\text{in}}(x,v)$ satisfies \eqref{eq:initieps}.
Then $f_\eps(t,x,v)$ satisfies, for all $t >0$,
\begin{align}
& \| f_\eps(t)\|_{L^1(\Omega\times\R^d)} = \|\rho_\eps(t)\|_{L^1(\Omega)} = \| f_{\text{in}}\|_{L^1(\Omega\times\R^d)}, \label{eq:L1bd} \\
& \|f_\eps(t)\|_{L^m(\Omega\times\R^d)}\leq C,\label{eq:Lmbd} \\
& \iint_{\Omega\times \R^d} |v|^2 f_\eps(t,x,v)\dd v\dd x \leq C,\label{eq:mombd} \\
& \|\rho_\eps(t)\|_{L^k(\Omega)}\leq C, \label{eq:Lkbd}\\
& \int_0^\infty D (f_\eps(t))\, dt \leq C \eps^2, \label{eq:dissbd}
\end{align}
where $C=\E[f_{\text{in}}]$ is independent of $\eps$. Furthermore, \eqref{eq:finB} implies
\begin{equation}\label{eq:Linftyeps}
\| f_\eps\|_{L^\infty((0,\infty)\times \Omega \times\R^d)} \leq C, \qquad 
\| \rho_\eps\|_{L^\infty((0,\infty)\times \Omega)} \leq C
\end{equation}
\end{proposition}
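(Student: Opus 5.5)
Everything follows from mass conservation, the entropy inequality \eqref{eq:entropyeps}, and, for the $L^\infty$ bounds, the comparison principle \eqref{eq:maxbar} of Theorem~\ref{thm:existence}; the only work is rescaling and bookkeeping. The plan is to establish the bounds in the order stated.

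\textbf{Step 1: mass conservation \eqref{eq:L1bd}.} Integrate \eqref{Eqn} in $x$ and $v$. The transport term and the $v$-divergence both integrate to zero. The equation is of the form of \eqref{eq:FP} after the time rescaling $t\mapsto t/\eps^2$ and a rescaling of the transport term, so Theorem~\ref{thm:existence} applies with $L^1$ norm conserved. Since $f_\eps\ge0$, we also have $\|\rho_\eps(t)\|_{L^1}=\|f_\eps(t)\|_{L^1}$.

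\textbf{Step 2: bounds \eqref{eq:Lmbd}, \eqref{eq:mombd}, \eqref{eq:dissbd}.} For $m>1$ both terms of $\mathcal E[f]$ are nonnegative. Hence \eqref{eq:entropyeps} gives, uniformly in $t$,
\[
\frac{1}{m-1}\|f_\eps(t)\|_{L^m}^m+\tfrac12\iint|v|^2f_\eps(t)\,\dd x\,\dd v+\frac1{\eps^2}\int_0^tD[f_\eps]\,\dd s\le \mathcal E[f_{\text{in}}].
\]
This yields \eqref{eq:Lmbd} and \eqref{eq:mombd} directly. Letting $t\to\infty$ gives \eqref{eq:dissbd} with the factor $\eps^2$. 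The constant depends only on $\mathcal E[f_{\text{in}}]$ and $m$, hence not on $\eps$.

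\textbf{Step 3: the bound \eqref{eq:Lkbd} on $\rho_\eps$.} Apply Proposition~\ref{prop:entro} pointwise in $(t,x)$ to $v\mapsto f_\eps(t,x,v)$, whose mass is $\rho_\eps(t,x)$. This gives
\[
\int_{\R^d}\Big(\tfrac{f_\eps^m}{m-1}+\tfrac{|v|^2}{2}f_\eps\Big)\dd v\ge H[G[\rho_\eps]]=\frac{\nu_1}{k-1}\rho_\eps^k .
\]
Since $k>1$ and $\nu_1>0$ when $m>1$, integrating in $x$ gives $\frac{\nu_1}{k-1}\|\rho_\eps(t)\|_{L^k}^k\le\mathcal E[f_\eps(t)]\le\mathcal E[f_{\text{in}}]$. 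As an alternative route, one can interpolate using the Carlson--Levin type inequality $\int f^m\lesssim\ldots$ in reverse, via Hölder on the support. The Jensen route is cleaner and is the one I would use.

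\textbf{Step 4: the $L^\infty$ bounds \eqref{eq:Linftyeps}.} Assumption \eqref{eq:finB} says $f_{\text{in}}\le G[M]=\big(\mu[M]-\frac{m-1}{2m}|v|^2\big)_+^{1/(m-1)}$. This is exactly the form required in the last part of Theorem~\ref{thm:existence}, with $M_0=\mu[M]$. The barrier is a stationary solution of \eqref{Eqn} for every $\eps$, because it is independent of $x$ and annihilated by the collision operator. So \eqref{eq:maxbar}, whose proof uses only the comparison principle and therefore survives the $\eps$-scaling, gives $f_\eps\le G[M]$. Integrating in $v$ then gives $\rho_\eps\le M$.

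\textbf{Main obstacle.} Nothing is deep here. The step I would check most carefully is that the comparison bound \eqref{eq:maxbar} and the entropy inequality transfer to the $\eps$-rescaled equation. This is done by noting that the barrier and the entropy structure are invariant under the time rescaling, with the dissipation term picking up the factor $\eps^{-2}$.
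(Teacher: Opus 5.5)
Your proof is correct and follows the same route as the paper. It uses mass conservation, then the entropy inequality combined with $\mathcal E\geq 0$ for $m>1$, then the comparison principle with the stationary barrier $G[M]$ (with $M_0=\mu[M]$) for the $L^\infty$ bounds; your Step 3 also makes explicit what the paper leaves implicit, namely that the $L^k$ bound on $\rho_\eps$ comes from $H[f_\eps(t,x,\cdot)]\geq H[G[\rho_\eps]]=\frac{\nu_1}{k-1}\rho_\eps^k$ via Proposition~\ref{prop:entro}.
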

\begin{proof}[{\bf Proof of \Cref{prop:apriori}}]
The first equality is the mass conservation property. The next four inequalities follow from the entropy inequality \eqref{eq:entropyeps} and the fact that $\E[f]\geq 0$ when $m>1$.

Finally, \eqref{eq:finB} and the comparison principle implies that $f_\eps(t,x,v)\leq G[M](v)$ for all $t>0$ which yields \eqref{eq:Linftyeps}.

\end{proof}

Proposition \ref{prop:apriori} provides bounds on the density $\rho_\eps$.
The flux $j_\eps=\frac 1 \eps \int_{\R^d} v f_\eps(\cdot,\cdot,v)\, \dd v$ also plays a key role in the proof. It satisfies:

\begin{lemma}\label{lem:j}
The flux $j_\eps(t,x)$ is bounded in $L^2((0,T);L^1(\Omega)) $.
\end{lemma}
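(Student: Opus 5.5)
The plan is to exploit the fact that the equilibrium $G[\rho]$ has zero flux. Since the velocity field inside the dissipation vanishes on equilibria, the flux $\eps j_\eps$ should be controlled by the square root of the dissipation $D[f_\eps]$, which \eqref{eq:dissbd} says is of order $\eps^2$.

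The starting point is the identity
$$\irdv{ f_\eps \left( v + \frac{m}{m-1}\na_v f_\eps^{m-1}\right)} = \irdv{ v f_\eps} + \irdv{ \na_v f_\eps^m} = \irdv{ v f_\eps},$$
because $f\,\frac{m}{m-1}\na_v f^{m-1} = \na_v f^m$ and the integral of a gradient in $v$ vanishes. Integrating over all of $\R^d$ is justified since $\na_v f_\eps^m\in L^2$ by \Cref{thm:existence} (take $q=2m$) and $f_\eps^m$ is integrable; to be safe, I would insert a cutoff $\chi(v/R)$ and let $R\to\infty$. This gives the pointwise-in-$(t,x)$ representation
$$\eps\, j_\eps(t,x) = \irdv{ f_\eps \left( v + \frac{m}{m-1}\na_v f_\eps^{m-1}\right)}.$$

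Next comes Cauchy--Schwarz, first in $v$ and then in $x$:
$$\eps\int_\Omega |j_\eps|\,\dd x \le \int_\Omega \rho_\eps^{1/2}\left(\irdv{f_\eps\left|v+\frac{m}{m-1}\na_v f_\eps^{m-1}\right|^2}\right)^{1/2}\dd x \le \|\rho_\eps(t)\|_{L^1}^{1/2}\, D[f_\eps(t)]^{1/2}.$$
Squaring and integrating in time, and using \eqref{eq:L1bd} and \eqref{eq:dissbd}, I get
$$\eps^2\int_0^T \|j_\eps(t)\|_{L^1(\Omega)}^2\,\dd t \le \|f_{\text{in}}\|_{L^1}\int_0^T D[f_\eps(t)]\,\dd t \le C\eps^2 .$$
This yields the bound $\|j_\eps\|_{L^2(0,T;L^1(\Omega))}\le C$ uniformly in $\eps$. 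In fact the bound holds uniformly in $T$.

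The only delicate point is rigor: the dissipation must be read as the $L^2$ norm of $f^{1/2}v + \frac{2m}{2m-1}\na_v f^{m-1/2}$, and the vanishing of $\irdv{\na_v f_\eps^m}$ must be justified from the available regularity. A direct alternative avoids this. Write $vf+\na_v f^m = f^{1/2}\big(f^{1/2}v+\frac{2m}{2m-1}\na_v f^{m-1/2}\big)$, which is legitimate because $\na_v f^{m-1/2}\in L^2$, and apply the same Cauchy--Schwarz argument. The boundary terms from the cutoff vanish since $f_\eps^m\in L^1$.
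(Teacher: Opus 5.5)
Your proof is correct and follows the paper's argument: you write $\eps j_\eps=\int_{\R^d} f_\eps\bigl(v+\frac{m}{m-1}\nabla_v f_\eps^{m-1}\bigr)\dd v$, apply Cauchy--Schwarz in $v$ and then in $x$, and conclude with \eqref{eq:L1bd} and \eqref{eq:dissbd}. Your cutoff argument justifying $\int_{\R^d}\nabla_v f_\eps^m\,\dd v=0$, via the factorization $vf+\nabla_v f^m=f^{1/2}\bigl(f^{1/2}v+\frac{2m}{2m-1}\nabla_v f^{m-1/2}\bigr)$, supplies rigor that the paper leaves implicit.
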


\begin{proof}[{\bf Proof of \Cref{lem:j}}]
We write, for $(t,x) \in (0,T) \times \Omega$,
$$j_\eps(t,x) = \frac 1 \eps \irdv{vf_\eps(t,x,v)}
= \frac 1 \eps \irdv{f_\eps(t,x,v)\left[v + \frac{m}{m-1} \na_v f_\eps^{m-1}(t,x,v)\right]}
$$
We deduce
\begin{equation}\label{eq:jepsbd}
|j_\eps| \leq \left( \frac 1 {\eps^2}\irdv{f_\eps \left|v + \frac{m}{m-1} \na_v f_\eps^{m-1}\right|^2} \right)^{\frac12}\rho_\eps^{\frac12}
\end{equation}
and so
$$
\int_\Omega |j_\eps(t,x)|\, \dd x \leq
\left( \int_\Omega \rho_\eps(t,x) \dd x\right)^{\frac12}
\left( \frac 1 {\eps^2}D(f_\eps(t))\right)^{\frac12}.
$$
We deduce from \eqref{eq:L1bd} and \eqref{eq:dissbd} that $j_\eps$ is bounded in $L^2((0,T);L^1(\Omega)) $.
\end{proof}

Finally, this lemma implies some control on the moment of $f_\eps$ in $x$ which are required when $\Omega=\R^d$:
\begin{lemma}\label{lem:momentdelta}
When $\Omega=\R^d$,
there exists a constant $C$, independent of $\eps$ but depending on $T$ and the initial condition such that, for any $\delta \leq 1$,
\begin{equation}\label{eq:momentxeps}
\iint_{\R^d\times\R^d} \lfloor x \rceil ^\delta f_\eps(t,x,v) \dd v \dd x \leq C \qquad\forall t\in [0,T],
\end{equation}
and
$$\int_{\R^d} \lfloor x \rceil ^\delta \rho_\eps(t,x)\dd x \leq C\qquad\forall t\in [0,T].$$
\end{lemma}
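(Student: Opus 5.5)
The plan is to test the rescaled equation \eqref{Eqn} against the weight $\lfloor x\rceil^\delta=(1+|x|^2)^{\delta/2}$, which does not depend on $v$. Integrating in $v$ first removes the whole collision operator $\Delta_v f_\eps^m+\nabla_v\cdot(vf_\eps)$, since it is in divergence form in $v$. All that survives is the macroscopic continuity equation $\pa_t\rho_\eps+\div_x j_\eps=0$ from \eqref{eq:macro}. Testing it against $\lfloor x\rceil^\delta$ gives
\[
\frac{d}{dt}\int_{\R^d}\lfloor x\rceil^\delta\rho_\eps(t,x)\,\dd x=\int_{\R^d}\na_x\lfloor x\rceil^\delta\cdot j_\eps(t,x)\,\dd x .
\]
Because $\iint\lfloor x\rceil^\delta f_\eps\,\dd v\,\dd x=\int\lfloor x\rceil^\delta\rho_\eps\,\dd x$, the two claimed bounds are the same statement, so it suffices to bound this one quantity.

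The key observation is that for $\delta\le 1$ the gradient $\na_x\lfloor x\rceil^\delta=\delta x\lfloor x\rceil^{\delta-2}$ is bounded by $\delta\le 1$ uniformly in $x$. This is exactly where the restriction $\delta\le1$ enters. Hence the right-hand side is at most $\|j_\eps(t)\|_{L^1(\R^d)}$. By \Cref{lem:j}, more precisely the pointwise-in-time bound in its proof coming from \eqref{eq:jepsbd}, we have $\|j_\eps(t)\|_{L^1}\le\|f_{\text{in}}\|_{L^1}^{1/2}\big(\eps^{-2}D(f_\eps(t))\big)^{1/2}$. Integrating over $(0,t)$ and applying Cauchy--Schwarz in time, then \eqref{eq:dissbd}, gives
\[
\int_{\R^d}\lfloor x\rceil^\delta\rho_\eps(t)\,\dd x\le \int\lfloor x\rceil^\delta\rho_{\text{in}}\,\dd x+\sqrt{T}\,\|f_{\text{in}}\|_{L^1}^{1/2}\,\E[f_{\text{in}}]^{1/2}.
\]
This is independent of $\eps$. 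The initial moment is finite by the moment hypothesis of the theorem; since $\lfloor x\rceil^\delta$ is increasing in $\delta$, the bound for the assumed exponent controls all smaller ones.

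The main technical point is justifying the computation for weak solutions, because $\lfloor x\rceil^\delta$ is unbounded and not an admissible test function. I would replace the weight by a truncation $\chi_R(x)\lfloor x\rceil^\delta$ with $\chi_R$ a smooth cutoff at scale $R$, and take a test function independent of $v$ in the weak formulation. Here I would use that $f_\eps\in L^1$ with finite $|v|^2$-moment, so that the flux $\int vf_\eps\,\dd v$ is integrable and $v$-independent test functions are admissible after a $v$-cutoff. The gradient of the truncated weight stays bounded by a constant independent of $R$, so the estimate above holds uniformly in $R$. Monotone convergence as $R\to\infty$ then concludes. One should also note that $\E[f_{\text{in}}]\ge0$ when $m>1$, so that \eqref{eq:dissbd} holds with the constant $C=\E[f_{\text{in}}]$.
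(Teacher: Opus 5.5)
Your proposal is correct and follows essentially the paper's own argument. You test against $\lfloor x\rceil^\delta$, reduce to $\int \na_x\lfloor x\rceil^\delta\cdot j_\eps\,\dd x$, bound the gradient by $1$ for $\delta\le 1$, and conclude with Lemma \ref{lem:j} and \eqref{eq:dissbd}. Your truncation argument and your remark that the initial $\delta$-moment must be finite (so $\delta$ cannot exceed the exponent in the hypothesis) are points the paper leaves implicit.
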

\begin{proof}[{\bf Proof of \Cref{lem:momentdelta}}]
We compute
\begin{align*}
\frac{d}{dt}\iint_{\R^d\times\R^d} \lfloor x \rceil ^\delta f_\eps(t,x,v) \dd v \dd x
& = \frac 1 \eps \iint_{\R^d\times\R^d} \delta \lfloor x \rceil ^{\delta-2} x \cdot v f_\eps(t,x,v) \dd v \dd x\\
& = \int_{\R^d} \delta \lfloor x \rceil ^{\delta-2} x \cdot j_\eps(t,x)\dd x\\
& \leq \int_{\R^d}| j_\eps(t,x)|\dd x
\end{align*}
since $|\delta \lfloor x \rceil ^{\delta-2} x| \leq \delta \lfloor x \rceil ^{\delta-1}\leq 1$ when $\delta\leq 1$.
The result then follows from Lemma \ref{lem:j}.
\end{proof}

\subsubsection{Strong convergence of the density}
\begin{proposition}\label{prop:rho}
The family $(\rho_\eps)_{\eps \in (0,1]}$ is relatively compact in $L^1((0,T)\times\Omega)$:
There exists a function $\rho \in L^1((0,T)\times\Omega)$ such that, up to a subsequence, $\rho_\eps$ converges to $\rho$ strongly in $L^1((0,T)\times\Omega)$ and almost everywhere.

Furthermore, $\rho \in C^{1/2}(0,T;W^{-1,1}(\Omega))$ and $\rho(0,x)=\int_{\R^d} f_{in}(x,v)\dd v$.
\end{proposition}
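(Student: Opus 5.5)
The plan is to get time equicontinuity from the continuity equation and to recover space compactness from the entropy dissipation, which forces $f_\eps$ close to a local Barenblatt profile. Then an Aubin--Lions / Kolmogorov--Riesz type argument should conclude.

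\emph{Time regularity.} Integrating \eqref{Eqn} in $v$ gives $\pa_t\rho_\eps+\div_x j_\eps=0$ in the distributional sense. By Lemma~\ref{lem:j}, $j_\eps$ is bounded in $L^2((0,T);L^1(\Omega))$. Hence, for $0\le s<t\le T$,
$$\|\rho_\eps(t)-\rho_\eps(s)\|_{W^{-1,1}(\Omega)}\le \int_s^t\|j_\eps(\tau)\|_{L^1}\,\dd\tau\le C|t-s|^{1/2},$$
uniformly in $\eps$. This is the $C^{1/2}(0,T;W^{-1,1})$ bound. Passing to the limit in it gives the same regularity for $\rho$, and evaluating at $t=0$ gives $\rho(0)=\int f_{in}\dd v$, since $\rho_\eps(0)=\int f_{in}\dd v$ for every $\eps$.

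\emph{Space regularity.} This is the main obstacle, since the kinetic equation gives no direct regularity in $x$ uniformly in $\eps$. I would use the momentum equation in \eqref{eq:macro}, $\div_x P_\eps=-j_\eps-\eps^2\pa_t j_\eps$, together with the dissipation bound \eqref{eq:dissbd}. Write $f_\eps(v+\frac m{m-1}\na_v f_\eps^{m-1})=vf_\eps+\na_v f_\eps^m$ and multiply by $v$. Integrating by parts gives
$$P_\eps=\Big(\irdv{f_\eps^m}\Big)\mathbb I+R_\eps,\qquad |R_\eps|\le \Big(\irdv{|v|^2f_\eps}\Big)^{1/2}\Big(\irdv{f_\eps|v+\tfrac m{m-1}\na_vf_\eps^{m-1}|^2}\Big)^{1/2},$$
so that $R_\eps=O(\eps)$ in $L^2_tL^1_x$ by \eqref{eq:mombd} and \eqref{eq:dissbd}. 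Substituting, the identity
$$\na_x\Big(\irdv{f_\eps^m}\Big)=-j_\eps-\eps^2\pa_tj_\eps-\div_x R_\eps$$
shows that $\irdv{f_\eps^m}$ is compact in space up to errors vanishing in a negative-order time norm. The term $\eps^2\pa_t j_\eps$ is harmless after integrating against test functions in time, because $\eps j_\eps=\int vf_\eps$ is bounded.

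Next I would relate $\irdv{f_\eps^m}$ to $\nu(\rho_\eps)$. For this I would show, via a logarithmic Sobolev-type entropy--entropy production inequality in $v$ for the Barenblatt operator (used pointwise in $x$), that the $v$-relative entropy of $f_\eps$ with respect to $G[\rho_\eps]$ is controlled by the dissipation. That inequality is available when $m>1$. Combined with the $L^\infty$ bound \eqref{eq:Linftyeps}, it gives $\irdv{f_\eps^m}-\nu_1\rho_\eps^k\to0$ in $L^1$.

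\emph{Conclusion.} The previous step yields that $\na_x\rho_\eps^k$ is bounded, up to vanishing errors, in $L^2_tW^{-1,1}_x+L^2_tL^1_x$. Combined with the time equicontinuity, an Aubin--Lions--Simon argument (or a Kolmogorov--Riesz argument applied to $\rho_\eps^k$) then gives compactness of $\rho_\eps^k$ in $L^1_{loc}$. Since $s\mapsto s^k$ is strictly monotone and $\rho_\eps$ is bounded in $L^\infty$, we obtain a.e.\ convergence of $\rho_\eps$ along a subsequence. On $\R^d$, tightness follows from Lemma~\ref{lem:momentdelta}, so Vitali's theorem upgrades this to strong $L^1((0,T)\times\Omega)$ convergence.

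If the pointwise entropy inequality is unavailable, I would fall back on a velocity averaging lemma applied to $f_\eps^m$. That approach uses the $H^1_v$ bound on $f_\eps^m$ and the fact that $\eps\pa_t+v\cdot\na_x$ applied to $f_\eps$ is $O(1/\eps)$ times a divergence in $v$.
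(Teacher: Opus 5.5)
Your time estimate is correct and is exactly the paper's: $\pa_t\rho_\eps=-\text{div}_x j_\eps$, with $j_\eps$ bounded in $L^2((0,T);L^1(\Omega))$, gives the $C^{1/2}(0,T;W^{-1,1})$ bound and the initial trace. The gap is in the spatial compactness.

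Your identity $\na_x\int f_\eps^m\,\dd v=-j_\eps-\eps^2\pa_t j_\eps-\text{div}_x R_\eps$ only bounds the gradient in $L^2_tL^1_x$ modulo terms that are small in negative-order norms. Such errors do not preserve strong compactness.
\begin{itemize}
\item The worst error is $\eps^2\pa_t j_\eps=\eps\,\pa_t(\eps j_\eps)$. It tends to $0$ only in $\mathcal D'$ in time, so ``harmless after integrating against test functions'' is a weak-convergence statement. Kolmogorov--Riesz needs $\|A_\eps(\cdot,\cdot+z)-A_\eps\|_{L^1_{t,x}}$ small uniformly in $\eps$, where $A_\eps=\int f_\eps^m\,\dd v$. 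A contribution $\eps\pa_t B_\eps$ with $B_\eps$ merely bounded gives no such control unless you already know time equicontinuity of $A_\eps$ in $L^1$.
\item $\text{div}_x R_\eps$ is small only in $L^2_tW^{-1,1}_x$. Turning this into smallness of the corresponding part of $A_\eps$ requires inverting the gradient (Helmholtz decomposition, Calder\'on--Zygmund estimates). As you state it, a bound on $\na_x\rho_\eps^k$ in $L^2_tW^{-1,1}_x$ gives no spatial compactness.
\item Your time equicontinuity concerns $\rho_\eps$, while your spatial information concerns $\rho_\eps^k$. Aubin--Lions--Simon does not combine estimates on two different functions. The nonlinear substitute pairs $\rho_\eps(t+h)-\rho_\eps(t)$ with $\rho_\eps^k(t+h)-\rho_\eps^k(t)$ and uses monotonicity. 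That pairing needs $\rho_\eps^k$ in $W^{1,\infty}_x$ to test against $W^{-1,1}_x$, which an $L^1$ gradient bound does not provide.
\end{itemize}
Closing your route would need input you do not supply. Examples are $L^2$ bounds on $j_\eps$ and $R_\eps$ (available here, since $f_\eps\le G[M]$ is compactly supported in $v$ and $\rho_\eps\in L^\infty$), a Helmholtz decomposition of $A_\eps$, and an integration by parts in time to absorb $\eps^2\pa_t j_\eps$ inside the pairing. Separately, a bound on the $v$-relative entropy alone does not give $\int f_\eps^m\,\dd v-\nu_1\rho_\eps^k\to0$. For that you need the virial identity of Corollary~\ref{cor:ineqm}, or an $L^1$ Csisz\'ar--Kullback bound such as Proposition~\ref{prop:f-g}.

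The missing idea is the one in your fallback, but you state it in a form that would fail. The paper writes $\eps\pa_t f_\eps+v\cdot\na_x f_\eps=\text{div}_v g_\eps$ with $g_\eps=\frac1\eps\big(vf_\eps+\na_v f_\eps^m\big)$. The key point is that $g_\eps$ is \emph{bounded}, not $O(1/\eps)$. By Cauchy--Schwarz,
$$\|g_\eps(t)\|_{L^1(\Omega\times\R^d)}\le\|f_{\text{in}}\|_{L^1}^{1/2}\big(\eps^{-2}D[f_\eps(t)]\big)^{1/2},$$
so \eqref{eq:dissbd} bounds $g_\eps$ in $L^2((0,T);L^1(\Omega\times\R^d))$. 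With a source of size $1/\eps$, the averaging lemma would give nothing uniform in $\eps$.

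Lemma~\ref{lem:ave} is then applied to $h_\eps=f_\eps$ itself, which is bounded in $L^2$ by the $L^1$ and $L^\infty$ bounds, with $h_{0,\eps}=0$ and $h_{1,\eps}=g_\eps$. This gives equicontinuity in $x$ of $\int f_\eps\psi\,\dd v$, uniformly in $\eps$. No $H^1_v$ bound and no detour through $f_\eps^m$ is needed, since only velocity averages are required. The $|v|^2$-moment bound lets you take $\psi\equiv1$. You thus get spatial equicontinuity of $\rho_\eps$ itself, which combines directly with your time estimate and, on $\R^d$, with the tightness from Lemma~\ref{lem:momentdelta}.
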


\begin{proof}[{\bf Proof of \Cref{prop:rho}}]
We can write the kinetic equation in the form
\begin{equation}\label{eq:fg}
\eps \pa_ t f_\eps + v\cdot \na_x f_\eps = \div_v g_\eps
\end{equation}
where
$$ g_\eps = \frac{1}{\eps} \left[v+\frac{m}{m-1}\na_v f_\eps^{m-1}\right] f_\eps $$
satisfies
$$
\iint_{\Omega\times\R^d }|g_\eps(t,x,v) | \dd x \dd v \leq
\left( \iint_{\Omega\times\R^d }
f_\eps \dd v \dd x\right)^{\frac{1}{2}}\left(
\frac{1}{\eps^2}
\iint_{\Omega\times\R^d } f_\eps\left| v+\frac{m}{m-1}\na_v f_\eps^{m-1}\right|^2 \dd v \dd x \right)^{\frac{1}{2}}
$$
so \eqref{eq:L1bd} and \eqref{eq:dissbd} imply that
\begin{equation}\label{eq:gbounded}
g_\eps(t,x,v)
\mbox{ is bounded in } L^2((0,T);L^1(\Omega\times\R^d)).
\end{equation}

We can then proceed as in \cite[Proposition 6.1]{ElGhaniMasmoudi2010} to show that $(\rho_\eps)_{\eps \in (0,1]}$ is relatively compact in $L^1((0,T)\times\Omega)$. We recall the main step of this argument for the sake of completeness:

First, we recall (see \eqref{eq:macro}) that $\pa_t \rho_\eps =- \text{div}_x j_\eps = 0$
and so Lemma \ref{lem:j} implies
\begin{equation}\label{eq:patrho} \pa_t \rho_\eps \mbox{ is bounded in } L^2((0,T);W^{-1,1}(\Omega)) .
\end{equation}
Compactness with respect to $x$ will be a consequence of the following velocity averaging lemma, which proof can be found in \cite{ElGhaniMasmoudi2010,golse2025velocity}:
\begin{lemma}\label{lem:ave}
Assume that $(h_\eps)_{\eps \in (0,1]}$ is bounded in $L^2((0,T)\times \Omega\times\R^d)$ and let $(h_{0,\eps})_{\eps \in (0,1]}$ and $(h_{1,\eps})_{\eps \in (0,1]}$ be bounded in $L^1((0,T)\times \Omega\times\R^d)$ such that
\begin{equation}\label{eq:fh}
\eps\pa_t h_\eps + v\cdot\na _x h_\eps = h_{0,\eps}+ \na_v\cdot h_{1,\eps}.
\end{equation}
Then for all $\psi\in C^\infty_0(\R^d)$, we have
$$\lim_{z\to 0} \quad\left\| \int_{\R^d} (h_\eps(\cdot,\cdot+z,v)-h_\eps(\cdot,\cdot,v)) \psi(v)\, \dd v \right\|_{L^1((0,T)\times\Omega)} = 0,$$
uniformly in $\eps$.
\end{lemma}

The $L^1$ and $L^\infty$ bounds \eqref{eq:L1bd} and \eqref{eq:Linftyeps} imply that
$f_\eps$ is bounded in $ L^2((0,T)\times \Omega\times\R^d)$ and in view of \eqref{eq:fg}, we can use Lemma \ref{lem:ave} to get
$$ \lim_{z \to 0 }\left\| \int_{\R^d} (f_\eps(t,\cdot+z,\cdot )-f_\eps(t,\cdot,\cdot)) \psi(v)\, \dd v \right\|_{L^1((0,T)\times\Omega)} = 0,$$
uniformly in $\eps$. Furthermore, using the bound on $\iint_{\R^d\times \Omega} |v|^2 f_\eps \dd x \dd v $ and using the same argument as in \# Step 1 in the proof of \Cref{prop:regular}, we can take $\psi\equiv 1$ and get
$$\lim_{z \to 0 } \int_0^T\int_{\R^d}| \rho_\eps(t,x+z)-\rho_\eps(t,x)|\, \dd x\, dt = 0,
$$
uniformly in $\eps$. Together with \eqref{eq:patrho} and \eqref{eq:momentxeps}, this implies Proposition \ref{prop:rho} (in particular \eqref{eq:patrho} implies that $\rho_\eps$ is bounded in $C^{1/2}(0,T;W^{-1,1}(\Omega))$ and that we can pass to the limit in the initial data).

\end{proof}

\subsubsection{Passing to the limit in the equation}
We now have all the ingredients necessary to pass to the limit in the equations \eqref{eq:macro} which we recall here for convenience:
$$
\begin{cases}
\pa_t \rho_\eps + \text{div}_x j_\eps = 0 \\
\eps^2\pa_t j_\eps + \text{div}_x P_\eps = - j_\eps
\end{cases}
$$
with
$$
P_\eps(t,x) = \irdv{v\otimes v \,f_\var(t,x,v)},
$$

In view of Proposition \ref{prop:rho} and Lemma \ref{lem:j} we can extract a subsequence such that $\rho_\eps$ converges strongly in $L^1((0,T)\times\Omega)$ to $\rho$ and $j^\eps$ converges weakly in $L^1((0,T)\times\Omega)$ to $j$. This allows us to pass to the limit in the continuity equation to get:
$$\pa_t \rho + \text{div}_x j = 0 $$
in the sense of distributions.

The main step is thus the following proposition

\begin{proposition}\label{prop:convP}
The following holds:
$$
\lim_{\eps \to 0} P_\eps = \nu (\rho) \, \mathbb I \qquad \mbox{ in } L^1((0,T)\times\Omega)
$$
where $\nu(\rho) = \int_{\R^d} G[\rho](v)\dd v$ (see \eqref{eq:nu}).
\end{proposition}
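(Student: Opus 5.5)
Note first that $\nu(\rho)$ should be read as in \eqref{eq:nu}, $\nu(\rho)=\frac1d\irdv{|v|^2G[\rho]}=\irdv{G[\rho]^m}=\nu_1\rho^k$. The plan is to split $P_\eps$ using the entropy dissipation. Write
$$
v f_\eps = f_\eps\Big[v+\tfrac{m}{m-1}\na_v f_\eps^{m-1}\Big] - \na_v f_\eps^m ,
$$
multiply by $v$ (tensorially) and integrate in $v$. Integrating by parts the last term gives
\begin{equation*}
P_\eps = \Big(\irdv{f_\eps^m}\Big)\mathbb I + R_\eps, \qquad R_\eps := \irdv{ v\otimes f_\eps\Big[v+\tfrac{m}{m-1}\na_v f_\eps^{m-1}\Big]} .
\end{equation*}
This is exactly the identity $\eps g_\eps$ from the proof of Proposition \ref{prop:rho} tested against $v$. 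By Cauchy--Schwarz, pointwise in $(t,x)$,
$$
|R_\eps|\le \Big(\irdv{|v|^2 f_\eps}\Big)^{1/2}\Big(\irdv{f_\eps\big|v+\tfrac{m}{m-1}\na_v f_\eps^{m-1}\big|^2}\Big)^{1/2}.
$$
Integrating over $(0,T)\times\Omega$ and using \eqref{eq:mombd} and \eqref{eq:dissbd} then yields $\|R_\eps\|_{L^1}\le C\sqrt T\,\eps\to0$. So it remains to show
$$
\irdv{f_\eps^m}\to\irdv{G[\rho]^m}=\nu(\rho) \quad\text{in } L^1((0,T)\times\Omega).
$$

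For this second step I would show that $f_\eps$ is close to $G[\rho_\eps]$ and then use the strong convergence $\rho_\eps\to\rho$ from Proposition \ref{prop:rho}. The natural tool is a quantitative form of Proposition \ref{prop:entro}, pointwise in $x$. By convexity, $H[f]-H[G[\rho]]$ equals the relative entropy $\frac1{m-1}\irdv{f^m-G^m-mG^{m-1}(f-G)}$ up to a sign-definite support term when $m>1$. It is controlled by the entropy production $\irdv{f|v+\frac{m}{m-1}\na_v f^{m-1}|^2}$ through the entropy--entropy production inequality for the nonlinear Fokker--Planck operator in $v$. This is the Carrillo--Toscani / Del Pino--Dolbeault inequality, valid for $m>\frac d{d+2}$ with constant $1$ thanks to the confinement $|v|^2/2$.

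Integrating in $x$ and $t$ then gives
$$
\int_0^T\!\!\int_\Omega \big(H[f_\eps]-H[G[\rho_\eps]]\big)\,\dd x\,\dd t\le C\eps^2 .
$$
A Csiszár--Kullback type inequality, using the $L^\infty$ bound \eqref{eq:Linftyeps} to handle the degenerate convexity of $s^m$, then gives $f_\eps-G[\rho_\eps]\to0$ in $L^1((0,T)\times\Omega\times\R^d)$. From this, $\irdv{f_\eps^m}-\irdv{G[\rho_\eps]^m}\to0$ in $L^1$: since $s\mapsto s^m$ is Lipschitz on $[0,C]$ for $m>1$, we have $\big|f_\eps^m-G[\rho_\eps]^m\big|\le C|f_\eps-G[\rho_\eps]|$.

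Finally, $\irdv{G[\rho_\eps]^m}=\nu_1\rho_\eps^k$. Since $\rho_\eps\to\rho$ a.e., $\rho_\eps$ is uniformly bounded by \eqref{eq:Linftyeps}, and $\rho_\eps$ is tight in $L^1$ (by \eqref{eq:momentxeps} when $\Omega=\R^d$), the bound $\rho_\eps^k\le C\rho_\eps$ together with dominated/Vitali convergence gives $\nu_1\rho_\eps^k\to\nu_1\rho^k$ in $L^1$.

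The main obstacle is the middle step: obtaining the local-in-$x$ entropy--entropy production inequality with the mass $\rho_\eps(t,x)$ as a parameter, uniformly in that parameter. This can be done by scaling, since $G[\rho]$ is a rescaling of $G[1]$, but it requires checking that the constant is uniform, including near $\rho=0$. If citing that inequality proves awkward, a fallback is to avoid it. In that approach, $\int_0^T D\,\dd t\to0$ together with the compactness of $f_\eps^m$ from averaging lemmas gives a limit $f$ with $f\,|v+\frac m{m-1}\na_vf^{m-1}|^2=0$, hence $f=G[\rho]$. The convergence of $\irdv{f_\eps^m}$ then follows from the $|v|^2$-moment tightness.
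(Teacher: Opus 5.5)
Your first step is exactly the paper's. You split $P_\eps=\big(\irdv{f_\eps^m}\big)\mathbb I+R_\eps$ and bound $\|R_\eps\|_{L^1}\le C\eps$ by Cauchy--Schwarz. You are also right that $\nu(\rho)$ must be read as $\irdv{G[\rho]^m}$.

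The second step takes a genuinely different route. The paper never needs $f_\eps$ to be close to $G[\rho_\eps]$ in $L^1$. Corollary \ref{cor:ineqm} uses the virial identity $\irdv{\frac{|v|^2}{2}(f-G[\rho])}=\frac12\irdv{v\cdot(\na_v f^m+vf)}+\frac d2\irdv{(f^m-G[\rho]^m)}$ together with $\frac1{k-1}=\frac1{m-1}+\frac d2$. This gives $\frac1{k-1}\irdv{(f^m-G[\rho]^m)}=H[f]-H[G[\rho]]-\frac12\irdv{v\cdot(\na_v f^m+vf)}$. Proposition \ref{prop:entropydissipation} plus Cauchy--Schwarz then yield $\|\irdv{f_\eps^m}-\nu(\rho_\eps)\|_{L^1}\lesssim\eps$ directly. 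This argument gives a rate and never sees the compact support of $G[\rho]$. It integrates in $x$ with no uniformity issue, and the paper reuses it verbatim for $m<1$, where your Lipschitz step is unavailable.

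Your route instead reduces the proposition to $f_\eps-G[\rho_\eps]\to0$ in $L^1$, which is the paper's Lemma \ref{lem:convf2}. That is a stronger conclusion, but the Csisz\'ar--Kullback step you dispatch in one sentence is precisely the hard part. Since $G[\rho]$ vanishes outside $B_R$, the entropy gap there only controls $\int_{|v|\ge R}f^m$ and $\int_{|v|\ge R}(|v|^2-R^2)f$. Neither controls $\int_{|v|\ge R}f$ linearly, and the $L^\infty$ bound goes the wrong way, since it bounds $f^m$ by $f$ and not conversely. To make this step work, the paper needs:
\begin{itemize}
\item the interpolation of Lemma \ref{lem:support};
\item the truncation $\widetilde f$ of Proposition \ref{prop:f-g};
\item the non-homogeneous inequality $\big(\irdv{|f-G[\rho]|}\big)^k\le\delta\rho^k+C_\delta(H[f]-H[G[\rho]])$, so that one can integrate over $x$ uniformly in $\rho_\eps(t,x)$;
\item on $\R^d$, the tightness \eqref{eq:momentxeps}.
\end{itemize}

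Conversely, what you flag as the main obstacle is not one. Uniformity in $\rho$ of the entropy--entropy production inequality is automatic, because Proposition \ref{prop:entropydissipation} holds with the constant $\frac12$ for every mass.
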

\begin{proof}[{\bf Proof of \Cref{prop:convP}}]
We start by writing
\begin{align*}
P_\eps & = \irdv{v\otimes [v \,f_\var + \na_v f_\eps^m]} - \irdv{v\otimes \na_v f_\eps^m} \\
& = \irdv{v\otimes [v \,f_\var + \na_v f_\eps^m]} + \mathbb I \irdv{f_\eps^m}.
\end{align*}
The first term satisfies
\begin{multline*}
\irdx{\left|\irdv{v\otimes [v \,f_\var + \na_v f_\eps^m]}\right|}
\\\leq \left(\irdxv{|v|^2 f_\eps}\right)^{\frac{1}{2}} \left(\irdxv{ f_\eps \left| v + \frac{m}{m-1}\na_v f_\eps^{m-1}\right|^2 }\right)^{\frac{1}{2}}
\end{multline*}
and so
$$
\int_0^T \irdx{\left|\irdv{v\otimes [v \,f_\var + \na_v f_\eps^m]}\right|} \leq C\eps.
$$
We thus need to show that $\irdv{f_\eps^m}$ converges to $\nu(\rho)$.
We write:
\begin{align*}
\left\Vert \irdv{f_\eps(v)^m} - \nu(\rho) \right\Vert_{ L^1((0,T)\times\Omega)} &\leq \int_0^T \int_\Omega \left\vert \irdv{f_\eps^m} - \irdv{G[\rho_\eps(s,x)](v)^m} \right\vert \, \dd x \dd s \\ 
&\qquad \qquad + \int_0^T \int_\Omega \left\vert \nu(\rho_\eps) - \nu(\rho) \right\vert \, \dd x \dd s .
\end{align*}
The strong convergence of $\rho_\eps$ in $L^1$, together with the uniform $L^\infty$ bound \eqref{eq:Linftyeps} implies the strong convergence of $\nu(\rho_\eps) = \nu_1 \rho_\eps^k$ to $\nu(\rho)$. So we need to prove that the first term converges to zero, which follows from Corollary \ref{cor:ineqm}.

Indeed, applying \eqref{eq:ineqm} to $v\mapsto f_\eps(t,x,v)$ and using \eqref{eq:dissbd}, we get
\begin{equation*}
\left\Vert \int_{\R^d} f_\eps^m(v) - G[\rho_\eps]^m(v)\, \dd v\right\Vert_{ L^1((0,T)\times\Omega)} \lesssim C \eps
\end{equation*}
and the result follows.
\end{proof}

This proposition implies
$$
\lim_{\eps \to 0} j^\eps = \na_x \, \nu[\rho] \qquad\mbox{ in } \mathcal D'((0,T)\times\Omega).$$
and shows that the limit $\rho$ solves the nonlinear diffusion equation.

Finally, we note that the strong convergence of $\rho^\eps$ to $\rho$ implies that of $G[\rho_\eps]$ to $G[\rho]$, so the following lemma implies the strong convergence of $f_\eps$ to $G[\rho]$:

\begin{lemma}\label{lem:convf2}
We have:
\begin{equation}\label{eq:convfG} \lim_{\eps \to 0} \int_0^T\iint_{\Omega\times\R^d} \left\vert f_\eps-G[\rho_\eps] \right\vert\, \dd x\, \dd v\, dt = 0.
\end{equation}
\end{lemma}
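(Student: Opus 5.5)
The plan is to prove \eqref{eq:convfG} pointwise in $(t,x)$ with a functional inequality in the velocity variable, the same way Corollary \ref{cor:ineqm} was used in the proof of Proposition \ref{prop:convP}, and then integrate in $(t,x)$. For a fixed $(t,x)$ write $f=f_\eps(t,x,\cdot)$, $\rho=\rho_\eps(t,x)$ and $G=G[\rho]$. The first step is to control the relative entropy
$$H[f]-H[G[\rho]]=\irdv{\frac{f^m-G^m-mG^{m-1}(f-G)}{m-1}}$$
by the local dissipation $\irdv{f|v+\frac{m}{m-1}\na_v f^{m-1}|^2}$. This identity holds because $\frac{m}{m-1}G^{m-1}=\frac{m}{m-1}\mu-\frac{|v|^2}{2}$ on the support of $G$ and $f$, $G$ have the same mass. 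When $m>1$ and $f$ leaves the support of $G$, the identity becomes an inequality, but it goes in the favourable direction. I would obtain this entropy--entropy production inequality from the nonlinear Fokker--Planck (Barenblatt) log-Sobolev type inequality, which is equivalent to the sharp Gagliardo--Nirenberg inequality of Del Pino--Dolbeault, with the scaling in $\rho$ tracked explicitly. This is presumably the content behind Corollary \ref{cor:ineqm}, and I would reuse it. The inequality I aim for is
$$H[f]-H[G[\rho]]\le C(\rho)\irdv{f\Big|v+\frac{m}{m-1}\na_v f^{m-1}\Big|^2}.$$
Here $C(\rho)$ is a power of $\rho$. It is bounded on bounded sets thanks to \eqref{eq:Linftyeps}, which gives $\rho_\eps\le C$.

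The second step is to pass from the relative entropy to an $L^1$ bound through a Csisz\'ar--Kullback type inequality for the nonlinear entropy. For $m>1$, or for $m\in(\frac{d}{d+2},1)$, one has
$$\|f-G\|_{L^1(\R^d)}^2\le C\,\Big(\irdv{G^{m-2}\cdots}\Big)\big(H[f]-H[G]\big),$$
with a constant depending on $\rho$ through $\irdv{G^{2-m}}$ (Carrillo--Toscani, Del Pino--Dolbeault). I would prove this by splitting into the regions $\{f\le 2G\}$ and $\{f>2G\}$ and using Taylor's formula for $s\mapsto s^m$ on each. The first region yields a weighted $L^2$ distance, which Cauchy--Schwarz converts to $L^1$. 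On the second region the integrand of the relative entropy dominates $|f-G|$ linearly, or through $f^m$.

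Combining the two steps gives, for a.e. $(t,x)$,
$$\irdv{|f_\eps-G[\rho_\eps]|}\le C\,\rho_\eps^{\alpha}\Big(\irdv{f_\eps\Big|v+\frac{m}{m-1}\na_vf_\eps^{m-1}\Big|^2}\Big)^{1/2}.$$
I then integrate over $(0,T)\times\Omega$ and apply Cauchy--Schwarz in $(t,x)$. One factor is $(\int_0^T D[f_\eps]\,dt)^{1/2}\le C\eps$ by \eqref{eq:dissbd}. The other is $\|\rho_\eps^{\alpha}\|_{L^2((0,T)\times\Omega)}$. If $\alpha\ge\frac12$, this is controlled by \eqref{eq:L1bd} together with \eqref{eq:Linftyeps}. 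If $\alpha<\frac12$ and $\Omega=\R^d$, it is controlled by the moment bound of Lemma \ref{lem:momentdelta}, interpolated with the $L^1$ bound, to handle regions where $\rho_\eps$ is small. The result is an $O(\eps)$ bound, which proves \eqref{eq:convfG}.

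The main obstacle is the homogeneity in $\rho$ of the constants in the two inequalities. For small $\rho$ the constant $C(\rho)$ may blow up. If the exponent $\alpha$ turns out negative, I would cut off at $\{\rho_\eps<\eta\}$ and bound that region trivially: there $\irdv{|f_\eps-G[\rho_\eps]|}\le 2\rho_\eps$, and on $\Omega=\R^d$ the $\lfloor x\rceil^\delta$ moment makes $\int_{\{\rho_\eps<\eta\}}\rho_\eps$ small uniformly in $\eps$. On $\{\rho_\eps\ge\eta\}$ I would apply the estimate above, then let $\eps\to0$ first and $\eta\to0$ afterwards.
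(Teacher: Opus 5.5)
Your Step 2 fails as written for $m>1$, which is the only case this lemma concerns. On $\{f>2G\}$ the relative-entropy integrand $\frac{f^m-G^m-mG^{m-1}(f-G)}{m-1}$ does not dominate $|f-G|$. Inside $B_R=\{G[\rho]>0\}$ it only controls $c\,G^{m-1}f=c'(R^2-|v|^2)f$, and this weight vanishes on $\pa B_R$. Outside $B_R$ it equals $\frac{f^m}{m-1}$, which gives no $L^1$ control of $f$ on the unbounded set $\{|v|>R\}$: a mass $\eta$ spread there over a set of volume $V$ costs only $\frac{\eta^m V^{1-m}}{m-1}\to0$. So no estimate confined to that region can work. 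The Carrillo--Toscani bound you cite is only available when $f$ and $G[\rho]$ have the same support (Proposition~\ref{prop:entropm>1}). The real obstacle is therefore not the $\rho$-homogeneity, since Step 1 holds with the constant $\frac12$ by Proposition~\ref{prop:entropydissipation}. It is the compact support of $G[\rho]$. The paper handles it in three moves. It keeps the confinement term $\int_{|v|\geq R}\frac12(|v|^2-R^2)f\,\dd v$ of \eqref{eq:HH}, the one you discard as ``favourable''. In Lemma~\ref{lem:support} it balances this term against $\int_{|v|\geq R}f^m\,\dd v$ to bound the mass outside $B_R$. In Proposition~\ref{prop:f-g} it truncates and rescales $f$ onto $B_R$. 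This only yields $\big(\irdv{|f-G[\rho]|}\big)^k\leq\delta\rho^k+C_\delta\,(H[f]-H[G[\rho]])$, hence convergence without a rate.

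The missing idea that repairs your route is to use the equal masses. Since $\irdv{f}=\irdv{G[\rho]}$, one has $\|f-G[\rho]\|_{L^1}=2\int_{\{f<G\}}(G-f)\,\dd v$, and $\{f<G\}\subset B_R$, so the second region is never needed. On $\{f<G\}$ write $f=tG$ with $t\in[0,1)$. The integrand is then $G^m\phi(t)$, where $\phi(t)=\frac{t^m-1-m(t-1)}{m-1}=\int_t^1 (s-t)\,m s^{m-2}\,\dd s\geq c_m(1-t)^2$ on $[0,1]$ for every $m>1$. Prove this bound from the integral form, not the Lagrange remainder, whose factor $\xi^{m-2}$ degenerates as $f\to0$ when $m>2$. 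Cauchy--Schwarz with $\int_{B_R}G[\rho]^{2-m}\,\dd v=C\rho^{2-k}$, together with \eqref{eq:HH}, gives $\|f-G[\rho]\|_{L^1}\leq C\rho^{1-\frac k2}(H[f]-H[G[\rho]])^{\frac12}$ with no support assumption. Your Cauchy--Schwarz in $(t,x)$ then yields an $O(\eps)$ bound on $\T^d$, since $\int_{\T^d}\rho_\eps^{2-k}\,\dd x\leq|\T^d|^{k-1}\|\rho_\eps\|_{L^1}^{2-k}$. This repaired route is shorter than the paper's and gives a rate that the paper's $\delta$-argument does not. On $\R^d$, however, $\int\rho_\eps^{2-k}\,\dd x$ cannot be bounded by interpolating with the $\lfloor x\rceil^\delta$ moment. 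That would need a moment of order larger than $\frac{d(k-1)}{2-k}$, which can exceed $1$. Use your cut-off instead, or localize to $|x|<L$ and bound $\int_{|x|>L}\rho_\eps$ by Lemma~\ref{lem:momentdelta}, as the paper does.
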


\begin{proof}[{\bf Proof of \Cref{lem:convf2}}]

For clarity, we will not always indicate the dependence on the variable $t$, $x$ and $v$ in the proof below.
In order to use the results of the Appendix (which only involve the variable $v$), we denote
$$H[f_\eps] =\int_{\R^d} \frac{f_\eps^m}{m-1} + \frac{|v|^2}{2} f_\eps\dd v$$
which is a function of $x$ and $t$ for which we have
$$\E_\eps[f_\eps] = \int_\Omega H[f_\eps]\, \dd x
$$
(which is a function of $t$).

First, we classically have (see \cite[Corollary 13]{DPD} and Proposition \ref{prop:entropydissipation})
\begin{equation}\label{eq:GN}
0\leq \E[f_\eps(t)] - \E[G[\rho_\eps(t)]]\leq D_\eps(t) \qquad \mbox{ when } m>1
\end{equation}
and so \eqref{eq:dissbd} implies
\begin{equation}\label{eq:entropeps}\int_0^T \E[f_\eps(t)] - \E[G[\rho_\eps(t)]]\, dt \leq C \eps^2.
\end{equation}

We then use Proposition \ref{prop:f-g}:
for all $\delta>0$ there exists $C_{\delta,m,d}$ such that
$$\left(\int_{\R^d} |f_\eps(t,x,v)-G[\rho_\eps(t,x)](v)|\dd v \right)^k \leq \delta \rho_\eps(t,x)^k + C_{\delta,m,d} (H[f_\eps(t,x)]-H[G[\rho_\eps(t,x)]])$$
for all $x\in \Omega$ and $t\geq 0$.
It follows (using \eqref{eq:Lkbd} and \eqref{eq:entropeps}):
\begin{align*}
\int_0^T \int_{\Omega} \left(\int_{\R^d} |f_\eps(t,x,v)-G[\rho_\eps(t,x)](v)|\dd v \right)^k\dd x \dd t
& \leq \delta \int_0^T \int_{\Omega} \rho_\eps^k\dd x \dd t + C_{\delta,m,d} \int_0^T \E[f_\eps]-\E[G[\rho_\eps]]\, dt\\
& \leq C \delta + C \eps^2.
\end{align*}
We deduce
\begin{align*}
\limsup_{\eps\to 0} \int_0^T \int_{\Omega} \left(\int_{\R^d} |f_\eps(t,x,v)-G[\rho_\eps(t,x)](v)|\dd v \right)^k\dd x \dd t \leq C \delta
\end{align*}
and since this holds for all $\delta>0$, we must have
$$\limsup_{\eps\to 0} \int_0^T \int_{\Omega} \left(\int_{\R^d} |f_\eps(t,x,v)-G[\rho_\eps(t,x)](v)|\dd v \right)^k\dd x \dd t =0. $$
When $\Omega=\T^d$, the result follows (since $k>1$ when $m>1$).
When $\Omega=\R^d$, we get convergence in $L^1_{loc}((0,T)\times\R^d ; L^1(\R^d))$, which together with the bounds \eqref{eq:momentxeps} implies the convergence in $L^1((0,T)\times\R^d ; L^1(\R^d))$.
\end{proof}

\subsection{The case \texorpdfstring{$m<1$}{m<1}.}

\subsubsection{A priori estimates}
When $m<1$, the entropy, which still convex and decreasing in time, can take negative values and we need to use the following proposition:
\begin{proposition}\label{prop:CLbd}
When $\Omega=\T^d$ and $m\in(\frac{d}{d+2},1)$, and given $f$ such that $(1+|v|^2)f \in L^1(\Omega\times\R^d) $,
there exists $C(d,m, \|f\|_{L^1(\Omega\times\R^d}))$ such that
\begin{equation}\label{eq:CLbd1}
\frac{1}{1-m}\iint_{\T^d\times\R^d} f^m\dd v \dd x
\leq \frac 1 4 \iint_{\T^d\times \R^d} |v|^2 f \dd v \dd x + C(d,m, \|f\|_{L^1(\Omega\times\R^d})) 
\end{equation}

When $\Omega =\R^d$ and $m\in(\frac{d}{d+1},1)$,
and given $f$ such that $(1+|v|^2)f \in L^1(\Omega\times\R^d) $,
there exists $C=C(d,m, \|f\|_{L^1(\Omega\times\R^d}))$ such that
\begin{equation}\label{eq:CLbd2}
\frac{1}{1-m}\iint_{\R^d\times\R^d} f^m\dd v \dd x
\leq \frac 1 4 \iint_{\R^d\times \R^d}( |v|^2 +|x|^2 ) f \dd v \dd x + C(d,m, \|f\|_{L^1(\Omega\times\R^d})) 
\end{equation}
\end{proposition}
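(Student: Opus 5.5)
The plan is to combine the Carlson--Levin inequality \eqref{eq:CL0} (in the variable $v$ on the torus, and in $(x,v)\in\R^{2d}$ on the whole space) with Young's inequality. This mirrors the computation already used in the proof of bound (2a) in \Cref{sec:m1} and in the key estimate of the case $m<1$, $\Omega=\R^d$.

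\emph{Case $\Omega=\T^d$.} For fixed $x$, apply \eqref{eq:CL0} to $v\mapsto f(x,v)$:
$$\int_{\R^d} f^m\,\dd v\lesssim \rho(x)^{m-\frac{d(1-m)}{2}}\Big(\int_{\R^d}|v|^2 f\,\dd v\Big)^{\frac{d(1-m)}{2}} .$$
This is valid because $m>\frac{d}{d+2}$. Set $\theta=\frac{d(1-m)}{2}\in(0,1)$; note that $\theta<1$ already holds as soon as $m>\frac{d-2}{d}$. Integrate in $x$ and apply Hölder with exponents $\frac1{1-\theta},\frac1\theta$. Since $(m-\theta)/(1-\theta)=k$ by \eqref{eq:km}, this gives
$$\iint f^m\lesssim \Big(\int_{\T^d}\rho^k\Big)^{1-\theta}\Big(\iint|v|^2f\Big)^{\theta}.$$
Next apply Young's inequality with a small parameter, absorbing the kinetic energy term with coefficient $\frac{1-m}{4}$. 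What remains is a constant times $\int_{\T^d}\rho^k$. Since $k\in(0,1)$ and $|\T^d|<\infty$, Jensen's inequality gives $\int\rho^k\le |\T^d|^{1-k}\|f\|_{L^1}^k$. Dividing by $1-m$ yields \eqref{eq:CLbd1}, with a constant depending on $d$, $m$ and $\|f\|_{L^1}$ (the torus volume is fixed).

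\emph{Case $\Omega=\R^d$.} Treat $(x,v)$ as a single variable $z\in\R^{2d}$ and apply Carlson--Levin in dimension $2d$. The condition for this is exactly $m>\frac{2d}{2d+2}=\frac{d}{d+1}$. It gives
$$\iint f^m\lesssim \|f\|_{L^1}^{m-d(1-m)}\Big(\iint(|x|^2+|v|^2)f\Big)^{d(1-m)},$$
where the exponent $d(1-m)=\frac{2d(1-m)}{2}$ lies in $(0,1)$ because $m>\frac{d}{d+1}>\frac{d-1}{d}$. A further application of Young's inequality with a small parameter absorbs the second-moment term into $\frac{1-m}{4}\iint(|x|^2+|v|^2)f$. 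The remaining constant is a power of $\|f\|_{L^1}$, and dividing by $1-m$ gives \eqref{eq:CLbd2}.

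The main point to verify is the exponent bookkeeping. First, the Carlson--Levin exponents must sum correctly and satisfy $\theta<1$, which is what makes Young's inequality applicable. Second, in the torus case the Hölder step must reproduce exactly $\rho^k$ with $k<1$. The Carlson--Levin inequality itself is taken from the Appendix, and the implicit constants depend only on $d$ and $m$.
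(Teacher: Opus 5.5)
Your argument is correct, but it takes a different route from the paper.

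\emph{The paper's route.} The paper does not invoke Carlson--Levin here. It writes $f^m=((1+|v|^2)f)^m(1+|v|^2)^{-m}$ and applies one H\"older inequality with exponents $\frac1m$ and $\frac1{1-m}$ on $\T^d\times\R^d$. This gives $\iint f^m\le\big(\iint(1+|v|^2)f\big)^m\big(\iint(1+|v|^2)^{-\frac{m}{1-m}}\big)^{1-m}$, and the last integral is finite precisely when $m>\frac d{d+2}$ (using $|\T^d|<\infty$). Young's inequality for the power $m<1$ then finishes. The whole-space case is identical with the weight $(1+|x|^2+|v|^2)^{-m}$, whose power $-\frac{m}{1-m}$ is integrable on $\R^{2d}$ if and only if $m>\frac d{d+1}$.

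\emph{What each approach buys.} The paper's route is shorter, needs no exponent bookkeeping involving $k$, and covers $d=1$ directly. You rely on \eqref{eq:CL0}, which the paper only states for $d\ge 2$. Its one-dimensional version does hold, by the same splitting as in the proof of \Cref{prop:CL}, so this is only a remark. In exchange, your scale-invariant argument yields more. The intermediate estimate $\iint f^m\le\eta\iint|v|^2f+C_\eta\int\rho^k$ is valid on any spatial domain and isolates the macroscopic quantity $\int\rho^k$ that appears in $H[G[\rho]]$. Your final constant is also homogeneous in the mass (of order $\|f\|_{L^1}^k$ on the torus). By contrast, the inhomogeneous weight $1+|v|^2$ in the paper's proof produces an additive constant that does not scale with the mass. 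Both proofs use the finiteness of $|\T^d|$: the paper through integrability of the weight, yours through Jensen's inequality.
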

\begin{proof}[{\bf Proof of \Cref{prop:CLbd}.}]
We write
\begin{align*}
\iint_{\T^d\times\R^d} f^m\dd v \dd x 
& = \iint_{\T^d\times\R^d} \left((1+|v|^2) f\right)^m (1+|v|^2)^{-m}\dd v \dd x \\
& \leq \left( \iint_{\T^d\times\R^d} (1+|v|^2) f \dd v \dd x \right)^{m}
\left( \iint_{\T^d\times\R^d} (1+|v|^2)^{-\frac{m}{1-m}} \dd v \dd x \right)^{1-m}
\end{align*}
where this last integral is finite since the condition $m>\frac{d}{d+2}$ is equivalent to $\frac{m}{1-m} >\frac{d}{2}$ (and $\T^d$ is bounded).
The result follows from Young's inequality.

When $\Omega=\R^d$, we proceed similarly, writing
\begin{align*}
\iint_{\R^d\times\R^d} f^m\dd v \dd x 
& = \iint_{\R^d\times\R^d} \left((1+|v|^2+|x|^2) f\right)^m (1+|v|^2+|x|^2)^{-m}\dd v \dd x \\
& \leq \left( \iint_{\R^d\times\R^d} (1+|v|^2+|x|^2) f \dd v \dd x \right)^{m}
\left( \iint_{\R^d\times\R^d} (1+|v|^2+|x|^2)^{-\frac{m}{1-m}} \dd v \dd x \right)^{1-m}
\end{align*}
and this last integral is finite since the condition $m>\frac{d}{d+1}$ is equivalent to $\frac{m}{1-m} >d$.
\end{proof}

\begin{corollary}\label{cor:CLbd}
When $\Omega =\T^d$ and $m\in(\frac{d}{d+2},1)$, there exists $C$ depending only on $m$, $d$ and the initial condition such that
$$\iint_{\T^d\times\R^d} f_\eps^m(t)\dd v \dd x + \iint_{\T^d\times\R^d} |v|^2f_\eps(t)\dd v \dd x
+ \frac 1 {\eps^2} \int_0^t D[f_\eps(s)]\, \dd s \leq C.$$
When $\Omega =\R^d$ and $m\in(\frac{d}{d+1},1)$, there exists $C$ depending only on $m$, $d$ and the initial condition such that
$$
\iint_{\R^d\times\R^d} f_\eps^m(t)\dd v \dd x + \iint_{\R^d\times\R^d} (|v|^2+ |x|^2 ) f_\eps(t) \dd x \dd v
+ \frac 1 {\eps^2} \int_0^t D[f_\eps(s)]\, \dd s \leq C.
$$ 
\end{corollary}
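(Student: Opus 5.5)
The plan is to combine the entropy inequality \eqref{eq:entropyeps} with Proposition \ref{prop:CLbd}, which lets the kinetic energy absorb the negative part of the entropy. For $m<1$ we have $\frac{f^m}{m-1} = -\frac{f^m}{1-m}$. Hence \eqref{eq:entropyeps} reads
\begin{equation*}
-\frac{1}{1-m}\iint f_\eps^m + \frac12\iint |v|^2 f_\eps + \frac{1}{\eps^2}\int_0^t D[f_\eps(s)]\,\dd s \leq \mathcal E[f_{\text{in}}] .
\end{equation*}
The right-hand side is finite by assumption and independent of $\eps$. Mass conservation \eqref{eq:L1bd} still holds, so $\|f_\eps(t)\|_{L^1} = \|f_{\text{in}}\|_{L^1}$ and the constant in Proposition \ref{prop:CLbd} does not depend on $t$ or $\eps$.

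On $\T^d$, I apply \eqref{eq:CLbd1} to $f_\eps(t)$, which bounds $\frac{1}{1-m}\iint f_\eps^m$ by $\frac14\iint|v|^2 f_\eps + C$. Substituting this gives
\begin{equation*}
\frac14\iint |v|^2 f_\eps(t) + \frac{1}{\eps^2}\int_0^t D[f_\eps(s)]\,\dd s \leq \mathcal E[f_{\text{in}}] + C .
\end{equation*}
This controls the second and third terms of the statement. Plugging the kinetic-energy bound back into \eqref{eq:CLbd1} then controls $\iint f_\eps^m$.

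On $\R^d$, \eqref{eq:CLbd2} involves $\frac14\iint(|v|^2+|x|^2)f_\eps$, but the entropy contains no $|x|^2$ term, so a separate estimate is needed. I will derive one from $\frac{d}{dt}\iint |x|^2 f_\eps = \frac{2}{\eps}\iint x\cdot v f_\eps = 2\int x\cdot j_\eps$. Writing $v f_\eps = f_\eps\bigl(v+\frac{m}{m-1}\na_v f_\eps^{m-1}\bigr) - \na_v f_\eps^m$, the last term integrates to zero in $v$. Cauchy--Schwarz then gives
\begin{equation*}
\frac{d}{dt}\iint |x|^2 f_\eps \leq 2\Bigl(\iint |x|^2 f_\eps\Bigr)^{1/2}\Bigl(\frac{1}{\eps^2}D[f_\eps]\Bigr)^{1/2} .
\end{equation*}
Setting $X(t)=\iint|x|^2 f_\eps(t)$ and $A(t)=\frac1{\eps^2}\int_0^t D[f_\eps]$, this yields $\sqrt{X(t)}\leq\sqrt{X(0)}+\int_0^t (D/\eps^2)^{1/2} \leq \sqrt{X(0)}+\sqrt{t\,A(t)}$, so $X(t)\leq 2X(0)+2T A(t)$ on $[0,T]$.

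Next I combine this with the entropy inequality and \eqref{eq:CLbd2}:
\begin{equation*}
\frac14\iint|v|^2 f_\eps + A(t) \leq \mathcal E[f_{\text{in}}] + C + \frac14 X(t) \leq C' + \frac{T}{2}A(t).
\end{equation*}
This closes only when $T<2$. The plan is to rescale the weight: replace $\frac14$ by a small $\eta$ in Proposition \ref{prop:CLbd}, which Young's inequality allows at the cost of a larger constant $C_\eta$. With $\eta\cdot 2T<\frac12$ the $A(t)$ term is absorbed and gives a bound on $A(t)$ and $\iint|v|^2 f_\eps$. The estimate for $X$ then gives the $|x|^2$ moment, and \eqref{eq:CLbd2} again gives the $f_\eps^m$ bound.

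The main obstacle is this absorption step on $\R^d$. The $x$-moment is only controlled through the dissipation, with a constant that grows with $T$, so the argument depends on being able to choose the Young constant small, depending on $T$. As a result the final constant $C$ depends on $T$ as well as on $m$, $d$ and the initial data, including $\iint |x|^2 f_{\text{in}}$.
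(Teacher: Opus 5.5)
Your proof is correct. On $\T^d$ it is the paper's argument. On $\R^d$ you close the estimate differently.

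The paper adds $\frac12|x|^2 f_\eps$ to the entropy. It uses the same Cauchy--Schwarz bound on $\frac1\eps\iint x\cdot(vf_\eps+\na_v f_\eps^m)$, followed by Young, to show that the confined functional $\mathcal F(t)=\iint \frac{f_\eps^m}{m-1}+\frac12(|v|^2+|x|^2)f_\eps$ satisfies $\mathcal F'+\frac{1}{2\eps^2}D[f_\eps]\le \iint|x|^2f_\eps$. It then applies \eqref{eq:CLbd2} with its fixed constant $\frac14$ (this is \eqref{eq:momentxv}) to get $\iint|x|^2 f_\eps\le 4(\mathcal F+C)$, and closes by Gronwall.

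You keep the $x$-moment separate. You integrate $\frac{d}{dt}\sqrt X\le (D[f_\eps]/\eps^2)^{1/2}$ to get $X(t)\le 2X(0)+2tA(t)$. You then absorb $A(t)$ by taking the Young weight in Proposition~\ref{prop:CLbd} to be $\eta\lesssim 1/T$. This is legitimate: since $m<1$, the proof of that proposition ends with $a^m K^{1-m}\le \eta a + C_\eta K$ for any $\eta>0$, with $C_\eta\sim \eta^{-m/(1-m)}$.

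The two routes trade off as follows. The paper uses Proposition~\ref{prop:CLbd} exactly as stated and needs only a one-line Gronwall, but its constants grow like $e^{cT}$. Your route reopens the Young step and avoids Gronwall entirely. It gives constants that grow only polynomially in $T$: roughly $A\lesssim T^{m/(1-m)}$ and $X\lesssim T^{1/(1-m)}$.

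In both approaches the $\R^d$ constant depends on $T$ and on $\iint|x|^2 f_{\text{in}}$. So the dependence you flag is not a weakness of your route. It is something the statement's ``depending only on $m$, $d$ and the initial condition'' leaves out.
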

\begin{proof}[{\bf Proof of \Cref{cor:CLbd}.}]
We begin again with the case $\Omega =\T^d$ and $m\in(\frac{d}{d+2},1)$. In that case, \eqref{eq:CLbd1} and the conservation of mass gives
\begin{align*}
\E[f_\eps] & = -\frac{1}{1-m} \iint_{\T^d\times\R^d} f_\eps^m\dd v \dd x + \iint_{\T^d\times\R^d}\frac{1}{2} |v|^2f_\eps\dd v \dd x \\
& \geq \iint_{\T^d\times\R^d}\frac{1}{4} |v|^2f_\eps\dd v \dd x - C
\end{align*}
with $C$ independent of $\eps$.
The entropy inequality \eqref{eq:entropyeps} together with the conservation of mass then implies
\begin{align*}
\iint_{\T^d\times\R^d}\frac{1}{4} |v|^2f_\eps(t)\dd v \dd x
+ \frac 1 {\eps^2} \int_0^t D[f_\eps(s)]\, \dd s \leq
\mathcal E[f_{\text{in}}] + C
\end{align*}

When $\Omega=\R^d$ and $m\in(\frac{d}{d+1},1)$, we first note that
\begin{align*}
\frac{d}{dt}\iint_{\R^d\times\R^d} |x|^2 f_\eps(t,x,v)\dd v\dd x
& = \frac 1 \eps \iint_{\R^d\times\R^d} x\cdot v f_\eps(t,x,v)\dd v\dd x\\
& = \frac 1 \eps \iint_{\R^d\times\R^d} x\cdot[ v f_\eps(t,x,v)+ \na_v f_\eps^m (t,x,v) ] \dd v\dd x\\
&\leq \left( \frac 1 {\eps^2} \iint_{\R^d\times\R^d} f_\eps(t,x,v) \left|v +
\frac{m}{m-1} \na_v f_\eps^{m-1}(t,x,v)\right|^2\dd x \dd v \right)^{\frac12} \\
& \qquad \times \left(\iint_{\R^d\times\R^d} |x|^2f_\eps(t,x)\dd v\dd x\right) ^{\frac12}
\end{align*}
that is
$$\frac{d}{dt}\iint_{\R^d\times\R^d} |x|^2 f_\eps(t,x,v) \dd x \dd v\leq
\left( \frac 1 \eps D_\eps(t)\right)^{\frac12} \left(\iint_{\R^d\times\R^d} |x|^2 f_\eps(t,x,v)\dd x \dd v \right) ^{\frac12}
$$
Combined with \ref{entro}, this implies
$$
\frac{d}{dt}
\iint_{\R^d\times\R^d} \frac{f_\eps^m}{m-1} + \frac{1}{2}(|v|^2+ |x|^2 ) f_\eps \dd x \dd v
+ \frac 1 {\eps^2} D[f_\eps(t)] \leq \left( \frac 1 \eps D_\eps(t)\right)^{\frac12} \left(\iint_{\R^d\times\R^d} |x|^2 f_\eps(t,x,v)\dd x \dd v \right) ^{\frac12}
$$
and so
\begin{equation}\label{eq:entropyx}
\frac{d}{dt}
\iint_{\R^d\times\R^d} \frac{f_\eps^m}{m-1} + \frac{1}{2}(|v|^2+ |x|^2 ) f_\eps \dd x \dd v
+ \frac 1 {2 \eps^2} D[f_\eps(t)] \leq \iint_{\R^d\times\R^d} |x|^2 f_\eps(t,x,v)\dd x \dd v
\end{equation}
We now use \eqref{eq:CLbd1} and the conservation of mass to write
\begin{equation}\label{eq:momentxv}
\frac 1 4 \iint_{\R^d\times \R^d}( |v|^2 +|x|^2 ) f_\eps \dd v \dd x \leq
\iint_{\R^d\times\R^d} \frac{f^m_\eps}{m-1}\dd v \dd x
+ \frac 1 2 \iint_{\R^d\times \R^d}( |v|^2 +|x|^2 ) f_\eps \dd v \dd x + C
\end{equation}
which together with \eqref{eq:entropyx} implies
$$
\frac{d}{dt}
\iint_{\R^d\times\R^d} \frac{f_\eps^m}{m-1} + \frac{1}{2}(|v|^2+ |x|^2 ) f_\eps \dd x \dd v
+ \frac 1 {2 \eps^2} D[f_\eps(t)] \leq 4 \iint_{\R^d\times\R^d} \frac{f_\eps^m}{m-1} + \frac{1}{2}(|v|^2+ |x|^2 ) f_\eps \dd x \dd v + C
$$
Finally a Gronwall argument yields
$$
\iint_{\R^d\times\R^d} \frac{f_\eps^m(t)}{m-1} + \frac{1}{2}(|v|^2+ |x|^2 ) f_\eps(t) \dd x \dd v
+ \frac 1 {2 \eps^2} \int_0^t D[f_\eps(s)]\, \dd s \leq C
$$
for a constant $C$ depending only on $m$, $d$ and the initial condition, and we can use \eqref{eq:momentxv} again to conclude.
\end{proof}

With this corollary, we recover the same bounds as in the case $m>1$, namely:
\begin{proposition}\label{prop:apriorim1}
Under the assumptions of Theorem \ref{thm:convm1}, $f_\eps(t,x,v)$ satisfies
\begin{align}
& \| f_\eps(t)\|_{L^1(\Omega\times\R^d)} = \|\rho_\eps(t)\|_{L^1(\Omega)} = \| f_{\text{in}}\|_{L^1(\Omega\times\R^d)}, \label{eq:L1bdm1} \\
& \|f_\eps(t)\|_{L^m(\Omega\times\R^d)}\leq C,\label{eq:Lmbdm1} \\
& \iint_{\Omega\times \R^d} |v|^2 f_\eps(t,x,v)\dd v\dd x \leq C,\label{eq:mombdm1} \\
& \|\rho_\eps(t)\|_{L^k(\Omega)}\leq C, \label{eq:Lkbdm1}\\
& \int_0^\infty D (f_\eps(t))\, dt \leq C \eps^2, \label{eq:dissbdm1}
\end{align}
where $C=\E[f_{\text{in}}]$ is independent of $\eps$. Furthermore,
\begin{equation}\label{eq:Linftyepsm1}
\| f_\eps\|_{L^\infty((0,\infty)\times \Omega \times\R^d)} \leq C, \qquad 
\| \rho_\eps\|_{L^\infty((0,\infty)\times \Omega)} \leq C
\end{equation}
and
the flux $j_\eps(t,x)$ is bounded in $L^2((0,T);L^1(\Omega)) $.
\end{proposition}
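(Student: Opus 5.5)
The estimates follow by combining the entropy inequality with Corollary \ref{cor:CLbd}, the comparison principle, and the argument of Lemma \ref{lem:j}.

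\emph{Mass, $L^m$ norm, second moment and dissipation.} The identity \eqref{eq:L1bdm1} is conservation of mass, which is obtained by integrating \eqref{Eqn} in $(x,v)$. Corollary \ref{cor:CLbd} then gives uniform-in-$\eps$ bounds on $\iint f_\eps^m$, on $\iint |v|^2 f_\eps$ (together with $|x|^2 f_\eps$ when $\Omega=\R^d$), and on $\eps^{-2}\int_0^t D[f_\eps]$. This yields \eqref{eq:Lmbdm1}, \eqref{eq:mombdm1} and \eqref{eq:dissbdm1}. The constant depends only on $m$, $d$, $\E[f_{\text{in}}]$, the mass, and the $x$-moment of the initial data; it is no longer literally $\E[f_{\text{in}}]$, and I will note this in the statement.

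\emph{The $L^k$ bound on $\rho_\eps$.} Since $k\in(0,1)$, I will use Proposition \ref{prop:entro} in $v$ at fixed $(t,x)$. It gives $H[f_\eps(t,x)]\ge H[G[\rho_\eps]]=\frac{\nu_1}{k-1}\rho_\eps^k$, that is,
\[
\frac{\nu_1}{1-k}\,\rho_\eps^k\le \frac{1}{1-m}\int_{\R^d} f_\eps^m\,\dd v .
\]
Integrating in $x$ and using \eqref{eq:Lmbdm1} bounds $\|\rho_\eps\|_{L^k}^k$. On $\T^d$ one could instead use Jensen with the bounded volume.

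\emph{The $L^\infty$ bounds.} The hypothesis $f_{\text{in}}\le G[M]$ is the fast-diffusion Barenblatt bound. Theorem \ref{thm:existencefast}, applied to the rescaled equation, preserves it: the rescaling only multiplies the transport and time derivative by constants, and $G[M]$ is still a stationary supersolution independent of $x$. Hence $f_\eps\le G[M](v)$ for all times, which gives the first part of \eqref{eq:Linftyepsm1}. Integrating in $v$ gives $\rho_\eps\le \int G[M]=M$, which is finite because $m>\frac{d-2}{d}$.

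\emph{The flux.} The bound on $j_\eps$ repeats Lemma \ref{lem:j} word for word. First, $\int \na_v f_\eps^m\,\dd v=0$ and $f_\eps\frac{m}{m-1}\na_v f_\eps^{m-1}=\na_v f_\eps^m$, so $j_\eps=\eps^{-1}\int f_\eps\,(v+\frac{m}{m-1}\na_v f_\eps^{m-1})\,\dd v$. Cauchy--Schwarz then gives $\|j_\eps(t)\|_{L^1}\le \|f_{\text{in}}\|_{L^1}^{1/2}(\eps^{-2}D[f_\eps(t)])^{1/2}$, and \eqref{eq:dissbdm1} closes the estimate.

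\emph{Main obstacle.} The delicate point is justifying $\int_{\R^d}\na_v f_\eps^m\,\dd v=0$ and the integrations by parts behind the moment bounds, given the heavy tails. Both follow from $\na_v f^{q/2}\in L^2$ and the finite second moment. Truncating with a cutoff $\chi(v/R)$ and letting $R\to\infty$ requires $f_\eps^m/R\to0$ in $L^1$ on the annulus $\{R\le|v|\le2R\}$, which \eqref{eq:Lmbdm1} provides.
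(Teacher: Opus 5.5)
Apart from one step, your proposal is the paper's argument: the paper simply invokes Corollary~\ref{cor:CLbd} and repeats the $m>1$ reasoning (mass conservation, comparison with $G[M]$, and the Cauchy--Schwarz bound of Lemma~\ref{lem:j}). Your remark that the constant is no longer literally $\E[f_{\text{in}}]$ is right.

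The step that fails is the $L^k$ bound on $\rho_\eps$ when $\Omega=\R^d$. Since $m<1$, $k<1$ and $H[G[\rho]]=-\frac{\nu_1}{1-k}\rho^k$, Proposition~\ref{prop:entro} gives
\[
\frac{\nu_1}{1-k}\,\rho_\eps^k\;\ge\;\frac{1}{1-m}\int_{\R^d} f_\eps^m\,\dd v-\frac12\int_{\R^d}|v|^2 f_\eps\,\dd v .
\]
This is a \emph{lower} bound on $\rho_\eps^k$. The inequality you wrote has the sign reversed, and it is false in general. For $f=\rho\,|B_r|^{-1}\mathbf{1}_{B_r}$ one gets $\int f^m\,\dd v=\rho^m|B_r|^{1-m}\to0$ as $r\to0$ with $\rho$ fixed.

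In fact, on $\R^d$ no bound on $\int\rho^k\,\dd x$ can come from the mass, $\iint f^m$, the kinetic energy and the $L^\infty$ bound alone. Take $f(x,v)=A\,\mathbf{1}_{B_L}(x)\mathbf{1}_{B_r}(v)$ with $A$ fixed, $A|B_r|=\delta$ and $\delta|B_L|$ fixed. As $\delta\to0$ all these quantities stay bounded, while $\int\rho^k\,\dd x=|B_L|\,\delta^k\to\infty$. An $x$-moment is therefore indispensable.

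The repair uses ingredients you already cite. On $\T^d$ your Jensen remark suffices: $\int_{\T^d}\rho_\eps^k\,\dd x\le|\T^d|^{1-k}\|f_{\text{in}}\|_{L^1}^k$. On $\R^d$, use the bound on $\iint|x|^2 f_\eps$ from Corollary~\ref{cor:CLbd} and H\"older with exponents $1/k$ and $1/(1-k)$:
\[
\int_{\R^d}\rho_\eps^k\,\dd x\le\Bigl(\int_{\R^d}(1+|x|^2)\,\rho_\eps\,\dd x\Bigr)^{k}\Bigl(\int_{\R^d}(1+|x|^2)^{-\frac{k}{1-k}}\,\dd x\Bigr)^{1-k}.
\]
The last integral is finite if and only if $\frac{2k}{1-k}>d$, i.e.\ $k>\frac{d}{d+2}$. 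By \eqref{eq:km} this is exactly the hypothesis $m>\frac{d}{d+1}$; it is the Carlson--Levin-in-$x$ restriction the paper mentions in the existence part.

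With this substitution the rest of your argument goes through. One caveat: on $\R^d$ the Gr\"onwall step in Corollary~\ref{cor:CLbd} makes the constants depend on $T$. There the bounds hold only for $t\in[0,T]$, and \eqref{eq:dissbdm1} should be read with $\int_0^T$; your list of what the constant depends on should include $T$.
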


\subsubsection{Strong convergence of the density}
As in the case $m>1$, we can then prove:
\begin{proposition}\label{prop:rhom1}
The family $(\rho_\eps)_{\eps \in (0,1]}$ is relatively compact in $L^1((0,T)\times\Omega)$:
There exists a function $\rho \in L^1((0,T)\times\Omega)$ such that, up to a subsequence, $\rho_\eps$ converges to $\rho$ strongly in $L^1((0,T)\times\Omega)$ and almost everywhere.
\end{proposition}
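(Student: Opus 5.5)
The plan is to repeat the proof of \Cref{prop:rho} almost verbatim. The only inputs that proof used are: the uniform bounds of \Cref{prop:apriori}, the bound on the flux from \Cref{lem:j}, and, when $\Omega=\R^d$, a moment bound in $x$ from \Cref{lem:momentdelta}. All of these are now supplied by \Cref{prop:apriorim1} and \Cref{cor:CLbd}.

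\textbf{Time regularity.} First, integrating the equation in $v$ gives $\pa_t\rho_\eps=-\div_x j_\eps$. Since \Cref{prop:apriorim1} gives $j_\eps$ bounded in $L^2((0,T);L^1(\Omega))$, the family $\pa_t\rho_\eps$ is bounded in $L^2((0,T);W^{-1,1}(\Omega))$.

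\textbf{Kinetic formulation.} Next, I write the equation as $\eps\pa_t f_\eps+v\cdot\na_x f_\eps=\div_v g_\eps$ with
$$g_\eps=\frac1\eps\Big[v+\frac{m}{m-1}\na_v f_\eps^{m-1}\Big]f_\eps .$$
By Cauchy--Schwarz, as before,
$$\|g_\eps(t)\|_{L^1}\le \|f_{in}\|_{L^1}^{1/2}\,\big(\eps^{-2}D[f_\eps(t)]\big)^{1/2}.$$
The dissipation bound \eqref{eq:dissbdm1} then makes $g_\eps$ bounded in $L^2((0,T);L^1(\Omega\times\R^d))$. The bounds \eqref{eq:L1bdm1} and \eqref{eq:Linftyepsm1} make $f_\eps$ bounded in $L^2((0,T)\times\Omega\times\R^d)$.

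\textbf{Averaging in $x$.} Lemma \ref{lem:ave} therefore applies and gives equicontinuity in $x$ of $\int f_\eps\psi\,\dd v$, uniformly in $\eps$, for every $\psi\in C^\infty_0$. To remove the cutoff, I write $1=\psi_R+(1-\psi_R)$ with $\psi_R$ supported in $B_{2R}$. The tail satisfies $\int (1-\psi_R)f_\eps\,\dd v\,\dd x\le R^{-2}\iint |v|^2 f_\eps\le C R^{-2}$ by \eqref{eq:mombdm1}. This yields
$$\lim_{z\to0}\|\rho_\eps(\cdot,\cdot+z)-\rho_\eps\|_{L^1((0,T)\times\Omega)}=0$$
uniformly in $\eps$.

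\textbf{Compactness.} Combining the space equicontinuity with the $W^{-1,1}$ time bound (a Kolmogorov--Riesz / Aubin--Lions–type argument, as in \cite[Proposition 6.1]{ElGhaniMasmoudi2010}) gives relative compactness in $L^1((0,T)\times K)$ for every compact $K$. On $\T^d$ this is already the claim.

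\textbf{Main obstacle: the whole space.} The step I expect to need care is tightness in $x$ when $\Omega=\R^d$. In the case $m>1$ this came from the $\lfloor x\rceil^\delta$ moments of \Cref{lem:momentdelta}. Here I will instead use the second moment in $x$ from \Cref{cor:CLbd}, which gives $\int|x|^2\rho_\eps\,\dd x\le C$ uniformly on $[0,T]$. That is valid precisely in the range $m\in(\frac d{d+1},1)$ assumed in the statement. Alternatively, \Cref{lem:momentdelta} can be rerun verbatim, since it uses only the flux bound. Either way, $\int_{|x|>R}\rho_\eps\le CR^{-\delta}$ uniformly in $\eps$. This upgrades local compactness to compactness in $L^1((0,T)\times\R^d)$.

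\textbf{Conclusion.} Extracting a further subsequence gives a.e.\ convergence. The limit $\rho$ inherits the $C^{1/2}(0,T;W^{-1,1})$ regularity, which identifies $\rho(0)=\int f_{in}\,\dd v$.
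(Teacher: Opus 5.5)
Your proposal is correct and follows essentially the same route as the paper. Both arguments bound $\pa_t\rho_\eps$ in $L^2((0,T);W^{-1,1}(\Omega))$ through the flux. Both use the kinetic form $\eps\pa_t f_\eps+v\cdot\na_x f_\eps=\div_v g_\eps$ with $g_\eps$ bounded in $L^2((0,T);L^1)$, apply the averaging lemma, and remove the velocity cutoff with the $|v|^2$ moment. For tightness on $\R^d$, both use the $|x|^2$ moment from \Cref{cor:CLbd}. Your alternative of rerunning \Cref{lem:momentdelta} is also valid, since that lemma only needs the flux bound and a finite initial moment.
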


\begin{proof}[{\bf Proof of \Cref{prop:rhom1}}]
The proof is similar to that of Proposition \ref{prop:rho}. In particular, we still have that $ \pa_t \rho_\eps$ is bounded in $L^2((0,T);W^{-1,1}(\Omega))$ (this follows from the continuity equation \eqref{eq:macro} and the bound on $j_\eps$ given by Proposition \ref{prop:apriorim1}.

We can still write
\begin{equation}
\eps \pa_ t f_\eps + v\cdot \na_x f_\eps = \div_v g_\eps
\end{equation}
where
$$ g_\eps = \frac{1}{\eps} \left[v+\frac{m}{m-1}\na_v f_\eps^{m-1}\right] f_\eps $$
is bounded in $ L^2((0,T);L^1(\Omega\times\R^d))$.

The $L^\infty$ bound \eqref{eq:Linftyepsm1} implies that $f_\eps $ is bounded in $L^2((0,T);L^2(\Omega\times\R^d))$ and we can use the same averaging lemma argument to show that
$$\lim_{z \to 0} \; \left\| \int_{\R^d} ( f_\eps(,\cdot+z,v)- f_\eps(\cdot,\cdot,v)) \psi(v)\, \dd v \right\|_{L^1((0,T)\times\Omega)} = 0$$
uniformly with respect to $\eps>0$.
Finally, we can use the bound on $\iint_{\R^d\times \Omega} |v|^2 f_\eps \dd x \dd v $ to take $\psi\equiv 1$ conclude as before (using the bound on $\iint_{\R^d\times \Omega} |x|^2 f_\eps \dd x \dd v $ when $\Omega =\R^d$).
\end{proof}

Proposition \ref{prop:convP} can be proved as before: It relies on Corollary \ref{cor:ineqm} which is valid for $m\geq \frac{d-1}{d}$ ($m>\frac 1 2$ when $d=2$).

Furthermore, Proposition \ref{prop:entropm<1} together with Proposition \ref{prop:entropydissipation} imply:
\begin{lemma}\label{lem:convf}
Assume $m\geq \frac{d-1}{d}$, $m>\frac{1}{2}$
and for $\Omega=\T^d$ (For $\Omega =\R^d$ we get only local norm in $x$ at least for $m>1$ ) we have:
\begin{equation}
\int_0^T\iint_{\Omega\times\R^d} |f^m_\eps-G[\rho_\eps]^m|\, \dd x\, \dd v\, dt \to 0
\end{equation}
\end{lemma}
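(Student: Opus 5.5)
The plan is to mirror the proof of \Cref{lem:convf2}, with $f_\eps^m$ in place of $f_\eps$ and $L^1$ norms in place of $L^k$ norms. We fix $(t,x)$ and work only in the variable $v$, using the local entropy $H[f_\eps(t,x)]$ and the local dissipation
$$D_{t,x}[f_\eps]=\irdv{f_\eps\left|v+\tfrac{m}{m-1}\na_v f_\eps^{m-1}\right|^2},$$
which satisfies $\int_\Omega D_{t,x}\,\dd x = D[f_\eps(t)]$. The argument then has three steps, the first two pointwise in $(t,x)$ and the last integrated.

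\emph{Step 1 (entropy/dissipation).} By Proposition \ref{prop:entropydissipation}, the entropy--entropy production inequality also holds when $m<1$ in the range $m>\frac{d}{d+2}$ (the fast diffusion Gagliardo--Nirenberg/Carrillo--Toscani type bound, cf.\ \cite{DPD}). It gives, for each $(t,x)$,
$$0\le H[f_\eps]-H[G[\rho_\eps]]\le C\, D_{t,x}[f_\eps].$$
Integrating in $(t,x)$ and using \eqref{eq:dissbdm1} then yields
$$\int_0^T\!\!\int_\Omega \big(H[f_\eps]-H[G[\rho_\eps]]\big)\,\dd x\,\dd t\le C\eps^2 .$$

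\emph{Step 2 (Csisz\'ar--Kullback type bound).} Proposition \ref{prop:entropm<1} should control $\int|f^m-G[\rho]^m|\dd v$ by the relative entropy $H[f]-H[G[\rho]]$, up to a small multiple of a macroscopic quantity. By analogy with Proposition \ref{prop:f-g}, I expect the form: for every $\delta>0$,
$$\irdv{|f^m-G[\rho]^m|}\le \delta\left(\rho^k+\irdv{|v|^2f}\right)+C_\delta\,\big(H[f]-H[G[\rho]]\big)^{\theta}$$
for some $\theta\in(0,1]$. The restrictions $m\ge\frac{d-1}{d}$ and $m>\frac12$ enter here, through the tail integrability of $G^m$ and $|v|^2G$ and through a Carlson--Levin estimate \eqref{eq:CL} used to handle large velocities.

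\emph{Step 3 (integration).} We integrate the Step 2 bound over $(0,T)\times\T^d$. The $\delta$-term is bounded uniformly in $\eps$ by \eqref{eq:mombdm1}, \eqref{eq:Lkbdm1} and Corollary \ref{cor:CLbd}. If $\theta<1$, we apply H\"older's inequality on the bounded domain $(0,T)\times\T^d$ and combine with Step 1 to get a contribution $\le C_\delta(\eps^2)^\theta$. Taking $\limsup_{\eps\to0}$ and then letting $\delta\to0$ gives the claim. On $\R^d$ the H\"older step fails for lack of finite measure, which is why only local-in-$x$ convergence is asserted there, unless the $|x|^2$ moment of Corollary \ref{cor:CLbd} is used to control the tails.

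The main obstacle is Step 2, i.e.\ making sure Proposition \ref{prop:entropm<1} really provides an $L^1$ bound on $f^m-G^m$ whose error terms are controlled by the uniform bounds we have. When $m<1$, $f^m$ is not controlled by $f$ at large $|v|$, so the heavy tails have to be split off. I would first try to truncate at $|v|\le R$: there, use a pointwise convexity (Bregman) bound
$$|f^m-G^m|\lesssim (f+G)^{\frac{m}{2}}\,\Big(\text{Bregman}(f,G)\Big)^{\frac12},$$
combined with Cauchy--Schwarz. For $|v|>R$, use the bound $\int_{|v|>R} f^m\lesssim R^{-\alpha}(\ldots)$ obtained from the H\"older argument of Proposition \ref{prop:CLbd}, together with the explicit tail of $G[\rho]^m$, which is integrable precisely when $m>\frac{d}{d+2}$.
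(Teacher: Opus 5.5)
Your skeleton matches the paper's proof. You integrate the entropy--entropy production inequality of Proposition \ref{prop:entropydissipation} against \eqref{eq:dissbdm1} to get $\int_0^T(\E[f_\eps]-\E[G[\rho_\eps]])\,\dd t\le C\eps^2$. You then convert relative entropy into an $L^1$ bound on $f_\eps^m-G[\rho_\eps]^m$. The weak point is that you leave the crux, Step 2, as a guessed $\delta$-inequality with a truncation plan. Proposition \ref{prop:entropm<1} already gives the clean bound
\[
\irdv{|f^m-G[\rho]^m|}\lesssim \max\left\{\irdv{f^m},\,\irdv{G[\rho]^m}\right\}^{\frac12}\big(H[f]-H[G[\rho]]\big)^{\frac12}.
\]
Your own pointwise Bregman estimate proves exactly this if you apply Cauchy--Schwarz on all of $\R^d$ rather than on $\{|v|\le R\}$. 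For equal masses, $H[f]-H[G[\rho]]$ is exactly the $v$-integral of the Bregman divergence of $s\mapsto\frac{s^m}{m-1}$, because $\frac{m}{m-1}G[\rho]^{m-1}=\frac{m}{m-1}\mu[\rho]-\frac{|v|^2}{2}$. The weight $f^m+G[\rho]^m$ is integrable, with $\irdv{G[\rho]^m}=\nu_1\rho^k$, so the heavy tails need no separate treatment. A second Cauchy--Schwarz in $(t,x)$, together with \eqref{eq:Lmbdm1} and \eqref{eq:Lkbdm1}, then gives the rate
\[
\int_0^T\iint_{\T^d\times\R^d}|f_\eps^m-G[\rho_\eps]^m|\,\dd x\,\dd v\,\dd t\le C(T)\,\eps .
\]
So neither the $\delta$-splitting nor a H\"older step with $\theta<1$ is needed. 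Your $\delta$-form does follow from this bound by Young and Carlson--Levin, so your Step 3 is correct, only weaker and without a rate.

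You also misplace where the hypotheses enter. Proposition \ref{prop:entropydissipation} is stated for $m\ge\frac{d-1}{d}$, not for $m>\frac{d}{d+2}$. That is exactly where the lemma's assumption $m\ge\frac{d-1}{d}$ is used, namely in your Step 1. Step 2 only needs $\irdv{f^m}$, $\irdv{G[\rho]^m}$ and the second moments to be finite, i.e.\ $m>\frac{d}{d+2}$, which the lemma's hypotheses imply. The condition $m>\frac12$ is inherited from the statements of Propositions \ref{prop:entropydissipation} and \ref{prop:entropm<1}. Your Step 1 is therefore valid under the lemma's assumptions, but not for the reason you give.
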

\begin{proof}[{\bf Proof of \Cref{lem:convf}}]
First, we classically have (see \cite[Corollary 13]{DPD} and Proposition \ref{prop:entropydissipation})
\begin{equation}\label{eq:GNbis}
0\leq \E[f_\eps(t)] - \E[G[\rho_\eps(t)]]\leq\eps^2 D_\eps(t) \qquad \mbox{ if } \frac{d-1}{d}\leq m <1
\end{equation}
and so
$$\int_0^T \E[f_\eps(t)] - \E[G[\rho_\eps(t)]]\, dt \leq C \eps^2.$$
Next, Proposition 15 in \cite{DPD} (see also Proposition \ref{prop:entropm<1}) implies
\begin{equation}\label{eq:CK}
\iint_{\Omega\times\R^d} |f_\eps^m - G[\rho_\eps]^m|\, \dd v \, \dd x
\leq \max\left\{\iint_{\Omega\times \R^d} f_\eps^m \dd v\dd x ,\iint_{\Omega\times \R^d} G[\rho_\eps]^m \dd v\dd x \right\}^{\frac 1 2} \left(\E[f_\eps] - \E[G[\rho_\eps]]\right)^{\frac 1 2} .
\end{equation}
Together with \eqref{eq:GN}, this implies
$$
\int_0^T\iint_{\Omega\times\R^d} |f^m_\eps-G[\rho_\eps]^m|\, \dd x\, \dd v\, dt \leq C(T) \eps
$$
and the result follows.
\end{proof}

Lemma \ref{lem:convf} implies that $f_\eps^m$ converges strong in $L^1$, and therefore almost everywhere (up to a subsequence) to $G[\rho]^m$.
It follows that $f_\eps$ converges almost everywhere to $G[\rho]$, and therefore strongly in $L^1$.

\section{Relative entropy method}

In this section, we prove the convergence of $f_\eps$ to $G[\rho]$ using the relative entropy method.

\subsection{proof of Proposition \ref{prop:relativeentropym}}

Our first task is to prove the relative entropy inequality \eqref{eq:relativeentropy}.

Define $\mathbb{E}$ by
\begin{equation*}
\mathbb{E}[\rho,u]: v \mapsto \mu[\rho]-\frac{m-1}{2\,m}\,|v-u|^2,
\end{equation*}
so that
\begin{align*}
\mathscr H_\eps(t) &= \frac{1}{m-1}\irdxv{ \left(f_\eps^m -m \, \mathbb{E}[\rho_0,\var w_0] f_\eps \right)} + \irdx{\nu[\rho_0]}.
\end{align*}

From this,
\begin{align*}
\eps^2 \frac{d}{dt}
\mathscr H_\eps(t)
& = - \frac{m}{m-1}\irdxv{ \var^2\left(\partial_t (\mu(\rho_0))+ \var \frac{1-m}{m} (v - \var w_0) \cdot \partial_t w_0\right) f_\eps} + \eps^2\irdx{\partial_t\nu[\rho_0]}\\
&\qquad \qquad + \frac{m}{m-1}\irdxv{ \left( f_\eps^{m-1} - \, \mathbb{E}[\rho,\var w_0] \right) \eps^2 \partial_t f_\eps } \\
& = - \frac{m}{m-1}\irdxv{ \var^2\left(\partial_t (\mu(\rho_0))+ \var \frac{1-m}{m} (v - \var w_0) \cdot \partial_t w_0\right) f_\eps} + \eps^2\irdx{\partial_t\nu[\rho_0]}\\
&+ \frac{m}{m-1}\irdxv{ \var\,v\cdot\nabla_x \left( f_\eps^{m-1} -\mathbb{E}[\rho,\var w_0] \right) \, f_\var }\\
& + \frac{m}{m-1}\irdxv{ \left( f_\eps^{m-1} - \mathbb{E}[\rho,\var w_0] \right) \left(\Delta_vf_\var^m+\nabla_v\cdot (v\,f_\var) \right) } \\ 
&= I_1 + I_2 + I_3.
\end{align*}

We shall now estimate these three terms separately.

\begin{align*}
I_1 &= - \frac{m \, \eps^2}{m-1}\irdxv{ f_\eps \left(\partial_t (\mu(\rho_0))+ \var \frac{1-m}{m} (v - \var w_0) \cdot \partial_t w_0\right)} + \, \eps^2 \irdx{ \,\partial_t \left(\nu(\rho_0) \right)}\\
&= \var^2 \irdx{ \left(\nu'(\rho_0) - \frac{m}{m-1}\rho_\eps \mu'(\rho_0) \right)\partial_t \rho_0} - \eps^2\irdxv{ f_\eps \left(\var (v - \var w_0) \cdot \partial_t w_0\right)}\\
&= -\var^2 \irdx{ \left(\nu'(\rho_0) - \frac{m}{m-1}\rho_\eps \mu'(\rho_0) \right) \text{div}_x(\rho_0 w_0)} - \eps^3\irdxv{ \left(\na_v f_\eps^m + (v - \var w_0)f_\eps \right) \cdot \partial_t w_0}\\
&= - \var^2 \irdx{ \left(\nu'(\rho_0) - \frac{m}{m-1}\rho_\eps \mu'(\rho_0) \right) \rho_0 \text{div}_x w_0} - \var^2 \irdx{ \left(\nu'(\rho_0) - \frac{m}{m-1}\rho_\eps \mu'(\rho_0) \right) \nabla_x \rho_0 \cdot w_0} \\
&\qquad\qquad- \eps^3\irdxv{ \left(\na_v f_\eps^m + (v - \var w_0)f_\eps \right) \cdot \partial_t w_0}
\end{align*}
where we have used that $\partial_t w_0 + \nabla \cdot( \rho_0 w) = 0$. We pursue as follows, using that, by \eqref{eq:numu}, $\nu'(\rho) = \frac{m}{m-1} \mu'(\rho)\rho$, $ \na_x \nu(\rho_0) = \rho_0 \na \frac{m}{m-1} \mu(\rho_0) $ and $w= - \frac{m}{m-1}\na_x \mu(\rho_0)$,
\begin{align*}
&-\var^2 \irdx{ \left(\nu'(\rho_0) \rho_0 - \frac{m}{m-1}\rho_\eps \mu'(\rho_0) \rho_0 \right) \text{div}_x w_0} - \var^2 \irdx{ \left(\nabla_x\nu(\rho_0) - \frac{m}{m-1}\rho_\eps \nabla_x\mu(\rho_0) \right) \cdot w} \\
&= -\var^2 \irdx{ \nu'(\rho_0) \left(\rho_0 - \rho_\eps \right) \text{div}_x w_0} - \var^2 \irdx{ \left(\nabla_x\nu(\rho_0) - \frac{m}{m-1}\rho_\eps \nabla_x\mu(\rho_0) \right) \cdot w}\\
&= -\var^2 \irdx{ \nu'(\rho_0) \left(\rho_0 - \rho_\eps \right) \text{div}_x w_0} + \var^2 \irdx{ \nu(\rho_0) \text{div}_x w} - \var^2 \irdx{\rho_\eps \vert w \vert^2} \\
&= \var^2 \irdx{ \left( \nu(\rho_0) - \nu'(\rho_0) \left(\rho_0 - \rho_\eps \right) \right) \text{div}_x w_0} -\var^2 \irdx{\rho_\eps \vert w \vert^2}
\end{align*}

We conclude, that,
\begin{align*}
I_1 &= \var^2 \irdx{ \left( \nu(\rho_0) - \nu'(\rho_0) \left(\rho_0 - \rho_\eps \right) \right) \text{div}_x w_0} -\var^2 \irdx{\rho_\eps \vert w \vert^2} \\
&\qquad \qquad - \eps^3\irdxv{ \left(\na_v f_\eps^m + (v - \var w_0)f_\eps \right) \cdot \partial_t w_0}
\end{align*}

We have used above

\begin{align*}
I_2 &= \frac{m}{m-1}\irdxv{ \var\,v\cdot\nabla_x \left( f_\eps^{m-1} -\mathbb{E}[\rho,\var w_0] \right) \, f_\var }\\
&=-\irdxv{ f_\var\var\,v\cdot \left( \frac{m}{m-1}\nabla_x \mu(\rho_0) - \eps D_xw \,(v-\eps w_0) \right) } \\
&= \var\irdxv{ v\cdot w \, f_\var} + \eps^2 \irdxv{ v\cdot D_xw \,(v-\eps w_0) \, f_\var \, }\\
&=\var\irdxv{ v\cdot w \, f_\var} \\
&\qquad \qquad + \eps^2 \irdxv{ v\cdot D_xw \,(\na_v f_\eps^m+ (v-\eps w_0) f_\eps)} - \eps^2 \irdxv{ v\cdot D_xw \,\na_v f_\eps^m}\\
& =\var\irdxv{ v\cdot w \, f_\var} - \eps^2 \irdxv{ f_\eps^m \text{div}_x w} \\
&\qquad \qquad \qquad+ \eps^2 \irdxv{ v\cdot D_xw \,(\na_v f_\eps^m+ (v-\eps w_0) f_\eps)}
\end{align*}

\begin{align*}
I_3 &= \frac{m}{m-1}\irdxv{ \left( f_\eps^{m-1} - \mathbb{E}[\rho,\var w_0] \right) \left(\Delta_vf_\var^m+\nabla_v\cdot (v\,f_\var) \right) } \\
&= - \frac{m}{m-1}\irdxv{ \left(\nabla_v f_\var^m+ v\,f_\var \right) \, \text{div}_v \left( f_\eps^{m-1} - \mathbb{E}[\rho,\var w_0] \right) }\\
&= - \irdxv{ \left(\nabla_v f_\var^m+ v\,f_\var \right) \cdot \left( \frac{m}{m-1} \nabla_v (f_\eps^{m-1}) + (v-\eps w_0) \right) }\\
&= - \irdxv{ \frac{1}{f_\var} \left\vert \nabla_v f_\var^m+ (v-\eps w_0)\,f_\var \right\vert^2 } \\
& \qquad \qquad - \irdxv{ \left( \eps w_0 \,f_\var \right) \cdot \left( \frac{m}{m-1} \nabla_v (f_\eps^{m-1}) + (v-\eps w_0) \right) }\\
&= - \irdxv{ \frac{1}{f_\var} \left\vert \nabla_v f_\var^m+ (v-\eps w_0)\,f_\var \right\vert^2 } - \irdxv{ \eps w_0 \cdot (v-\eps w_0) \,f_\var }\\
&= - \irdxv{ \frac{1}{f_\var} \left\vert \nabla_v f_\var^m+ (v-\eps w_0)\,f_\var \right\vert^2 } - \irdxv{ \eps w_0 \cdot v \,f_\var } + \eps^2 \irdx{ \vert w \vert^2 \,\rho_\var }
\end{align*}

Summing up, we get,
\begin{align*}
&\eps^2 \frac{d}{dt}
\mathscr H_\eps(t) + \irdxv{ \frac{1}{f_\var} \left\vert \nabla_v f_\var^m+ (v-\eps w_0)\,f_\var \right\vert^2 }\\
&= \var^2 \irdx{ \left( \nu(\rho_0) - \nu'(\rho_0) \left(\rho_0 - \rho_\eps \right) \right) \text{div}_x w_0} - \eps^2 \irdxv{ f_\eps^m \text{div}_x w} \\
&\qquad \qquad - \eps^3\irdxv{ \left(\na_v f_\eps^m + (v - \var w_0)f_\eps \right) \cdot \partial_t w_0} \\
&\qquad \qquad \qquad \qquad + \eps^2 \irdxv{ v\cdot D_xw \,(\na_v f_\eps^m+ (v-\eps w_0) f_\eps)}
\end{align*}
We shall analyze the first integral of the r.h.s separately. Since $\nu(\rho_0) - \nu'(\rho_0)\rho_0 = -(k-1) \nu(\rho_0)$,
\begin{align*}
&\irdx{ \left( \nu(\rho_0) - \nu'(\rho_0) \left(\rho_0 - \rho_\eps \right) \right) \text{div}_x w_0} \\
&= \irdx{ \left( -(k-1) \nu(\rho_0) + \nu'(\rho_0) \rho_\eps \right) \text{div}_x w_0} \\
&=-(k-1) \irdx{ \left( \nu(\rho_0) - \frac{\nu'(\rho_0) }{k-1}\rho_\eps \right) \text{div}_x w_0} \\
&=-(k-1) \irdx{ \nu(\rho_0) \, \text{div}_x(w)} \\
&\qquad \qquad+(k-1) \irdxv{\left( \frac{m}{m-1} \mathbb{E}[\rho_0,\var w_0] + \frac{1}{2}\,|v-\var w_0|^2\right) f_\eps \text{div}_x w_0}\\
&=-(k-1) \irdx{ \nu(\rho_0) \, \text{div}_x w_0} +\frac{k-1}{m-1} \irdxv{ m\mathbb{E}[\rho_0,\var w_0] \, f_\eps \text{div}_x w_0}\\
&\qquad \qquad \qquad \qquad +(k-1) \irdxv{\frac{1}{2}\,|v-\var w_0|^2 f_\eps \text{div}_x w_0}
\end{align*}
Hence,
\begin{align*}
&\irdx{ \left( \nu(\rho_0) - \nu'(\rho_0) \left(\rho_0 - \rho_\eps \right) \right) \text{div}_x w_0} - \irdxv{ f_\eps^m \text{div}_x w} \\
&=-(k-1) \left( \irdx{ \nu(\rho_0) \, \text{div}_x w_0} +\frac{1}{m-1} \irdxv{ \left(f_\eps^m - m\mathbb{E}[\rho_0,\var w_0] \, f_\eps \right) \text{div}_x w_0} \right)\\
&\qquad \qquad +(k-1) \irdxv{\frac{1}{2}\,|v-\var w_0|^2 f_\eps \text{div}_x w_0} - \left( 1 - \frac{k-1}{m-1}{}\right) \irdxv{ f_\eps^m \text{div}_x w}\\
&=-(k-1) \left( \irdx{ \nu(\rho_0) \, \text{div}_x w_0} +\frac{1}{m-1} \irdxv{ \left(f_\eps^m - m\mathbb{E}[\rho_0,\var w_0] \, f_\eps \right) \text{div}_x w_0} \right) \\
&\qquad \qquad +(k-1) \irdxv{\left( \frac{1}{2}\,|v-\var w_0|^2 f_\eps -\frac d 2 f_\eps^m \right)\text{div}_x w_0},
\end{align*}
using the fact that $\frac{1}{k-1}=\frac{1}{m-1} + \frac d 2$. Now, since,
\begin{align*}
\irdv{ \frac {d}{2} f_\eps^m - \frac 1 2 |v-\eps w_0|^2 f_\eps } & =
\frac 1 2 \irdv{ \na _v \cdot (v-\eps w_0) f_\eps^m - |v-\eps w_0|^2f_\eps } \\
& = - \frac 1 2 \irdv{(v-\eps w_0)\cdot \na_v f_\eps^m + |v-\eps w_0|^2f_\eps }\\
& = - \frac 1 2 \irdv{(v-\eps w_0)\cdot ( \na _v f_\eps^m + (v-\eps w_0) f_\eps ) }.
\end{align*}
Putting previous estimates together, we finally obtain,
\begin{align*}
&\eps^2 \frac{d}{dt}
\mathscr H_\eps(t) + \irdxv{ \frac{1}{f_\var} \left\vert \nabla_v f_\var^m+ (v-\eps w_0)\,f_\var \right\vert^2 }\\
&= -(k-1) \left( \irdx{ \nu(\rho_0) \, \text{div}_x w_0} +\frac{1}{m-1} \irdxv{ \left(f_\eps^m - m\mathbb{E}[\rho_0,\var w_0] \, f_\eps \right) \text{div}_x w_0} \right)\\
&-\frac12(k-1) \var^2\irdxv{\left((v-\eps w_0)\cdot ( \na _v f_\eps^m + (v-\eps w_0) f_\eps ) \right)\text{div}_x w_0}\\
&- \eps^3\irdxv{ \left(\na_v f_\eps^m + (v - \var w_0)f_\eps \right) \cdot \partial_t w_0} + \eps^2 \irdxv{ v\cdot D_xw \,(\na_v f_\eps^m+ (v-\eps w_0) f_\eps)}
\end{align*}
We shall control the terms in the r.h.s. Using the convexity of $s \mapsto \frac{s^m}{m-1}$, and Cauchy-Schwarz three times, we finish the proof, with,
\begin{align*}
&\frac{d}{dt} \mathscr H_\eps(t) +\frac{1}{4 \eps^{2}} \irdxv{ \frac{1}{ f_\eps} |\na_v f_\eps^m +(v- \eps w_0) f_\eps |^2 } \\
&\leq \vert k-1 \vert\| \div w\|_{L^\infty} \mathscr H_\eps(t) + \eps^4 \|\pa_t w\|^2_{L^\infty} \irdxv{ f_\eps } + \eps^2 \|Dw\|_{L^\infty}^2 \irdxv{ | v| ^2 f_\eps} \\
& \quad
+\eps^2 \left(\frac {k-1 } 2\right)^2 \|\div w\|^2 _{L^\infty} \irdxv{|v-\eps w_0|^2 f_\eps }.
\end{align*}

\appendix\section{Key functional inequalities}

Throughout this section we assume that $v\mapsto f(v)$ is a non-negative function in $L^1(\R^d)$ with mass $\rho=\int_{\R^d} f(v)\, \dd v$ and we gather some important results concerning the entropy
$$
H(f) := \int_{\R^d} \frac{f^m(v)}{m-1} + \frac{|v|^2}{2} f(v)\, \dd v.
$$
We denote again by
$$G[\rho] = \left(\mu[\rho] - \frac{m-1}{2m} |v|^2\right)_+^{\frac{1}{m-1}} $$
the equilibrium with $\mu[\rho]$ chosen so that $\int_{\R^d} G[\rho](v)\, \dd v =\rho$.

First, we recall the following classical result:
\begin{proposition}{\cite[Corollary 13]{DPD}}\label{prop:entropydissipation}
Let $m\geq \frac{d-1}{d}$, $m\neq 1$ (and $m>\frac 1 2$ if $d=2$, $m>\frac 1 3$ if $d=1$) and assume that $f$ satisfies
$$
\na_v f^{m-\frac 1 2} \in L^2(\R^d), \quad |\cdot|^2 f \in L^1(\R^d) ,
$$
and
$$ f \in L^1(\R^d)\quad \text{ if } m> 1, \quad f^m\in L^1(\R^d) \quad\text{ if } m< 1.$$
Then
\begin{equation}\label{eq:entrdiss}
0\leq H(f)-H(G[\rho]) \leq \frac 1 2 \int_{\R^d} f \left| v+\frac{m}{m-1} \na_v f^{m-1}\right|^2\, \dd v.
\end{equation}
\end{proposition}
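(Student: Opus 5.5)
We plan to prove \eqref{eq:entrdiss} by optimal transport: the condition $m\geq\frac{d-1}{d}$ is exactly McCann's condition for displacement convexity of the internal energy $\int \frac{f^m}{m-1}\,\dd v$. The lower bound $H(f)\geq H(G[\rho])$ is Proposition~\ref{prop:entro}; note that $m\geq\frac{d-1}{d}>\frac{d}{d+2}$ for $d\geq 3$, and in dimensions $1,2$ this is the role of the extra restrictions $m>\frac13$, $m>\frac12$. So only the upper bound needs work. We write $h(s)=\frac{s^m}{m-1}$, so that $h'(s)=\frac{m}{m-1}s^{m-1}$ and the right-hand side of \eqref{eq:entrdiss} equals $\frac12 I(f)$ with
$$I(f)=\int_{\R^d} f\,|\na_v h'(f)+v|^2\,\dd v .$$

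Step 1 (above-tangent inequality). Let $T=\na\varphi$ be the Brenier map pushing $f$ forward to $G[\rho]$ (both have mass $\rho$ and finite second moments). The functional $H$ is the sum of a displacement convex internal energy and the potential energy of $V(v)=\frac{|v|^2}{2}$, which is $1$-uniformly convex. Hence $H$ is $1$-convex along the Wasserstein geodesic $f_s=((1-s)\mathrm{id}+sT)_\# f$. Differentiating at $s=0$ gives the above-tangent inequality
$$H(G[\rho])\;\geq\; H(f)+\int_{\R^d} f\,\big(\na_v h'(f)+v\big)\cdot\big(T(v)-v\big)\,\dd v+\frac12\int_{\R^d} f\,|T(v)-v|^2\,\dd v .$$
The first-variation term comes from the identity $\frac{d}{ds}\int h(f_s)\big|_{s=0}=-\int (f h'(f)-h(f))\,\mathrm{div}(T-\mathrm{id})$, followed by an integration by parts. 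The integration by parts is legitimate because $f\na_v h'(f)=\na_v (f^m)$ is controlled through $\na_v f^{m-\frac12}\in L^2$ and $f\in L^1$. This is the HWI-type inequality of Otto--Villani, here for a nonlinear entropy.

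Step 2 (Young's inequality). We set $a=\na_v h'(f)+v$ and $b=T-\mathrm{id}$. The pointwise inequality $-a\cdot b-\frac12|b|^2\leq \frac12|a|^2$, integrated against $f$, turns Step 1 into
$$H(f)-H(G[\rho])\leq \frac12\int_{\R^d} f\,|a|^2\,\dd v=\frac12 I(f),$$
which is \eqref{eq:entrdiss}. An alternative route, which we would keep as a backup, is the Bakry--\'Emery / Carrillo--Toscani argument. One runs the nonlinear Fokker--Planck flow $\pa_t f=\mathrm{div}_v(f(\na_v h'(f)+v))$, shows that $\frac{d}{dt}I\leq -2I$ (this again uses $m\geq\frac{d-1}{d}$, through the sign of a Bochner-type remainder), and integrates $-\frac{d}{dt}(H-H(G))=I$ in time. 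This argument requires more regularity, however.

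Main obstacle. The delicate part is justifying Step 1 under the weak hypotheses stated. For $m>1$, $f$ and $G[\rho]$ may have different supports, $T$ is only defined $f$-a.e., and $\mathrm{div}(T-\mathrm{id})$ is only a measure, whose singular part has the favourable sign. For $m<1$, $G[\rho]$ is heavy-tailed, so we must check that $\int G[\rho]^m$ and $\int |v|^2 G[\rho]$ are finite; this follows from $m>\frac{d}{d+2}$. The plan for this step is the standard one: first prove the inequality for smooth, positive, rapidly decaying $f$ (truncating and mollifying). Then pass to the limit in $H(f)$ by lower semicontinuity and dominated convergence, using $|v|^2 f\in L^1$ and, when $m<1$, $f^m\in L^1$. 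In $I(f)$ we pass to the limit by expanding the square: $\int f|v|^2$, $\int f^m$ (via the cross term $\int v\cdot\na_v f^m=-d\int f^m$) and $\big(\frac{m}{m-\frac12}\big)^2\int |\na_v f^{m-\frac12}|^2$, each of which is stable under the approximation.
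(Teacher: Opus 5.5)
Your proof is correct, but it is not the route behind the paper's statement. The paper gives no proof and imports the inequality from \cite[Corollary 13]{DPD}, where it is a reformulation of the optimal Gagliardo--Nirenberg inequalities. Set $w=f^{m-\frac12}$ and $p=\frac1{2m-1}$. The same expansion of $I(f)$ that you use in your limiting step makes the second moment cancel:
\[
\tfrac12 I(f)-H(f)=\tfrac12\Big(\tfrac{m}{m-\frac12}\Big)^2\|\na_v w\|_2^2-\Big(d+\tfrac{1}{m-1}\Big)\|w\|_{p+1}^{p+1},\qquad \rho=\|w\|_{2p}^{2p}.
\]
After optimizing over the mass-preserving dilations $f\mapsto\lambda^d f(\lambda\,\cdot)$, the bound $\frac12 I(f)-H(f)\geq -H(G[\rho])$ becomes equivalent to a sharp Gagliardo--Nirenberg inequality between $\|\na_v w\|_2$, $\|w\|_{p+1}$ and $\|w\|_{2p}$. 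Its optimizers are $G[\rho]^{m-\frac12}$ up to dilation, and Del Pino and Dolbeault prove it variationally, relying on the uniqueness of positive radial solutions of the Euler--Lagrange equation.

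In that route the threshold $m\geq\frac{d-1}{d}$ (when $m<1$, $d\geq 3$) is the Sobolev-critical condition $2p\leq\frac{2d}{d-2}$. In yours it is McCann's displacement-convexity condition, so your argument gives a structural reason for the same range.

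What each route buys:
\begin{itemize}
\item The Gagliardo--Nirenberg route gives sharp constants and equality cases and needs no optimal-transport regularity. The hypotheses of the statement ($f\in L^1$ if $m>1$, $f^m\in L^1$ if $m<1$, together with $\na_v w\in L^2$) are exactly what makes $w$ admissible in the relevant inequality.
\item Your route needs no uniqueness result for an elliptic equation: the above-tangent inequality at $G[\rho]$ plus Young already gives the upper bound, and Proposition~\ref{prop:entro} is used only for the lower one. It also yields the HWI inequality $H(f)-H(G[\rho])\leq W_2\sqrt{I(f)}-\frac12W_2^2$, with $W_2=W_2(f,G[\rho])$, along the way. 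It extends to any $\lambda$-uniformly convex confinement with constant $\frac1{2\lambda}$.
\item The price of your route is the justification of Step~1: the Aleksandrov Hessian, the sign of the singular part of $\mathrm{div}\,T$, and mismatched supports when $m>1$. This is the Carrillo--McCann--Villani above-tangent theorem, and you located these issues correctly.
\end{itemize}

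One small point in your approximation step. For the limit you need $\limsup_n I(f_n)\leq I(f)$, which together with the lower semicontinuity of $H$ gives the right direction. The map $f\mapsto\int|\na_v f^{m-\frac12}|^2\,\dd v$ is not convex for $m<1$ or $m>\frac32$, so mollifying $f$ does not obviously give this. It is cleaner to regularize $w=f^{m-\frac12}$ itself.
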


Furthermore, we can prove the following corollary:
\begin{corollary}\label{cor:ineqm}
Let $m\geq \frac{d-1}{d}$, $m\neq 1$ (and $m>\frac 1 2$ if $d=2$, $m>\frac 1 3$ if $d=1$) and assume that $f(v)$ satisfies the conditions of Proposition \ref{prop:entropydissipation}. Then the following inequality holds:
\begin{align}
\frac{1}{k-1} \left|\int_{\R^d} f^m(v) - G[\rho]^m(v)\, \dd v\right|
&\leq \frac 1 2 \int_{\R^d} f \left| v+\frac{m}{m-1} \na_v f^{m-1}\right|^2\, \dd v\nonumber \\
&\quad+ \frac{1}{2}\left(\int_{\R^d} | v|^2 f\dd v \right)^{\frac 1 2 } \left(\int_{\R^d} f \left|\frac{m}{m-1} \nabla_v f^{m-1} + v \right|^2\,\dd v \right)^{\frac 1 2 } . \label{eq:ineqm}
\end{align}
\end{corollary}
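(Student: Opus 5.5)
The plan is to reduce \eqref{eq:ineqm} to Proposition \ref{prop:entropydissipation} through an exact identity relating $H(f)$ to $\int_{\R^d} f^m\,\dd v$. The identity comes from a virial-type integration by parts in $v$. Throughout I write
$$D_v(f):=\int_{\R^d} f\left|v+\frac{m}{m-1}\na_v f^{m-1}\right|^2\dd v .$$

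\emph{Step 1: the identity for $f$.} Since $\mathrm{div}_v v=d$, integrating by parts gives
$$d\int_{\R^d} f^m\,\dd v=-\int_{\R^d} v\cdot\na_v f^m\,\dd v .$$
Using $\na_v f^m=\frac{m}{m-1}f\na_v f^{m-1}$, I rewrite the right-hand side as
$$-\int_{\R^d} v\cdot\na_v f^m\,\dd v=\int_{\R^d}|v|^2 f\,\dd v-R,\qquad R:=\int_{\R^d} f\,v\cdot\left(v+\frac{m}{m-1}\na_v f^{m-1}\right)\dd v .$$
By Cauchy--Schwarz, $|R|\le \left(\int_{\R^d}|v|^2f\,\dd v\right)^{1/2}D_v(f)^{1/2}$. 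Substituting $\frac12\int_{\R^d}|v|^2f\,\dd v=\frac d2\int_{\R^d} f^m\,\dd v+\frac R2$ into $H(f)$ and using \eqref{eq:km}, namely $\frac1{m-1}+\frac d2=\frac1{k-1}$, I obtain
$$H(f)=\frac{1}{k-1}\int_{\R^d} f^m\,\dd v+\frac R2 .$$

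\emph{Step 2: the identity for $G[\rho]$.} For the equilibrium the remainder vanishes identically, since $v+\frac{m}{m-1}\na_v G^{m-1}=0$ on the support of $G$. Equivalently, \eqref{eq:nu} gives $\frac1d\int_{\R^d}|v|^2G\,\dd v=\int_{\R^d} G^m\,\dd v$. Hence
$$H(G[\rho])=\frac{1}{k-1}\int_{\R^d} G[\rho]^m\,\dd v .$$

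\emph{Step 3: conclusion.} Subtracting the two identities gives
$$\frac{1}{k-1}\int_{\R^d}\left(f^m-G[\rho]^m\right)\dd v=H(f)-H(G[\rho])-\frac R2 .$$
Proposition \ref{prop:entropydissipation} gives $0\le H(f)-H(G[\rho])\le\frac12 D_v(f)$. Taking absolute values and applying the triangle inequality together with the Cauchy--Schwarz bound on $R$ yields exactly \eqref{eq:ineqm}. The constant $\frac1{k-1}$ may be negative when $m<1$, but this is harmless because only absolute values enter.

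\emph{Main obstacle: justifying the integration by parts in Step 1.} I would multiply by a cutoff $\chi(v/R)$ and let $R\to\infty$. The key is that $v\cdot\na_v f^m$ is integrable under the hypotheses of Proposition \ref{prop:entropydissipation}, because
$$v\cdot\na_v f^m=\frac{m}{m-1/2}\,\bigl(|v|f^{1/2}\bigr)\,\frac{v}{|v|}\cdot\na_v f^{m-1/2},$$
where $|v|f^{1/2}\in L^2$ since $|v|^2f\in L^1$, and $\na_v f^{m-1/2}\in L^2$ by assumption. The term $f^m$ is integrable: this is assumed directly when $m<1$, and follows from interpolation between $L^1$ and the moment bound when $m>1$. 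Finally, the boundary term $\int_{\R^d} f^m\, v\cdot\na\chi(v/R)/R\,\dd v$ is bounded by a constant times $\int_{|v|\sim R}f^m\,\dd v$, so it vanishes by dominated convergence. Once this is settled, the remaining steps are pure algebra.
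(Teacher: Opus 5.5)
Your argument is correct and is essentially the paper's proof. The paper merges your Steps 1 and 2 into a single computation, writing $\int_{\R^d}\frac{|v|^2}{2}(f-G[\rho])\,\dd v=\frac12\int_{\R^d} v\cdot(\na_v f^m+vf)\,\dd v+\frac d2\int_{\R^d}(f^m-G[\rho]^m)\,\dd v$ via \eqref{eq:nu} and the same integration by parts. It then uses \eqref{eq:km}, Proposition \ref{prop:entropydissipation} and Cauchy--Schwarz exactly as you do, and does not discuss justifying the integration by parts.

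One slip in your justification: for $m>1$, integrability of $f^m$ does not follow from interpolating between $L^1$ and the second moment, since that only controls $\int_{\R^d} f^p\,\dd v$ for $p<1$. It follows instead from $\na_v f^{m-\frac12}\in L^2$ and $f\in L^1$ via Sobolev/Gagliardo--Nirenberg. Alternatively, under these hypotheses $\int_{\R^d} f\left|v+\frac{m}{m-1}\na_v f^{m-1}\right|^2\dd v<\infty$, so Proposition \ref{prop:entropydissipation} forces $H(f)<\infty$.
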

\begin{proof}
The definition of $H$ gives
\begin{align*}
\frac{1}{m-1} \left(\int_{\R^d} f^m(v) - G[\rho]^m(v)\, \dd v\right)&= H(f) - H(G[\rho])- \int_{\R^d} \frac{|v|^2}{2} \left( f(v) - G[\rho](v) \right)\, \dd v. 
\end{align*}
Next, we note that (using \eqref{eq:nu})
\begin{align*}
&\int_{\R^d} \frac{|v|^2}{2} \left( f(v) - G[\rho ](v) \right)\, \dd v \\
&= \frac{1}{2}\int_{\R^d} v \cdot \left(\na_v f^m(v)+ vf(v) \right)
\dd v- \frac{1}{2}\int_{\R^d} v \cdot \na_v f^m(v) \dd v - \frac{1}{2}\int_{\R^d} |v|^2 G[\rho](v) \, \dd v \\
&= \frac{1}{2}\int_{\R^d} v \cdot \left( \nabla_v f^m (v)+ vf(v) \right)\, \dd v + \frac{d}{2} \int_{\R^d} \left( f^m(v) - G[\rho]^m(v) \right)\, \dd v .
\end{align*} 
Combing these two equalities and using \eqref{eq:km}, we deduce
\begin{align*}
\frac{1}{k-1} \left(\int_{\R^d} f^m(v) - G[\rho]^m(v)\, \dd v\right)&= H(f) - H(G[\rho])- \frac{1}{2}\int_{\R^d} v \cdot \left( \nabla_v f^m + vf \right)\, \dd v. 
\end{align*}
Finally, \Cref{prop:entropydissipation} (see \eqref{eq:entrdiss}) implies
\begin{align*}
&\frac{1}{k-1} \left|\int_{\R^d} f^m(v) - G[\rho]^m(v)\, \dd v\right|\\
&\qquad \qquad \leq \frac 1 2 \int_{\R^d} f \left| v+\frac{m}{m-1} \na_v f^{m-1}\right|^2\, \dd v + \frac{1}{2}\int_{\R^d} | v| f \left|\frac{m}{m-1} \nabla_v f^{m-1} + v \right|\, \dd v
\end{align*}
and \eqref{eq:ineqm} follows.
\end{proof}

Corollary \ref{cor:ineqm} will be used to prove the (strong) convergence of $\int f_\eps^m(t,x,v)\dd v$ in $L^1((0,T)\times \Omega)$. The strong convergence of $f_\eps^m(t,x,v)$ in $L^1((0,T)\times \Omega\times \R^d)$ requires further work:
First, when $m<1$, one has:
\begin{proposition}{\cite[Proposition 15]{DPD}}\label{prop:entropm<1}
Let $\frac{d-2}{d}\leq m <1$ and $m>\frac 1 2$. Then
\begin{equation}
\int_{\R^d} | f(v)^m-G[\rho](v)^m|\, \dd v \lesssim_{d,m}\, \max\left\{ \int_{\R^d} f(v)^m\, \dd v,\int_{\R^d} G[\rho](v)^m\, \dd v \right\}^{\frac 1 2}( H[f]-H[G[\rho]] )^{\frac 1 2}.
\end{equation}
\end{proposition}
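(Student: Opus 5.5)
The plan is to reduce the statement to a pointwise inequality between $|f^m-G^m|$ and the Bregman divergence of the concave function $s\mapsto s^m$, and then integrate with Cauchy--Schwarz. Throughout, write $G=G[\rho]$ with $\int_{\R^d} f\,\dd v=\int_{\R^d} G\,\dd v=\rho$, and $M(v)=\max\{f(v),G(v)\}$.

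\emph{Step 1: the relative entropy is a Bregman divergence.} By the definition of $G$,
$$\frac{m}{m-1}G^{m-1}=\frac{m}{m-1}\mu[\rho]-\frac{|v|^2}{2}$$
on $\R^d$; for $m<1$ there is no cut-off. Substituting $\frac{|v|^2}{2}=\frac{m}{m-1}\mu[\rho]-\frac{m}{m-1}G^{m-1}$ into $H[f]-H[G]$ and using $\int_{\R^d}(f-G)\,\dd v=0$ gives
$$H[f]-H[G]=\frac{1}{1-m}\int_{\R^d}\Big(G^m+mG^{m-1}(f-G)-f^m\Big)\dd v .$$
This is legitimate when the second moments of $f$ and $G$ are finite. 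In the remaining range of $m$, I would simply use the right-hand side as the definition of the relative entropy, since its integrand is nonnegative. This identification is the only place where the restrictions on $m$ enter, through the finiteness of the mass and the moments of $G$.

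\emph{Step 2: a pointwise lower bound for the integrand.} By Taylor's formula with integral remainder,
$$G^m+mG^{m-1}(f-G)-f^m=m(1-m)\,(f-G)^2\int_0^1(1-\theta)\big(G+\theta(f-G)\big)^{m-2}\,\dd\theta .$$
Each interpolant $G+\theta(f-G)$ is at most $M$, and $m-2<0$, so the integrand in $\theta$ is at least $(1-\theta)M^{m-2}$. Integrating in $\theta$ yields
$$G^m+mG^{m-1}(f-G)-f^m\ \ge\ \frac{m(1-m)}{2}\,M^{m-2}(f-G)^2 .$$

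\emph{Step 3: a pointwise upper bound for $|f^m-G^m|$.} For $a,b\ge0$, set $M=\max\{a,b\}$ and $r=\min\{a,b\}/M\in[0,1]$. Since $r^m\ge r$, we have $1-r^m\le 1-r$, and multiplying by $M^m$ gives
$$|a^m-b^m|\le M^{m-1}|a-b| .$$
Writing $M^{m-1}|f-G|=M^{m/2}\big(M^{m-2}(f-G)^2\big)^{1/2}$ and applying Cauchy--Schwarz,
$$\int_{\R^d}|f^m-G^m|\,\dd v\le\Big(\int_{\R^d} M^m\,\dd v\Big)^{\frac12}\Big(\int_{\R^d} M^{m-2}(f-G)^2\,\dd v\Big)^{\frac12}.$$
The first factor satisfies
$$\int_{\R^d} M^m\,\dd v\le\int_{\R^d} f^m\,\dd v+\int_{\R^d} G^m\,\dd v\le 2\max\Big\{\int_{\R^d} f^m\,\dd v,\int_{\R^d} G^m\,\dd v\Big\}.$$
By Steps 1 and 2, the second factor is at most $\frac{2}{m}\big(H[f]-H[G]\big)$. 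Combining these gives the claimed estimate with constant $2/\sqrt m$.

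\emph{Expected obstacle.} The analytic core is elementary. The step that needs care is Step 1: justifying the Bregman representation of $H[f]-H[G]$ near the lower end of the admissible range of $m$, where $\int_{\R^d}|v|^2G\,\dd v$ may be infinite and the two moment terms cannot be handled separately. There I would approximate $f$ by truncations with finite moments and pass to the limit by monotone convergence, using that the Bregman integrand is nonnegative.
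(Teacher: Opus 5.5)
The paper does not prove this statement; it imports it from \cite[Proposition 15]{DPD}. There is therefore no in-paper argument to compare with, and your proof is complete and correct on its own.

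Your Step 1 is the $m<1$ counterpart of identity \eqref{eq:HH}, which the paper derives in Lemma~\ref{lem:support} for $m>1$. There, the compact support of $G[\rho]$ leaves the tail term $\int_{|v|\ge R}\frac12(|v|^2-R^2)f\,\dd v$. For $m<1$, instead, $G[\rho]^{m-1}=\mu[\rho]+\frac{1-m}{2m}|v|^2$ is affine in $|v|^2$ on all of $\R^d$. Hence $H[f]-H[G[\rho]]$ is exactly the Bregman divergence $\E[f|G[\rho]]$ of $s\mapsto\frac{s^m}{m-1}$, with no remainder.

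Steps 2 and 3 are correct as written. The Taylor remainder stays integrable even where $f=0$, because $G[\rho]>0$ everywhere and $m>0$. These steps yield the explicit constant $2/\sqrt m$. They use neither $m>\frac12$ nor any dimensional restriction, so in your argument those hypotheses only serve to make $G[\rho]$ and $H[G[\rho]]$ meaningful.

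The difficulty you anticipate in your last paragraph does not actually arise. The truncation you propose is better dropped: cutting off $f$ destroys the constraint $\int_{\R^d}(f-G[\rho])\,\dd v=0$ on which Step 1 rests, and monotone convergence would not apply to the Bregman integrand anyway.

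By \eqref{eq:nu}, $\int_{\R^d}G[\rho]^m\,\dd v=\frac1d\int_{\R^d}|v|^2G[\rho]\,\dd v$, and both integrals are finite exactly when $m>\frac{d}{d+2}$. Suppose the right-hand side of the statement is finite. Then:
\begin{itemize}
\item $\int_{\R^d}f^m\,\dd v$ and $\int_{\R^d}G[\rho]^m\,\dd v$ are finite;
\item hence $m>\frac{d}{d+2}$ and $\int_{\R^d}|v|^2G[\rho]\,\dd v<\infty$;
\item finiteness of $H[f]$ then forces $\int_{\R^d}|v|^2f\,\dd v<\infty$.
\end{itemize}
So all integrals in Step 1 are finite and your substitution is justified directly, with no approximation. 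In every other case the right-hand side is infinite and there is nothing to prove.
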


When $m>1$ the result differs in two significant ways: We control the $L^1$ norm of $f-G[\rho]$ rather than $f^m-G[\rho]^m$, but most importantly, the fact that $G[\rho]$ is compactly supported raises some important issues. The first result thus assumes that $f$ and $G$ have the same support:
\begin{proposition}{\cite[Lemma 4.1 and 4.2]{carrillo2000asymptotic}}\label{prop:entropm>1}
Let $m>1$ and assume that $f$ and $G[\rho]$ have the same mass $\rho$ and the same support $B_R(0) = \{G[\rho]>0\}$. Then
$$ \int_{B_R} |f(v)-G[\rho](v)|\, \dd v \lesssim_{d,m}\, \rho^{\frac{2-k}{2}}\, ( H[f]-H[G[\rho]] )^{\frac 1 2}. $$
\end{proposition}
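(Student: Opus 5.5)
This is the classical $L^1$ stability estimate for the porous-medium entropy around its Barenblatt minimizer, restricted to the common support $B_R$. I would prove it in two steps. First I rewrite the relative entropy as a Bregman-type divergence of the convex function $\varphi(s)=\frac{s^m}{m-1}$ relative to $G=G[\rho]$. Then I convert that divergence into an $L^1$ bound with a Csisz\'ar--Kullback type argument.

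\textbf{Step 1: entropy gap as a Bregman divergence.} On $B_R$ we have $\frac{m}{m-1}G^{m-1}=\frac{m}{m-1}\mu[\rho]-\frac{|v|^2}{2}$. Hence $\frac{|v|^2}{2}=\frac{m}{m-1}\mu[\rho]-\varphi'(G)$ there. Since $f$ and $G$ are both supported in $B_R$ and have the same mass, the term $\frac{m}{m-1}\mu[\rho]\int (f-G)$ vanishes. This gives
$$H[f]-H[G]=\int_{B_R}\big(\varphi(f)-\varphi(G)-\varphi'(G)(f-G)\big)\,\dd v .$$
The integrand is nonnegative by convexity. This is exactly where the common-support hypothesis is used: without it, the identity becomes only an inequality with the wrong sign on $\{G=0\}$.

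\textbf{Step 2: pointwise lower bound.} By Taylor's formula, the integrand equals $\frac{m}{2}\,\xi^{m-2}(f-G)^2$ for some $\xi$ between $f$ and $G$. The difficulty is that $m-2$ may have either sign, so I split $B_R$ into two regions.
\begin{itemize}
\item On $\{f\le 2G\}$, I bound $\xi^{m-2}\gtrsim \max(f,G)^{m-2}$ if $m\le 2$, or $\xi^{m-2}\gtrsim \min(f,G)^{m-2}$ if $m\ge 2$. In both cases the integrand is at least $c_m\,\frac{(f-G)^2}{(f+G)^{2-m}}$.
\item On $\{f>2G\}$, the integrand is comparable to $\varphi(f)$ and hence at least $c\,|f-G|^m$. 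Since $m>1$, this is in turn at least $c\,\frac{(f-G)^2}{(f+G)^{2-m}}$, up to constants.
\end{itemize}
So in all cases the integrand dominates $c_m (f-G)^2 (f+G)^{m-2}$.

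\textbf{Step 3: Cauchy--Schwarz.} I write
$$\int_{B_R}|f-G|\le\Big(\int_{B_R}\frac{(f-G)^2}{(f+G)^{2-m}}\Big)^{1/2}\Big(\int_{B_R}(f+G)^{2-m}\Big)^{1/2}.$$
The first factor is at most $C\,(H[f]-H[G])^{1/2}$ by Step 2. This is the \textbf{main obstacle}: the second factor must be bounded by $C\rho^{2-k}$ using only the mass and the support.

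\emph{Case $1<m\le 2$.} Here $0\le 2-m<1$, and Jensen (concavity) on $B_R$ gives $\int (f+G)^{2-m}\le |B_R|^{m-1}(2\rho)^{2-m}$. Scaling takes care of the rest. Indeed $\mu[\rho]=\mu_1\rho^{k-1}$ and $R^2=\frac{2m}{m-1}\mu[\rho]$, so $|B_R|\sim \rho^{(k-1)d/2}$. Using \eqref{eq:km}, $\frac{(k-1)d}{2}(m-1)=(m-1)-(k-1)\cdot\frac{m-1}{k-1}+\dots$; I expect this to reduce to $|B_R|^{m-1}\rho^{2-m}\sim\rho^{2-k}$, since $\frac{d}{2}(k-1)(m-1)=(m-1)-(k-1)$ by \eqref{eq:km}. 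That is precisely the claimed power $\rho^{\frac{2-k}{2}}$ after taking square roots.

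\emph{Case $m>2$.} Concavity fails. I would instead use the weight $(f+G)^{m-2}\ge G^{m-2}$ only where needed. Alternatively, I would bound $\int (f+G)^{2-m}$ by $L^\infty$-free estimates obtained through a different split: on $\{f\le 2G\}$, the integrand dominates $G^{m-2}(f-G)^2$ and I apply Cauchy--Schwarz with weight $G^{2-m}$. That weight is not integrable near $\partial B_R$ when $m\ge 3$, so I would expect to follow Carrillo--Toscani's refined argument there. The $\{f>2G\}$ part is handled directly by $\int|f-G|^m$ and H\"older on $B_R$.

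Finally, I would check that all constants depend only on $d$ and $m$. This holds by the scaling $G[\rho](v)=\rho^{\alpha}G[1](\rho^{\beta}v)$.
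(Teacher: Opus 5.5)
Your argument works for $1<m\le 2$. There, the Jensen bound $\int_{B_R}(f+G)^{2-m}\le |B_R|^{m-1}(2\rho)^{2-m}\sim\rho^{2-k}$, obtained via $\frac d2(k-1)(m-1)=m-k$, is a legitimate alternative to the paper's route. The case $m>2$, however, is not proved. You claim that $G[\rho]^{2-m}$ is not integrable near $\partial B_R$ for $m\ge 3$ and defer to an unspecified ``refined argument''; this claim is false. The weight $G[\rho]^{2-m}=\bigl(\mu[\rho]-\frac{m-1}{2m}|v|^2\bigr)_+^{\frac{2-m}{m-1}}$ behaves like $(R-|v|)^{\frac{2-m}{m-1}}$ near $\partial B_R$, and $\frac{2-m}{m-1}>-1$ for every $m>1$. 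So by scaling
\[
\int_{B_R}G[\rho]^{2-m}\,\dd v=C_{d,m}\,\mu[\rho]^{\frac{2-m}{m-1}+\frac d2}=C_{d,m}\,\rho^{2-k}<\infty ,
\]
where the last equality uses $\mu[\rho]=\mu_1\rho^{k-1}$ and \eqref{eq:km}. Your fallback on $\{f>2G\}$ does not close the gap either. H\"older with a constant weight yields $\int_{\{f>2G\}}|f-G|\lesssim|B_R|^{\frac{m-1}{m}}\bigl(H[f]-H[G[\rho]]\bigr)^{\frac1m}$. For $m>2$ this is the wrong power of the entropy gap, and it cannot be upgraded to the exponent $\frac12$ when the gap is small.

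With integrability in hand, no splitting is needed. For $m>2$, $(f+G)^{2-m}\le G^{2-m}$ on $B_R$, so your own Cauchy--Schwarz in Step 3 closes at once. You do need a different justification of $D(f,G):=\varphi(f)-\varphi(G)-\varphi'(G)(f-G)\gtrsim (f-G)^2(f+G)^{m-2}$ for $m\ge 2$, because $\xi^{m-2}\ge\min(f,G)^{m-2}$ is useless when $f\ll G$. Write $D(f,G)=m\int_I s^{m-2}|f-s|\,\dd s$ over the interval $I$ between $f$ and $G$. Keep only the half of $I$ adjacent to $\max(f,G)$: there $s^{m-2}\ge(\frac{f+G}{2})^{m-2}$ and $\int|f-s|\,\dd s\ge\frac18(f-G)^2$.

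The paper organises the same idea by the sign of $f-G$. For $m>2$ on $\{f>G\}$, and for $m\le 2$ on $\{f<G\}$, the intermediate point satisfies $\xi^{m-2}\ge G^{m-2}$. Cauchy--Schwarz against $G^{2-m}$ then bounds that part, and equal masses give $\int|f-G|=2\int_{\{f>G\}}(f-G)=2\int_{\{f<G\}}(G-f)$. For $m>2$ this needs $\{f>G\}\subset B_R$, which is where the common-support hypothesis enters.

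Your repaired route uses only equal masses. In Step 1 the discrepancy off $B_R$ is $\int_{|v|\ge R}\frac12(|v|^2-R^2)f\,\dd v\ge 0$, so $H[f]-H[G[\rho]]$ dominates $\int_{B_R}D(f,G)$. That is the favourable sign, not the wrong one as you state.
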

\begin{proof}[{\bf Proof of \Cref{prop:entropm>1}}]
Due to Lemma 4.1 in \cite{carrillo2000asymptotic}, we shall split the values of $m$.

When $m>2$, Lemma 4.1 in \cite{carrillo2000asymptotic} applied with $p=q=2$ (in their notations) gives
$$ 
\int_{f>G[\rho]} |f(v)-G[\rho](v)|\, \dd v \leq \left(\frac 1 m \int_{G[\rho]>0} G[\rho]^{2-m}(v)\, \dd v\right)^{\frac{1}{2}} ( H[f]-H[G[\rho]] )^{\frac{1}{2}}
$$
where
$$\int_{G[\rho]>0} G[\rho](v)^{2-m}\, \dd v = \int_{B_R} \left(\mu - \frac{m-1}{2m} |v|^2\right)_+^{\frac{2-m}{m-1}}\, \dd v = C_{d,m}\, \mu^{\frac{2-m}{m-1}+\frac{d}{2}} = C_{m} \rho^{2-k}.
$$
We recall that $k <1+\frac {2}{d} \leq 2$ and so this integral is finite despite the singularity of the integrand on $\pa B_R$.
We conclude using the fact that $f$ and $G[\rho]$ has the same mass to write:
$$
\int_{\R^d} |f(v)-G[\rho](v)|\, \dd v = 2 \int_{f>G[\rho]} |f(v)-G[\rho](v)|\, \dd v,
$$
and conclude.

When $m\leq2$, Lemma 4.1 in \cite{carrillo2000asymptotic} applied with $p=q=2$ (in their notations) gives
$$ 
\int_{f<G[\rho]} |f(v)-G[\rho](v)|\, \dd v \leq \left(\frac 1 m \int_{G[\rho]>0} G[\rho]^{2-m}(v)\, \dd v\right)^{\frac{1}{2}} ( H[f]-H[G[\rho]] )^{\frac{1}{2}}
$$
and we can conclude similarly.
\end{proof}

In order to $f - G[\rho]$ without assuming anything about the support of these functions, we need to control how much mass $f$ has outside of the support of $G[\rho]$:
\begin{lemma}\label{lem:support}
Let $m>1$ and assume that $f$ and $G[\rho]$ have the same mass $\rho$.
Denote by $B_R = \{G[\rho]>0\}$ the support of $G[\rho]$ (that is $R^2=\frac{2 m}{m-1}\mu[\rho] $).
Then:
$$\left(\int_{|v|\geq R} f(v)\, \dd v \right)^k
\lesssim \max\left\{ (H[f]-H[G[\rho]])^{\frac 1 q} \rho^{k\left(1-\frac{1}{q}\right)} , H[f]-H[G[\rho]]\right\},
$$
with $q=\frac{2m-1}{mk}$.
\end{lemma}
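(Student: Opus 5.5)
The plan is to isolate the part of $H[f]-H[G[\rho]]$ that lives outside $B_R$, and then interpolate between an $L^m$ bound and a weighted $L^1$ bound on $\{|v|\geq R\}$.

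\emph{Step 1: rewrite the entropy gap.} Set $\Phi(v)=\mu[\rho]-\frac{m-1}{2m}|v|^2$. Then
\[
\frac{|v|^2}{2}=\frac{m}{m-1}\bigl(\mu[\rho]-\Phi(v)\bigr).
\]
Since $f$ and $G[\rho]$ have the same mass, the $\mu[\rho]$ contribution cancels, and we obtain
\[
H[f]-H[G[\rho]]=\frac{1}{m-1}\int_{\R^d}\Bigl(f^m-G[\rho]^m-m\,\Phi\,(f-G[\rho])\Bigr)\dd v .
\]
On $B_R$ we have $\Phi=G[\rho]^{m-1}$, so the integrand is the Bregman divergence of the convex function $s\mapsto s^m$ and is nonnegative. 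On $\{|v|\geq R\}$ we have $G[\rho]=0$ and $-\Phi=\frac{m-1}{2m}(|v|^2-R^2)\geq 0$. There the integrand equals
\[
\frac{1}{m-1}f^m+\tfrac12(|v|^2-R^2)f,
\]
a sum of two nonnegative terms. Writing $\Delta:=H[f]-H[G[\rho]]$, this yields the two key bounds
\[
\int_{|v|\geq R} f^m\,\dd v\leq (m-1)\Delta,\qquad \int_{|v|\geq R}(|v|^2-R^2)f\,\dd v\leq 2\Delta .
\]

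\emph{Step 2: interpolate.} Split $\{|v|\geq R\}$ into a shell $S_s=\{R\leq|v|\leq R+s\}$ and its complement, with $s>0$ to be chosen.

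On the complement, $|v|^2-R^2\geq 2Rs+s^2$. Hence
\[
\int_{|v|>R+s} f\leq \frac{2\Delta}{2Rs+s^2}\leq \frac{2\Delta}{\max(2Rs,s^2)} .
\]

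On the shell, Hölder gives
\[
\int_{S_s} f\leq \Bigl(\int_{S_s} f^m\Bigr)^{1/m}|S_s|^{1-1/m}\lesssim \Delta^{1/m}\bigl(R^{d-1}s+s^d\bigr)^{1-1/m}.
\]

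\emph{Step 3: optimize in $s$ and identify the exponents.} Recall $R^2=\frac{2m}{m-1}\mu[\rho]\sim\rho^{k-1}$ by \eqref{eq:mu}. I will optimize over $s$ separately in the two regimes $s\lesssim R$ (thin shell, $|S_s|\sim R^{d-1}s$) and $s\gtrsim R$ (where $|S_s|\sim s^d$).

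In the thin-shell regime, balancing $\Delta/(Rs)$ against $\Delta^{1/m}(R^{d-1}s)^{1-1/m}$ gives a bound of the form $\Delta^{a}R^{b}$. After raising to the power $k$ and using $R\sim\rho^{(k-1)/2}$ together with \eqref{eq:km}, I expect this to reduce to $\Delta^{1/q}\rho^{k(1-1/q)}$ with $q=\frac{2m-1}{mk}$. This regime should be the relevant one when $\Delta\lesssim\rho^k=H$-scale.

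In the large-$s$ regime, the analogous balance between $\Delta/s^2$ and $\Delta^{1/m}s^{d(1-1/m)}$ is scale-free in $R$. Using $\frac{1}{k-1}=\frac{1}{m-1}+\frac d2$, I expect it to produce exactly $\Delta^{1/k}$, that is, $\bigl(\int_{|v|\geq R}f\bigr)^k\lesssim\Delta$.

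Taking the better of the two choices of $s$ according to whether $\Delta\leq\rho^k$ or $\Delta\geq\rho^k$ then gives the stated maximum. A cruder but safe alternative is to bound the outer mass by $\rho$ whenever $\Delta\gtrsim\rho^k$.

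\emph{Expected obstacle.} Steps 1 and 2 are routine. The main difficulty is the exponent bookkeeping in Step 3: checking that the optimal balance really produces the power $1/q$ with $q=\frac{2m-1}{mk}$, rather than some nearby exponent. If the direct shell/tail split gives a different exponent, I would instead apply Hölder with a weight $(|v|^2-R^2)^{\theta}$ chosen so as to combine the $f^m$ and $(|v|^2-R^2)f$ bounds in a single inequality. I would first verify the exponent by scaling: the bound must be invariant under the mass scaling of Barenblatt profiles, under which $\Delta$ scales like $\rho^k$.
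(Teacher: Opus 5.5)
Your proposal is correct and follows the paper's proof essentially step for step. The paper also writes $H[f]-H[G[\rho]]$ as the Bregman relative entropy plus $\int_{|v|\geq R}\frac12(|v|^2-R^2)f\,\dd v$. It bounds $K_R=\int_{|v|\geq R}\bigl(\frac12(|v|^2-R^2)f+\frac{f^m}{m-1}\bigr)\dd v$ by the entropy gap. It then splits into a shell $R^2\leq|v|^2\leq R^2+\eta^2$, handled by H\"older, and a tail, handled by the weighted Chebyshev bound, and optimizes separately in the regimes $\eta\lesssim R$ and $\eta\gtrsim R$.

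The exponent bookkeeping you deferred does work out:
\begin{itemize}
\item The thin-shell balance gives $\Delta^{m/(2m-1)}R^{(d-2)(m-1)/(2m-1)}$. This becomes $\Delta^{1/q}\rho^{k(1-1/q)}$ after raising to the power $k$, using $R^2\sim\rho^{k-1}$ and the identity $(d-2)(m-1)(k-1)=2(m(2-k)-1)$.
\item The wide-shell balance gives exactly $\Delta^{1/k}$.
\item The crossover between the two regimes is at $\Delta\sim\rho^k$. For $\Delta\gtrsim\rho^k$, your cruder bound $\int_{|v|\geq R}f\,\dd v\leq\rho$ already suffices.
\end{itemize}
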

\begin{proof}[{\bf Proof of \Cref{lem:support}}]
The proof follows the argument developed in the proof of Theorem 4.5 in \cite{carrillo2000asymptotic}. Recall the definiton of the relative entropy,
$$
\E[f|G[\rho]] = \int_{\R^d} \frac{f^m}{m-1}- \frac{G[\rho]^m}{m-1}-\frac{m}{m-1} G[\rho]^{m-1}(f-G[\rho])\, \dd v.
$$
Write
\begin{align*}
&H[f]-H[G[\rho]] \\
&= \int_{\R^d} \frac{f^m(v)}{m-1} - \frac{G[\rho]^m(v)}{m-1} + \frac{|v|^2}{2} \left( f(v) - G[\rho](v)\right)\, \dd v\\
&= \E[f|G[\rho]] +\int_{\{G[\rho]>0\}} \left(\frac{m}{m-1}\mu-\frac{|v|^2}{2}\right) (f-G[\rho])\, \dd v \\
& \qquad \qquad \qquad \qquad + \int_{\R^d}\frac{|v|^2}{2} \left( f(v) - G[\rho](v)\right)\, \dd v \\
&= \E[f|G[\rho]] +\int_{\{G[\rho]>0\}} \left(\frac{m}{m-1}\mu-\frac{|v|^2}{2}\right) (f-G[\rho])\, \dd v \\
& \qquad \qquad + \int_{\{G[\rho]>0\}} \frac{|v|^2}{2} \left( f(v) - G[\rho](v)\right)\, \dd v + \int_{\{G[\rho]=0\}}\frac{|v|^2}{2} \left( f(v) - G[\rho](v)\right)\, \dd v \\
&= \E[f|G[\rho]] +\int_{\{G[\rho]>0\}} \frac{m}{m-1}\mu[\rho] \, (f-G[\rho])\, \dd v + \int_{\{G[\rho]=0\}}\frac{|v|^2}{2} \left( f(v) - G[\rho](v)\right)\, \dd v \\
&= \E[f|G[\rho]] -\int_{\{G[\rho]=0\}} \frac{m}{m-1}\mu[\rho] \, (f-G[\rho])\, \dd v + \int_{\{G[\rho]=0\}}\frac{|v|^2}{2} \left( f(v) - G[\rho](v)\right)\, \dd v.
\end{align*}
where we have used $\int_{\R^d} (f(v)-G[\rho](v)) \dd v =0$ at the last step. We deduce finally,
\begin{align}
H[f]-H[G[\rho]] = \E[f|G[\rho]] + \int _{|v|\geq R} \frac{1}{2}\left(|v|^2- R^2 \right) f(v) \, \dd v,\label{eq:HH}
\end{align}
with, recall, $R^2=\frac{2 m}{m-1}\mu $.

The convexity of the function $s\mapsto \frac{s^m}{m-1}$ implies that
$\frac{f^m}{m-1}- \frac{G[\rho]^m}{m-1}-\frac{m}{m-1} G[\rho]^{m-1}(f-G[\rho])\geq 0$ a.e. and so
\begin{equation}\label{eq:Hconv}
\E[f|G[\rho]] \geq \int_{|v|\geq R }\frac{f^m}{m-1}- \frac{G[\rho]^m}{m-1}-\frac{m}{m-1} G[\rho]^{m-1}(f-G[\rho]) \dd v=\int_{|v|\geq R } \frac{f^m}{m-1} \dd v.
\end{equation}
We deduce:
\begin{equation}\label{eq:masscomp}
K_R:=\int_{|v|\geq R} \frac{1}{2}\left(|v|^2- R^2 \right)f(v) + \frac{f(v)^m}{m-1}\, \dd v
\leq H[f]-H[G[\rho]] .
\end{equation}

We now write, with $\omega_d$ the volume of the unit ball,
\begin{align*}
\int_{|v|\geq R} f(v) \dd v & = \frac{1}{\eta^2}\int_{|v|^2 \geq R^2 + \eta^2} (|v|^2-R^2) f(v) \dd v + \int_{R^2 \leq |v|^2 \leq R^2 + \eta^2} f(v) \dd v\\
& \leq \frac{1}{\eta^2}\int_{|v|^2 \geq R^2} (|v|^2-R^2) f(v) \dd v + \left(\int_{ |v| \geq R} f(v)^m \dd v \right)^{\frac 1 m} \left( \omega_d ((R^2+\eta^2)^{d/2} - R^d)\right)^{\frac{m-1}{m}}\\
& \lesssim \frac{K_R}{\eta^2} + K_R^{\frac 1 m} R^{\frac{d(m-1)}{m}} \left(\left(1+\left(\frac{\eta}{R}\right)^2\right)^{\frac{d}{2}} - 1\right)^{\frac{m-1}{m}} .
\end{align*}
We write the right-hand-side under the form $R^{-2} \Phi\left(\frac{\eta^2}{R^2}\right)$ with
$$\Phi(x) = \frac{K_R}{R^{2}x} + K_R^{\frac 1 m} R^{\frac{d(m-1)}{m}} \left((1+x)^{\frac{d}{2}} - 1\right)^{\frac{m-1}{m}}, $$
and we search for a minimum value of $\Phi$.
We note that
$$\Phi'(1) = -R^{-2}K_R + \frac{m-1}{m} \left(2^{\frac{d}{2}} - 1\right)^{\frac{-1}{m}} \frac{d}{2}2^{\frac{d}{2}-1} K_R^{\frac 1 m} R^{\frac{d(m-1)}{m}} $$
and so a transition occurs when \[
K_R = C(d,m) R^{\frac{2m + d(m-1)}{m-1}},
\]
with $C(d,m)
=
\left[
\frac{d}{2}\,\frac{m-1}{m}\,
2^{\frac{d}{2}-1}
\left(2^{\frac{d}{2}}-1\right)^{-\frac{1}{m}}
\right]^{\frac{m}{m-1}}.$
\begin{itemize}
\item When $K_R < C(d,m)R^{\frac{2m + d(m-1)}{m-1}}$, we have $\Phi'(1)>0$, so the minimum of $\Phi$ is reached for some $x\in (0,1)$. In that region, we have $(1+x)^{\frac{d}{2}} - 1\lesssim x$ and so
$$\Phi(x) \lesssim \frac{R^{-2}K_R}{x} + K_R^{\frac 1 m} R^{\frac{d(m-1)}{m}} x^{\frac{m-1}{m}}, $$
and a standard minimization argument yields:
\begin{align*}
\min_{x\in(0,1)}\Phi(x) \lesssim K_R^{\frac{m}{2m-1}} R^{\frac{(d-2)(m-1)}{2m-1}} \lesssim K_R^{\frac{m}{2m-1}} \rho^{\frac{(d-2)(m-1)(k-1)}{2(2m-1)}} \lesssim K_R^{\frac{m}{2m-1}} \rho^{\frac{m(2-k)-1}{2m-1}},
\end{align*}
where we used the fact that $R^2=\frac{2 m}{m-1}\mu_1 \rho^{k-1} $ and the equality
$$(d-2)(m-1)(k-1)= 2(m(2-k)-1).$$
\item When $K_R > C(d,m) R^{\frac{2m + d(m-1)}{m-1}}$, we have $\Phi'(1)<0$, so the minimum of $\Phi$ is reached in the region for some $x>1$. In that region, we have
$(1+x)^{\frac{d}{2}} - 1\leq 2^{\frac{d}{2}} x^{\frac{d}{2}}$ and so
$$\Phi(x) = \frac{K_R}{xR^2} + 2^{\frac{d}{2}} K_R^{\frac 1 m} R^{\frac{d(m-1)}{m}} x^{\frac{d(m-1)}{2m}}, $$
and a standard minimization argument yields:
$$ \min_{x>1}\Phi(x) \lesssim K_R^{\frac{d(m-1)+2}{d(m-1)+2m}}= K_R^{\frac{1}{k}}.
$$
\end{itemize}
We deduce
\begin{align*}
\int_{|v|\geq R} f(v) \dd v
& \lesssim \max\left\{K_R^{\frac{m}{2m-1}} \rho^{\frac{m(2-k)-1}{2m-1}}, K_R^{\frac{1}{k}}\right\},
\end{align*}
and so
\begin{align*}
\left(\int_{|v|\geq R} f(v) \dd v\right)^k
\lesssim \max\left\{ K_R^{\frac{mk}{2m-1}} \rho^{k\frac{m(2-k)-1}{2m-1}}, K_R\right\}.
\end{align*}

The result follows from \eqref{eq:masscomp} with $q=\frac{2m-1}{mk}$ and $1-\frac{1}{q} =\frac{m(2-k)-1}{2m-1} $.
\end{proof}

\begin{remark}\label{rem:Hconv2}
We did not extract all the information we could have from \eqref{eq:HH}:
Using the convexity of the function $s\mapsto \frac{s^m}{m-1}$, \eqref{eq:Hconv} also gives
\begin{align*}
\E[f|G[\rho]] & \geq \int_{|v|\leq R }\frac{f^m}{m-1}- \frac{G[\rho]^m}{m-1}-\frac{m}{m-1} G[\rho]^{m-1}(f-G[\rho]) \dd v\nonumber \\
&= \int_{|v|\leq R }\frac{f^m}{m-1} +G[\rho]^m -\frac{m}{m-1} G[\rho]^{m-1}f \dd v
\end{align*}
so that \eqref{eq:HH} implies
$$\int_{|v|\leq R }\frac{f^m}{m-1} +G[\rho]^m -\frac{m}{m-1} G[\rho]^{m-1}f \dd v
\leq H[f]-H[G[\rho]]$$
Using Young's inequality $\int_{|v|\leq R }\frac{m}{m-1} G[\rho]^{m-1}f \dd v\leq 2^{\frac{1}{m-1}}\int_{|v|\leq R } G[\rho]^m \dd v + \frac 1 2 \int_{|v|\leq R }\frac{f^m}{m-1}\dd v$, we deduce
\begin{align}
\frac 1 2 \int_{|v|\leq R }\frac{f^m}{m-1}\dd v
& \leq H[f]-H[G[\rho]] + (2^{\frac{1}{m-1}}-1) \int_{|v|\leq R } G[\rho]^m\dd v\nonumber \\
& \leq H[f]-H[G[\rho]] + \nu_1 (2^{\frac{1}{m-1}}-1) \rho^k
\label{eq:Hconv2}
\end{align}.
\end{remark}
Finally, we can state the main inequality that we use when $m>1$:
\begin{proposition}\label{prop:f-g}
Let $m>1$ and assume that $f$ and $G[\rho]$ have the same mass $\rho$. There exists a constant $C_{d,m}$ such that
\begin{align}
\int_{\R^d} |f(v)-G[\rho](v)|\dd v
& \leq 
C_{d,m}\, \rho^{\frac{2-k}{2}} \left( \left(H[ f]-H[G[\rho]]\right) + \rho^{k-1} M(R) \right)^{\frac 1 2}
\label{eq:L1conv}
\end{align}
where (see Lemma \ref{lem:support})
$$ M(R): = \int_{|v|\geq R} f(v)\, \dd v \leq
\max\left\{ (H[f]-H[G[\rho]])^{\frac 1 {kq}} \rho^{\left(1-\frac{1}{q}\right)} , (H[f]-H[G[\rho]])^{\frac 1 k}\right\}.
$$
In particular, for all $\delta>0$ there exists $C_{\delta,m,d}$ such that
$$\left(\int_{\R^d} |f(v)-G[\rho](v)|\dd v \right)^k\leq \delta \rho^k + C_{\delta,m,d} (H[f]-H[G[\rho]]).$$
\end{proposition}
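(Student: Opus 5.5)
We split $f$ into its part inside and outside the support $B_R=\{G[\rho]>0\}$, with $R^2=\frac{2m}{m-1}\mu[\rho]$, and build a comparison function that has the same mass and support as $G[\rho]$, so that Proposition \ref{prop:entropm>1} applies. Set $M(R)=\int_{|v|\geq R} f\,\dd v$. Define $\tilde f = f\,\mathbf 1_{B_R} + \lambda\, G[\rho]$ with $\lambda = M(R)/\rho\in[0,1]$. Then $\tilde f$ has mass $\rho$ and support $B_R$, and the triangle inequality gives
\[
\int_{\R^d}|f-G[\rho]|\,\dd v \le \int_{B_R}|\tilde f - G[\rho]|\,\dd v + 2M(R).
\]

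We apply Proposition \ref{prop:entropm>1} to $\tilde f$. This bounds the first term by $C\rho^{\frac{2-k}{2}}(H[\tilde f]-H[G[\rho]])^{1/2}$.

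The main step is to compare $H[\tilde f]-H[G[\rho]]$ with $H[f]-H[G[\rho]]$. For this we use the identity \eqref{eq:HH}, written for any function $g$ with mass $\rho$:
\[
H[g]-H[G[\rho]] = \E[g|G[\rho]] + \int_{|v|\ge R}\tfrac12(|v|^2-R^2)\,g\,\dd v .
\]
For $g=\tilde f$ the outer integral vanishes, so $H[\tilde f]-H[G[\rho]]=\E[\tilde f|G[\rho]]$. For $g=f$, the pointwise convexity integrand is nonnegative, so $\E[f|G[\rho]]$ dominates its restriction to $B_R$. It then remains to estimate
\[
\int_{B_R}\big(\phi(f+\lambda G)-\phi(f)-\phi'(G)\,\lambda G\big)\,\dd v,\qquad \phi(s)=\tfrac{s^m}{m-1}.
\]
We would bound this using $\phi(f+\lambda G)-\phi(f)\le \phi'(f+\lambda G)\,\lambda G$. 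On $B_R$ we have $f\lesssim$ something, and $G\le\mu^{1/(m-1)}\sim\rho^{(k-1)/(m-1)}$, so the correction is at most $C\lambda\int_{B_R} (f+G)^{m-1}G\,\dd v$. Hölder's inequality together with \eqref{eq:Hconv2}, which controls $\int_{B_R}f^m$ by $H[f]-H[G[\rho]]+C\rho^k$, then yields a bound of the form $C\lambda\rho^k + C\lambda(H[f]-H[G[\rho]])$. Since $\lambda\rho^k=M(R)\rho^{k-1}$ and $\lambda\le 1$, this produces exactly the term $\rho^{k-1}M(R)$ in \eqref{eq:L1conv}, after absorbing constants.

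The additive $2M(R)$ also has to be absorbed, since it is linear rather than a square root. This requires $M(R)\lesssim \rho^{\frac{2-k}{2}}(\rho^{k-1}M(R))^{1/2}$, i.e. $M(R)\lesssim\rho$, which always holds. The bound on $M(R)$ stated in the proposition is then just Lemma \ref{lem:support}, with both sides raised to the power $1/k$.

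Finally, for the $\delta$-version we raise \eqref{eq:L1conv} to the power $k$. Writing $h=H[f]-H[G[\rho]]$, we get
\[
\Big(\int|f-G[\rho]|\Big)^k \lesssim \rho^{\frac{k(2-k)}{2}}\big(h+\rho^{k-1}M(R)\big)^{k/2}.
\]
Each resulting term is a product of a power of $\rho$ and a power of $h$ with homogeneity $k$, using the lemma for $M(R)$. The exponent of $h$ is $\le 1$ in all cases, namely $k/2$, $1/(2q)$ and $1/2$ (this is where $k<2$ is used). Young's inequality $a^{\theta}b^{1-\theta}\le \delta b+C_\delta a$ then turns each term into $\delta\rho^k + C_\delta h$.

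I expect the main obstacle to be the comparison $\E[\tilde f|G[\rho]]\lesssim h + \rho^{k-1}M(R)$. The added mass $\lambda G$ must be controlled inside the support, where $f$ is not a priori bounded. If the convexity estimate above fails for $m>2$, I would fall back on a rescaling $\tilde f=(1-\lambda)^{-1}$-normalized version, i.e. $\tilde f = f\mathbf 1_{B_R}\cdot \rho/(\rho-M(R))$, and use the homogeneity of $\phi$ instead.
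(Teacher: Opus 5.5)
Your argument is correct, but your comparison function differs from the paper's. The paper rescales, $\widetilde f=\alpha f\mathbf 1_{B_R}$ with $\alpha=\rho/\int_{B_R}f$. Because this needs $\alpha$ bounded, the paper splits into two cases. If $M(R)\ge\rho/2$, it uses Lemma \ref{lem:support} to get $\rho^k\lesssim H[f]-H[G[\rho]]$ and bounds $\int|f-G[\rho]|\le 2\rho$ directly. If $M(R)\le\rho/2$, then $\alpha\in[1,2]$, and it argues by homogeneity: $\alpha^m-1\lesssim\alpha-1$ and $(\alpha-1)\int_{B_R}f=M(R)$. It then discards the contribution of $f$ outside $B_R$ to $H[f]$, uses $|v|^2\le R^2\sim\rho^{k-1}$ on $B_R$, and controls $\int_{B_R}f^m$ by \eqref{eq:Hconv2}.

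Your additive correction $\widetilde f=f\mathbf 1_{B_R}+\lambda G[\rho]$ is admissible for every $\lambda\in[0,1]$. This buys three things: no case split is needed; $\widetilde f$ has support exactly $B_R$, as Proposition \ref{prop:entropm>1} literally requires; and Lemma \ref{lem:support} enters only through the bound on $M(R)$ and the $\delta$-version. In exchange, your entropy comparison runs through the identity \eqref{eq:HH} and the tangent-line inequality rather than scaling.

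That comparison closes for every $m>1$, so your rescaling fallback is unnecessary. Indeed, $(f+\lambda G)^{m-1}\le C_m(f^{m-1}+G^{m-1})$, Young's inequality gives $f^{m-1}G\le f^m+G^m$ pointwise, $\int G^m=\nu_1\rho^k$, and \eqref{eq:Hconv2} handles $\int_{B_R}f^m$. No pointwise bound on $f$ inside $B_R$ is used, so the sentence claiming ``$f\lesssim$ something'' on $B_R$ should simply be deleted.

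Your Young's-inequality derivation of the $\delta$-version also fills in a step the paper leaves implicit. Note that the restriction $k<2$ you invoke there holds only for $d\ge 2$, since $k<1+\frac2d$; the paper tacitly assumes this as well.
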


\begin{proof}[{\bf Proof of \Cref{prop:f-g}}]
We recall that, under the condition $\int_{\R^d} f(v) \dd v=\int_{\R^d} G[\rho](v) \dd v= \rho$, $f(v)$ and $G[\rho](v)$ are such that
$R^2=\frac{2 m}{m-1}\mu(\rho) = \mu_1 \frac{2 m}{m-1}\rho^{k-1} $.
We consider two cases.

\medskip\noindent{\bf Case 1: When $\int_{|v|\geq R} f\dd v \geq \frac 1 2 \int_{\R^d} f\dd v = \frac 1 2 \rho$.}

In that case, Lemma \ref{lem:support} implies
\begin{align*}
\rho^k \leq 2^k \left(\int_{|v|\geq R} f\dd v\right)^k
& \leq 2^k
\left( C (H[f]-H[G[\rho]])^{\frac 1 q} \rho^{k\left(1-\frac{1}{q}\right)} + C (H[f]-H[G[\rho]])\right)\\
& \leq C (H[f]-H[G[\rho]]) + \frac 1 2 \rho^{k}
\end{align*}
(using Young's inequality).
We deduce
$$
\rho^k\leq C(H[f]-H[G[\rho]])
$$
hence
$$ \int_{\R^d} |f(v)-G[\rho](v)|\dd v \leq 2 \int_{\R^d} f(v)+G[\rho](v)\dd v =2\rho\leq C (H[f]-H[G[\rho]])^{\frac 1 k}.
$$
Writing $\rho = \left(\rho^{2-k} \rho^{k}\right)^{\frac 1 2}$, we can also write
\begin{equation*}
\int_{\R^d} |f(v)-G[\rho](v)|\dd v \leq C\left(\rho^{2-k}( H[f]-H[G[\rho]])\right)^{\frac 1 2}
\end{equation*}
thus showing that \eqref{eq:L1conv} holds in that case.

\medskip\noindent{\bf Case 2: When $\int_{|v|\geq R} f\dd v \leq \frac 1 2 \int_{\R^d} f\dd v = \frac 1 2 \rho$.}

In that case, we define
$$
\widetilde f(v) =
\begin{cases}
\alpha f(v) & \mbox{ if } |v|\leq R\\
0 & \mbox{ otherwise,}
\end{cases}
$$
with $\alpha$ chosen such that
$$ \int _{\R^d} \widetilde f(v)\dd v =\int_{\R^d} f(v) \dd v=\rho, $$
that is
$$ \rho = \alpha(\rho-h), \qquad \alpha-1 = \frac{h}{g-h} $$
The condition $\int_{|v|\geq R} f\dd v \leq \frac 1 2 \rho$ implies
$$
\alpha = \frac{\rho}{\int_{|v|\leq R} f\dd v} \in [1,2]$$
and
$$
0\leq \alpha-1 \leq \frac 2 \rho \int_{|v|\geq R} f\dd v.
$$
We now compute:
$$
\int_{\R^d} |f(v) - \widetilde f(v)|\dd v
= (\alpha-1)\int_{|v|\leq R} f(v) \dd v+\int_{|v|>R} f(v) \dd v = 2\int_{|v|\geq R} f\dd v
$$

Furthermore, since $\widetilde f(v)$ and $G[\rho]$ have the same mass $\rho$ and same support $B_R$, we can use Proposition \ref{prop:entropm>1} to write
$$ \int_{\R^d} |\widetilde f(v)-G[\rho](v)|\dd v = \int_{B_R} |\widetilde f(v)-G[\rho](v)|\dd v \leq C_{d,m}\, \rho^{\frac{2-k}{2}}\, ( H[\widetilde f]-H[G[\rho]] )^{\frac 1 2} .$$

Finally, using the fact that $\alpha\in [0,2]$, we can write:
\begin{align*}
H[\widetilde f]-H[G[\rho]] & = H[ f]-H[G[\rho]]+ H[\widetilde f]-H[f]
\\
& \leq H[ f]-H[G[\rho]] + \int_{|v|\leq R} (\alpha^m-1) \frac{f^m}{m-1} + (\alpha-1)\frac1 2|v|^2 f \dd v \\
& \leq H[ f]-H[G[\rho]] + C (\alpha-1)\int_{|v|\leq R} \frac{f^m}{m-1} \dd v + \frac{R^2}{2} (\alpha-1) \int_{|v|\leq R} f \dd v\\
& \leq H[ f]-H[G[\rho]] + \left( C \frac{1}{\int_{|v|\leq R} f \dd v} \int_{|v|\leq R} \frac{f^m}{m-1} \dd v + \frac{R^2}{2} \right) \int_{|v|\geq R} f \dd v
\end{align*}
where we used the fact that $(\alpha-1)\int_{|v|\leq R} f \dd v = \int_{|v|\geq R} f \dd v$.

Using Remark \ref{rem:Hconv2} (see \eqref{eq:Hconv2}) and fact that $R^2 =\frac{2 m}{m-1}\mu(\rho) = \mu_1 \frac{2 m}{m-1}\rho^{k-1} $, we can write:
\begin{align*}
\left( C \frac{1}{\int_{|v|\leq R} f \dd v} \int_{|v|\leq R} \frac{f^m}{m-1} \dd v + \frac{R^2}{2} \right)
& \leq
\frac{C}{\int_{|v|\leq R} f \dd v} (H[f]-H[G[\rho]] + C \rho^k)
+ C\rho^{k-1}\\
& \leq
\frac{C}{\rho} (H[f]-H[G[\rho]] + C\rho^{k-1}
\end{align*}
where we used (again) the fact that $\int_{|v|\leq R} f \dd v\geq \frac 1 2 \rho$.
Putting things together, we get:
\begin{align*}
H[\widetilde f]-H[G[\rho]] &
\leq C \left(H[ f]-H[G[\rho]]\right) + C \rho^{k-1}\int_{|v|\geq R} f \dd v.
\end{align*}
We deduce:
\begin{align*}
\int_{\R^d} |f(v)-G[\rho](v)|\dd v
& \leq \int_{\R^d} |f(v)-\widetilde f (v)|\dd v +
\int_{\R^d} | \widetilde f (v)-G[\rho](v)|\dd v\\
& \leq 2\int_{|v|\geq R} f\dd v +C_{d,m}\, \rho^{\frac{2-k}{2}}\, \left( \left(H[ f]-H[G[\rho]]\right) + \rho^{k-1}\int_{|v|\geq R} f \dd v \right)^{\frac 1 2}\\
& \leq 2\int_{|v|\geq R} f\dd v +C_{d,m}\, \left( \rho^{2-k}\left(H[ f]-H[G[\rho]]\right) + \rho\int_{|v|\geq R} f \dd v \right)^{\frac 1 2}.
\end{align*}
Finally, since $\int_{|v|\geq R} f\dd v\leq \rho$, we have $\int_{|v|\geq R} f\dd v\leq \left(\rho \int_{|v|\geq R} f\dd v\right)^{\frac{1}{2}}$ so we can simply write:
\begin{equation*}
\int_{\R^d} |f(v)-G[\rho](v)|\dd v
\leq C_{d,m}\, \left( \rho^{2-k}\left(H[ f]-H[G[\rho]]\right) + \rho\int_{|v|\geq R} f \dd v \right)^{\frac 1 2}.
\end{equation*}
which gives \eqref{eq:L1conv} in this second case.
\end{proof}

\section{A generalised Carlson-Levin inequality}

First we recall the Carlson-Levin inequality:
\begin{proposition}\label{prop:CL0}
Given $d\geq 2$ and for $m \in ( \frac{d}{d+2},1)$
there holds:
\begin{equation}\label{eq:CL0}
\int_{\R^d} f(v)^m \, \dd v \lesssim \left( \int_{\R^d} f(v) \, \dd v\right)^{\frac{2m-d(1-m)}{2}} \left( \int_{\R^d} |v|^2 f(v) \, \dd v\right)^{\frac{d(1-m)}{2}}.
\end{equation}
\end{proposition}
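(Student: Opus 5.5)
The plan is to reduce the inequality, by scaling and homogeneity, to a single weighted Hölder estimate, with a Young/splitting argument to match the exponents. Write $M_0=\int_{\R^d} f\,\dd v$ and $M_2=\int_{\R^d}|v|^2 f\,\dd v$. The right-hand side has exponents $\frac{2m-d(1-m)}{2}$ and $\frac{d(1-m)}{2}$, which sum to $m$. This is exactly what invariance under $f\mapsto \lambda f$ requires. Under the dilation $f\mapsto f(\cdot/R)$ the left side scales like $R^d$, $M_0$ like $R^d$ and $M_2$ like $R^{d+2}$; checking $d\frac{2m-d(1-m)}{2}+(d+2)\frac{d(1-m)}{2}=d$ confirms consistency. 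So it suffices to prove
\[
\int_{\R^d} f^m\,\dd v\leq C(d,m)\,\big(M_0+M_2\big)^m
\]
and then optimise over dilations. Alternatively, one can prove the estimate directly after normalising $M_0=M_2=1$, with constant $C(d,m)$.

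For the core estimate I would use exactly the Hölder trick already employed in the proof of Proposition~\ref{prop:CLbd}:
\[
\int_{\R^d} f^m\,\dd v=\int_{\R^d}\big((1+|v|^2)f\big)^m(1+|v|^2)^{-m}\,\dd v\leq \Big(\int_{\R^d}(1+|v|^2)f\,\dd v\Big)^m\Big(\int_{\R^d}(1+|v|^2)^{-\frac{m}{1-m}}\,\dd v\Big)^{1-m},
\]
using Hölder with exponents $1/m$ and $1/(1-m)$. The last integral is finite precisely when $\frac{2m}{1-m}>d$, i.e. $m>\frac{d}{d+2}$, which is the assumed range. This gives $\int f^m\lesssim (M_0+M_2)^m$.

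To recover the product form, apply this to the rescaled function $f_R(v)=f(Rv)$. Then $\int f_R^m=R^{-d}\int f^m$, $M_0(f_R)=R^{-d}M_0$ and $M_2(f_R)=R^{-d-2}M_2$. Hence
\[
\int f^m\lesssim R^{d}\,R^{-dm}\big(M_0+R^{-2}M_2\big)^m .
\]
Choosing $R^2=M_2/M_0$ makes the two terms in the bracket equal, and the bound becomes $R^{d(1-m)}(2M_0)^m=C\,M_0^{m-\frac{d(1-m)}{2}}M_2^{\frac{d(1-m)}{2}}$, which is \eqref{eq:CL0}. The degenerate cases $M_0=0$ or $M_2=\infty$ are trivial.

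I expect no real obstacle. The only points needing care are the integrability condition on $(1+|v|^2)^{-m/(1-m)}$, which is exactly $m>\frac{d}{d+2}$, and the exponent bookkeeping in the dilation step. The argument does not need $d\geq2$, so that hypothesis is harmless.
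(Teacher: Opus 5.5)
Your argument is correct. The global Hölder step with weight $(1+|v|^2)^{-m}$ is valid exactly when $\frac{2m}{1-m}>d$, i.e.\ $m>\frac{d}{d+2}$. The dilation bookkeeping $\int f_R^m=R^{-d}\int f^m$, $M_0(f_R)=R^{-d}M_0$, $M_2(f_R)=R^{-d-2}M_2$ is right. The choice $R^2=M_2/M_0$ then produces exactly the exponents $\frac{2m-d(1-m)}{2}$ and $\frac{d(1-m)}{2}$, with the explicit constant $2^m\big(\int_{\R^d}(1+|v|^2)^{-\frac{m}{1-m}}\dd v\big)^{1-m}$.

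The paper does not prove this statement; it quotes it as the classical Carlson--Levin inequality. It is, however, the case $a=2$ of Proposition~\ref{prop:CL}, whose proof takes a somewhat different route. There the integral is split at $|v|=R$. Hölder is applied on $B_R$ against a constant weight, giving $M_0^m R^{d(1-m)}$, and on $B_R^c$ against $|v|^{-2m}$, giving $M_2^m R^{d(1-m)-2m}$. The two pieces are then balanced in $R$. The cutoff radius plays the role of your dilation parameter, and both arguments land on the same scale $R^2\sim M_2/M_0$.

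Your version gains a single global Hölder inequality, reusing the computation already done in Proposition~\ref{prop:CLbd}, together with a clean scaling reduction. The paper's splitting separates the integrability condition at the origin from the one at infinity. That makes the two-sided restriction \eqref{eq:ma} transparent for the singular weight $|v|^{a-2}$. Your method handles that case too: the weight $|v|^{a-2}(1+|v|^a)^{-m}$ gives the same two conditions.

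Your remark that $d\geq 2$ is unnecessary is also right: for $d=1$ the argument works for all $m\in(\frac13,1)$.
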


Next, we prove the following generalization:
\begin{proposition}\label{prop:CL}
Given $d\geq 2$ and $a\in(0,2]$, and for $m$ satisfying
\begin{equation}\label{eq:ma}
\frac{d+a-2}{d+a} <m<\frac{d+a-2}{d},
\end{equation}
there holds:
\begin{equation}\label{eq:CL}
\int_{\R^d} |v|^{a-2} f(v)^m \, \dd v \lesssim \left( \int_{\R^d} f(v) \, \dd v\right)^{\frac{a(m-1)+2-d(1-m)}{a}} \left( \int_{\R^d} |v|^a f(v) \, \dd v\right)^{\frac{a-2+d(1-m)}{a}}.
\end{equation}
\end{proposition}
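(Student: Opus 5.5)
The plan is to split the integral at a radius $R>0$, apply H\"older's inequality on each piece with a weight that is integrable there, and then optimise in $R$. Throughout, write
$$M_0=\int_{\R^d} f\,\dd v,\qquad M_a=\int_{\R^d}|v|^a f\,\dd v,$$
and assume both are finite and positive; otherwise there is nothing to prove. A preliminary check is that the two exponents in \eqref{eq:CL},
$$\alpha=\frac{a(m-1)+2-d(1-m)}{a},\qquad \beta=\frac{a-2+d(1-m)}{a},$$
satisfy $\alpha+\beta=m$. This is consistent with the homogeneity of both sides under $f\mapsto\lambda f$. The scaling $f\mapsto f(\lambda\,\cdot)$ then suggests that the optimal splitting radius is $R^a=M_a/M_0$.

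\emph{Inner region.} On $B_R$, I write $|v|^{a-2}f^m=f^m\cdot|v|^{a-2}$ and apply H\"older with exponents $\frac1m$ and $\frac1{1-m}$:
$$\int_{B_R}|v|^{a-2}f^m\,\dd v\le M_0^m\Big(\int_{B_R}|v|^{\frac{a-2}{1-m}}\,\dd v\Big)^{1-m}.$$
The last integral converges at the origin if and only if $\frac{a-2}{1-m}>-d$, that is $2-a<d(1-m)$. This is exactly the upper bound $m<\frac{d+a-2}{d}$ in \eqref{eq:ma}. Under it, the integral equals $C\,R^{\frac{a-2}{1-m}+d}$, so the inner piece is bounded by $C\,M_0^m R^{a-2+d(1-m)}=C\,M_0^m R^{a\beta}$.

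\emph{Outer region.} On $B_R^c$, I write $|v|^{a-2}f^m=(|v|^af)^m\,|v|^{a-2-am}$ and apply the same H\"older inequality:
$$\int_{B_R^c}|v|^{a-2}f^m\,\dd v\le M_a^m\Big(\int_{B_R^c}|v|^{\frac{a-2-am}{1-m}}\,\dd v\Big)^{1-m}.$$
This integral converges at infinity if and only if $\frac{a-2-am}{1-m}<-d$, that is $a(1-m)-2<-d(1-m)$, which is the lower bound $m>\frac{d+a-2}{d+a}$ in \eqref{eq:ma}. The outer piece is then bounded by $C\,M_a^m R^{a\beta-am}$.

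\emph{Choice of $R$.} Taking $R^a=M_a/M_0$, the inner piece becomes $C\,M_0^{m-\beta}M_a^{\beta}$. The outer piece becomes $C\,M_a^{m}(M_a/M_0)^{\beta-m}=C\,M_0^{m-\beta}M_a^{\beta}$ as well. Since $m-\beta=\alpha$, both terms equal $C\,M_0^\alpha M_a^\beta$, which is \eqref{eq:CL}.

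I do not expect a genuine obstacle: the only delicate point is the bookkeeping showing that each local integrability condition matches one side of \eqref{eq:ma}. The restriction $d\ge2$ plays no role in this argument. The case $a=2$ recovers Proposition~\ref{prop:CL0}, and the same computation with $\lfloor v\rceil$ in place of $|v|$ gives the variant used in bound (2b).
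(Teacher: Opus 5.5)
Your proof is correct and follows the paper's argument. Like the paper, you split at a radius $R$, apply H\"older with exponents $\frac1m$ and $\frac1{1-m}$ on each piece, with the integrability conditions at the origin and at infinity matching the two sides of \eqref{eq:ma}, and then choose $R$. Your explicit choice $R^a=M_a/M_0$ is simply a concrete version of the paper's ``optimise in $R$'' step: it makes both pieces equal to $M_0^{\alpha}M_a^{\beta}$ directly, without invoking the signs of the two $R$-exponents.
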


Note that for every $m\in (\frac{d-2}{2},1)$, there exists $a \in (0,2)$ such that \eqref{eq:ma} holds (since $\frac{d+a-2}{d+a} \to \frac{d-2}{2}$ as $a\to 0$ and $\frac{d+a-2}{d}\to1$ as $a\to 2$).

\begin{proof}[{\bf Proof of \Cref{prop:CL}}]
We write
$$
\int_{\R^d} |v|^{a-2} f(v)^m \dd v= \int_{|v|\leq R} |v|^{a-2} f(v)^m \dd v+\int_{|v|\geq R} |v|^{a-2} f(v)^m \dd v
$$
Using H\"older's inequality, we write,
\begin{multline*}
\int_{|v|\leq R} |v|^{a-2} f(v)^m \dd v \leq \left( \int_{|v|\leq R} f(v) \dd v\right)^{m} \left( \int_{|v|\leq R} |v|^{\frac{a-2}{1-m}} \dd v\right)^{1-m}
\\\leq \left( \int_{|v|\leq R} f(v) \dd v\right)^{m}
R^{a-2+d(1-m)}. 
\end{multline*}
This inequality requires $\frac{a-2}{1-m}>-d$ which is equivalent to the right inequality in \eqref{eq:ma}.
Similarly, we have
\begin{multline*}
\int_{|v|\geq R} |v|^{a-2} f(v)^m \dd v \leq \left( \int_{|v|\geq R} |v|^a f(v) \dd v\right)^{m} \left( \int_{|v|\geq R} |v|^{a-\frac{2}{1-m}} \dd v\right)^{1-m}
\\\leq \left( \int_{|v|\geq R} |v|^a f(v) \dd v\right)^{m}
R^{a(1-m)-2+d(1-m)}. 
\end{multline*}
This inequality requires $a-\frac{2}{1-m} <-d$, which is equivalent to the left inequality in \eqref{eq:ma}.
Putting things together, we deduce:
$$
\int_{\R^d} |v|^{a-2} f(v)^m \dd v
\leq
\left( \int_{\R^d} f(v) \dd v\right)^{m}
R^{a-2+d(1-m)} +
\left( \int_{\R^d} |v|^a f(v) \dd v\right)^{m}
R^{a(1-m)-2+d(1-m)}.
$$
We note that $a-2+d(1-m)>0$ and $a(1-m)-2+d(1-m)<0$ so we can optimize with respect to $R$ to get \eqref{eq:CL}.
\end{proof}

\section{Relative entropy method when \texorpdfstring{$m=1$}{=1}: Proof of \texorpdfstring{\Cref{thm:convm1}}{thm:convm1}}

In this appendix, we present the relative entropy method for the linear Fokker-Planck equation. Consider the following (linear) rescaled equation,
\begin{equation}\label{eq:VFP}
\eps^2 \pa_t f_\eps + \eps v\cdot\na _x f_\eps = \text{div}_v \left[\na_v f_\eps + vf_\eps\right], \qquad t > 0, \; x \in \Omega, \; v \in \R^d.
\end{equation}
complemented with an initial condition $f_\eps(0,\cdot)$. This equation is associated to the entropy,
$$
\forall t >0, \qquad \mathscr E(f_\eps)(t) = \iint_{\Omega \times\R^d} f_\eps \log f_\eps + \frac{|v|^2}{2} f_\eps\, \dd x \, \dd v.
$$
In particular,
solutions of \eqref{eq:VFP} satisfy
$$
\frac{d}{dt} \mathscr E(f_\eps)(t) = -\frac 1 {\eps^{2}}
\iint_{\Omega \times\R^d} \frac{1}{ f_\eps} \left\vert \na_v f_\eps +vf_\eps \right\vert^2 \, \dd x \, \dd v
$$
Classically, the minimizers of $\mathscr E(f)$ under mass constraint $\irdv{f} =\rho$ are given by Maxwellian distributions, that is
$$\rho \, M(v) := \frac{\rho}{(2\pi)^{\frac{d}{2}}} \exp\left(- \frac{\vert v \vert^2 }{2} \right).$$
In particular, $f_\eps$ is expected to converge to $\rho_0 \, M$ where $(t,x) \mapsto \rho_0(t,x)$ solves
$\pa_t \rho_0 - \Delta \rho_0=0$, or equivalently,
\begin{equation}\label{eq:diff}
\pa_t \rho_0 + \text{div}_x (\rho_0 w) = 0 , \qquad w = - \nabla \ln \rho_0.
\end{equation}

Given $f_\eps$ a solution to \eqref{eq:VFP} and $\rho_0 $ a solution to \eqref{eq:diff}, we define
the modulated energy (or relative entropy):
\begin{align*}
\mathscr H_\eps (t)
& = \iint_{\Omega \times\R^d} f_\eps(t,x,v) \log \left( \frac { f_\eps(t,x,v)}{ \rho_0(t,x) e^{- \frac{1}{2} |v-\eps w_0(t,x)|^2}} \right)\, \dd x \, \dd v\\
& = \iint_{\Omega \times\R^d} f_\eps \log\left(\frac{f_\eps}{\rho_0}\right) +\frac{|v-\eps w_0|^2}{2} f_\eps \, \dd x \, \dd v.
\end{align*}
Observe that this quantity is not the relative entropy with respect to the expected limit $\rho_0 \, M$, but takes into account a first order correction of the flux.

\begin{theorem}\label{thm:convm1}
Let $f_\eps$ be a weak solution to \eqref{eq:VFP} and $\rho_0$ be a solution to \eqref{eq:diff}. Then we have
\begin{align*}
&\frac{d}{dt} \mathscr H_\eps (t) +\frac{1}{2\eps^2}\iint_{\Omega \times\R^d} \frac{1}{ f_\eps} |\na_v f_\eps +(v- \eps u_\eps) f_\eps |^2 \, \dd x \, \dd v + \frac 1 2 \int_\Omega \rho_\eps |u_\eps-w|^2 \, \dd x \\
& \leq
\eps^2 C \Big ( \|\pa_tw\|_{L^\infty},\|Dw\|_{L^\infty} , \iint_{\Omega \times\R^d} |v|^2 f_\eps \, \dd v \, \dd x \Big )
\end{align*}
\end{theorem}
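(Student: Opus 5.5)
The plan is to differentiate $\mathscr H_\eps$ in time, insert \eqref{eq:VFP} and \eqref{eq:diff}, and repeat in the linear setting the computation of the proof of Proposition \ref{prop:relativeentropym}. Throughout, $u_\eps$ denotes the macroscopic velocity $u_\eps=j_\eps/\rho_\eps$, so that $\irdv{v f_\eps}=\eps\rho_\eps u_\eps$, and $w=w_0=-\na\ln\rho_0$. We abbreviate
$$A_\eps:=\na_v f_\eps+(v-\eps w)f_\eps .$$
The computation is first done for smooth, decaying solutions; the inequality for weak solutions then follows by the regularisation and limit procedure of Section 2, using convexity and lower semicontinuity of the dissipation.

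\emph{Step 1: exact identity.} We compute $\eps^2\frac{d}{dt}\mathscr H_\eps$ as the sum of three contributions.
\begin{itemize}
\item The $\pa_t$-term: $\iint(\log f_\eps+1-\log\rho_0+\tfrac12|v-\eps w|^2)\,\eps^2\pa_t f_\eps$.
\item The explicit time dependence: $-\eps^2\int\rho_\eps\,\pa_t\rho_0/\rho_0-\eps^3\iint f_\eps(v-\eps w)\cdot\pa_t w$.
\item After substituting the equation for $\eps^2\pa_t f_\eps$, the transport part gives $\eps\iint f_\eps v\cdot w-\eps^2\iint f_\eps\, v\cdot Dw\,(v-\eps w)$, using $\na_x(-\log\rho_0)=w$.
\end{itemize}
The collision part, after integration by parts, gives $-\iint|A_\eps|^2/f_\eps-\eps\iint f_\eps w\cdot(v-\eps w)$. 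The two $\eps\iint f_\eps v\cdot w$ terms cancel, which leaves $+\eps^2\int\rho_\eps|w|^2$.

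Using $\pa_t\rho_0=-\div(\rho_0 w)$ we get
$$-\int\rho_\eps\,\pa_t\rho_0/\rho_0=\int\rho_\eps\,\div w-\int\rho_\eps|w|^2 ,$$
so the $|w|^2$ terms cancel as well. In the $Dw$ term we write $v=(v-\eps w)+\eps w$ and use the identity
$$\irdv{(v-\eps w)_i(v-\eps w)_j f_\eps}=\delta_{ij}\rho_\eps+\irdv{(v-\eps w)_j (A_\eps)_i}.$$
Its trace part $-\eps^2\int\rho_\eps\div w$ cancels the previous $\div w$ term. Since $\irdv{\na_v f_\eps}=0$, we also have $\iint f_\eps(v-\eps w)\cdot\pa_t w=\iint A_\eps\cdot\pa_t w$. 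The result is
$$\eps^2\tfrac{d}{dt}\mathscr H_\eps+\iint\frac{|A_\eps|^2}{f_\eps}=-\eps^2\iint (v-\eps w)\cdot Dw\,A_\eps-\eps^3\iint f_\eps\, w\cdot Dw\,(v-\eps w)-\eps^3\iint A_\eps\cdot\pa_t w ,$$
with every remaining term either containing $A_\eps$ or carrying a factor $\eps^3$. Here $v\cdot Dw\,(\cdot)$ is read with the index contraction of the identity above. The term $\eps^3\iint f_\eps\, w\cdot Dw\,(v-\eps w)$ could also be rewritten using $\iint f_\eps(v-\eps w)=\eps\int\rho_\eps(u_\eps-w)$; either way it is $O(\eps^3)$ times controlled quantities.

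\emph{Step 2: splitting the dissipation.} Since $\irdv{\na_v f_\eps+(v-\eps u_\eps)f_\eps}=0$, expanding the square gives the orthogonal decomposition
$$\irdv{\frac{|A_\eps|^2}{f_\eps}}=\irdv{\frac{|\na_v f_\eps+(v-\eps u_\eps)f_\eps|^2}{f_\eps}}+\eps^2\rho_\eps|u_\eps-w|^2 .$$
After dividing by $\eps^2$, this produces both dissipation terms on the left-hand side of the theorem, with constants $1$ before absorption.

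\emph{Step 3: error terms.} Each term on the right is bounded by Cauchy--Schwarz in the form $\iint|g||A_\eps|\le(\iint|g|^2 f_\eps)^{1/2}(\iint|A_\eps|^2/f_\eps)^{1/2}$, followed by Young's inequality. Half of $\iint|A_\eps|^2/f_\eps$, and hence half of each of the two pieces from Step 2, is absorbed into the left-hand side. The remainders are
$$\eps^4\|Dw\|_{L^\infty}^2\iint|v-\eps w|^2f_\eps \quad\text{and}\quad \eps^6\|\pa_t w\|_{L^\infty}^2\iint f_\eps ,$$
together with the $O(\eps^3)$ term. These are controlled by the kinetic energy $\iint|v|^2f_\eps$ and the mass, noting that $|v-\eps w|^2\le 2|v|^2+2\eps^2\|w\|^2_{L^\infty}$. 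Dividing by $\eps^2$ gives the stated $\eps^2C(\cdots)$ bound.

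The main obstacle I expect is bookkeeping in Step 1: getting the exact cancellations of the $O(\eps)$ and $\int\rho_\eps|w|^2$ terms, and recognising that the trace of the $Dw$ term cancels $\int\rho_\eps\div w$. This requires choosing the decomposition $v=(v-\eps w)+\eps w$ and the identity above; with any other split one is left with an $O(1)$ term that cannot be absorbed.
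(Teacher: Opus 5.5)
Your computation is correct and is essentially the paper's proof, written for $f_\eps$ instead of $g_\eps=f_\eps/\rho_0$. It uses the same cancellations: the $\eps\iint f_\eps\, v\cdot w$ terms, the $\rho_\eps|w|^2$ terms via $\pa_t\rho_0=-\div(\rho_0 w)$, and the trace of the $Dw$ term against $\int\rho_\eps\div w$. These are followed by the same Cauchy--Schwarz/Young absorption.

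The one substantive difference in Step 1 is how you treat the $Dw$ term. The paper does not split $v=(v-\eps w)+\eps w$. It keeps the factor $v$ and only writes $(v-\eps w)f_\eps=A_\eps-\na_v f_\eps$, which gives directly
\[
-\eps^2\iint f_\eps\, v\cdot Dw\,(v-\eps w)=-\eps^2\iint v\cdot Dw\,A_\eps-\eps^2\int\rho_\eps\,\div w .
\]
This leaves no extra $\eps^3$ term, and the Young remainder is $\eps^4\|Dw\|_{L^\infty}^2\iint|v|^2 f_\eps$, with no $\|w\|_{L^\infty}$.

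Your route brings in $\|w\|_{L^\infty}$, which is not among the arguments of $C$ in the statement. On $\T^d$ it is controlled by $\|Dw\|_{L^\infty}$, since $w$ is a periodic gradient and so has zero mean. Your closing claim that every other split leaves an unabsorbable $O(1)$ term is therefore wrong.

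Your extra term is also not harmless as you describe it: an $O(\eps^3)$ term divided by $\eps^2$ is $O(\eps)$, not $O(\eps^2)$. To reach the stated order you must write $\int f_\eps(v-\eps w)\dd v=\int A_\eps\dd v$ and absorb the term into the dissipation, which makes its contribution $O(\eps^4)$. Doing so just recombines it with your first term into the paper's $-\eps^2\iint v\cdot Dw\,A_\eps$.

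Your Step 2 is a genuine addition. The paper's proof ends with $(v-\eps w_0)$ inside the dissipation and never introduces $u_\eps$ or the term $\frac12\int\rho_\eps|u_\eps-w|^2$. Your orthogonal decomposition is valid because $\int(\na_v f_\eps+(v-\eps u_\eps)f_\eps)\dd v=0$ when $u_\eps=j_\eps/\rho_\eps$. It is exactly what turns the paper's final inequality into the one stated in the theorem.
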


Define $g_\eps = \frac{f_\eps}{\rho_0}$. The function $g_\eps$ satisfies,
\begin{equation}
\eps^2 \pa_t g_\eps + \eps^2 \left( \rho_0^{-1}\pa_t \rho_0 \right) g_\eps + \eps v\cdot\na _x (\rho_0 g_\eps) \rho_0^{-1} = \text{div}_v \left[\na_v g_\eps + v g_\eps\right], \qquad t > 0, \; x \in \Omega, \; v \in \R^d.
\end{equation}
The relative entropy may be rewritten,
\begin{align*}
\mathscr H_\eps (t)
& = \iint_{\Omega \times\R^d} \left[ g_\eps \log g_\eps +\frac{1}{2} |v-\eps w_0|^2 g_\eps \right]\, \rho_0 \, \dd x \, \dd v.
\end{align*}
Compute,
\begin{align*}
\mathscr H_\eps' (t)
& = \iint_{\Omega \times\R^d} \left( 1 + \log g_\eps +\frac{1}{2} |v-\eps w_0|^2 \right)\partial_t g_\eps \, \rho_0 \, \dd x \, \dd v \\
& \qquad \qquad + \iint_{\Omega \times\R^d} (v-\eps w_0)(-\eps \partial_t w_0 ) g_\eps \, \rho_0 \, \dd x \, \dd v \\
& \qquad \qquad\qquad \qquad + \iint_{\Omega \times\R^d} \left[ g_\eps \log g_\eps +\frac{1}{2} |v-\eps w_0|^2 g_\eps \right]\, \partial_t \rho_0 \, \dd x \, \dd v \\
& = I_1 + I_2 + I_3.
\end{align*}
We shall estimate the three integrals separately.
\begin{align*}
\eps^2 I_1 :=
&\eps^2 \iint_{\Omega \times\R^d} \left( 1 + \log g_\eps +\frac{1}{2} |v-\eps w_0|^2 \right)\partial_t g_\eps \, \rho_0 \, \dd x \, \dd v \\
&= \iint_{\Omega \times\R^d} \left( 1 + \log g_\eps +\frac{1}{2} |v-\eps w_0|^2 \right) \left( \text{div}_v \left[\na_v g_\eps + v g_\eps\right] \right) \, \rho_0 \, \dd x \, \dd v \\
& \qquad \qquad + \iint_{\Omega \times\R^d} \left( 1 + \log g_\eps +\frac{1}{2} |v-\eps w_0|^2 \right) \left( - \eps v\cdot\na _x (\rho_0 g_\eps) \rho_0^{-1} \right) \, \rho_0 \, \dd x \, \dd v \\
&\qquad \qquad \qquad \qquad + \iint_{\Omega \times\R^d} \left( 1 + \log g_\eps +\frac{1}{2} |v-\eps w_0|^2 \right) \left( -\eps^2 \left( \rho_0^{-1}\pa_t \rho_0 \right) g_\eps \right) \, \rho_0 \, \dd x \, \dd v \\
& = I_{1,1} + I_{1,2} + I_{1,3}.
\end{align*}
Again, we estimate all integrals separately.
\begin{align*}
I_{1,1} & = \iint_{\Omega \times\R^d} \left( 1 + \log g_\eps +\frac{1}{2} |v-\eps w_0|^2 \right) \left( \text{div}_v \left[\na_v g_\eps + v g_\eps\right] \right) \, \rho_0 \, \dd x \, \dd v \\
& = - \iint_{\Omega \times\R^d} \left( \frac{1}{g_\eps}\nabla_v g_\eps + (v-\eps w_0) \right) \cdot \left(\na_v g_\eps + v g_\eps\right) \, \rho_0 \, \dd x \, \dd v \\
&= - \iint_{\Omega \times\R^d} \frac{1}{g_\eps} \left\vert \na_v g_\eps + (v-\eps w_0) g_\eps\right\vert^2 \, \rho_0 \, \dd x \, \dd v - \iint_{\Omega \times\R^d} \eps \left( \nabla_v g_\eps + (v-\eps w_0) g_\eps \right) \cdot w \, \rho_0 \, \dd x \, \dd v \\
&= - \iint_{\Omega \times\R^d} \frac{1}{g_\eps} \left\vert \na_v g_\eps + (v-\eps w_0) g_\eps\right\vert^2 \, \rho_0 \, \dd x \, \dd v - \iint_{\Omega \times\R^d} \eps (v-\eps w_0) \cdot w_0 g_\eps \, \rho_0 \, \dd x \, \dd v
\end{align*}
Then,
\begin{align*}
I_{1,2} &= \iint_{\Omega \times\R^d} \left( 1 + \log g_\eps +\frac{1}{2} |v-\eps w_0|^2 \right) \left( - \eps v\cdot\na _x (\rho_0 g_\eps) \rho_0^{-1} \right) \, \rho_0 \, \dd x \, \dd v\\
&= \eps \iint_{\Omega \times\R^d} v\cdot\left( \frac{\na _x g_\eps}{g_\eps}- \eps (D_x w_0)(v-\eps w_0) \right) \rho_0 g_\eps \, \dd x \, \dd v \\
&= \eps \iint_{\Omega \times\R^d} v\cdot \na _x g_\eps \rho_0 \, \dd x \, \dd v - \eps^2\iint_{\Omega \times\R^d} v^\top (D_x w_0)(v-\eps w_0) \rho_0 g_\eps \, \dd x \, \dd v\\
&= - \eps \iint_{\Omega \times\R^d} v\cdot \na _x \rho_0 g_\eps \, \dd x \, \dd v - \eps^2\iint_{\Omega \times\R^d} v^\top (D_x w_0)(v-\eps w_0) \rho_0 g_\eps \, \dd x \, \dd v\\
&= \eps \iint_{\Omega \times\R^d} v\cdot w_0 \rho_0 g_\eps \, \dd x \, \dd v - \eps^2\iint_{\Omega \times\R^d} v^\top (D_x w_0)(v-\eps w_0) \rho_0 g_\eps \, \dd x \, \dd v
\end{align*}
Finally,
\begin{align*}
I_{1,3} &= \iint_{\Omega \times\R^d} \left( 1 + \log g_\eps +\frac{1}{2} |v-\eps w_0|^2 \right) \left( -\eps^2 \left( \rho_0^{-1}\pa_t \rho_0 \right) g_\eps \right) \, \rho_0 \, \dd x \, \dd v\\
&= -\eps^2\iint_{\Omega \times\R^d} \left( 1 + \log g_\eps +\frac{1}{2} |v-\eps w_0|^2 \right) \left( \pa_t \rho_0 \right) g_\eps \, \dd x \, \dd v.
\end{align*}
Observe that $I_{1,3} + \eps^2 I_3 = - \eps^2 \iint_{\Omega \times\R^d} g_\eps \pa_t \rho_0 \, \dd x \, \dd v.$
As a consequence, using a cancellation between $I_{1,1}$ and $I_{1,2}$,
\begin{align*}
\eps^2 \mathscr H_\eps' (t)
& = \eps^2 I_1 + \eps^2 I_2 + \eps^2 I_3 = I_{1,1} + I_{1,2} + I_{1,3} + \eps^2 I_2 + \eps^2 I_3\\
&= - \iint_{\Omega \times\R^d} \frac{1}{g_\eps} \left\vert \na_v g_\eps + (v-\eps w_0) g_\eps\right\vert^2 \, \rho_0 \, \dd x \, \dd v \\
&-\eps^2 \iint_{\Omega \times\R^d} v^\top (D_x w_0)((v-\eps w_0) g_\eps+ \nabla_v g_\eps) \rho_0 \, \dd x \, \dd v \\
&+ \eps^2\iint_{\Omega \times\R^d} v^\top (D_x w_0)(\nabla_v g_\eps) \rho_0 \, \dd x \, \dd v + \iint_{\Omega \times\R^d} \eps^2 \vert w_0 \vert^2 g_\eps \, \rho_0 \, \dd x \, \dd v - \eps^2\iint_{\Omega \times\R^d} g_\eps \pa_t \rho_0 \, \dd x \, \dd v \\
&+ \eps^2 \iint_{\Omega \times\R^d} (v-\eps w_0)(-\eps \partial_t w_0 ) g_\eps \, \rho_0 \, \dd x \, \dd v 
\end{align*}
From this, observe that, since
\begin{equation*}
\iint_{\Omega \times\R^d} v^\top (D_x w_0)(\nabla_v g_\eps) \rho_0 \, \dd x \, \dd v = - \iint_{\Omega \times\R^d} \text{div}_x w \, g_\eps \, \rho_0 \, \dd x \, \dd v ,
\end{equation*}
we have
\begin{align*}
& \iint_{\Omega \times\R^d} v^\top (D_x w_0)(\nabla_v g_\eps) \rho_0 \, \dd x \, \dd v + \iint_{\Omega \times\R^d} \vert w \vert^2 g_\eps \, \rho_0 \, \dd x \, \dd v - \iint_{\Omega \times\R^d} g_\eps \pa_t \rho_0 \, \dd x \, \dd v \\
&= \iint_{\Omega \times\R^d} \left( -\text{div}_x w \, \rho_0 + \vert w_0 \vert^2 \, \rho_0 - \pa_t \rho_0 \right) \, g_\eps \, \dd x \, \dd v = 0,
\end{align*}
recalling the identity $ -\text{div}_x w \, \rho_0 + \vert w_0 \vert^2 \, \rho_0 = \pa_t \rho_0$.
The last step is to bound,
\begin{align*}
&\left\vert \iint_{\Omega \times\R^d} v^\top (D_x w_0)((v-\eps w_0) g_\eps+ \nabla_v g_\eps) \rho_0 \, \dd x \, \dd v\right\vert \\ &\leq \frac{1}{4\eps^{2}} \iint_{\Omega \times\R^d} \frac{1}{g_\eps} \left\vert \na_v g_\eps + (v-\eps w_0) g_\eps\right\vert^2 \, \rho_0 \, \dd x \, \dd v + \eps^2 \Vert D_x w_0 \Vert_{\infty} \iint_{\Omega \times\R^d} \vert v \vert^2 g_\eps \rho_0 \, \dd x \, \dd v.
\end{align*}
and,
\begin{align*}
& \left\vert \iint_{\Omega \times\R^d} (v-\eps w_0) \partial_t w_0 \, g_\eps \, \rho_0 \, \dd x \, \dd v \right\vert = \left\vert \iint_{\Omega \times\R^d} \left( (v-\eps w_0)g_\eps + \nabla_v g_\eps\right) \, \partial_t w_0 \, \rho_0 \, \dd x \, \dd v \right\vert \\
&\leq \frac{1}{4\eps^{3}} \iint_{\Omega \times\R^d} \frac{1}{g_\eps} \left\vert \na_v g_\eps + (v-\eps w_0) g_\eps\right\vert^2 \, \rho_0 \, \dd x \, \dd v + \eps^3 \Vert \partial_t w_0 \Vert_{\infty} \iint_{\Omega \times\R^d} g_\eps \rho_0 \, \dd x \, \dd v. 
\end{align*}
to conclude that,
\begin{align*}
\eps^2 \mathscr H_\eps' (t)
&\leq - \frac12\iint_{\Omega \times\R^d} \frac{1}{g_\eps} \left\vert \na_v g_\eps + (v-\eps w_0) g_\eps\right\vert^2 \, \rho_0 \, \dd x \, \dd v \\
&+ \eps^4 \Vert D_x w_0 \Vert_{\infty} \iint_{\Omega \times\R^d} \vert v \vert^2 g_\eps \rho_0 \, \dd x \, \dd v + \eps^6 \Vert \partial_t w_0 \Vert_{\infty} \iint_{\Omega \times\R^d} g_\eps \rho_0 \, \dd x \, \dd v,
\end{align*}
or equivalently,
\begin{align*}
\mathscr H_\eps' (t) &+ \frac{1}{2\eps^2}\iint_{\Omega \times\R^d} \frac{1}{f_\eps} \left\vert \na_v f_\eps + (v-\eps w_0) f_\eps\right\vert^2 \, \dd x \, \dd v \\
&\leq \eps^2 \Vert D_x w_0 \Vert_{\infty} \iint_{\Omega \times\R^d} \vert v \vert^2 f_\eps \, \dd x \, \dd v + \eps^4 \Vert \partial_t w_0 \Vert_{\infty} \iint_{\Omega \times\R^d} f_\eps \, \dd x \, \dd v.
\end{align*}
This ends the proof.\newpage


\bigskip\begin{center}\rule{2cm}{0.5pt}\end{center}\bigskip

\end{document}